\numberwithin{equation}{subsection}
\newtheorem{thm}[equation]{Theorem}
\newtheorem*{thma*}{Theorem A}
\newtheorem*{thmb*}{Theorem B}
\newtheorem*{thmc*}{Theorem C}
\newtheorem*{thm*}{Theorem}
\newtheorem{cor}[equation]{Corollary}
\newtheorem{lem}[equation]{Lemma}
\newtheorem{prop}[equation]{Proposition}
\newtheorem{defprop}[equation]{Definition/Proposition}
\theoremstyle{definition}
\newtheorem{rem}[equation]{Remark}
\newtheorem{defn}[equation]{Definition}
\DeclareMathOperator{\h}{H} 
\DeclareMathOperator{\sgn}{sgn} 
\newcommand{\triv}{{\mathbf{1}}}
\def\R{\mathbb R}
\def\Z{\mathbb Z}
\def\A{\mathbb A}
\def\Q{\mathbb Q}
\def\C{\mathbb C}
\def\F{\mathbb F}
\def\ira{\stackrel{\sim}{\longrightarrow}}
\def\hra{\hookrightarrow}
\def\ra{\rightarrow}
\def\g{\mathfrak g}
\def\gl{\mathfrak{gl}}
\def\t{\textbf{\emph{t}}}
\def\n{\mathfrak n}
\def\O{\mathcal O}
\def\h{\mathfrak h}
\def\k{\mathfrak k}
\def\<{\langle}
\def\>{\rangle}
\def\GL{{\rm GL}}
\def\GSpin{{\rm GSpin}}
\def\SO{{\rm SO}}
\def\GSp{{\rm GSp}_4}
\def\Orth{{\rm O}}
\def\S{\mathcal{S}} 
\def\G{\mathcal{G}}
\def\T{\mathcal{T}}
\def\Hom{{\rm Hom}}
\newcommand{\bfgreek}[1]{\bm{\@nameuse{up#1}}}
\title[Shalika models and critical values]{On the arithmetic of Shalika models and \\
the critical values of $L$-functions for $\GL_{2n}$}
\author{ Harald Grobner \ \and \ A. Raghuram}
\date{\today}
\address{Harald Grobner: Fakult\"at f\"ur Mathematik, University of Vienna\\ Oskar--Morgenstern--Platz 1\\ A-1090 Vienna, Austria.}
\email{harald.grobner@univie.ac.at}
\urladdr{http://homepage.univie.ac.at/harald.grobner}
\address{A. Raghuram: Department of Mathematics\\ Oklahoma State University\\ 401 Mathematical Sciences\\ Stillwater, OK 74078, USA. Current Address: Indian Institute of Science Education and Research, Dr.\ Homi Bhabha Road, Pashan, Pune 411021, India.}
\email{raghuram@iiserpune.ac.in}
\keywords{cuspidal automorphic representation, Shalika model, $L$-function, algebraicity result, period, cuspidal cohomology, Langlands Functoriality}
\subjclass[2010]{Primary: 11F67; Secondary: 11F41, 11F70, 11F75, 22E55}
\thanks{H.G. is supported by the Austrian Science Fund (FWF) Erwin Schr\"odinger grant, J 3076-N13. A.R. is partially supported by the National Science Foundation (NSF), award number DMS-0856113, and an Alexander von Humboldt Research Fellowship.}\thanks{This is an extended and slightly modified version of an article, which appeared in {\it Amer. J. Math.} {\bf 136} (2014)}
\begin{document}
\maketitle
\centerline{{\it With an appendix by Wee Teck Gan}}
\normalsize
\begin{abstract}
Let $\Pi$ be a cohomological cuspidal automorphic representation of ${\rm GL}_{2n}(\A)$ over a totally real number field $F$. Suppose that $\Pi$ has a Shalika model.
We define a rational structure on the Shalika model of $\Pi_f.$ Comparing it with a rational structure on a realization of $\Pi_f$ in cuspidal cohomology in top-degree, we define certain periods $\omega^{\epsilon}(\Pi_f)$. We describe the behaviour of such top-degree periods upon twisting $\Pi$ by algebraic Hecke characters $\chi$ of $F$. Then we prove an algebraicity result for all the critical values of the standard $L$-functions $L(s, \Pi \otimes \chi);$ here we use the recent work of B. Sun on the non-vanishing of a certain quantity attached to $\Pi_\infty$. As applications, we obtain  algebraicity results in the following cases: Firstly, for the symmetric cube $L$-functions attached to holomorphic Hilbert modular cusp forms; we also discuss the situation for higher symmetric powers. Secondly, for certain
(self-dual of symplectic type) Rankin--Selberg $L$-functions for $\GL_3\times\GL_2;$ assuming Langlands Functoriality, this generalizes to certain Rankin--Selberg $L$-functions of $\GL_n\times\GL_{n-1}$.
Thirdly, for the degree four $L$-functions attached to Siegel modular forms of genus $2$ and full level. Moreover, we compare our top-degree periods with periods defined by other authors. We also show that our main theorem is compatible with conjectures of Deligne and Gross.
\end{abstract}

\setcounter{tocdepth}{1}
\tableofcontents
\section{Introduction}
Let $F$ be a totally real number field and $G=\GL_{2n}/F$, $n\geq 1$, the split general linear group over $F$. Let $\Pi$ be a cuspidal automorphic representation of $G(\A)$ and $\chi$ an algebraic Hecke character. Attached to this data is the standard Langlands $L$-function $L(s,\Pi\otimes\chi)$. The main aim of this paper is to study the algebraicity of the critical values of $L(s,\Pi\otimes\chi)$ for representations $\Pi$ which admit a Shalika model. To that end, we will also investigate the arithmetic of such Shalika models.

Manin and Shimura independently studied the arithmetic of $L$-functions associated to holomorphic Hilbert modular forms, see \cite{manin} and \cite[Thm.\ 4.3]{shimura}; and more generally, Harder \cite{hardermodsym} and Hida \cite{hida-duke} proved algebraicity theorems for critical values of $L$-functions of cuspidal automorphic representations of $\GL_2$. These results relate the $L$-value at hand to a non-zero complex number, called a {\it period}, which essentially captures the transcendental part of the critical $L$-values. Underlying this construction is the fact that the cuspidal automorphic representations considered are of cohomological type, i.e., have non-vanishing cohomology with respect to some finite-dimensional algebraic coefficient module. Then the periods arise from comparing a rational structure on the Whittaker model of the finite part of the cuspidal representation and a realization of the latter in cohomology.
This idea of defining a period attached to cohomological cuspidal representations was pursued by several other authors, among them Kazhdan--Mazur--Schmidt \cite{kazhdan-mazur-schmidt}, Mahnkopf \cite{mahnk}, Raghuram \cite{raghuram-imrn} and Raghuram--Shahidi \cite{raghuram-shahidi-imrn}. All these works have in common that they use the Whittaker model and the {\it lowest} possible degree of cohomology which can carry a cuspidal automorphic representation.
Here, we replace the Whittaker model by what is called the Shalika model -- if there is one -- of a cuspidal automorphic representation $\Pi$ of $G(\A)$; furthermore, we will work with the {\it highest} possible degree of cohomology in which a cuspidal representation may contribute. In particular, this approach gives rise to different periods than the ones considered by the previous authors.

\smallskip

To put ourselves {\it in medias res}, let $\S/F=\Delta\GL_n\cdot {\rm M}_n\subset\GL_{2n}$ be the Shalika subgroup of $G=\GL_{2n}$, $\eta$ an id\`ele class character of $F$ such that $\eta^n$ equals the central character $\omega_\Pi$ of $\Pi$ and $\psi$ a non-trivial additive character of $F\backslash \A$. The latter two characters naturally extend to characters of $\S(\A)$, cf.\  Sect.\ \ref{sect:globalShalikamodels}. A cuspidal automorphic representation $\Pi$ of $G(\A),$ which we do not assume to be unitary, is said to have an $(\eta,\psi)$-Shalika model, if
$$\S^\eta_\psi(\varphi)(g):=\int_{Z_{G}(\A)\S(F)\backslash \S(\A)} (\Pi(g)\cdot\varphi)(s)\eta^{-1}(s)\psi^{-1}(s) ds\neq 0$$
for some $\varphi\in\Pi$ and $g\in G(\A)$. According to Jacquet--Shalika \cite{jacshal}, this is equivalent to a twisted partial exterior square $L$-function
$L^S(s,\Pi,\wedge^2\otimes\eta^{-1})=\prod_{v\notin S} L(s,\Pi_v,\wedge^2\otimes\eta^{-1}_v)$ having a pole at $s=1$, cf.\  Thm.\ \ref{thm:JS}. One may again reformulate this by saying that $\Pi$ has a Shalika model if and only if $\Pi$ is the Asgari--Shahidi transfer of a globally generic, cuspidal automorphic representation of ${\rm GSpin}_{2n+1}(\A)$, see Prop.\ \ref{prop:selfdual}. For the definition of the Shalika model $\S^{\eta}_{\psi}(\Pi)$, see Def.\ \ref{defn:shalika-model}.

If $\Pi$ is cohomological and cuspidal, then we know that its $\sigma$-twist ${}^\sigma\Pi:=\otimes_{v\textrm{ arch.}} \Pi_{\sigma^{-1} v}\otimes(\Pi_f\otimes_{\C,\sigma^{-1}}\C)$
is also cohomological and cuspidal for all $\sigma\in$Aut$(\C);$ see Clozel \cite{clozel}.  We would like to define an action of ${\rm Aut}(\C)$ on Shalika models and hence define rational structures on such models. Toward this we have the following theorem which says that having a Shalika model is an arithmetic property of a cohomological cuspidal automorphic representation $\Pi.$  See  Thm.\ \ref{thm:arithmeticShalika}. The appendix of this article contains a simple and elegant proof of this theorem by Wee Teck Gan. \\

Under our present assumptions, it is known that the rationality field $\Q(\Pi)$ of $\Pi$, i.e., the fixed field of all automorphisms $\sigma\in$ Aut$(\C)$ which leave $\Pi$ invariant, $\Pi\cong{}^\sigma\Pi$, is an algebraic number field. The same holds for $\Q(\Pi,\eta)$, the compositum of the rationality fields of $\Pi$ and $\eta$.
By virtue of Thm.\,\ref{thm:arithmeticShalika}, we are able to define a ``$\sigma$-twisted action'' on the Shalika model $\S^{\eta_f}_{\psi_f}(\Pi_f)$ of the finite part $\Pi_f$ of $\Pi$, and hence obtain a $\Q(\Pi,\eta)$-structure on $\S^{\eta_f}_{\psi_f}(\Pi_f)$. That is, there is a $\Q(\Pi,\eta)$-subspace of $\S^{\eta_f}_{\psi_f}(\Pi_f)$, stable under the action of $G(\A_f)$, which - tensored by $\C$ - retrieves the Shalika model, see Lem.\ \ref{lem:rational-shalika}.

Let $K_\infty=\prod_{v~ {\rm arch.}} {\rm O}(2n)\R^\times$ and $q_0=\dim_\Q(F)\cdot(n^2+n-1)$. Then $q_0$ is the highest degree in which a cuspidal automorphic representation $\Pi$ can have non-vanishing $(\g_\infty,K_\infty^\circ)$-cohomology with respect to some finite-dimensional, irreducible algebraic coefficient system $E^{{\sf v}}_\mu$.
It is known that every character $\epsilon$ of $\pi_0(G_\infty)=K_\infty/K_\infty^\circ$ appears in
$H^{q_0}(\g_\infty,K^\circ_\infty,\Pi_\infty\otimes E^{\sf v}_\mu)$ with multiplicity one. Hence, taking the $\epsilon$-isotypic component gives a one-dimensional space
$H^{q_0}(\g_\infty,K^\circ_\infty,\Pi_\infty\otimes E^{\sf v}_\mu)[\epsilon]\cong\C.$
Fixing a basis vector $[\Pi_\infty]^\epsilon$ of the former cohomology space defines an isomorphism
$$
\Theta_\Pi^\epsilon: \S^{\eta_f}_{\psi_f}(\Pi_f)\ira H^{q_0}(\g_\infty,K^\circ_\infty,\Pi\otimes E^{\sf v}_\mu)[\epsilon].
$$
The right hand side also has a $\Q(\Pi,\eta)$-structure, which originates from a geometric realization of automorphic cohomology. One may normalize $\Theta^\epsilon$ in such a way that it respects the $\Q(\Pi,\eta)$-structures on both sides. This normalization factor is a period which we denote $\omega^\epsilon(\Pi_f)$;  it is well-defined as an element of $\C^\times/\Q(\Pi,\eta)^\times.$ See Definition/Proposition~\ref{defprop}. Call this normalized isomorphism $\Theta_{\Pi,0}^{\epsilon}$.\\

In Sect.\,\ref{sect:twisted} we prove the first main algebraicity result of this paper; see Theorem~\ref{thm:twisted}.
It describes the behaviour of our top-degree periods $\omega^\epsilon(\Pi_f)$ under twisting $\Pi$ by an algebraic Hecke character $\chi$ of $F$. Let $\G(\chi_f)$ be the Gau\ss~sum of $\chi_f$ and $\epsilon_\chi$ the signature of $\chi$, cf.\  Sect.\,\ref{sect:characters}.
Let $\epsilon$ be a character of $K_{\infty}/K_{\infty}^\circ$ with rationality field $\Q(\epsilon)$ and let $\omega^{\epsilon}(\Pi_f)$ be the attached period. Let $\chi$ be an algebraic Hecke character of $F$, and let $\epsilon_{\chi}$ be its signature. For any $\sigma \in {\rm Aut}({\mathbb C})$ we have
$$
\sigma\left(\frac
{\omega^{\epsilon \cdot \epsilon_{\chi}}(\Pi_f\otimes\chi_f)}
{\G(\chi_f)^n \, \omega^{\epsilon}(\Pi_f) }\right)
 \ = \
\left(\frac
{\omega^{\epsilon \cdot \epsilon_{\chi}} ({}^{\sigma}\Pi_f \otimes {}^\sigma\!\chi_f)}
{\G( {}^\sigma\!\chi_f)^n \, \omega^{\epsilon} ({}^{\sigma}\Pi_f)}\right).
$$
This roughly says that the periods of $\Pi_f\otimes\chi_f$ differ from the periods of $\Pi_f$ by the $n$-th power of the Gau\ss~sum of $\chi$ up to an algebraic number in a canonical number field determined by the data at hand. In the context of periods arising from Whittaker models and bottom-degree cohomology, such a theorem was proved by the second author and Shahidi; see the main theorem of \cite{raghuram-shahidi-imrn}. The strength of Thm.\,\ref{thm:twisted} relies on the fact that in order to prove an algebraicity theorem for all the critical values of $L(s,\Pi\otimes\chi)$, it suffices to prove an algebraicity theorem for just {\it one} critical value of the {\it untwisted} $L$-function $L(s,\Pi)$. In the rest of the introduction we show how one can prove such an algebraicity theorem for $L(\tfrac12, \Pi)$, assuming that $s=\tfrac12$ is critical for $L(s,\Pi)$. \\

For a Shalika function $\xi_{\varphi}=\S^\eta_\psi(\varphi)\in\S^\eta_\psi(\Pi)$, following Friedberg--Jacquet \cite{friedjac}, one
may define the Shalika-zeta-integral
$$\zeta(s,\varphi):=\int_{\GL_n(\A)} \S^\eta_\psi(\varphi)\left(\!\!\left(\!\! \begin{array}{ccc}
g_1 &  0\\
0 &  1
\end{array}\!\!\right)\!\!\right) |\det(g_1)|^{s-1/2} dg_1,$$
which may be shown to extend to a meromorphic function in $s\in\C$, cf.\  Prop.\ \ref{prop:FJ}. This also makes  sense locally, i.e., at a place $v$ of $F$.
Under the standing assumption that $\Pi$ is cohomological and cuspidal automorphic and admits an $(\eta, \psi)$-Shalika model, we prove in Sect.\ \ref{sect:vector} that there is a very special vector $\xi^\circ_{\Pi_f}$ in the Shalika model $\S^{\eta_f}_{\psi_f}(\Pi_f)$ which satisfies
\begin{enumerate}
\item $\zeta_v(\frac12,\xi^\circ_{\Pi_v})=L(\frac12,\Pi_v)$ for all unramified finite places $v$,
\item $\zeta_v(\frac12,\xi^\circ_{\Pi_v})=1$ for all ramified finite places $v$.
\end{enumerate}
Moreover, this vector transforms compatibly under the action of Aut$(\C)$, i.e., ${}^\sigma\xi^\circ_{\Pi_v}=\xi^\circ_{{}^\sigma\Pi_v}$. Let $H:=\GL_n\times\GL_n$ which is naturally a subgroup of  $G=\GL_{2n}.$ Next, one uses the result of Friedberg--Jacquet that the period integral along $H(F)\backslash H(\A)$ of a cusp form $\varphi$ is nothing but the Shalika-zeta-integral of $\xi_\varphi.$ Hence, we get an integral representation of the central critical (partial) $L$-value $L^S(\tfrac12,\Pi)$ as a period integral of a cusp form, which in the Shalika model corresponds to our special vector $\xi^\circ_{\Pi_f}$. Our main algebraicity result follows by interpreting this period integral in cohomology. Towards such a cohomological interpretation, consider the real orbifolds
$$\tilde{S}^H_{K_f}:= H(F)\backslash H(\A)/(K^\circ_\infty\cap H_\infty)^\circ \iota^{-1}(K_f)\longrightarrow G(F)\backslash G(\A)/K^\circ_\infty K_f=:S^G_{K_f},$$
where $\iota:H\hookrightarrow G$ denotes the natural embedding of $H$ into $G$ and $K_f$ is an open compact subgroup of $G(\A_f)$. {\it It is a crucial observation that $\dim_\R \tilde{S}^H_{K_f} = q_0$}. This numerical coincidence is a very important ingredient in making  the whole story work. Another important ingredient, which follows from a classical branching law (cf.\  Prop.\ \ref{prop:knapp}), is the observation that $s=\tfrac12$ is critical for $L(s,\Pi)$ if and only if the essentially  trivial representation of $H_\infty$ appears (and then necessarily with multiplicity one) in the representation $E_\mu^{\sf v}$.  Finally, we use a version of Poincar\'e-duality $\int_{\tilde{S}^H_{K_f}}$ (cf.\  Sect.\ \ref{sect:Poinc}) to obtain our main diagram of maps, see Sect.\ \ref{sect:diagram} for details and notation unexplained here:
$$
\xymatrix{
H_c^{q_0}(S^G_{K_f},\mathcal E^{\sf v}_\mu) \ar[r]^{\iota^*} & H_c^{q_0}(\tilde{S}^H_{K_f},\mathcal E^{\sf v}_\mu) \ar[rr]^{\mathcal T^*} & &
H_c^{q_0}(\tilde{S}^H_{K_f},\mathcal E_{(0, -{\sf w})}) \ar[dd]^{\int_{\tilde{S}^H_{K_f}}}\\
H^{q_0}(\mathfrak{g}_{\infty},K_{\infty}^\circ; \Pi\otimes E^{\sf v}_{\mu})[\epsilon_0]^{K_f} \ar@{^{(}->}[u] & & \\
\S^{\eta_f}_{\psi_f}(\Pi_f)^{K_f} \ar[u]^{\Theta_{\Pi,0}^{\epsilon_0}} \ar[rrr]& & & \C}
$$

The ``Main Identity'' proved in Thm.\ \ref{thm:mainid} shows that chasing our special vector $\xi^\circ_{\Pi_f}$ through this diagram essentially computes the
$L$-value $L(\tfrac12,\Pi_f)$. Here, we use recent work of Sun \cite{sun}, which shows the non-vanishing of a quantity $\omega(\Pi_\infty)$, depending only on the choice of generator $[\Pi_\infty]^\epsilon.$ This, together with our theorem on period relations then gives the second main algebraicity result of this paper; see Theorem~\ref{thm:central-value}.
Let $\chi$ be a finite-order Hecke character of $F$ and Crit$(\Pi)=$ Crit$(\Pi\otimes\chi)$ be the set of critical points in $\tfrac12 + \Z$
for the $L$-function $L(s,\Pi\otimes\chi)$ of $\Pi\otimes\chi$. Let $\tfrac12 + m \in$ Crit$(\Pi\otimes\chi)$. Then,
for any $\sigma\in {\rm Aut}(\C)$, we have
$$
\sigma\left(
\frac{L(\tfrac 12+m,\Pi_f \otimes \chi_f)}{\omega^{(-1)^{m+n-1}\epsilon_{\chi}}(\Pi_f) \, \G(\chi_f)^n \, \omega(\Pi_\infty,m)}\right) \ = \
\frac{L(\tfrac 12+m,{}^\sigma\Pi_f \otimes {}^\sigma\!\chi_f)}{\omega^{(-1)^{m+n-1}\epsilon_\chi}({}^\sigma\Pi_f) \, \G({}^\sigma\!\chi_f)^n \,\omega(\Pi_\infty,m)},
$$
where the quantity $\omega(\Pi_\infty,m)$ is defined in Thm.\,\ref{hyp}.  \\

For the case of trivial coefficients (i.e., when $\mu = 0$) for the group $\GL_4$, a weak form of the above theorem is implicit in a construction of $p$-adic $L$-functions due to Ash--Ginzburg \cite{ash-ginzburg}. (There the authors worked over $\overline{\Q}$ -- the algebraic closure of $\Q$ in $\C$ -- instead of the number field $\Q(\Pi,\eta, \chi)$ and needed to assume the non-vanishing of $L(\tfrac12,\Pi_f\otimes\chi_f)$ for some unitary character $\chi$ trivial at infinity.) There are several parts of that paper which are for $\GL_{2n}$, however, to quote them from the introduction of their paper, ``our results are definitive when $n=2$ and $F$ totally real.'' The reader should view our Thm.\,\ref{thm:central-value} as a generalization, as well as a refinement, of some of the results of \cite{ash-ginzburg}.

For a cohomological cuspidal representation $\pi$ of ${\rm GL}_n/{\mathbb Q}$, Mahnkopf \cite{mahnk} was the first to prove a general rationality result for the critical values of the standard $L$-function $L(s, \pi)$. (See \cite[Thm.\ A]{mahnk}.) His rationality result, which is under the assumption of a non-vanishing hypothesis, is formulated in terms of certain periods $\Omega^\pm(\pi)$ attached to $\pi.$ These periods however depend not only on $\pi$, but also on a series of representations
$\pi = \pi_0, \pi_1, \dots ,$ where $\pi_j$ is a representation of ${\rm GL}_{n-2j}/{\mathbb Q}$. Unfortunately, $\pi$ does not canonically determine the $\pi_j$'s; besides, there is no relation between the rationality fields $\Q(\pi)$ and $\Q(\pi_j)$. This raises significant problems in any particular instance; for example, using Langlands Functoriality for the symmetric cube transfer, it seems impossible to apply Mahnkopf's results to prove that the critical values of the symmetric cube $L$-function of the Ramanujan $\Delta$-function, divided by his periods $\Omega^\pm({\rm Sym}^3(\Delta))$, are rational numbers. (See, for example, Mizumoto~\cite{mizumoto} for the critical values of the $L$-function of
${\rm Sym}^3(\Delta)$.)
In comparison, our Thm.\,\ref{thm:central-value}, which is totally independent of Mahnkopf's paper, has the advantage that it is unconditional, and furthermore, it is sufficiently refined to give algebraicity results for critical values of concrete examples like the symmetric cube $L$-functions of Hilbert modular forms, or of the degree four $L$-functions of Siegel modular forms.

In Sect.\ \ref{sec:complementa} we take up various such examples to which Thm.\,\ref{thm:central-value} is applicable. Consider a primitive holomorphic Hilbert modular cusp form ${\bf f}$ of weight $k=(k_v)_{v\in S_\infty}$ and let $\pi({\bf f})$ be the corresponding cuspidal automorphic representation of $\GL_2(\A)$. If ${\bf f}$ is algebraic, we prove that $\Pi:= {\rm Sym}^3(\pi)$ being the Kim--Shahidi symmetric cube transfer of $\pi=\pi({\bf f})\otimes|\cdot|^{k_0/2}$, $k_0=\max k_v$, satisfies all the assumptions made in our Thm.\,\ref{thm:central-value}, cf.\  Prop.\ \ref{prop:hilbertmod}. Hence, we get a new algebraicity theorem for the critical values of such symmetric cube $L$-functions, see Cor.\ \ref{cor:hilbertmod}. The reader should compare this with a previous theorem of Garrett--Harris \cite[Thm.\ 6.2]{garrett-harris} on symmetric cube $L$-functions. In fact, using their paper, we derive Cor.\ \ref{cor:period-relns-garrett-harris}, which compares our top-degree periods with the Petersson inner product of ${\bf f}$. Further, assuming Langlands Functoriality, we get a theorem for all odd symmetric power $L$-functions of ${\bf f}$, see Prop.\ \ref{prop:highersym}. This should be compared with Raghuram \cite[Thm.\ 1.3]{raghuram-imrn}.

Next, we consider Rankin--Selberg $L$-functions for $\GL_3 \times \GL_2$ attached to a pair $(\pi,\tau)$ of unitary cuspidal automorphic representations. This $L$-function is the standard $L$-function of the Kim--Shahidi transfer $\Pi:=\pi \boxtimes \tau$, which is a representation of $\GL_6(\A)$. We show that if $\pi$ is essentially self-dual, $\tau$ is not dihedral and $\pi$ is not a twist of the Gelbart--Jacquet transfer of $\tau$, then $\pi \boxtimes \tau$ is a cuspidal automorphic representation; if $\pi_\infty$ and $\tau_\infty$ are also cohomological and ``sufficiently disjoint'' (a mild condition on their Langlands parameters), then we prove that $\Pi$ is cohomological, too, and finally we verify that $\Pi$ admits a Shalika model, see Prop.\ \ref{prop:gl2gl3}. Hence, we get a new algebraicity theorem for the critical values of $L(s,\pi\times\tau)$, cf.\  Cor.\ \ref{cor:gl2gl3}. We compare this result with Raghuram~\cite[Thm.\ 1.1]{raghuram-imrn} in Cor.\ \ref{cor:gl2gl3-period-relations}, which yields a comparison of our top-degree (Shalika--)periods and the bottom-degree (Whittaker--)periods used in the aforementioned reference. In Sect.\ \ref{sect:glngln-1}, we indicate how these algebraicity theorems may be extended to the case of Rankin--Selberg $L$-functions of $\GL_n\times\GL_{n-1}$ assuming Langlands Functoriality.

As another class of examples, let $\Phi$ be a non-zero genus two cuspidal Siegel modular eigenform of full level. By a recent work of Pitale--Saha--Schmidt \cite{pss}, one knows the existence of the Langlands transfer of $\Phi$ to a cuspidal automorphic representation $\Pi(\Phi)$ of $\GL_4(\A_\Q)$. We check that our Thm.\,\ref{thm:central-value} applies to $\Pi(\Phi)$, giving a new theorem on the critical values of the degree four $L$-function of Siegel modular cusp forms, cf.\  Cor.\ \ref{cor:siegelmod}. This should be compared with Harris \cite[Thm.\ 3.5.5]{harris-occult}.

In Sect.\ \ref{sec:complementa} we also comment on the compatibility of our theorem with Deligne's conjecture on the critical values of motivic $L$-functions. As it stands, it seems impossible to compare our periods with Deligne's motivic periods directly. However, Blasius and Panchishkin have independently computed the behaviour of Deligne's periods upon twisting the motive by characters, and based on this they predict how critical values of automorphic $L$-functions change upon twisting. Our theorem is compatible with their predictions; see Cor.\ \ref{cor:deligne}. We also note that our result is compatible with Gross's conjecture on the order of vanishing of motivic $L$-functions at critical points; see
Cor.\ \ref{cor:gross}. \\

\noindent{\it Acknowledgements: } \small We are grateful to G\"unter Harder for valuable discussions on automorphic cohomology. We thank Ameya Pitale, Abhishek Saha and Ralf Schmidt for showing us their preprint \cite{pss}; Binyong Sun for sending us his preprint \cite{sun}; and Wee Teck Gan for writing the appendix with his proof of arithmeticity of Shalika models. A.R. thanks Paul Garrett for a lot of helpful discussions. We are grateful to the referee for a thorough reading and for several insightful comments. Finally, both H.G. and A.R. thank the Max Planck Institut f\"ur Mathematik, Bonn, for its hospitality; this work was conceived and carried through when both were visitors at the Max Planck Institut.
\normalsize

\section{Notation and conventions}

\subsection{}
Let $F$ be a totally real number field of degree $d=[F:\Q]$ with ring of integers $\O$. For any place $v$ we write $F_v$ for the topological completion of $F$ at $v$. Let $S_\infty$ be the set of archimedean places of $F$. If $v \notin S_{\infty}$, we let $\O_v$ be the local ring of integers of $F_v$ with unique maximal ideal $\wp_v$. Moreover, $\A$ denotes the ring of ad\`eles of $F$ and $\A_f$ its finite part. We use the local and global normalized absolute values and denote each of them by $|\cdot|$. Further, $\mathfrak{D}_F$ stands for the absolute different of $F$, i.e., $\mathfrak{D}_F^{-1} = \{x \in F : Tr_{F/\Q}(x \O) \subset \Z\}$.

\subsection{}\label{sect:GH}
Throughout this paper we let $G:=\GL_{2n}/F$, $n\geq 1$, the split general linear group over $F$. Let $H:=\GL_n\times \GL_n/F$ which is viewed as a subgroup of $G$ consisting of block diagonal matrices. The center of $G$ is denoted $Z_G/F$. If $A$ is any abelian $F$-algebra, $G(A)$ (resp., $H(A)$) stands for the $A$-rational points of $G$ (resp., $H$). In accordance with the usual conventions, we write $G_\infty=\prod_{v\in S_{\infty}} G(F_v)=\GL_{2n}(\R)^d$ (resp., $H_\infty=\prod_{v\in S_{\infty}} H(F_v)=(\GL_{n}(\R)\times \GL_n(\R))^d$). Lie algebras of real Lie groups are denoted by the same letter but in lower case gothics; for example, $\g_\infty=Lie(G_\infty)$, $\g_v=Lie(G(F_v))$, $v\in S_{\infty}$.

\subsection{}\label{sect:highweights}
We fix once and for all a maximal $F$-split torus of $G$, the group of diagonal matrices in $G$. Fixing positivity on the corresponding set of roots in the usual way gives us that the set of tuples $\mu=(\mu_v)_{v\in S_{\infty}}$, $\mu_v=(\mu_{v,1},...,\mu_{v,2n})$ with $\mu_{v,1}\geq ...\geq\mu_{v,2n}$ and $\mu_{v,i}\in\Z$, for all $v\in S_{\infty}$ and $1\leq i\leq 2n$, can be identified with the set of equivalence classes of irreducible finite-dimensional algebraic representations $E_\mu$ of $G_\infty$ (on complex vector spaces) via the highest weight correspondence. It is clear that any such representation $E_\mu$ factors as $E_\mu=\bigotimes_{v\in S_{\infty}} E_{\mu_v}$, where $E_{\mu_v}$ is the irreducible representation of $G(F_v)=G(\R)$ of highest weight $\mu_v$. The representation $E_\mu$ is called {\it essentially self-dual} if all its local factors $E_{\mu_{v}}$ are, i.e., if for all $v\in V_\infty$ there is a $w_v\in\Z$ such that
$$\mu_{v,i}+\mu_{v,2n-i+1}=w_v, \quad\quad 1\leq i\leq n.$$
This is equivalent to saying that $E_{\mu_v}\cong E^{\sf v}_{\mu_v}\otimes\det^{w_v}$. It is called {\it self-dual} if $w_v=0$, i.e., $E_{\mu_v}\cong E^{\sf v}_{\mu_v}$.

\subsection{}\label{sect:gKcoh}
At an archimedean place $v\in S_{\infty}$ we let $K_v$ be the product of a maximal compact subgroup of the real Lie group $G(F_v)=\GL_{2n}(\R)$ and $Z_G(F_v)=Z_G(\R)$. We make the following explicit choice:
$$K_v= \Orth(2n)\R^\times,$$
and set $K_\infty=\prod_{v\in S_\infty} K_v$. By $K^\circ_\infty$ we mean the topological connected component of the identity within $K_\infty$. Hence, locally
$$K^\circ_v= \SO(2n)\R_+.$$
All Lie-group representations $\Pi_\infty=\bigotimes_{v\in S_{\infty}} \Pi_v$ of $G_\infty$ appearing in this paper define a $(\g_\infty,K^\circ_\infty)$-module and
for each $v \in S_{\infty}$ a $(\g_v,K^\circ_v)$-module, which we shall all denote by the same letter as the original Lie group representation.
In particular, this applies to a highest weight representation  $E_\mu=\bigotimes_{v\in S_{\infty}} E_{\mu_v}$. If furthermore $\Pi_\infty=\bigotimes_{v\in S_{\infty}} \Pi_v$ is any $(\g_\infty,K^\circ_\infty)$-module, then we denote by
$H^q(\g_\infty,K^\circ_\infty,\Pi_\infty)$
its space of $(\g_\infty,K^\circ_\infty)$-cohomology in degree $q$, cf.\ Borel--Wallach \cite{bowa}, I.5. A module $\Pi_\infty$ is called {\it cohomological}, if there is a highest weight representation $E_\mu$ as in Sect.\ \ref{sect:highweights}, such that $H^q(\g_\infty,K^\circ_\infty,\Pi_\infty\otimes E^{\sf v}_\mu)\neq 0$ for some degree $q$. It is a basic fact that these cohomology groups obey the K\"unneth-rule, i.e.,
$$H^q(\g_\infty,K^\circ_\infty,\Pi_\infty\otimes E^{\sf v}_\mu)\cong\bigoplus_{\sum_v q_v=q}\bigotimes_{v\in S_{\infty}} H^{q_v}(\g_v,K^\circ_v,\Pi_v\otimes E^{\sf v}_{\mu_v}).$$
Hence, $\Pi_\infty$ is cohomological, if and only if all its local components $\Pi_v$ are, i.e., they have non-vanishing $(\g_v,K^\circ_v)$-cohomology with respect to some local highest weight representation $E^{\sf v}_{\mu_v}$.

\subsection{}\label{sect:sigmatwist}
For $\sigma\in \textrm{Aut}(\C)$, let us define the {\it $\sigma$-twist} ${}^\sigma\!\nu$ of a representation $\nu$ of $G(\A_f)$ (resp., $G(F_v)$, $v\notin S_\infty$) on a complex vector space $W$ as in Waldspurger \cite{waldsp}, I.1: If $W'$ is a $\C$-vector space with a $\sigma$-linear isomorphism $t':W\ra W'$ then we set
$${}^\sigma\!\nu:= t'\circ\nu\circ t'^{-1}.$$
This definition is independent of $t'$ and $W'$ up to equivalence of representations. If $\nu_\infty=\bigotimes_{v\in S_{\infty}}\nu_{v}$ is a representation of $G_\infty$, we let $${}^{\sigma}\!\nu_\infty:=\bigotimes_{v\in S_{\infty}}\nu_{\sigma^{-1} v},$$
interpreting $v\in S_{\infty}$ as an embedding of fields $v:F\hookrightarrow\R$. For $\sigma\in\textrm{Aut}(\C)$, this defines the $\sigma$-twist on a global representation $\nu=\nu_\infty\otimes\nu_f$ of $G(\A)$ be setting
$${}^\sigma\!\nu:={}^{\sigma}\!\nu_\infty\otimes {}^\sigma\!\nu_f.$$
Recall also the definition of the rationality field of a representation from \cite{waldsp}, I.1. If $\nu$ is any of the representations considered above, then let $\mathfrak S(\nu)$ be the group of all automorphisms $\sigma\in \textrm{Aut}(\C)$ such that ${}^\sigma\!\nu\cong\nu $. Then the {\it rationality field} $\Q(\nu)$ is defined as
$$\Q(\nu):=\{z\in\C| \sigma(z)=z \textrm{ for all } \sigma\in\mathfrak S(\nu)\}.$$
As another ingredient we recall that a representation $\nu$ on a $\C$-vector space $W$ is said to be {\it defined over a subfield $\F\subset\C$}, if there is a $\F$-vector subspace $W_\F\subset W$, stable under the given action, and such that the canonical map $W_\F\otimes_\F\C\rightarrow W$ is an isomorphism. In this case, we also say that $(\nu,W)$ has an {\it $\F$-structure}.

\subsection{}\label{sect:cusprep}
We let $\Pi$ be an irreducible cuspidal automorphic representation of $G(\A)$ with central character $\omega_\Pi$, cf.\  \cite{bojac} 4.4--4.6. For convenience we will not distinguish between a cuspidal automorphic representation, its smooth automorphic LF-space completion and its (non-smooth) Hilbert space completion in the $L^2$-spectrum. It is of the form
$$\Pi=\tilde\Pi\otimes |\!\det\!|^t,\quad t\in\C,$$
with $\tilde\Pi$ being a unitary cuspidal automorphic representation of $G(\A)$. It decomposes abstractly into a restricted tensor product of local representations $\Pi=\otimes'_v \Pi_v$ of irreducible admissible representations $\Pi_v=\tilde\Pi_v\otimes|\!\det_v\!|^t$ of $G(F_v)$. Collecting the local representations at the archimedean (resp., non-archimedean) places, we obtain an irreducible admissible representation $\Pi_\infty=\otimes_{v\in S_{\infty}}\Pi_v$ of the real Lie group $G_\infty$ (resp., an irreducible admissible representation $\Pi_f=\otimes'_{v \notin S_{\infty}}\Pi_v$ of the totally disconnected, locally compact group $G(\A_f)$). The finite set of places where $\Pi_f$ ramifies is denoted $S_{\Pi_f}$ and we let $S_{\Pi}=S_{\infty}\cup S_{\Pi_f}$. We assume furthermore that there is an id\`ele class character $\eta: F^\times\backslash\A^\times\ra\C^\times$ such that
$$\eta^n=\omega_\Pi.$$
It is hence of the form $\eta=\tilde\eta\otimes |\cdot|^{2t}$, $\tilde\eta$ being unitary. We will write $S_\eta$ for the set of places where $\eta$ ramifies and define $S_{\Pi,\eta}:=S_\Pi\cup S_\eta$. Further, let
$$
\Q(\Pi,\eta):=\Q(\Pi)\Q(\eta),
$$
the compositum of the rationality field $\Q(\Pi)$ of $\Pi$ and the rationality field $\Q(\eta)$ of $\eta$.

\subsection{}\label{sect:psi}
We fix, once and for all, an additive character $\psi_{\Q}$ of ${\mathbb Q} \backslash {\mathbb A}$, as in Tate's thesis, namely,
$\psi_{\Q}(x) = e^{2\pi i \lambda(x)}$ with the $\lambda$ as defined in \cite[Sect.\ 2.2]{tate-thesis}. In particular,
$\lambda = \sum_{p \leq \infty} \lambda_p$, where $\lambda_{\infty}(t) = -t$ for any $t \in \R$ and $\lambda_p(x_p)$ for any $x_p \in \Q_p$
is the rational number with only $p$-power denominator such that $x_p - \lambda_p(x_p) \in \Z_p$. If we write
$\psi_{\Q} =  \psi_{\R} \otimes \otimes_p \psi_{\Q_p}$,
then $\psi_{\R}(t) = e^{-2\pi it}$ and $\psi_{\Q_p}$ is trivial on $\Z_p$ and nontrivial on $p^{-1}\Z_p$.
Next, we define a character $\psi$ of $F\backslash \A$ by composing $\psi_{\Q}$ with the
trace map from $F$ to $\Q$: $\psi = \psi_{\Q} \circ Tr_{F/\Q}$. If $\psi = \otimes_v \psi_v$, then the local characters are determined analogously.
In particular, if $\mathfrak{D}_F = \prod_{\wp} \wp^{r_{\wp}}$ with the product running over
all prime ideals $\wp$, then the conductor of the local character $\psi_{v}$ is $\wp_v^{-r_{\wp}}$, i.e., $\psi_{v}$ is trivial on $\wp_v^{-r_{\wp}}$ and nontrivial on $\wp_v^{-r_{\wp}-1}.$ Let $S_\psi=\{\wp:\wp\nmid\mathfrak D_F\}$ the set of non-archimedean places of $F$, where $\psi$ ramifies.
Note that $\psi_f$ takes values in the subgroup  $\bfgreek{mu}_{\infty}$ of $\C^\times$ consisting of all roots of unity. This comment will be relevant when we deal with rational structures on Shalika models, cf.\ Sect.\ \ref{sect:AutCact}.

\subsection{}\label{sect:measures}
For $v \notin S_\infty$, let $dh_v=d(h_{1,v},h_{2,v})=dg_{1,v}\times dg_{2,v}$ be the unique local Haar measure on $H(F_v)$ for which the volume of each copy of $\GL_n(\O_v)$ equals $1$. Define $dg_{i,f}:= \prod_{v\notin S_\infty} dg_{i,v}$, $i=1,2$, let $dh_f=d(h_{1,f},h_{2,f})=dg_{1,f}\times dg_{2,f}$ be the corresponding measure on $H(\A_f)$. This choice implies that certain volume terms that will appear will be rational numbers. Observe that the volume of $Z_G(F)\backslash Z_G(\A)/\R_+^d$ is already determined by our choice of $dh_f$ made above. Just for this subsection, let $c$ be this volume. Now, at an archimedean place $v\in S_\infty$, let $dg'_{1,v}$ and $dg_{2,v}$ be the local Haar measures that give the respective copy of $SO(n)$ volume 1 and define $dg_{1,\infty}:=c\cdot\prod_{v\in S_\infty}dg'_{1,v}$ and $dg_{2,\infty}:=\prod_{v\in S_\infty}dg_{2,v}$.
This defines global invariant measures $dg_i:=dg_{i,\infty}\cdot dg_{i,f}$, $i=1,2$, on each copy of $\GL_n(\A)$, well as a global invariant measure $dh = d(h_1,h_2)$ on $H(\A)$ by $dh:=dg_1\times dg_2$.

\section{Shalika models and rational structures}
\subsection{Global Shalika models}\label{sect:globalShalikamodels}
We will now define the notion of a {\it Shalika model} of a cuspidal automorphic representation $\Pi$ as in Sect.\ \ref{sect:cusprep}. Let
$$ \S:=\left\{
s =  \left( \!\!\begin{array}{ccc}
h &  0 \\
0 &  h
\end{array}\!\!\right)
\left( \!\!\begin{array}{ccc}
1 &  X\\
0 &  1
\end{array}\!\!\right) \Bigg|
\begin{array}{l}
h\in \GL_n\\
X\in {\rm M}_n
\end{array}\right\}\subset G.$$
It is traditional to call $\S$ the Shalika subgroup of $G$. The characters $\eta$ and $\psi$ can be extended to a character of $\S(\A)$:
$$s=\left(\!\! \begin{array}{ccc}
h &  0\\
0 &  h
\end{array}\!\!\right)\left(\!\! \begin{array}{ccc}
1 &  X\\
0 &  1
\end{array}\!\!\right) \mapsto (\eta \otimes \psi)(s) := \eta(\det(h))\psi(Tr(X)).$$
We will also denote $\eta(s) = \eta(\det(h))$ and $\psi(s) = \psi(Tr(X))$. For a cusp form $\varphi\in\Pi$ and $g\in G(\A)$ consider the integral
$$
\S^\eta_\psi(\varphi)(g):=\int_{Z_{G}(\A)\S(F)\backslash \S(\A)} (\Pi(g)\cdot\varphi)(s)\eta^{-1}(s)\psi^{-1}(s) ds.
$$
It is well-defined by the cuspidality of the function $\varphi$, cf.\  Jacquet--Shalika \cite{jacshal} 8.1, and hence yields a function
$\mathcal{S}^\eta_\psi(\varphi): G(\A)\ra\C$. It satisfies the transformation law
$$
\S^\eta_\psi(\varphi)(sg) = \eta(s)\cdot\psi(s)\cdot \S^\eta_\psi(\varphi)(g),
$$
for all $g\in G(\A)$ and $s\in \S(\A)$ as above. In particular, we obtain an intertwining of $G(\A)$-modules
$$
\Pi\ra \textrm{Ind}_{\S(\A)}^{G(\A)}[\eta\otimes\psi]
$$
given by $\varphi\mapsto \S^\eta_\psi(\varphi),$ which by the irreducibility of $\Pi$ is either trivial or injective. The following theorem, due to Jacquet--Shalika, gives a necessary and sufficient condition for $\S^\eta_\psi$ being non-zero.

\begin{thm}[Jacquet--Shalika, \cite{jacshal} Thm.\,1, p.\,213]\label{thm:JS}
The following assertions are equivalent:
\begin{enumerate}
\item[(i)] There is a $\varphi\in\Pi$ and $g\in G(\A)$ such that $\S^\eta_\psi(\varphi)(g)\neq 0$.
\item[(ii)] $\S^\eta_\psi$ defines an injection of $G(\A)$-modules
$$
\Pi\hookrightarrow \textrm{\emph{Ind}}_{\S(\A)}^{G(\A)}[\eta\otimes\psi].
$$
\item[(iii)] Let $S$ be any finite set of places containing $S_{\Pi,\eta}$.
The twisted partial exterior square $L$-function
$$
L^S(s,\Pi,\wedge^2\otimes\eta^{-1}):=\prod_{v\notin S} L(s,\Pi_v,\wedge^2\otimes\eta^{-1}_v)
$$
has a pole at $s=1$.
\end{enumerate}
\end{thm}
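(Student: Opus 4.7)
The plan is to separate the ``easy'' equivalence (i) $\Leftrightarrow$ (ii) from the substantive Eulerian input (iii). For the former, the map $\varphi \mapsto \S^\eta_\psi(\varphi)$ is $G(\A)$-equivariant by construction, so its kernel is a $G(\A)$-submodule of $\Pi$. By irreducibility of $\Pi$ the kernel is either all of $\Pi$ (giving the zero map, contradicting (i)) or trivial (giving injectivity); hence (i) $\Rightarrow$ (ii), while (ii) $\Rightarrow$ (i) is immediate.

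For (i) $\Rightarrow$ (iii), I would use the Jacquet--Shalika global integral representation for the twisted exterior square $L$-function, built from a cusp form $\varphi \in \Pi$, a Schwartz--Bruhat function $\Phi$ on $\A^{2n}$, and a degenerate Eisenstein series $E(s, \cdot, \Phi)$ on $G(\A)$ attached to the Siegel-type parabolic. The integral
\[
Z(s, \varphi, \Phi) \ = \ \int_{Z_G(\A) G(F)\backslash G(\A)} \varphi(g) \, E(s, g, \Phi) \, \eta^{-1}(\det g) \, dg
\]
unfolds, under hypothesis (i), to an Eulerian product in which the unramified local factor equals $L(s, \Pi_v, \wedge^2 \otimes \eta_v^{-1})$ by the local Jacquet--Shalika computation. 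Since $E(s, g, \Phi)$ has a simple pole at $s = 1$ whose residue is (up to a non-zero constant) $\int \Phi$, the global integral, and hence the partial $L$-function, inherits this pole.

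For (iii) $\Rightarrow$ (i) -- the deeper direction -- I would analyse the same integral $Z(s, \varphi, \Phi)$ \emph{before} assuming (i), by expanding the Eisenstein series into an orbit sum for the action of $G(F)$ on $F^{2n}$. The orbit-by-orbit analysis reveals that the only orbit whose contribution can produce a pole at $s = 1$ is the open orbit, and that contribution is precisely a global Shalika period of $\varphi$ (after a Fourier expansion along the unipotent radical of $\S$). If $L^S(s, \Pi, \wedge^2 \otimes \eta^{-1})$ has a pole at $s = 1$, the global integral must have a pole for some choice of $\varphi$ and $\Phi$; this pole can only come from the open orbit, and therefore the Shalika functional does not vanish identically on $\Pi$.

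The main obstacle is the rigour of the orbit analysis in the (iii) $\Rightarrow$ (i) step: one must justify exchange of summation and integration in the region of absolute convergence, show that the closed orbits contribute functions holomorphic at $s = 1$, and meromorphically continue everything past $s = 1$. This is precisely the content of Theorem~1 of Jacquet--Shalika \cite{jacshal}, and a complete argument would follow their paper essentially verbatim; an alternative route via the Langlands--Shahidi/Asgari--Shahidi transfer from $\GSpin_{2n+1}$ would reduce the problem to known poles of intertwining operators but would not yield a meaningfully shorter proof.
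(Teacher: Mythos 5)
Your outline is a correct sketch of the Jacquet--Shalika argument, which is precisely what the paper defers to: the paper's own ``proof'' of Thm.\ \ref{thm:JS} is nothing but a citation of \cite{jacshal} together with the one-line remark that the extension to the non-unitary case is easy. That remark is the single thing you have not addressed. The $\Pi$ in the statement is a cuspidal automorphic representation as in Sect.\ \ref{sect:cusprep}, which is \emph{not} assumed unitary; it is $\Pi=\tilde\Pi\otimes|\!\det\!|^{t}$ with $\tilde\Pi$ unitary, and correspondingly $\eta=\tilde\eta\otimes|\cdot|^{2t}$ with $\tilde\eta$ unitary. Theorem~1 of \cite{jacshal}, however, is stated and proved for \emph{unitary} cuspidal representations. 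Since you say yourself that a complete argument would follow \cite{jacshal} essentially verbatim, you should close this loop: writing $\varphi=\tilde\varphi\cdot|\!\det\!|^{t}$ with $\tilde\varphi\in\tilde\Pi$, a short computation using $|\!\det(s)\!|=|\!\det(h)\!|^{2}$ for $s\in\S(\A)$ gives
$$
\S^{\eta}_{\psi}(\varphi)(g)=|\!\det(g)\!|^{t}\,\S^{\tilde\eta}_{\psi}(\tilde\varphi)(g),
$$
so conditions (i) and (ii) for $(\Pi,\eta)$ are equivalent to those for $(\tilde\Pi,\tilde\eta)$; and $L^S(s,\Pi,\wedge^{2}\otimes\eta^{-1})=L^S(s,\tilde\Pi,\wedge^{2}\otimes\tilde\eta^{-1})$ (the twists by $|\cdot|^{2t}$ cancel), so (iii) likewise transfers. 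Apart from this bookkeeping, which the paper dispatches in a clause, your structure matches the source.
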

\begin{proof}
This is proved in \cite{jacshal} for unitary representations and its extension to the non-unitary case is easy.
\end{proof}

\begin{defn}
\label{defn:shalika-model}
If $\Pi$ satisfies any one, and hence all, of the equivalent conditions of Thm.\ \ref{thm:JS}, then we say that $\Pi$ {\it has an $(\eta,\psi)$-Shalika model}, and
we call the isomorphic image $\S^\eta_\psi(\Pi)$ of $\Pi$ under $\S^\eta_\psi$ a {\it global $(\eta,\psi)$-Shalika model} of $\Pi$. We will sometimes suppress the choice of the characters $\eta$ and $\psi$ and the fact that we deal with a global representation (i.e., a representation of $G(\A)$) and simply say that
$\Pi$ has a Shalika model.
\end{defn}

\begin{cor}\label{cor:n=1}
Let $\Pi$ be a  cuspidal automorphic representation of $\GL_2(\A)$ with central character $\omega_\Pi$. Then $\Pi$ has a global $(\omega_\Pi,\psi)$-Shalika model.
\end{cor}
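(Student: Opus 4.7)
The plan is to verify condition (iii) of Theorem~\ref{thm:JS} for the case $n=1$, $\eta=\omega_\Pi$. Since the exterior square representation of the standard representation of $\GL_2$ is nothing but the determinant, at every unramified place $v$ the local $L$-factor is
$$
L(s,\Pi_v,\wedge^2\otimes\omega_\Pi^{-1}) \;=\; L(s,\omega_{\Pi_v}\cdot\omega_{\Pi_v}^{-1}) \;=\; L(s,\triv_v).
$$
Consequently, for any finite set of places $S\supset S_{\Pi,\omega_\Pi}$ the partial twisted exterior square $L$-function equals the partial Dedekind zeta function $\zeta_F^S(s)$, which has a simple pole at $s=1$. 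By the Jacquet--Shalika theorem (Thm.~\ref{thm:JS}, (iii)$\Rightarrow$(ii)) this already forces $\Pi$ to embed into $\mathrm{Ind}_{\S(\A)}^{G(\A)}[\omega_\Pi\otimes\psi]$, yielding the desired $(\omega_\Pi,\psi)$-Shalika model.

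For the reader's convenience I would also describe the equivalent direct verification, which makes the corollary especially transparent. For $n=1$ the Shalika subgroup $\S$ is just the product $Z_G\cdot N$, where $N$ is the upper unipotent radical of the standard Borel in $\GL_2$. Using $\omega_\Pi|_{Z_G(\A)}=\omega_\Pi$, the quotient $Z_G(\A)\S(F)\backslash\S(\A)$ is identified with $F\backslash\A$, and
$$
\S^{\omega_\Pi}_\psi(\varphi)(g)\;=\;\int_{F\backslash\A}(\Pi(g)\cdot\varphi)\!\left(\!\begin{pmatrix}1 & X\\ 0 & 1\end{pmatrix}\!\right)\psi^{-1}(X)\,dX
$$
is precisely the $\psi^{-1}$-Whittaker coefficient of the cusp form $\Pi(g)\varphi$. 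Since every cuspidal automorphic representation of $\GL_2(\A)$ is globally generic (the Fourier expansion of a cusp form recovers the form from its Whittaker coefficient), this integral is not identically zero.

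No real obstacle arises here; the main thing to notice is the coincidence, in the rank-one case, of the Shalika and Whittaker models, and the fact that the twist of $\wedge^2$ by $\omega_\Pi^{-1}$ reduces to the trivial character, so that the pole at $s=1$ is free of charge from $\zeta_F^S(s)$.
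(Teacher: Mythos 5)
Your proposal is correct, and in fact contains two independent arguments. Your ``direct verification'' is exactly the paper's proof: for $n=1$ the Shalika subgroup $\S$ reduces to $Z_G N$, the character $\eta$ is absorbed by the central character when one quotients by $Z_G(\A)$, and the Shalika integral becomes the $\psi$-Whittaker coefficient of the cusp form, which is nonzero because every cuspidal automorphic representation of $\GL_2(\A)$ is globally generic. This is precisely the one-line observation ``for $\GL_2$ the Whittaker and Shalika models coincide'' that the paper records.

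Your primary route, via criterion (iii) of Thm.~\ref{thm:JS}, is a genuinely different (and also valid) argument: for a $2$-dimensional local parameter one has $\wedge^2\phi_v = \det\phi_v = \omega_{\Pi_v}$, so the $\omega_\Pi^{-1}$-twisted partial exterior square $L$-function collapses to $\zeta_F^S(s)$, which has a pole at $s=1$. This buys you nothing extra here beyond the direct argument, and is heavier machinery for the same conclusion, but it does illustrate, in the simplest nontrivial case, the same $L$-function criterion that the paper must rely on for $n\geq 2$. Either proof is acceptable.
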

\begin{proof}
For $\GL_2$, the Whittaker model and the Shalika model of a representation $\Pi$ coincide.
\end{proof}

The following proposition gives another equivalent condition for $\Pi$ to have a global Shalika model, which puts this notion into a broader context within the theory of automorphic forms and will be of particular importance in Wee Teck Gan's appendix. We will use the functorial transfer from GSpin$_{2n+1}$ to $\GL_{2n}$, established for unitary globally generic cuspidal automorphic representations by Asgari--Shahidi in \cite[Thm.\ 1.1]{asgari-shahidi-generic} in its weak form and finally in \cite[Cor.\ 5.15]{asgari-shahidi} at every place. Its extension to the non-unitary case, which we are going to use, is given as follows: Every cuspidal automorphic representation $\pi$ of ${\rm GSpin}_{2n+1}(\A)$ is of the form $\pi\cong\tilde\pi\otimes|\!\det\!|^{t/n}$ for a unitary cuspidal automorphic representation $\tilde\pi$ and some $t\in\C$. Further, $\pi$ is globally generic if and only if $\tilde\pi$ is. Now, if $\tilde\Pi$ is the Asgari--Shahidi transfer of such a $\tilde\pi$, then we let $\pi$ transfer to the cuspidal automorphic representation $\Pi:=\tilde\Pi\otimes |\!\det\!|^t$ of $\GL_{2n}(\A)$. With this set-up in place we obtain

\begin{prop}\label{prop:selfdual}
Let $\Pi$ be a cuspidal automorphic representation of $G(\A)=\GL_{2n}(\A)$ with central character $\omega_\Pi$. Then the following assertions are equivalent:
\begin{enumerate}
\item[(i)] $\Pi$ has a global $(\eta,\psi)$-Shalika model for some id\`ele class character $\eta$ satisfying $\eta^n=\omega_\Pi$.
\item[(ii)] $\Pi$ is the transfer of a globally generic cuspidal automorphic representation $\pi$ of ${\rm GSpin}_{2n+1}(\A)$.
\end{enumerate}
In particular, if any of the above equivalent conditions is satisfied, then $\Pi$ is essentially self-dual. The character $\eta$ may be taken to be the central character
$\omega_\pi$ of $\pi$.
\end{prop}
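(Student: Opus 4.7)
The plan is to rewrite both (i) and (ii) as a single pole condition for the partial twisted exterior-square $L$-function $L^S(s,\Pi,\wedge^2\otimes\eta^{-1})$, and then to read the equivalence off Thm.~\ref{thm:JS} combined with the Asgari--Shahidi characterization of the image of the functorial transfer from $\GSpin_{2n+1}$ to $\GL_{2n}$, extended to the non-unitary setting as in the paragraph preceding the proposition. Throughout I would reduce to the unitary case via the factorizations $\Pi=\tilde\Pi\otimes|\!\det\!|^t$ and $\pi=\tilde\pi\otimes|\!\det\!|^{t/n}$, so I will not emphasize unitarity below.

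For (ii)~$\Rightarrow$~(i), I would unpack the transfer at the unramified places. The Langlands dual of $\GSpin_{2n+1}$ is ${\rm GSp}_{2n}(\C)$, and the transfer is induced by the defining embedding ${\rm GSp}_{2n}(\C)\hookrightarrow\GL_{2n}(\C)$. Under this embedding, the exterior square $\wedge^2$ of the standard $2n$-dimensional representation decomposes as the similitude character $\nu$ plus an irreducible ``primitive'' complement $W$. Consequently, at every $v \notin S_{\Pi,\pi}$,
\begin{equation*}
L(s,\Pi_v,\wedge^2\otimes\omega_{\pi,v}^{-1}) \;=\; \zeta_{F_v}(s)\cdot L(s,\pi_v, W\otimes\nu^{-1}),
\end{equation*}
and after taking the product over unramified $v$ the partial Dedekind zeta factor produces a simple pole at $s=1$. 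Thm.~\ref{thm:JS} then provides a $(\omega_\pi,\psi)$-Shalika model on $\Pi$. The compatibility $\omega_\Pi = \omega_\pi^n$ needed to set $\eta = \omega_\pi$ is automatic, since the determinant character of the standard representation restricted to ${\rm GSp}_{2n}(\C)$ equals $\nu^n$.

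For (i)~$\Rightarrow$~(ii), Thm.~\ref{thm:JS} translates the hypothesis into a pole at $s=1$ of $L^S(s,\Pi,\wedge^2\otimes\eta^{-1})$. The Rankin--Selberg factorization
\begin{equation*}
L^S(s,\Pi\times(\Pi\otimes\eta^{-1})) \;=\; L^S(s,\Pi,\mathrm{Sym}^2\otimes\eta^{-1})\cdot L^S(s,\Pi,\wedge^2\otimes\eta^{-1}),
\end{equation*}
combined with the Jacquet--Shalika criterion for poles of Rankin--Selberg $L$-functions, forces $\Pi\otimes\eta^{-1}\cong\Pi^\vee$, which gives the essential self-duality asserted at the end of the proposition. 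Essential self-duality together with the pole of $L^S(s,\Pi,\wedge^2\otimes\eta^{-1})$ are exactly the conditions that the Asgari--Shahidi characterization \cite{asgari-shahidi} of the image of functoriality uses to cut out the cuspidal representations in the image of the transfer from globally generic cuspidal representations of $\GSpin_{2n+1}(\A)$; the source $\pi$ can be taken so that $\omega_\pi=\eta$, completing the proof.

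The main obstacle is the (i)~$\Rightarrow$~(ii) direction: its substance lies in the image-of-functoriality theorem and its non-unitary version, which rests on the automorphic descent machinery of Asgari--Shahidi (and, in another guise, of Hundley--Sayag) and is much deeper than the other direction. The direction (ii)~$\Rightarrow$~(i) is essentially a formal consequence of the ${\rm GSp}_{2n}(\C)$-branching for $\wedge^2$ and Thm.~\ref{thm:JS}; the only bookkeeping issue is the passage between central characters of $\pi$ and $\Pi$, which is clean once one has identified the similitude character on the dual side.
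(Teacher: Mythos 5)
Your overall structure matches the paper's: both proofs reduce (i) to the pole of the partial twisted exterior-square $L$-function at $s=1$ via Thm.~\ref{thm:JS}, reduce to the unitary case by the shift $\Pi=\tilde\Pi\otimes|\!\det\!|^t$, and then invoke the Asgari--Shahidi transfer and the Hundley--Sayag descent to convert the pole condition into membership in the image of functoriality from $\GSpin_{2n+1}$.

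Where you diverge, however, there is a gap. The paper simply cites Asgari--Shahidi, Thm.~5.10(b), for the direction (ii)~$\Rightarrow$~(i). You instead rederive it via the dual-group decomposition $\wedge^2(\mathrm{std})|_{{\rm GSp}_{2n}(\C)}=\nu\oplus W$, giving the factorization of partial $L$-functions
$L^S(s,\Pi,\wedge^2\otimes\omega_\pi^{-1})=\zeta_F^S(s)\cdot L^S(s,\pi,W\otimes\nu^{-1})$.
That identity is correct at unramified places, and the observation $\det|_{{\rm GSp}_{2n}}=\nu^n$ giving $\omega_\Pi=\omega_\pi^n$ is also correct. But the partial Dedekind zeta having a simple pole at $s=1$ does \emph{not} by itself give a pole of the product: you must rule out that $L^S(s,\pi,W\otimes\nu^{-1})$ has a zero at $s=1$ cancelling it. This non-vanishing is a genuine analytic input (it follows from Shahidi's non-vanishing theory for Langlands--Shahidi $L$-functions, and is in effect part of what Asgari--Shahidi prove in establishing Thm.~5.10(b)); as written your argument does not address it.

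Two smaller points on attribution. For (i)~$\Rightarrow$~(ii), the key ingredient is the descent result of Hundley--Sayag (Thm.~A), which the paper cites as the primary reference; you mention it only parenthetically, attributing the image characterization chiefly to Asgari--Shahidi, who establish the forward transfer rather than the converse. Also, in your (i)~$\Rightarrow$~(ii) sketch, essential self-duality is a \emph{consequence} of the pole of the exterior-square $L$-function (and, as you note, of the non-vanishing of the symmetric-square factor at $s=1$, which should be flagged as an input as well); it is not an independent hypothesis that the image characterization requires in addition to the pole. The paper sidesteps all of this by deriving essential self-duality directly from Asgari--Shahidi, Cor.~5.15, once (ii) is known.
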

\begin{proof}
Let $\Pi$ be a cuspidal automorphic representation of $G(\A)$ with central character $\omega_\Pi$.
By Thm.\ \ref{thm:JS}, $\Pi$ has an $(\eta,\psi)$-Shalika model for some id\`ele class character $\eta$ satisfying $\eta^n=\omega_\Pi$, if and only if the partial exterior square $L$-function $L^S(s,\Pi,\wedge^2\otimes\eta^{-1})$ has a pole at $s=1$. (Here $S$ is any finite set of places containing $S_{\Pi,\eta}$.) Furthermore, a functorial transfer of a globally generic, cuspidal automorphic representation of ${\rm GSpin}_{2n+1}(\A)$ is essentially self-dual by \cite{asgari-shahidi}, Cor.\ 5.15. Observing that $L^S(s,\Pi,\wedge^2\otimes\eta^{-1})=L^S(s,\tilde\Pi,\wedge^2\otimes\tilde\eta^{-1})$, the equivalence of (i) and (ii) follows now from Hundley--Sayag \cite{hundleysayag}, Thm.\ A, together with Asgari--Shahidi, \cite{asgari-shahidi}, Thm.\ 5.10.(b).
\end{proof}

The following proposition is crucial for much that will follow. It relates the period-integral over $H$ of a cusp form $\varphi$ of $G$ to a certain zeta-integral of the
function $\S^\eta_\psi(\varphi)$ in the Shalika model corresponding to $\varphi$ over one copy of $\GL_n$.

\begin{prop}[Friedberg--Jacquet, \cite{friedjac} Prop.\ 2.3]\label{prop:FJ}
Let $\Pi$ have an $(\eta,\psi)$-Shalika model. For a cusp form $\varphi\in\Pi$, consider the integral
$$
\Psi(s,\varphi):=\int_{Z_G(\A)H(F)\backslash H(\A)} \varphi\left(\!\!\left(\!\! \begin{array}{ccc}
h_1 &  0\\
0 &  h_2
\end{array}\!\!\right)\!\!\right)\Bigg|\frac{\det(h_1)}{\det(h_2)}\Bigg|^{s-1/2}\eta^{-1}(\det(h_2))\, d(h_1,h_2).
$$
Then, $\Psi(s,\varphi)$ converges absolutely for all $s\in\C$. Next, consider the integral
$$
\zeta(s,\varphi):=\int_{\GL_n(\A)} \S^\eta_\psi(\varphi)\left(\!\!\left(\!\! \begin{array}{ccc}
g_1 &  0\\
0 &  1
\end{array}\!\!\right)\!\!\right) |\det(g_1)|^{s-1/2} \, dg_1.
$$
Then, $\zeta(s,\varphi)$ is absolutely convergent for $\Re(s)\gg 0$.
Further, for $\Re(s)\gg 0$, we have
$$
\zeta(s,\varphi)=\Psi(s,\varphi),
$$
which provides an analytic continuation of $\zeta(s,\varphi)$ by setting $\zeta(s,\varphi)=\Psi(s,\varphi)$ for all $s\in\C$.
\end{prop}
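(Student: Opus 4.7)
The plan is to establish the convergence of $\Psi(s,\varphi)$ for all $s\in\C$ directly from rapid decay of cusp forms, and to prove the identity $\zeta(s,\varphi)=\Psi(s,\varphi)$ for $\Re(s)\gg 0$ by showing that both integrals coincide with a common unfolded integral; absolute convergence of $\zeta(s,\varphi)$ on a right half-plane then falls out as a corollary, and its analytic continuation to all $s$ is immediate from the entire character of $\Psi(s,\varphi)$. Convergence of $\Psi(s,\varphi)$ is clean: Siegel sets of $H$ pull back to Siegel sets of $G$ modulo centers, so $\varphi$ restricted to $\iota(H(\A))$ is rapidly decreasing on $Z_G(\A)H(F)\backslash H(\A)$, dominating the only polynomially growing multiplier $|\det h_1/\det h_2|^{s-1/2}\eta^{-1}(\det h_2)$.

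The common unfolded integral will be
\begin{equation*}
I(s,\varphi) \ :=\ \int_{\GL_n(\A)}\int_{Z_n(\A)\GL_n(F)\backslash\GL_n(\A)} c_\psi(h_1,h_2) \, \frac{|\det h_1|^{s-1/2}}{|\det h_2|^{s-1/2}} \, \eta^{-1}(\det h_2) \, dh_1\, dh_2,
\end{equation*}
where $Z_n$ denotes the scalar torus of $\GL_n$ and
\begin{equation*}
c_\psi(h_1, h_2) \ :=\ \int_{{\rm M}_n(F)\backslash {\rm M}_n(\A)} \varphi\begin{pmatrix} h_1 & Yh_2 \\ 0 & h_2 \end{pmatrix} \psi^{-1}(Tr\, Y) \, dY
\end{equation*}
is the Fourier coefficient of $\varphi$ along the unipotent $U$ of matrices of shape $\begin{pmatrix} 1_n & Y \\ 0 & 1_n \end{pmatrix}$ at the character $\psi\circ Tr$. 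To identify $\zeta(s,\varphi)=I(s,\varphi)$, I parametrize $\S=U\cdot\Delta\GL_n$ so that a typical element reads $\begin{pmatrix} h & Yh \\ 0 & h \end{pmatrix}$ with equivariance character $\eta(\det h)\psi(Tr\, Y)$ (the two parametrizations $X$ and $Y=hXh^{-1}$ give the same $\psi$-character by cyclicity of $Tr$), substitute into the definition of $\S^\eta_\psi(\varphi)$, perform the change of variables $g_1\mapsto h^{-1}g_1$ in the outer integral, rewrite the argument of $\varphi$ as $\begin{pmatrix} 1 & Y \\ 0 & 1 \end{pmatrix}\begin{pmatrix} g_1 & 0 \\ 0 & h \end{pmatrix}$, and apply Fubini: the inner $Y$-integral collapses to $c_\psi(g_1, h)$, and collecting the $|\det h|^{-(s-1/2)}$ factor produced by the change of variables gives $I(s,\varphi)$. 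To identify $\Psi(s,\varphi)=I(s,\varphi)$, I Fourier-expand $\varphi\begin{pmatrix} h_1 & Yh_2 \\ 0 & h_2 \end{pmatrix}=\sum_{A\in{\rm M}_n(F)} c_A(h_1,h_2)\psi(Tr(AY))$ along $U(F)\backslash U(\A)$ and specialize to $Y=0$; cuspidality of $\varphi$ annihilates every $c_A$ with ${\rm rank}(A)<n$ (each such stratum is a non-trivial constant term along a unipotent radical), and the surviving full-rank $A$ form the single right $\GL_n(F)$-orbit of $I_n$. The elementary covariance $c_{A\gamma}(h_1, h_2)=c_A(\gamma h_1, h_2)$ rewrites the sum as $\sum_{\gamma\in\GL_n(F)} c_\psi(\gamma h_1, h_2)$, which unfolds against the $h_1$-integration over $\GL_n(F)\backslash \GL_n(\A)$ in the defining integral of $\Psi(s,\varphi)$ to yield $I(s,\varphi)$.

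The main obstacle is supplying the gauge estimates on $c_\psi(h_1, h_2)$ needed to justify Fubini in the $\zeta$-chain and the interchange of Fourier series with the outer integral in the $\Psi$-chain, all in a common half-plane $\Re(s)\gg 0$. The key input is a Shalika-model analog of the classical Jacquet--Shalika gauge estimates for Whittaker functions, which yields rapid decay of $c_\psi(h_1, h_2)$ in the dominant chamber of the split torus of $\GL_n$ and hence absolute convergence of $I(s,\varphi)$; absolute convergence of $\zeta(s,\varphi)$ on the same half-plane follows immediately. Once $\zeta(s,\varphi)=\Psi(s,\varphi)=I(s,\varphi)$ is established on this half-plane, the entire character of $\Psi(s,\varphi)$ from the first step delivers the analytic continuation of $\zeta(s,\varphi)$ to all $s\in\C$.
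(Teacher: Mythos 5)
Note that the paper's own proof of this proposition is a one-line citation to Friedberg--Jacquet \cite{friedjac}, Prop.~2.3, plus the remark that the extension from unitary to general $\Pi$ is easy. Your proposal re-derives the statement from scratch, and the skeleton is precisely the one in \cite{friedjac}: unfold both $\zeta(s,\varphi)$ and $\Psi(s,\varphi)$ to a common integral of the Fourier coefficient $c_\psi(h_1,h_2)$ along the upper-right unipotent, discard the non-full-rank Fourier strata by cuspidality, identify the generic stratum with the single right $\GL_n(F)$-orbit of $I_n$, and use rapid decay of cusp forms for convergence of $\Psi$ together with a decay estimate for $c_\psi$ for absolute convergence of the unfolded integral on a right half-plane. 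So you are not taking a different route from the cited source; you are filling in the route the paper outsources.

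What keeps this a sketch rather than a proof are exactly the two points you flag as the main obstacle. The vanishing of $c_A$ for ${\rm rank}\,A<n$ is the technical heart of \cite{friedjac}, Lemma~2.1, and the one-clause justification (``each such stratum is a non-trivial constant term along a unipotent radical'') conceals the real work: moving $A$ to a normal form via the two-sided $\GL_n(F)$-action, extending the character trivially to the unipotent radical of a suitable maximal parabolic of $\GL_{2n}$, and only then invoking cuspidality. The decay estimate for $c_\psi$ that justifies Fubini is likewise asserted, not proved. The absolute convergence of $\Psi(s,\varphi)$ for all $s$ rests on the properness of $\tilde{S}^H_{K_f}\hookrightarrow S^G_{K_f}$, i.e.\ the Borel--Prasad--Ash result the paper invokes later in Sect.~\ref{sect:SGSH}; ``Siegel sets of $H$ pull back to Siegel sets of $G$ modulo centers'' is a reasonable gloss but is itself a theorem requiring proof. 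Finally, a small slip: the displayed $I(s,\varphi)$ places $h_2$ over $\GL_n(\A)$ and $h_1$ over $Z_n(\A)\GL_n(F)\backslash\GL_n(\A)$, whereas the unfolding, and your own prose, produce the opposite assignment ($h_1\in\GL_n(\A)$ free, $h_2$ over the quotient).
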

\begin{proof}
This is proved in \cite{friedjac} for unitary representations and its extension to the non-unitary case is easy.
\end{proof}

\begin{rem}
The results quoted in this section are valid for any number field $F$, however, we will need them only for a totally real $F$ since
our main results will crucially depend on $F$ being totally real.
\end{rem}

\subsection{Local Shalika models}
Consider a cuspidal automorphic representation $\Pi=\otimes'_v \Pi_v$ of $G(\A)$ as in Sect.\ \ref{sect:cusprep}.

\begin{defn}
For any place $v$ we say that $\Pi_v$ has a {\it local $(\eta_v,\psi_v)$-Shalika model} if there is a non-trivial (and hence injective) intertwining
$$
\Pi_v \hookrightarrow \textrm{Ind}_{\S(F_v)}^{G(F_v)}[\eta_v\otimes\psi_v].
$$
\end{defn}

If $\Pi$ has a global Shalika model, then $\S^\eta_\psi$ defines local Shalika models at every place. The corresponding local intertwining operators are denoted by $\S^{\eta_v}_{\psi_v}$ and their images by $\S^{\eta_v}_{\psi_v}(\Pi_v)$, whence $\S^\eta_\psi(\Pi)=\otimes'_v \S^{\eta_v}_{\psi_v}(\Pi_v)$.
We can now consider cusp forms $\varphi$ such that the function $\xi_\varphi=\S^\eta_\psi(\varphi)\in \S^\eta_\psi(\Pi)$ is factorizable as
$$
\xi_\varphi=\otimes'_v \xi_{\varphi_v},
$$
where
$$
\xi_{\varphi_v}\in \S^{\eta_v}_{\psi_v}(\Pi_v)\subset \textrm{Ind}_{\S(F_v)}^{G(F_v)}[\eta_v\otimes\psi_v].
$$
Prop.\ \ref{prop:FJ} implies that
$$
\zeta_v(s,\xi_{\varphi_v}):=\int_{\GL_n(F_v)} \xi_{\varphi_v}\left(\!\!\left(\!\! \begin{array}{ccc}
g_{1,v} &  0\\
0 &  1_v
\end{array}\!\!\right)\!\!\right) |\det(g_{1,v})|^{s-1/2} dg_{1,v}
$$
is absolutely convergent for $\Re(s)$ sufficiently large. The same remark applies to
$$\zeta_f(s,\xi_{\varphi_f}):=\int_{\GL_n(\A_f)} \xi_{\varphi_f}\left(\!\!\left(\!\! \begin{array}{ccc}
g_{1,f} &  0\\
0 &  1_f
\end{array}\!\!\right)\!\!\right) |\det(g_{1,f})|^{s-1/2} dg_{1,f}=\prod_{v\notin S_\infty} \zeta_v(s,\xi_{\varphi_v}).$$

\subsection{}
The next proposition, also due to Friedberg--Jacquet, relates the ``Shalika-zeta-integral'' with the standard $L$-function of $\Pi$.

\begin{prop}[\cite{friedjac}, Prop.\ 3.1 \& 3.2]\label{prop:FJL-fct}
Assume that $\Pi$ has an $(\eta,\psi)$-Shalika model. Then for each place $v$ and $\xi_{\varphi_v}\in \S^{\eta_v}_{\psi_v}(\Pi_v)$ there is a holomorphic function $P(s,\xi_{\varphi_v})$ such that
$$\zeta_v(s,\xi_{\varphi_v})=L(s,\Pi_v)P(s,\xi_{\varphi_v}).$$
One may hence analytically continue $\zeta_v(s,\xi_{\varphi_v})$ by re-defining it to be $L(s,\Pi_v)P(s,\xi_{\varphi_v})$ for all $s\in\C$. Moreover, for every $s\in\C$ there exists a vector $\xi_{\varphi_v}\in \S^{\eta_v}_{\psi_v}(\Pi_v)$ such that $P(s,\xi_{\varphi_v})=1$. If $v\notin S_{\Pi,\psi}:=S_\Pi\cup S_\psi$, then this vector can be taken to be the spherical vector $\xi_{\Pi_v}\in \S^{\eta_v}_{\psi_v}(\Pi_v)$ normalized by the condition
$$\xi_{\Pi_v}(id_v)=1.$$
\end{prop}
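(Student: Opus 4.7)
The plan is to follow the original Friedberg--Jacquet approach, which splits naturally into (i) the unramified computation, (ii) the general non-archimedean GCD argument, and (iii) the archimedean analysis. I would first verify convergence of $\zeta_v(s,\xi_{\varphi_v})$ for $\Re(s) \gg 0$ at every place, using standard asymptotic bounds on Shalika functions: admissibility of $\Pi_v$ forces decay of $\xi_{\varphi_v}(\mathrm{diag}(g_1,1))$ as $|\det(g_1)|_v \to \infty$ and at the same time controls its behavior as $|\det(g_1)|_v \to 0$, giving an integral that is absolutely convergent in a right half-plane. Meromorphic continuation is then built into the characterization $\zeta_v(s,\xi_{\varphi_v}) = L(s,\Pi_v) P(s,\xi_{\varphi_v})$ once the latter is proved.

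For the unramified case, suppose $v \notin S_{\Pi,\psi}$, so $\Pi_v$ is unramified and $\psi_v$ has conductor $\mathcal O_v$. Then $\S^{\eta_v}_{\psi_v}(\Pi_v)$ contains a unique $G(\mathcal O_v)$-fixed vector up to scalar; normalize it so that $\xi_{\Pi_v}(id_v)=1$. Using the Iwasawa decomposition and the Shalika equivariance, the integral $\zeta_v(s,\xi_{\Pi_v})$ collapses to a sum over dominant cocharacters of the diagonal torus of $\GL_n$. A Casselman--Shalika-type formula (carried out directly by Friedberg--Jacquet) evaluates $\xi_{\Pi_v}(\mathrm{diag}(\lambda,1))$ in terms of Schur polynomials in the Satake parameters $A(\Pi_v)$, and the dual Cauchy identity telescopes the resulting series to $\det(1 - q_v^{-s} A(\Pi_v))^{-1} = L(s,\Pi_v)$. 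This gives $P(s,\xi_{\Pi_v})=1$ for the normalized spherical vector.

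For the general non-archimedean case, local constancy of the integrand implies that $\zeta_v(s,\xi_{\varphi_v})$ is a rational function in $q_v^{-s}$. The collection $\{\zeta_v(s,\xi_{\varphi_v}) : \xi_{\varphi_v} \in \S^{\eta_v}_{\psi_v}(\Pi_v)\}$ is stable under multiplication by $\C[q_v^s, q_v^{-s}]$, hence forms a principal fractional ideal in $\C(q_v^{-s})$; one identifies its generator (up to a unit) with $L(s,\Pi_v)$ by a Bernstein-type argument using the unramified computation as the ``generic'' case. This establishes that $P(s,\xi_{\varphi_v}) \in \C[q_v^s, q_v^{-s}]$ and in particular is entire. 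Given $s_0 \in \C$, to realize $P(s_0,\xi_{\varphi_v})=1$ one chooses $\xi_{\varphi_v}$ supported on a small open neighborhood of the identity in $\mathrm{diag}(g_1,1) \cdot \S(F_v)$, evaluates the resulting zeta integral as a volume times a locally constant function, and normalizes.

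At an archimedean $v$, absolute convergence and meromorphic continuation follow from Casselman--Wallach smoothness together with matrix-coefficient estimates; the key point is the holomorphicity of the quotient $P(s,\xi_{\varphi_v}) = \zeta_v(s,\xi_{\varphi_v})/L(s,\Pi_v)$. I would realize $\Pi_v$ as a Langlands quotient of a principal series and transport the Shalika integral to the induced model, where the local $L$-factor is literally the product of Gamma factors attached to the Langlands parameter; one then shows that every pole of $\zeta_v(s,\xi_{\varphi_v})$ is accounted for by a corresponding Gamma-pole. Existence of $\xi_{\varphi_v}$ with $P(s_0,\xi_{\varphi_v})=1$ is again produced by compactly-supported test vectors on the open Shalika orbit. \emph{The main obstacle is the archimedean holomorphicity step}: unlike the $p$-adic case, where everything reduces to algebra in $\C[q_v^{\pm s}]$, at infinity one must match analytic poles coming from the Shalika integral with the explicit Gamma factors in $L(s,\Pi_v)$, which demands delicate control on $\xi_{\varphi_v}$ in terms of the Langlands data of $\Pi_v$.
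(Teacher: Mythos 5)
Your proposal reconstructs the Friedberg--Jacquet argument from scratch, but that is not what the paper's proof is. The paper's proof is a one-line citation to \cite{friedjac}, Prop.\ 3.1 \& 3.2, with the added remark that FJ prove the result for \emph{unitary} $\Pi$ and that its extension to the non-unitary case is ``easy.'' That extension is the only substantive point the paper's proof contains, and it is missing from your proposal. Recall from Sect.\ 2.6 that the paper allows $\Pi = \tilde\Pi \otimes |\!\det\!|^t$ with $\tilde\Pi$ unitary and $\eta = \tilde\eta\,|\cdot|^{2t}$. Unwinding the definition of the Shalika functional gives $\S^{\eta}_{\psi}(\varphi)(g) = |\det g|^t\,\S^{\tilde\eta}_{\psi}(\tilde\varphi)(g)$, hence
$$\zeta_v(s,\xi_{\varphi_v}) = \zeta_v(s+t,\xi_{\tilde\varphi_v}) \quad\text{and}\quad L(s,\Pi_v) = L(s+t,\tilde\Pi_v),$$
so the proposition for $\Pi_v$ follows from the unitary case for $\tilde\Pi_v$ by the substitution $s\mapsto s+t$. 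Any write-up in the context of this paper should record that reduction rather than re-derive Friedberg--Jacquet.

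On the reconstruction itself: it is plausible in outline, but it proves more than the statement asks and attributes to FJ things they do not do. The proposition only asserts that $L(s,\Pi_v)$ divides each $\zeta_v(s,\xi)$ (so that $P(s,\xi)$ is holomorphic) and that $P(s_0,\xi)=1$ is attainable at each $s_0$; it does \emph{not} assert that $L(s,\Pi_v)$ is the gcd, i.e., the generator of the fractional ideal of zeta integrals. Your ``Bernstein-type argument'' pinning down the generator is therefore extra, and is not how FJ argue. Likewise, the Laurent-polynomial property of $P(s,\xi_v)$ in $q_v^{\pm s}$ --- which you use to conclude entirety in the $p$-adic case --- is, in this paper, attributed to Lapid--Mao \cite{erez_mao} (see the proof of Lem.\ 3.6.3), not to FJ, so it should not be presented as part of the Friedberg--Jacquet argument you are ostensibly following.
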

\begin{proof}
This is proved in \cite{friedjac} for unitary representations and its extension to the non-unitary case is easy.
\end{proof}

Prop.\ \ref{prop:FJL-fct} relates the Shalika-zeta-integral to $L$-functions, on the other hand Prop.\ \ref{prop:FJ} relates this integral to a period integral over $H$. As we shall soon see, the period integral over $H$ admits a cohomological interpretation, provided the cuspidal representation $\Pi$ is of cohomological type.

\subsection{An interlude: cohomological cuspidal automorphic representations}\label{sect:cohreps}
We assume from now on that the cuspidal automorphic representation $\Pi$ of $G(\A)$ as defined in Sect.\ \ref{sect:cusprep} is cohomological. By Sect.\ \ref{sect:gKcoh}, this means that there is a  highest weight representation $E_\mu=\otimes_{v\in S_{\infty}} E_{\mu_v}$ of $G_\infty$ such that
$$H^q(\g_\infty,K^\circ_\infty,\Pi\otimes E^{\sf v}_\mu)=H^q(\g_\infty,K^\circ_\infty,\Pi_\infty\otimes E^{\sf v}_\mu)\otimes\Pi_f\neq 0$$
for some degree $q$. Such a highest weight module $E_\mu$ is necessarily essentially self-dual, but even more is true due to the fact that $\Pi$ is cuspidal. Indeed, according to Clozel \cite{clozel}, Lem.\ 4.9, there is a ${\sf w}\in\Z$ such that for each archimedean place $v\in S_{\infty}$ the highest weight $\mu_v=(\mu_{v,1},...,\mu_{v,2n})$ satisfies
$$\mu_{v,i}+\mu_{v,2n-i+1}={\sf w}, \quad\quad 1\leq i\leq n.$$
In other words, $E_{\mu_v}$ differs from its dual by the same integer power $|\!\det\!|^{w_v}=|\!\det\!|^{\sf w}$ of the determinant at each archimedean place $v\in S_\infty$, cf.\  Sect.\ \ref{sect:highweights}. This integer is called the ``purity weight'' of $\Pi$ and if we write $\Pi=\tilde\Pi\otimes|\!\det\!|^t$ as in Sect.\ \ref{sect:cusprep}, then $t=\tfrac{{\sf w}}{2}$.

As $\Pi$ is generic, the archimedean local component $\Pi_\infty$ must be essentially tempered. More precisely, for each archimedean place $v\in S_{\infty}$ let
$$
\ell_{v,i}:=\mu_{v,i}-\mu_{v,2n-i+1}+2(n-i)+1=2\mu_{v,i}+2(n-i)+1-{\sf w}, \quad 1\leq i\leq n,
$$
so $\ell_{v,1}>\ell_{v,2}>...>\ell_{v,n}\geq 1$. Moreover, let $P$ be the parabolic subgroup of $G$ with Levi factor $L=\prod_{i=1}^n \GL_2$ and $D(\ell_v)$ the discrete series representations of $\GL_2(\R)$ of lowest (non-negative) $\Orth(2)$-type $\ell_v+1$. Then one can show, under these assumptions, that

\begin{equation}\label{eq:piinfty}
\Pi_v\cong\textrm{Ind}^{G(\R)}_{P(\R)}[D(\ell_{v,1})|\!\det\!|^{{\sf w}/2}\otimes...\otimes D(\ell_{v,n})|\!\det\!|^{{\sf w}/2}],\quad\quad\forall v\in S_{\infty},
\end{equation}
and so
\begin{eqnarray*}
H^q(\g_\infty,K^\circ_\infty,\Pi_\infty\otimes E^{\sf v}_\mu) & \cong & \bigoplus_{\sum_v q_v=q}\bigotimes_{v\in S_{\infty}} H^{q_v}(\mathfrak{gl}_{2n}(\R),\SO(2n)\R_+,\Pi_v\otimes E^{\sf v}_{\mu_v})\\
& \cong & \bigoplus_{\sum_v q_v=q}\bigotimes_{v\in S_{\infty}} \C^2\otimes\bigwedge^{q_v-n^2}\C^{n-1}.
\end{eqnarray*}
The group $K_\infty/K^\circ_\infty \cong (\Z/2\Z)^d$ acts on this cohomology space. For any character $\epsilon$ of
$K_\infty/K^\circ_\infty$ which we write as:
$$
\epsilon = (\epsilon_1,\dots,\epsilon_d) \in  (\Z/2\Z)^d \cong (K_\infty/K^\circ_\infty)^*,
$$
one obtains a corresponding eigenspace
$$
H^q(\g_\infty,K^\circ_\infty,\Pi_\infty\otimes E^{\sf v}_\mu)[\epsilon]\cong\bigoplus_{\sum_v q_v=q}\bigotimes_{v\in S_{\infty}}\bigwedge^{q_v-n^2}\C^{n-1}.
$$
(As a general reference for the above see Clozel \cite[3]{clozel}. See also \cite[3.1.2]{mahnk} or \cite[5.5]{grobrag}.)
In particular, for any cohomological cuspidal automorphic representation $\Pi$ of $G(\A)$, the corresponding $\epsilon$-eigenspace in the $(\g_\infty,K^\circ_\infty)$-cohomology of $\Pi_\infty\otimes E^{\sf v}_\mu$ in the top degree, i.e., in degree
\begin{equation}\label{eqn:top-degree}
q_0:=d(n^2+n-1)
\end{equation}
is one-dimensional. Observe that $q_0$ only depends on $n$ and the degree $d$ of $F/\Q$.

The next proposition explains the behaviour of cuspidal automorphic representations under $\sigma$-twisting.

\begin{prop}[Clozel \cite{clozel}, Thm.\ 3.13 ]\label{prop:regalg}
Let $\Pi$ be a cuspidal automorphic representation as in Sect.\ \ref{sect:cusprep}. If $\Pi$ is cohomological with respect to a highest weight representation $E^{\sf v}_\mu$, then the $\sigma$-twisted representation ${}^\sigma\Pi$ is also cuspidal automorphic. It is cohomological with respect to ${}^\sigma\! E^{\sf v}_\mu$. Moreover, for every $\epsilon\in (K_\infty/K^\circ_\infty)^*$, the $G(\A_f)$-module $H^{q_0}(\g_\infty,K^\circ_\infty,\Pi\otimes E^{\sf v}_\mu)[\epsilon]$ is defined over the rationality field $\Q(\Pi)$, which in this case is a number field. The same holds for $\Q(\Pi)$ being replaced by $\Q(\Pi,\eta)$.
\end{prop}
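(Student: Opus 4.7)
The plan is to reduce the proposition to the rationality properties of Betti cohomology of the locally symmetric space $S^G_{K_f}$ with coefficients in an algebraic local system, and then transport these rationality statements back to the automorphic side via Borel's comparison between cuspidal cohomology and de Rham/Betti cohomology.

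First, I would use that since $\Pi$ is cuspidal and cohomological, the relative Lie algebra cohomology $H^{q_0}(\g_\infty,K_\infty^\circ,\Pi\otimes E^{\sf v}_\mu)$ embeds Hecke-equivariantly into the Betti cohomology $H^{q_0}(S^G_{K_f},\mathcal E^{\sf v}_\mu)$ for any small enough level $K_f$, where $\mathcal E^{\sf v}_\mu$ is the sheaf attached to $E^{\sf v}_\mu$. Now $E^{\sf v}_\mu$ is an algebraic representation of the $\Q$-group $G$, so $\mathcal E^{\sf v}_\mu$ carries a canonical $\Q$-rational structure; hence Betti cohomology inherits a $\Q$-structure, and consequently an ${\rm Aut}(\C)$-action commuting with the Hecke action away from the ramified places. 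On isotypic components, this action maps the $\Pi_f$-component isomorphically onto the ${}^\sigma\Pi_f$-component, but now sitting inside cohomology with coefficients in ${}^\sigma\mathcal E^{\sf v}_\mu$, which corresponds to the local system attached to ${}^\sigma\! E^{\sf v}_\mu$ (the highest weight being permuted by $\sigma$ via the action on archimedean places).

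Second, from this I would deduce that ${}^\sigma\Pi_f$ appears in cuspidal cohomology in degree $q_0$ with coefficients in ${}^\sigma\! E^{\sf v}_\mu$. By Strong Multiplicity One for $\GL_{2n}$ there is at most one cuspidal automorphic representation with that finite part, and the archimedean classification recalled in (\ref{eq:piinfty}) forces its archimedean component to be ${}^\sigma\Pi_\infty$, proving simultaneously that ${}^\sigma\Pi$ is cuspidal automorphic and that it is cohomological with respect to ${}^\sigma\! E^{\sf v}_\mu$. The rationality of $H^{q_0}[\epsilon]$ over $\Q(\Pi)$ follows: $\Q(\Pi)$ is by construction the fixed field of the stabilizer of the $\Pi_f$-isotypic component in Betti cohomology under ${\rm Aut}(\C)$; the decomposition into $\epsilon$-eigenspaces is performed by the $\Q$-rational action of $K_\infty/K_\infty^\circ$, so each eigenspace inherits a $\Q(\Pi)$-structure by Galois descent.

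Finally, $\Q(\Pi)$ is a number field because the ${\rm Aut}(\C)$-orbit of $\Pi$ is finite: all ${}^\sigma\Pi$ share a common level (Galois-stable compact-open subgroup) and their archimedean parameters lie in the finite orbit of $\mu$ under permutations of the set $S_\infty$ of archimedean places combined with coordinate permutations, so there are only finitely many such ${}^\sigma\Pi$. The extension to $\Q(\Pi,\eta)$ is immediate: $\eta$ is algebraic so $\Q(\eta)$ is a number field, the compositum $\Q(\Pi,\eta)$ is again a number field, and the $\Q(\Pi)$-structure on cohomology extends scalars to a $\Q(\Pi,\eta)$-structure. The main obstacle is verifying that the geometric ${\rm Aut}(\C)$-action on Betti cohomology really implements the representation-theoretic $\sigma$-twist on both archimedean and finite components simultaneously; this requires Clozel's careful analysis of how the purity weight ${\sf w}$ and the essential self-duality of $E^{\sf v}_\mu$ force the correct matching of archimedean factors with ${}^\sigma\! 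E^{\sf v}_\mu$, so that the ${}^\sigma\Pi_f$-isotypic class is indeed realized with the expected archimedean twist.
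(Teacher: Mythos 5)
The paper gives no proof of Proposition~\ref{prop:regalg}: it is stated as a citation to Clozel's Theorem~3.13, and Remark~\ref{rem:clozel} clarifies that the rationality clause is only implicit there (pointing to Grobner--Raghuram, Thm.~8.6) and records why $\Q(\eta)$ is a number field. Your sketch is a sound reconstruction of the cited argument and follows the same circle of ideas: the $\Q$-structure on Betti cohomology coming from the algebraic coefficient system, the ${\rm Aut}(\C)$-action sending the $\Pi_f$-isotypic component to the ${}^\sigma\Pi_f$-isotypic component inside cohomology with coefficients in ${}^\sigma\mathcal E^{\sf v}_\mu$, strong multiplicity one, and finiteness of the ${\rm Aut}(\C)$-orbit (equivalently, finite-dimensionality of $H^{q_0}(S^G_{K_f},\mathcal E^{\sf v}_\mu)$ over $\Q$) to get the number field.

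One step you state without justification is that the ${}^\sigma\Pi_f$-component ``appears in cuspidal cohomology.'' What ${\rm Aut}(\C)$ visibly preserves is inner cohomology $H^*_! = \mathrm{Im}(H^*_c \to H^*)$, since that is a $\Q$-rational sub. To descend to cuspidality you cannot get away with strong multiplicity one alone, because that is a uniqueness statement, not an existence statement: you must first know that ${}^\sigma\Pi_f$ is the finite part of \emph{some} cuspidal representation. In Clozel's argument this is supplied by combining the fact that the relevant isotypic subspace of $H^*_!$ comes from the discrete spectrum with the observation that ${}^\sigma\Pi_\infty$ remains essentially tempered (it is a permutation of the $\Pi_v$, each of which is an induction from discrete series as in~\eqref{eq:piinfty}); temperedness rules out the residual (Speh) part of the $\GL_{2n}$ discrete spectrum classified by Moeglin--Waldspurger. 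Once you include that, the rest of your outline --- identification of ${}^\sigma\Pi_\infty$, the $\Q(\Pi)$-structure on the $\epsilon$-eigenspaces via the rational $K_\infty/K_\infty^\circ$-action, and the scalar extension to $\Q(\Pi,\eta)$ --- is exactly the intended argument.
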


\begin{rem}\label{rem:clozel}
This is shown in Clozel \cite{clozel} for regular algebraic cuspidal automorphic representations, cf.\  \cite[3.5]{clozel} for this notion. However, a cuspidal automorphic representation $\Pi$ as in Sect.\ \ref{sect:cusprep} is cohomological if and only if it is regular algebraic, whence we obtain the proposition in the above form. The last assertion on cohomology being defined over $\Q(\Pi)$ and hence also over $\Q(\Pi,\eta)$ -- although well-known to the experts -- is only implicitly proved in \cite{clozel}. For an actual proof one may consider \cite[Thm.\ 8.6]{grobrag}. Here, we note that $\Pi_\infty$ being cohomological forces its central character to be of the form $\omega_{\Pi_\infty}\cong\otimes_{v\in S_\infty}\sgn^{a_v} |\cdot|^{n\cdot{\sf w}}$, for some $a_v\in\{0,1\}$, and so $\eta$ has to be algebraic, whence $\Q(\eta)$ is a number field, too. See also Sect.\ \ref{sect:characters}. The rational structure on cohomology has a purely geometric origin and it is inherited by a $\Q(E^{\sf v}_\mu)$-structure on cuspidal, or better, inner cohomology.
\end{rem}

\subsection{Shalika model versus cohomology}
Let $\Pi$ be a cuspidal automorphic representation of $G(\A)$ as in Sect.\ \ref{sect:cusprep}. On the one hand we can impose the condition that $\Pi$ has a Shalika model,
and on the other hand we can ask for $\Pi$ to be cohomological. Let us observe that these conditions are independent of each other by presenting some examples. An example of {\it Shalika model but not cohomological:}
Take $n=1$. According to Cor.\ \ref{cor:n=1}, any  cuspidal automorphic representation $\Pi$ of $G(\A)=\GL_2(\A)$ has an $(\omega_{\Pi}, \psi)$-Shalika model. But if $\Pi$ is constructed from a Maass-form, then $\Pi_\infty$ cannot be cohomological. An example of {\it cohomological but no Shalika model for $\GL_4$:}
Let $n=2$. Let $\pi_k$ and $\pi_{k'}$ be cuspidal automorphic representations of $\GL_2(\A)$, $F=\Q$, attached to primitive modular cusp forms of weights $k$ and $k'$, respectively. Assume that $k\neq k'$ and both numbers are even. Assume moreover that both modular forms are not of CM-type, i.e., Sym$^2(\pi_k)$ and Sym$^2(\pi_{k'})$, cf.\  Sect.\ \ref{sect:symmtrans}, are cuspidal. Put $\Pi:=(\pi_k\boxtimes\pi_{k'})|\cdot|^{1/2}$. Then $\Pi$ is regular algebraic, i.e., cohomological, cf.\  Rem.\  \ref{rem:clozel}. Furthermore, $\Pi$ is cuspidal by Ramakrishnan \cite[Thm.\ M]{rama}. Further, one may check that the exterior square of $\Pi$ decomposes as an isobaric direct sum
$$\wedge^2\Pi \cong \left({\rm Sym}^2(\pi_k) \otimes\omega_{\pi_{k'}}\right)|\cdot| \boxplus \left({\rm Sym}^2(\pi_{k'}) \otimes\omega_{\pi_{k}}\right)|\cdot|.$$
See, for example, Asgari--Raghuram \cite{asgari-raghuram}, Prop.\ 3.1. Hence, $\wedge^2\Pi$ has no one-dimensional isobaric summand, and by Thm.\ \ref{thm:JS}, $\Pi$ cannot have a Shalika model for any $\eta$.
Another example of {\it cohomological but no Shalika model} but now for $\GL_6$:
Let $n=3$. In \cite[Thm.\ 5.1]{ramwong} Ramakrishnan and Wang gave an example of a cohomological unitary cuspidal automorphic representation $\Pi$ of $G(\A)=\GL_6(\A)$, $F=\Q$, which is not essentially self-dual. Hence, Prop.\ \ref{prop:selfdual} implies that $\Pi$ does not admit an $(\eta,\psi)$-Shalika model for any
$\eta$.

\subsection{Shalika models and $\sigma$-twisting}\label{sect:shalikasigma}
Having observed in the last subsection that having a Shalika model is independent of having non-zero $(\g_\infty,K_\infty^\circ)$-cohomology with respect to some finite-dimensional, algebraic coefficient system, one can still ask the question if having a Shalika-model is an invariant under $\sigma$-twisting. In other words, we may ask, if having a Shalika model is an {\it arithmetic} property of a cohomological cuspidal automorphic representation. We begin with a useful observation about the character $\eta$:

\begin{lem}\label{lem:eta}
Let $\Pi$ be a cohomological cuspidal automorphic representation of $G(\A)$ which admits an $(\eta,\psi)$-Shalika model. Then, for all $v \in S_\infty$ we have
$$
\eta_v \ = \ {\rm sgn}^{\sf w} |\ |^{\sf w}.
$$
\end{lem}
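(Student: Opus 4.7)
The plan is to pin down $\eta_v$ by combining two archimedean constraints: the explicit value of the central character $\omega_{\Pi_v}$ (via $\eta^n=\omega_\Pi$), and the twist that sends $\Pi_v$ to its contragredient (via the essential self-duality from Prop.\ \ref{prop:selfdual}).

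First, I would use the explicit form of $\Pi_v$ from equation \eqref{eq:piinfty} as a principal series induced from $D(\ell_{v,1})|\!\det\!|^{{\sf w}/2}\otimes\cdots\otimes D(\ell_{v,n})|\!\det\!|^{{\sf w}/2}$. A direct central character computation yields $|\omega_{\Pi_v}|=|\cdot|^{n{\sf w}}$ and, using $\omega_{D(\ell)}(-1)=(-1)^{\ell+1}$ together with $\ell_{v,i}\equiv 1-{\sf w}\pmod 2$, one finds $\omega_{\Pi_v}(-1)=(-1)^{n{\sf w}}$. Since any continuous character $\eta_v$ of $\R^\times$ factors as $\sgn^{b_v}|\cdot|^{s_v}$, the identity $\eta_v^n=\omega_{\Pi_v}$ forces $s_v={\sf w}$ and $nb_v\equiv n{\sf w}\pmod 2$.

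Second, I would invoke the essential self-duality from Prop.\ \ref{prop:selfdual}: because $\Pi^{\sf v}$ and $\Pi\otimes\eta^{-1}$ have the same central character $\eta^{-n}$ and $\Pi$ is essentially self-dual, we have $\Pi^{\sf v}\cong\Pi\otimes\eta^{-1}$, whence locally $\Pi_v^{\sf v}\cong\Pi_v\otimes\eta_v^{-1}$. Using that each discrete series $D(\ell)$ is invariant under twisting by $\sgn\circ\det$ (since its $\Orth(2)$-types are), one gets $D(\ell)^{\sf v}\cong D(\ell)$, and hence $\Pi_v^{\sf v}\cong\Pi_v\otimes|\!\det\!|^{-{\sf w}}$. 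Because the inducing discrete series are pairwise distinct, the stabilizer of $\Pi_v$ under one-dimensional character twists is $\{1,\sgn\}$, forcing $\eta_v\in\{|\cdot|^{\sf w},\sgn\cdot|\cdot|^{\sf w}\}$.

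When $n$ is odd, the constraint $nb_v\equiv n{\sf w}\pmod 2$ immediately gives $b_v\equiv{\sf w}\pmod 2$, so $\eta_v=\sgn^{\sf w}|\cdot|^{\sf w}$ as claimed. The main obstacle is the case $n$ even, where both candidates $|\cdot|^{\sf w}$ and $\sgn\cdot|\cdot|^{\sf w}$ satisfy $\eta_v^n=\omega_{\Pi_v}$, so the central character and self-duality alone are insufficient. Here I would use the identification $\eta=\omega_\pi$ from Prop.\ \ref{prop:selfdual}, with $\pi$ the globally generic cuspidal representation of $\GSpin_{2n+1}(\A)$ transferring to $\Pi$: the cohomological structure of $\Pi_\infty$ fixes the archimedean Langlands parameter of $\pi_v$, and its similitude character at $v$ computes to $\sgn^{\sf w}|\cdot|^{\sf w}$, closing the last case.
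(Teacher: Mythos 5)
Your plan is essentially the approach the paper points to: the paper's "proof'' is a one-line citation to Gan--Raghuram, and the argument there (as one sees from Gan's Appendix here for the closely related Thm.~\ref{thm:arithmeticShalika}) is precisely the $\GSpin_{2n+1}$ transfer being strong at infinity, so that $\eta_v$ is the similitude character of the archimedean parameter; this is your third step. Your first step (reading off $\omega_{\Pi_v}=\sgn^{n{\sf w}}|\cdot|^{n{\sf w}}$ from \eqref{eq:piinfty} and the parity $\ell_{v,i}\equiv 1-{\sf w}\bmod 2$) is correct and is a nice elementary shortcut that disposes of $n$ odd without the transfer, although the $\GSpin$ argument already handles all $n$ uniformly.

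Two smaller issues. First, your second step is redundant: since $\eta_v=\sgn^{b_v}|\cdot|^{s_v}$ with $b_v\in\{0,1\}$ and the central-character computation already forces $s_v={\sf w}$, the conclusion $\eta_v\in\{|\cdot|^{\sf w},\sgn\,|\cdot|^{\sf w}\}$ is already in hand before you invoke self-duality. Moreover the justification you give there is not valid as stated: essential self-duality only gives $\Pi^{\sf v}\cong\Pi\otimes\chi$ for \emph{some} $\chi$, and the central-character equation $\chi^{2n}=\eta^{-2n}$ leaves $\chi$ undetermined modulo a $2n$-torsion character, so matching central characters does not identify $\chi$ with $\eta^{-1}$. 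The correct deduction goes through Thm.~\ref{thm:JS}: the pole of $L^S(s,\Pi,\wedge^2\otimes\eta^{-1})$ at $s=1$ forces a pole of $L^S(s,\Pi\times(\Pi\otimes\eta^{-1}))$ and hence $\Pi^{\sf v}\cong\Pi\otimes\eta^{-1}$. Second, the crux for $n$ even is your unproved assertion that the similitude character is $\sgn^{\sf w}|\cdot|^{\sf w}$; spell it out. Since the $\ell_{v,i}$ are distinct, the $\phi_{\ell_{v,i}}$ are pairwise non-isomorphic even after one-dimensional twists, so any symplectic structure on $\phi_v=\bigoplus_i\phi_{\ell_{v,i}}\otimes|\cdot|^{{\sf w}/2}$ decomposes into symplectic forms on the summands, all with the same similitude. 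On the irreducible $\phi_\ell$ the unique (up to scalar) skew form has similitude $\sgn^{\ell+1}$, and $\ell_{v,i}+1\equiv{\sf w}\bmod 2$, so each summand carries a symplectic structure with similitude $\sgn^{\sf w}|\cdot|^{\sf w}$ and no other. The strong transfer then identifies this with $\eta_v$.
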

\begin{proof}
See the proof of Thm.\ 5.3 in Gan--Raghuram~\cite{gan-raghuram}.
\end{proof}

By Prop.\ \ref{prop:regalg}, ${}^\sigma\Pi$ is also a cuspidal automorphic representation of $G(\A)$ for all $\sigma\in {\rm Aut}(\C)$. It makes sense to ask if $\Pi$ having an $(\eta,\psi)$-Shalika model implies that ${}^\sigma\Pi$ has a $({}^\sigma\!\eta,\psi)$-Shalika model for all $\sigma\in {\rm Aut}(\C)$. For $n=1$ this is obvious in view of Cor.\ \ref{cor:n=1}. If $n\geq 2$, the situation is more complicated, nevertheless, we have the following theorem.

\begin{thm}\label{thm:arithmeticShalika}
Let $\Pi$ be a cohomological cuspidal automorphic representation of $G(\A)$ which admits an $(\eta,\psi)$-Shalika model. Then ${}^\sigma\Pi$ has a $({}^\sigma\!\eta,\psi)$-Shalika model for all $\sigma\in {\rm Aut}(\C)$.
\end{thm}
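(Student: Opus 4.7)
My plan is to use the characterization of Shalika models given by Proposition~\ref{prop:selfdual}: the hypothesis on $\Pi$ is equivalent to $\Pi$ being the Asgari--Shahidi transfer of a globally generic cuspidal representation $\pi$ of $\mathrm{GSpin}_{2n+1}(\A)$ with central character $\omega_\pi = \eta$. The task is then to show that ${}^\sigma\!\pi$ is again a globally generic cuspidal automorphic representation of $\mathrm{GSpin}_{2n+1}(\A)$, and that its Asgari--Shahidi transfer is precisely ${}^\sigma\Pi$; then Proposition~\ref{prop:selfdual} applied in reverse yields the desired $({}^\sigma\!\eta,\psi)$-Shalika model on ${}^\sigma\Pi$, with $\omega_{{}^\sigma\!\pi} = {}^\sigma\omega_\pi = {}^\sigma\!\eta$.

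First I would show that $\pi$ is cohomological, so that an arithmeticity result can be applied. Since $\Pi_\infty$ is cohomological of the explicit form \eqref{eq:piinfty}, its archimedean $L$-parameter factors through the dual-group embedding $\mathrm{GSp}_{2n}(\C) \hookrightarrow \GL_{2n}(\C)$ (this is what the Shalika structure forces at infinity); pulling back, one sees that $\pi_v$ is a cohomological discrete-series-type representation of $\mathrm{GSpin}_{2n+1}(\R)$ at every $v \in S_\infty$. Second, one invokes an analogue of Proposition~\ref{prop:regalg} for cohomological cuspidal representations of $\mathrm{GSpin}_{2n+1}$: cuspidal cohomology of the adelic locally symmetric space for $\mathrm{GSpin}_{2n+1}$ carries a rational Betti/de~Rham structure, so ${}^\sigma\!\pi$ is again a cohomological cuspidal representation. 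Global genericity is detected by non-vanishing of a Whittaker integral, an arithmetic property that therefore survives the $\sigma$-twist. Third, the Asgari--Shahidi transfer commutes with $\sigma$-twisting, because at each unramified place it is the pushforward of Satake parameters along $\mathrm{GSp}_{2n}(\C) \hookrightarrow \GL_{2n}(\C)$, and this operation is manifestly $\sigma$-equivariant on Satake parameters. Strong multiplicity one for isobaric representations of $\GL_{2n}$ then forces the Asgari--Shahidi transfer of ${}^\sigma\!\pi$ to coincide with ${}^\sigma\Pi$.

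The principal obstacle is establishing the $\mathrm{GSpin}_{2n+1}$-analogue of Proposition~\ref{prop:regalg}. Although the formalism of cuspidal cohomology and its rational structure should parallel the $\GL_n$-case, one needs a careful $L$-packet analysis at infinity to pin down the cohomological nature of $\pi_\infty$, together with the explicit setup of rational cuspidal cohomology for locally symmetric spaces of $\mathrm{GSpin}_{2n+1}$ (these are not Shimura varieties in general, so one cannot simply invoke existing machinery). An alternative approach that stays entirely on $\GL_{2n}$ would use the Jacquet--Shalika criterion (Theorem~\ref{thm:JS}) and attempt to prove directly that the pole of $L^S(s,\Pi,\wedge^2\otimes\eta^{-1})$ at $s=1$ is a $\sigma$-equivariant condition, which is plausible since the unramified local $L$-factors are built from Satake parameters and from $\eta_v(\varpi_v)$, both of which transform by $\sigma$ under $\sigma$-twist, together with Lemma~\ref{lem:eta} for the archimedean part. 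Gan's appendix, advertised as simple and elegant, likely adopts one of these strategies or presents a more direct argument via local Shalika functionals.
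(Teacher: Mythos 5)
Your proposal has a genuine gap. The primary route requires a $\GSpin_{2n+1}$-analogue of Prop.~\ref{prop:regalg} (Clozel's arithmeticity theorem), which you correctly flag as the principal obstacle; it is not available in the literature and you do not supply it, so you have traded an assertion about $\GL_{2n}$ for an unproved and no easier one about $\GSpin_{2n+1}$. The alternative route --- that the pole of $L^S(s,\Pi,\wedge^2\otimes\eta^{-1})$ at $s=1$ ``is a $\sigma$-equivariant condition'' because the unramified local factors transform by $\sigma$ --- does not follow: a pole is an analytic feature of the global meromorphic continuation, and a discontinuous $\sigma\in\mathrm{Aut}(\C)$ applied to the Dirichlet coefficients term-by-term gives no control on poles. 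Settling exactly this point is the whole content of the theorem.

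Gan's proof, which does stay on $\GL_{2n}$ as your second route suggests but argues quite differently, circumvents both problems. The purely algebraic relation $\Pi^{\sf v}\cong\Pi\otimes\eta^{-1}$ does transport under $\sigma$-twist, giving $({}^\sigma\Pi)^{\sf v}\cong{}^\sigma\Pi\otimes{}^\sigma\!\eta^{-1}$. The Jacquet--Shalika pole criterion for \emph{Rankin--Selberg} $L$-functions then forces $L^S(s,{}^\sigma\Pi\times{}^\sigma\Pi\otimes{}^\sigma\!\eta^{-1}) = L^S(s,{}^\sigma\Pi,{\rm Sym}^2\otimes{}^\sigma\!\eta^{-1})\cdot L^S(s,{}^\sigma\Pi,\wedge^2\otimes{}^\sigma\!\eta^{-1})$ to have a pole at $s=1$, so exactly one of the two factors does. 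To rule out ${\rm Sym}^2$, one argues by contradiction: a pole there would, via Asgari--Shahidi and Hundley--Sayag, make ${}^\sigma\Pi$ a strong functorial lift from $\GSpin_{2n}$ with similitude character ${}^\sigma\!\eta$; since the archimedean data of ${}^\sigma\Pi$ and ${}^\sigma\!\eta$ are permutations of those of $\Pi$ and $\eta$, each archimedean parameter $\phi_v$ of $\Pi_v$ would factor through both $\mathrm{GSp}_{2n}(\C)$ and $\mathrm{GSO}_{2n}(\C)$ with the same similitude character $\eta_v$. But \eqref{eq:piinfty} makes $\phi_v$ a multiplicity-free sum of irreducible $2$-dimensional $W_{F_v}$-representations, on each of which the symmetric and the alternating form would then both restrict nondegenerately, contradicting Schur's lemma. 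You assembled the right raw ingredients (GSpin, Jacquet--Shalika, Lem.~\ref{lem:eta}), but the ${\rm Sym}^2/\wedge^2$ dichotomy and the archimedean contradiction resolving it are the missing heart of the argument.
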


\begin{proof}
See the appendix for Wee Teck Gan's proof of this theorem; this is elaborated further in Thm.\ 5.3 in Gan--Raghuram~\cite{gan-raghuram}.
\end{proof}

\subsection{An action of Aut$(\C)$ on Shalika functions}\label{sect:AutCact}
Henceforth, we take $\Pi$ to be a cohomological cuspidal automorphic representation of $G(\A)$ which has an $(\eta,\psi)$-Shalika model $\S^{\eta}_\psi(\Pi)$.

Our goal is to define a $\Q(\Pi,\eta)$-structure $\S^{\eta_f}_{\psi_f}(\Pi_f)_{\Q(\Pi,\eta)}$ on the Shalika model of $\Pi_f$.
The main ingredient towards this will be a certain ``twisted action'' of Aut$(\C)$ on $\textrm{Ind}_{\S(\A_f)}^{G(\A_f)}[\eta_f \otimes \psi_f]$, which we shall now define. Recall that $\psi_f$, being the finite part of a unitary additive character, takes values in $\bfgreek{mu}_{\infty} \subset \C^\times$, cf.\  Sect.\ \ref{sect:psi}. This suggests that we consider the cyclotomic character:
$$
\begin{array}{ccccccc}
{\rm Aut}(\C)& \longrightarrow & {\rm Gal}({\Q}(\bfgreek{mu}_{\infty})/{\Q}) & \longrightarrow & \widehat{{\Z}}^\times \cong \prod_p \Z_p^\times & \hookrightarrow &
\prod_{p} \prod_{v|p} \O_v^\times, \\
\sigma  & \longmapsto & \sigma |_{{\Q}(\bfgreek{mu}_{\infty})}& \longmapsto & t_{\sigma} & \longmapsto & t_{\sigma}
\end{array}
$$
where the last inclusion is the one induced by the diagonal embedding of
${\Z}_p$ into $\prod_{v|p} \O_v$. The element $t_{\sigma}$ at the end may hence
be thought of as an element of ${\A}_f^\times$.
Let $\t^{-1}_\sigma$ denote the diagonal matrix
$$\t^{-1}_\sigma:={\rm diag}(\underbrace{t_{\sigma}^{-1},...,t_{\sigma}^{-1}}_{n}, \underbrace{1,...,1}_{n}),$$ regarded as an
element of $G(\A_f)$. For $\sigma \in {\rm Aut}({\C})$ and $\xi\in \S^{\eta_f}_{\psi_f}(\Pi_f)$, we define the function ${}^{\sigma}\xi$ by
\begin{equation}
\label{eqn:aut-c-action}
{}^{\sigma}\xi(g_f) = \sigma(\xi(\t^{-1}_\sigma\cdot g_f)),
\end{equation}
$g_f \in G({\A}_f)$. Note that this action makes sense locally, by replacing
$\t^{-1}_\sigma$ by $\t^{-1}_{\sigma,v}$. We see that $\xi\mapsto {}^{\sigma}\xi$ is a $\sigma$-linear $G({\A}_f)$-equivariant
isomorphism $\tilde\sigma :  \textrm{Ind}_{\S(\A_f)}^{G(\A_f)}[\eta_f \otimes \psi_f] \to \textrm{Ind}_{\S(\A_f)}^{G(\A_f)}[{}^{\sigma}\eta_f \otimes \psi_f]$
and the same holds locally at any finite place $v$.

\subsection{A certain $\Q(\Pi,\eta)$-structure on $\S^{\eta_f}_{\psi_f}(\Pi_f)$}

\begin{lem}\label{lem:rational-shalika}
Let $\Pi$ be a cohomological cuspidal automorphic representations of $G(\A)$ which has an $(\eta,\psi)$-Shalika model $\S^{\eta}_\psi(\Pi)$. Then, $\tilde\sigma\left(\S^{\eta_f}_{\psi_f}(\Pi_f)\right)=\S^{{}^\sigma\!\eta_f}_{\psi_f}({}^\sigma\Pi_f)$ for all $\sigma\in {\rm Aut}(\C)$. For any finite extension $\F/{\Q}(\Pi,\eta)$ we have an $\F$-structure on $\S^{\eta_f}_{\psi_f}(\Pi_f)$ by taking invariants:
$$
\S^{\eta_f}_{\psi_f}(\Pi_f)_\F: = \S^{\eta_f}_{\psi_f}(\Pi_f)^{{\rm Aut}({\C}/\F)}.
$$
\end{lem}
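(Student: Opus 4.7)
The plan is to derive both assertions from the explicit form of the intertwiner $\tilde\sigma$ together with local multiplicity one for Shalika functionals and the arithmeticity provided by Theorem~\ref{thm:arithmeticShalika}.

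For the first equality, I would observe that $\tilde\sigma$ is by construction a $\sigma$-linear $G(\A_f)$-equivariant isomorphism between $\textrm{Ind}_{\S(\A_f)}^{G(\A_f)}[\eta_f \otimes \psi_f]$ and $\textrm{Ind}_{\S(\A_f)}^{G(\A_f)}[{}^\sigma\!\eta_f \otimes \psi_f]$, as verified in Section~\ref{sect:AutCact}. The restriction $\tilde\sigma(\S^{\eta_f}_{\psi_f}(\Pi_f))$ is therefore a nonzero $G(\A_f)$-submodule of the target, abstractly isomorphic to ${}^\sigma\Pi_f$ by the very definition of the $\sigma$-twist in Section~\ref{sect:sigmatwist}. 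Theorem~\ref{thm:arithmeticShalika} guarantees that the Shalika model $\S^{{}^\sigma\!\eta_f}_{\psi_f}({}^\sigma\Pi_f)$ also sits inside this induced module. Local multiplicity one for Shalika functionals, in the form due to Jacquet--Rallis, forces any realization of ${}^\sigma\Pi_f$ inside $\textrm{Ind}_{\S(\A_f)}^{G(\A_f)}[{}^\sigma\!\eta_f \otimes \psi_f]$ to be unique as a subspace, so the two must coincide.

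For the second assertion, I specialize to $\sigma \in \textrm{Aut}(\C/\Q(\Pi,\eta))$ (and \emph{a fortiori} to $\sigma \in \textrm{Aut}(\C/\F)$): then $\eta_f = {}^\sigma\!\eta_f$ and $\Pi_f \cong {}^\sigma\Pi_f$, so the first part specializes to $\tilde\sigma(\S^{\eta_f}_{\psi_f}(\Pi_f)) = \S^{\eta_f}_{\psi_f}(\Pi_f)$ as subspaces of the ambient induction. Since $\sigma \mapsto \tilde\sigma$ is multiplicative (the cyclotomic character being a homomorphism of the Galois group into $\widehat\Z^\times$), this endows $\S^{\eta_f}_{\psi_f}(\Pi_f)$ with a genuine semilinear $\textrm{Aut}(\C/\F)$-action.

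To conclude that the invariants constitute an $\F$-structure, I would reduce to finite-dimensional pieces by passing to $K_f$-fixed vectors for open compact $K_f \subset G(\A_f)$. Each $\S^{\eta_f}_{\psi_f}(\Pi_f)^{K_f}$ is finite-dimensional by admissibility, is preserved by $\tilde\sigma$ because the latter is $G(\A_f)$-equivariant, and carries a semilinear action factoring through a finite Galois quotient --- this last point is ensured by Proposition~\ref{prop:regalg}, which asserts that $\Pi_f$ is defined over the number field $\Q(\Pi,\eta)$. Standard finite-dimensional Galois descent (Hilbert 90) then yields
$$
\S^{\eta_f}_{\psi_f}(\Pi_f)^{K_f, \textrm{Aut}(\C/\F)} \otimes_\F \C \ira \S^{\eta_f}_{\psi_f}(\Pi_f)^{K_f},
$$
and the direct limit over the directed system of $K_f$'s produces the claimed $\F$-structure on the full Shalika model. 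The principal obstacle lies in the identification in the first step: it relies essentially on Theorem~\ref{thm:arithmeticShalika} for \emph{existence} of the target Shalika model and on local multiplicity one of Shalika functionals for its \emph{uniqueness}; once these arithmetic inputs are in hand, the remaining descent is entirely formal.
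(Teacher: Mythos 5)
Your argument for the first assertion is essentially the paper's: existence of the target Shalika model comes from Thm.\ \ref{thm:arithmeticShalika}, and the equality of subspaces then follows from local uniqueness of Shalika models. One small point: the Jacquet--Rallis uniqueness result is only for trivial $\eta$; as Remark~\ref{rem:unique} notes, the general case needed here is due to Chen--Sun.

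For the second assertion, however, there is a genuine gap. You pass to $K_f$-invariants and invoke Galois descent, asserting that the semilinear $\tilde\sigma$-action on $\S^{\eta_f}_{\psi_f}(\Pi_f)^{K_f}$ factors through a finite Galois quotient and that this is ``ensured by Prop.~\ref{prop:regalg}.'' Neither part of this is right. First, Prop.~\ref{prop:regalg} asserts that a \emph{cohomological} realization of $\Pi_f$ is defined over $\Q(\Pi,\eta)$; it does not say that the specific semilinear action $\tilde\sigma$ on the \emph{Shalika} realization agrees with the natural action coming from some $\Q(\Pi,\eta)$-form. A priori the two differ by a $1$-cocycle valued in $\mathrm{Aut}_{G(\A_f)}(\Pi_f)\cong\C^\times$, and trivializing this cocycle is precisely the content of the lemma --- so citing Prop.~\ref{prop:regalg} here is circular. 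Second, the action literally does \emph{not} factor through a finite quotient: for any $\xi$ and any $g$ with $\xi(g)$ transcendental, the identity $\sigma(\xi(\mathbf{t}_\sigma^{-1}g))=\xi(g)$ already fails for $\sigma$ fixing $\Q(\bfgreek{mu}_\infty)\cdot\Q(\Pi,\eta)$ but moving $\xi(g)$. What you actually need is that the fixed points span over $\C$, and this requires exhibiting a concrete Galois-fixed vector. That is what the paper does: when $\eta$ is unramified at $S_{\Pi_f}$, the new (essential) vector normalized by $\xi_{\Pi_v}(\mathrm{id}_v)=1$ satisfies ${}^\sigma\xi_{\Pi_v}=\xi_{{}^\sigma\Pi_v}$ and hence is fixed by $\mathrm{Aut}(\C/\Q(\Pi,\eta))$; the $\Q(\Pi,\eta)$-span of its $G(\A_f)$-orbit is then shown to be the desired $\Q(\Pi,\eta)$-structure by the Waldspurger-style surjectivity/injectivity argument, and passage to a finite extension $\F$ is routine. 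When $\eta$ is ramified on $S_{\Pi_f}$ this normalization is unavailable and the paper defers to a separate Hilbert~90 argument in the cited reference; but in either case the trivialization of the cocycle has to be established by an arithmetic input specific to the Shalika model, not deduced formally from rationality of some other realization of $\Pi_f$.
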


\begin{proof}
Using Thm.\ \ref{thm:arithmeticShalika}, the first part of the lemma may be proved in analogy to the case of Whittaker models. See Harder \cite[p.80]{hardermodsym}, Mahnkopf \cite[p.594]{mahnk} or Raghuram--Shahidi \cite[Lem.\ 3.2]{raghuram-shahidi-imrn}. See also Remark \ref{rem:unique} below.

In order to prove the remaining assertions of the lemma, consider the vector $\xi_{\Pi_f} = \otimes_{v\notin S_\infty}\xi_{\Pi_v}$, where $\xi_{\Pi_v}$ is a new vector (called ``essential vector'' in Jacquet--Piatetski-Shapiro--Shalika \cite{jac-ps-shalika-mathann}.) That means that $\xi_{\Pi_v}$ is right invariant by a suitable open compact subgroup of $G(F_v)$ which gives rise to a one-dimensional space of invariant vectors. At each finite place $v$, the vector $\xi_{\Pi_v}$ is unique up to scalars and if $\eta_v$ is unramified at all $v\in S_{\Pi_f}$ (in particular, if $\eta_v=\triv_v$ at the finitely many, non-archimedean places, where $\Pi$ ramifies) we may fix a choice of $\xi_{\Pi_v}$ by assuming that $\xi_{\Pi_v}(id_v)=1$, see \cite{note}, Cor.\ 8. In this case, we obtain by an easy calculation using (\ref{eqn:aut-c-action}) that
$$
{}^\sigma\xi_{\Pi_v}=\xi_{{}^\sigma\Pi_v}.
$$
In particular, any $\sigma \in {\rm Aut}({\C}/\Q(\Pi,\eta))$ fixes $\xi_{\Pi_v}$, and hence fixes the global new vector $\xi_{\Pi_f}$. As a consequence, the well-known strategy used for establishing rational structures on Whittaker models, \cite{hardermodsym, mahnk, raghuram-shahidi-imrn}, carries over the our situation: For this, let $\S^{\eta_f}_{\psi_f}(\Pi_f)_{\Q(\Pi,\eta)}$ be the $\Q(\Pi,\eta)$-span of the $G({\A}_f)$-orbit of $\xi_{\Pi_f}$. Then the canonical map
$$
\S^{\eta_f}_{\psi_f}(\Pi_f)_{\Q(\Pi,\eta)} \otimes_{\Q(\Pi,\eta)} {\C} \to \S^{\eta_f}_{\psi_f}(\Pi_f)
$$
is an isomorphism. Indeed, as $\S^{\eta_f}_{\psi_f}(\Pi_f)_{\Q(\Pi,\eta)}\neq 0$, surjectivity follows from the irreducibility of $\S^{\eta_f}_{\psi_f}(\Pi_f)$, and injectivity follows exactly as in the proof of \cite[Lemme I.1.1]{waldsp}. The action of ${\rm Aut}({\C}/\Q(\Pi,\eta))$ on $\S^{\eta_f}_{\psi_f}(\Pi_f)$ may then be identified with the action of ${\rm Aut}({\C}/\Q(\Pi,\eta))$ on $\S^{\eta_f}_{\psi_f}(\Pi_f)_{\Q(\Pi,\eta)} \otimes_{\Q(\Pi,\eta)} {\C}$, where it acts on the second factor. We deduce that
$$
\S^{\eta_f}_{\psi_f}(\Pi_f)_{\Q(\Pi,\eta)} = \S^{\eta_f}_{\psi_f}(\Pi_f)^{{\rm Aut}({\C}/\Q(\Pi,\eta))}.
$$
Now, if $\F$ is a finite extension of $\Q(\Pi,\eta)$ then, in the above isomorphism, one can identify
$$
\S^{\eta_f}_{\psi_f}(\Pi_f)_\F = \S^{\eta_f}_{\psi_f}(\Pi_f)_{\Q(\Pi,\eta)} \otimes_{\Q(\Pi,\eta)} \F
$$
with
$\S^{\eta_f}_{\psi_f}(\Pi_f)^{{\rm Aut}({\C}/\F)}$. This proves the lemma for unramified $\eta_v$, $v\in S_{\Pi_f}$. If there is a place $v\in S_{\Pi_f}$, where $\eta$ is ramified, too, then the well-known strategy from the Whittaker case does apply any more. Still, the lemma holds true by Hilbert's Thm.\ 90, as it is explained for instance in \cite{gro-ron}, Sect.\ 3.3.1.
\end{proof}

\begin{rem}\label{rem:unique}
For the first assertion of Lem.\ \ref{lem:rational-shalika} we used uniqueness of local, non-archimedean Shalika models. This has been established by Jacquet--Rallis \cite{jacquet-rallis} and Nien \cite{nien} for trivial $\eta$ and in general by Chen--Sun in \cite{chen_sun}, Thm.\ A.
\end{rem}

\subsection{A specific choice of a compatible vector in the Shalika model}
Assume that $\Pi$ is a cohomological cuspidal automorphic representations of $G(\A)$ which admits an $(\eta,\psi)$-Shalika model. Let $S_{\Pi,\psi}=S_\Pi\cup S_\psi$ as in Prop.\ \ref{prop:FJL-fct}, resp.\ $S_{\Pi_f,\psi}:=S_{\Pi_f}\cup S_\psi$. We shall now fix once and for all a particular vector
$$
\xi^\circ_{\Pi_f} \ = \
\otimes'_{v\notin S_{\infty}}  \xi^\circ_{\Pi_v} \ \in \
\S^{\eta_f}_{\psi_f}(\Pi_f),
$$
which transforms compatibly under Aut$(\C)$, i.e., ${}^\sigma\xi^\circ_{\Pi_f}=\xi^\circ_{{}^\sigma\Pi_f}$ and which has the following properties:
\begin{enumerate}
\item $\zeta_v(\frac12,\xi^\circ_{\Pi_v})=L(\frac12,\Pi_v)$ for all $v\notin S_{\Pi,\psi}$,
\item $\zeta_v(\frac12,\xi^\circ_{\Pi_v})=1$ for all $v \in S_{\Pi_f,\psi}$.
\end{enumerate}
\noindent
We divide our discussion into two parts.

\subsubsection{}
\label{sec:unramified}
First, we consider the unramified case. So, let $v\notin S_{\Pi,\psi}$.
According to Prop.\ \ref{prop:FJL-fct}, the normalized spherical vector $\xi_{\Pi_v}$ has the property $\zeta_v(\frac12,\xi_{\Pi_v})=L(\frac12,\Pi_v)$. Furthermore, as one easily sees, ${}^\sigma\xi_{\Pi_v}=\xi_{{}^\sigma \Pi_v}$ for every $\sigma\in {\rm Aut}(\C)$. Therefore, we let
$$
\xi^\circ_{\Pi_v} \ := \
\xi_{\Pi_v}  \ = \ \mbox{normalized spherical vector},
$$
for $v\notin S_{\Pi,\psi}$.

\subsubsection{}
\label{sec:ramified}
Next, consider $v\in S_{\Pi_f,\psi}$. Now the situation is slightly more complicated. Firstly, we observe that by virtue of Prop.\ \ref{prop:FJL-fct} and the non-vanishing of the local $L$-function $L(s,\Pi_v)$ at $s=\tfrac12$, there exists a vector $\xi^\circ_{\Pi_v}\in \S^{\eta_v}_{\psi_v}(\Pi_v)$ such that $\zeta_v(\tfrac12,\xi^\circ_{\Pi_v})=1$. We fix such a local Shalika-functional and put $\xi^\circ_{{}^\sigma\Pi_v}:={}^\sigma\xi^\circ_{\Pi_v}$ for $\sigma\in $Aut$(\C)$. Observe that $\xi^\circ_{{}^\sigma\Pi_v}\in \S^{{}^\sigma\!\eta_v}_{\psi_v}({}^\sigma\Pi_v)$ by Lem.\ \ref{lem:rational-shalika}. Hence, for our purpose it is enough to show 

\begin{lem}\label{lem:rationality-newvector}
Let $v\in S_{\Pi_f,\psi}$. With the above notation, $\zeta_v(\tfrac12, \xi^\circ_{{}^\sigma\Pi_v})=1.$
\end{lem}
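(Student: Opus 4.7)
The goal is to establish the Galois-equivariance
\[
\zeta_v(\tfrac12, {}^\sigma\xi) \ = \ \sigma\bigl(\zeta_v(\tfrac12, \xi)\bigr) \qquad \text{for every } \xi \in \S^{\eta_v}_{\psi_v}(\Pi_v);
\]
applying this to $\xi = \xi^\circ_{\Pi_v}$ with $\zeta_v(\tfrac12, \xi^\circ_{\Pi_v}) = 1$ immediately gives $\zeta_v(\tfrac12, {}^\sigma\xi^\circ_{\Pi_v}) = \sigma(1) = 1$, which is the lemma.

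First I would unwind the $\sigma$-twist: by \eqref{eqn:aut-c-action} together with the substitution $g' = t_\sigma^{-1} g$, which preserves both $|\det g|$ and the Haar measure since $t_\sigma \in \O_v^\times$, one obtains
\[
\zeta_v(s, {}^\sigma\xi) \ = \ \int_{\GL_n(F_v)} \sigma\bigl(\xi(\text{diag}(g,1))\bigr)\, |\det g|^{s-1/2}\, dg \qquad (\Re(s)\gg 0).
\]
In words: the absolutely convergent integral defining $\zeta_v(s, {}^\sigma\xi)$ differs from the one defining $\zeta_v(s, \xi)$ only by applying $\sigma$ pointwise to the integrand.

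Next I would exploit the rational-function structure from Prop.~\ref{prop:FJL-fct}. Setting $Y := q_v^{-s+1/2}$, the Cartan decomposition $\GL_n(F_v) = \bigsqcup_\lambda K\varpi_v^\lambda K$ (with $K = \GL_n(\O_v)$) yields $|\det g|^{s-1/2} = Y^{|\lambda|}$ on the double coset $K\varpi_v^\lambda K$, so
\[
\zeta_v(s, \xi) \ = \ \sum_\lambda F_\lambda(\xi)\, Y^{|\lambda|}, \qquad F_\lambda(\xi) \ := \ \int_{K\varpi_v^\lambda K} \xi(\text{diag}(g,1))\,dg,
\]
which converges for $\Re(s)\gg 0$ and represents the rational function $L(s, \Pi_v)\cdot P(s, \xi)$, where $P(s,\xi)$ is a Laurent polynomial in $Y$ with only finitely many nonzero coefficients. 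Because $\xi$ is smooth, each $F_\lambda(\xi)$ reduces to a finite $\Q$-linear combination of values of $\xi$ (the volumes of the locally constant pieces of the partition are rational by our choice of measures in Sect.~\ref{sect:measures}); and since $t_\sigma I_n \in K$ stabilizes $K\varpi_v^\lambda K$, the very same finite combination computes $F_\lambda(\sigma\cdot\xi) = \sigma(F_\lambda(\xi))$, where $(\sigma\cdot\xi)(g) := \sigma(\xi(g))$. Hence the rational functions $\zeta_v(s, {}^\sigma\xi)$ and $\sigma^\star\bigl(\zeta_v(s, \xi)\bigr)$ in $\C(Y)$ agree, where $\sigma^\star$ denotes the action of $\sigma$ on coefficients (fixing $Y$).

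Since $Y = 1$ at $s = \tfrac12$ and $\sigma(1) = 1$, evaluation commutes with $\sigma^\star$, yielding the Galois-equivariance displayed above and hence the lemma. The main technical subtlety lies in the identification of $\zeta_v(s, \sigma\cdot\xi)$ with $\sigma^\star\bigl(\zeta_v(s, \xi)\bigr)$ as rational functions in $Y$: for $n \geq 2$ infinitely many $\lambda$ contribute to any given power of $Y$, so rather than comparing the coefficient of each $Y^k$ term-by-term in $\lambda$, one must pass through Prop.~\ref{prop:FJL-fct} (which forces $P(s,\xi)$ to be a finite Laurent polynomial) to reduce the claim to a finite-dimensional identity that is manifestly Galois-compatible and that extends past the analytic continuation to $s = \tfrac12$.
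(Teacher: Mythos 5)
Your overall strategy is the same as the paper's: write the zeta integral as a Laurent series in $Y = q_v^{-s+1/2}$, show the coefficients transform Galois-equivariantly, use the fact that $P(s,\xi)$ is a Laurent \emph{polynomial} to make evaluation at $Y = 1$ (i.e., $s=\tfrac12$) commute with $\sigma$, and conclude. However, there is a genuine gap in the step where you pass from ``each $F_\lambda(\sigma\cdot\xi) = \sigma(F_\lambda(\xi))$'' to ``$\zeta_v(s,{}^\sigma\xi)$ and $\sigma^\star(\zeta_v(s,\xi))$ agree as rational functions.'' The $k$-th Laurent-series coefficient is $c_k(\xi) = \sum_{|\lambda|=k} F_\lambda(\xi)$, and if this sum over $\lambda$ were genuinely infinite (as you assert for $n\geq 2$), then $\sigma(c_k(\xi))$ need not equal $\sum_{|\lambda|=k}\sigma(F_\lambda(\xi)) = c_k({}^\sigma\xi)$, since a generic $\sigma\in\mathrm{Aut}(\C)$ is wildly discontinuous and does not commute with convergent infinite sums. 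Your proposed remedy --- invoking the Laurent polynomiality of $P(s,\xi)$ --- does not repair this: the finitely many coefficients $p_m$ of $P$ are expressed as finite $\C$-linear combinations of the $c_k$'s (via $P = L^{-1}\cdot\zeta_v$), but each $c_k$ remains an a priori infinite sum, so the Galois-equivariance of the $p_m$'s would still require commuting $\sigma$ through those infinite sums.

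The missing ingredient is the \emph{support property} of Shalika functionals (Friedberg--Jacquet, \cite{friedjac}, p.~111), which the paper invokes at exactly this point: for each fixed $k$, the function $g\mapsto\xi(\mathrm{diag}(g,1))$ restricted to $\{|\det g| = q^{-k}\}$ is supported on finitely many double cosets $K\varpi^\lambda K$, so $c_k(\xi)$ is a \emph{finite} sum and $\sigma$ passes through it. (In particular, your assertion that ``for $n \geq 2$ infinitely many $\lambda$ contribute to any given power of $Y$'' is not correct once one accounts for the support of $\xi$.) Once finiteness of $c_k$ is in hand, the identity $\sigma(c_k(\xi)) = c_k({}^\sigma\xi)$ follows and the rest of your argument goes through. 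A secondary, minor point: the Laurent polynomiality of $P(s,\xi)$ is \emph{not} a consequence of Prop.~\ref{prop:FJL-fct} (which only gives holomorphy of $P$); it comes from Lapid--Mao \cite[Cor.~5.2]{erez_mao}, as the paper indicates.
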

\begin{proof}
Let $\Re(s)\gg 0$ and denote by $q$ the order of the residue field $\O_v/\wp_v$. Then, for any $\xi_v\in \S^{\eta_v}_{\psi_v}(\Pi_v)$
$$
\zeta_v(s+\tfrac12,\xi_v)=\int_{\GL_n(F_v)} \xi_v\left(\!\!\left(\!\! \begin{array}{ccc}
g_{1,v} &  0\\
0 &  1_v
\end{array}\!\!\right)\!\!\right) |\det(g_{1,v})|^{s} dg_{1,v}
$$
is a formal Laurent series in the variable $q^{-s}$, its $k$-th coefficients being
$$c_k^{\eta_v, \psi_v}(\xi_v):=\int_{\substack{g_{1,v}\in\GL_n(F_v)\\ |\det(g_{1,v})|=q^{-k}}}\xi_v\left(\!\!\left(\!\! \begin{array}{ccc}
g_{1,v} &  0\\
0 &  1_v
\end{array}\!\!\right)\!\!\right) dg_{1,v}.$$
The latter integral vanishes for $k\ll 0$ and becomes a finite sum, because of the support of Shalika functionals, cf.\ \cite{friedjac}, p.\ 111. Hence, by a change of variables in the integral, we obtain 
\begin{equation}\label{eq:c}
\sigma(c_k^{\eta_v, \psi_v}(\xi_v))=c_k^{{}^\sigma\!\eta_v, \psi_v}({}^\sigma\xi_v).
\end{equation} 
The linear span of the zeta-integrals $\zeta_v(s+\tfrac12,\xi_v)$, $\xi_v\in \S^{\eta_v}_{\psi_v}(\Pi_v)$, is a fractional ideal $\mathcal I^{\eta_v, \psi_v}(\Pi_v)$ of $\C[q^{-s},q^s]$, cf.\ \cite{erez_mao}, p.\ 27. Hence, \eqref{eq:c} implies that 
\begin{equation}\label{eq:d}
\mathcal I^{\eta_v, \psi_v}(\Pi_v)^\sigma = \mathcal I^{{}^\sigma\!\eta_v, \psi_v}({}^\sigma\Pi_v),
\end{equation}
where we let $\sigma\in $ Aut$(\C)$ act on Laurent polynomials by applying it to the coefficients. Recall the holomorphic function $P(s,\xi_v)$ from Prop.\ \ref{prop:FJL-fct}. It is a consequence of \cite{erez_mao}, Cor.\ 5.2, that $P(s+\tfrac12,\xi_v)$ is in fact a Laurent {\it polynomial} in $q^{\pm s}$, i.e., an element of $\C[q^{-s},q^s]$. So, we may (without any issues of convergence) simply insert $s=0$ into the two functions $\sigma(P(s+\tfrac12,\xi_v))$ and $P(s+\tfrac12,\xi_v)^\sigma$ and obtain 
\begin{equation}\label{eq:e}
\sigma(P(\tfrac12,\xi_v)) = P(\tfrac12,\xi_v)^\sigma.
\end{equation}
Specifying $\xi_v=\xi^\circ_{\Pi_v}$ and invoking analytic continuation
\begin{eqnarray*}
\zeta_v(\tfrac12, \xi^\circ_{{}^\sigma\Pi_v}) & = & L(\tfrac12,{}^\sigma\Pi_v) P(\tfrac12, \xi^\circ_{{}^\sigma\Pi_v})\\
& \underset{\textrm{\cite{clozel} Lem.\ 4.6}}{=} & \sigma(L(\tfrac12,\Pi_v)) P(\tfrac12, {}^\sigma\xi^\circ_{\Pi_v}) \\
& \underset{\eqref{eq:d}}{=} & \sigma(L(\tfrac12,\Pi_v)) P(\tfrac12, \xi^\circ_{\Pi_v})^\sigma \\
& \underset{\eqref{eq:e}}{=}  & \sigma(L(\tfrac12,\Pi_v)) \sigma( P(\tfrac12, \xi^\circ_{\Pi_v})) \\
& = & \sigma(L(\tfrac12,\Pi_v) P(\tfrac12, \xi^\circ_{\Pi_v})) \\
& = & \sigma(\zeta_v(\tfrac12, \xi^\circ_{\Pi_v})) \\
& = & \sigma(1) \\
& = & 1.
\end{eqnarray*}
\end{proof}

\subsubsection{In summary}\label{sect:vector}  Putting \ref{sec:unramified} and \ref{sec:ramified} together, we have a very special vector
$$\xi^\circ_{\Pi_f}=\otimes'_{v \notin S_{\infty}}\xi^\circ_{\Pi_v}\in \S^{\eta_f}_{\psi_f}(\Pi_f),$$
which satisfies
\begin{enumerate}
\item $\zeta_v(\frac12,\xi^\circ_{\Pi_v})=L(\frac12,\Pi_v)$ for all $v\notin S_{\Pi,\psi}$,
\item $\zeta_v(\frac12,\xi^\circ_{\Pi_v})=1$ for all $v\in S_{\Pi_f,\psi},$
\end{enumerate}
\noindent and
$${}^\sigma\xi^\circ_{\Pi_f} = \otimes'_{v \notin S_{\infty}}{}^\sigma\xi^\circ_{\Pi_v} =
\otimes'_{v \notin S_{\infty}}\xi^\circ_{{}^\sigma\Pi_v} = \xi^\circ_{{}^\sigma\Pi_f},$$
for all $\sigma\in\textrm{Aut}(\C)$.

\section{Automorphic cohomology groups and top-degree periods}
\subsection{}\label{sect:generator}
Continuing with the notation and assumptions of the previous sections, let $\Pi$ be a cohomological cuspidal automorphic representation of $G(\A)$ which admits an $(\eta,\psi)$-Shalika model. Recall from Sect.\ \ref{sect:cohreps} that in top-degree $q_0=d(n^2+n-1)$, and for any $\epsilon = (\epsilon_v)_{v\in S_\infty} \in (K_{\infty}/K_{\infty}^{\circ})^*\cong (\Z/2\Z)^d$, the
$\epsilon$-eigenspace of $(\g_\infty,K^\circ_\infty)$-cohomology is one-dimensional:
$$\dim H^{q_0}(\g_\infty,K^\circ_\infty,\Pi_\infty\otimes E^{\sf v}_\mu)[\epsilon]=1.$$
The same therefore holds for $\Pi_\infty$ being replaced by its local Shalika model $\S^{\eta_\infty}_{\psi_\infty}(\Pi_\infty)$. We will now fix once and for all a generator of these one-dimensional cohomology spaces (i.e., for all $\epsilon\in (K_{\infty}/K_{\infty}^{\circ})^*$ at once). To this end, observe that
$$
H^{q_0}(\g_\infty,K^\circ_\infty,\S^{\eta_\infty}_{\psi_\infty}(\Pi_\infty)\otimes E^{\sf v}_\mu)[\epsilon]
\ \subseteq \
\left(\bigwedge^{q_0} \left(\g_\infty/\k_\infty\right)^*\otimes \S^{\eta_\infty}_{\psi_\infty}(\Pi_\infty)\otimes E^{\sf v}_\mu\right)^{K^\circ_\infty},
$$
cf.\  \cite[II.3.4]{bowa}. So, we may choose a generator of $H^{q_0}(\g_\infty,K^\circ_\infty,\S^{\eta_\infty}_{\psi_\infty}(\Pi_\infty)\otimes E^{\sf v}_\mu)[\epsilon]$ of the form
$$
[\Pi_\infty]^\epsilon:=\sum_{\underline i=(i_1,...,i_{q_0})}\sum_{\alpha=1}^{\dim E_\mu} X^*_{\underline i}\otimes\xi^\epsilon_{\infty,\underline i, \alpha}\otimes e^{\sf v}_\alpha,
$$
where the following data has been fixed:
\begin{enumerate}
\item A basis $\{X_j\}$ of $\g_\infty/\k_\infty$, which fixes the dual-basis $\{X_j^*\}$ for $\left(\g_\infty/\k_\infty\right)^*$.
For $\underline i=(i_1,...,i_{q_0})$, let $X^*_{\underline i}=X^*_{i_1}\wedge...\wedge X^*_{i_{q_0}}\in \bigwedge^{q_0} \left(\g_\infty/\k_\infty\right)^*.$
\item Elements $e^{\sf v}_1,...,e^{\sf v}_{\dim E_\mu}$ making up a $\Q(E^{\sf v}_\mu)$-basis of $E^{\sf v}_\mu$.
\item To each $\underline i$ and $\alpha$, $\xi^\epsilon_{\infty,\underline i, \alpha}=\otimes_{v\in S_\infty} \xi^{\epsilon_v}_{v,\underline i, \alpha}\in \S^{\eta_\infty}_{\psi_\infty}(\Pi_\infty)=\otimes_{v\in S_\infty} \S^{\eta_v}_{\psi_v}(\Pi_v)$.
\end{enumerate}
We may and will assume that $\{X_j\}$ is the extension of a fixed basis $\{Y_j\}$ of $\h_\infty/(\h_\infty\cap\k_\infty)$ via the block-diagonal embedding $\iota:H\hra G$. Finally, recall that $\sigma\in {\rm Aut}(\C)$ acts on objects at infinity which are parameterized by $S_{\infty}$ by permuting the archimedean places. This induces an action of ${\rm Aut}(\C)$ on the cohomology class at infinity:
$$\tilde\sigma\left([\Pi_\infty]^\epsilon\right):=[{}^\sigma\Pi_\infty]^{\epsilon}.$$
(Observe that $({}^\sigma\xi^\epsilon_{\infty,\underline i, \alpha})_v=\xi^{\epsilon_v}_{\sigma^{-1}\circ v,\underline i, \alpha}$)

\subsection{The map $\Theta^\epsilon$ and the definition of the period $\omega^\epsilon(\Pi_f)$}
\label{sec:theta-epsilon}
The choice of the generator $[\Pi_\infty]^\epsilon$ of $H^{q_0}(\g_\infty,K^\circ_\infty,\S^{\eta_\infty}_{\psi_\infty}(\Pi_\infty)\otimes E^{\sf v}_\mu)[\epsilon]$ fixes an isomorphism
$\Theta_\Pi^\epsilon$ of $G(\A_f)$-modules exactly as in \cite[3.3]{raghuram-shahidi-imrn}; it is the composition of
the three isomorphisms:
\begin{eqnarray*}
\S^{\eta_f}_{\psi_f}(\Pi_f) & \ira &
\S^{\eta_f}_{\psi_f}(\Pi_f) \otimes
H^{q_0}(\g_\infty,K^\circ_\infty, \S^{\eta_{\infty}}_{\psi_{\infty}}(\Pi_\infty) \otimes E^{\sf v}_\mu)[\epsilon] \\
& \ira &
H^{q_0}(\g_\infty,K^\circ_\infty, \S^{\eta}_{\psi}(\Pi) \otimes E^{\sf v}_\mu)[\epsilon] \\
& \ira &
H^{q_0}(\g_\infty,K^\circ_\infty, \Pi \otimes E^{\sf v}_\mu)[\epsilon],
\end{eqnarray*}
where the first map is $\xi_{\varphi_f} \mapsto \xi_{\varphi_f} \otimes [\Pi_{\infty}]^\epsilon$;
the second map is the obvious one; and the third map is the map induced in cohomology
by $(\S^\eta_\psi)^{-1}$. More concretely,
if we denote by
$\xi_{\underline i,\alpha}:=\xi_{\infty,\underline i, \alpha}\otimes\xi_{\varphi_f}$ and $\varphi_{\underline i,\alpha}:= (\S^\eta_\psi)^{-1}(\xi_{\underline i,\alpha})$, then
$$\Theta^\epsilon(\xi_{\varphi_f})=\sum_{\underline i=(i_1,...,i_{q_0})}\sum_{\alpha=1}^{\dim E_\mu} X^*_{\underline i}\otimes\varphi_{\underline i, \alpha}\otimes e^{\sf v}_\alpha.$$

Recall from Prop.\ \ref{prop:regalg} that the space $H^{q_0}(\g_\infty,K^\circ_\infty,\Pi \otimes E^{\sf v}_\mu)[\epsilon]$ has a certain $\Q(\Pi,\eta)$-structure. In particular, it is defined over a number field. Since we are dealing with an irreducible representation of $G(\A_f)$, such $\Q(\Pi,\eta)$-structures are unique up to homotheties, i.e., up to multiplication with non-zero complex numbers, cf.\ \cite[Lem.\ I.1]{waldsp} or \cite[Prop. 3.1]{clozel}. This leads us to the following

\begin{defprop}[The periods]
\label{defprop}
Let $\Pi$ be a cuspidal automorphic representation of $G(\A)$ which is cohomological with respect to a highest weight representation $E^{\sf v}_\mu$. Assume furthermore that $\Pi$ admits an $(\eta,\psi)$-Shalika model. Let $\epsilon$ be a character of $K_{\infty}/K_{\infty}^\circ$ and let $[\Pi_\infty]^\epsilon$ be a generator of the one-dimensional vector space $H^{q_0}(\g_\infty,K^\circ_\infty,\S^{\eta_\infty}_{\psi_\infty}(\Pi_\infty)\otimes E^{\sf v}_\mu)[\epsilon]$. Then there is a non-zero complex number $\omega^{\epsilon}(\Pi_f)=\omega^\epsilon(\Pi_f,[\Pi_\infty]^\epsilon)$, such that the normalized map
\begin{equation}\label{eq:Theta_0}
\Theta^\epsilon_{\Pi,0}:=\omega^\epsilon(\Pi_f)^{-1}\cdot\Theta_\Pi^\epsilon
\end{equation}
is ${\rm Aut}({\mathbb C})$-equivariant, i.e., the following diagram commutes:
$$
\xymatrix{
\S^{\eta_f}_{\psi_f}(\Pi_f) \ar[rrrr]^{\Theta^\epsilon_{\Pi,0}}\ar[d]_{\tilde\sigma} & & & &
H^{q_0}(\mathfrak{g}_{\infty},K_{\infty}^\circ, \Pi\otimes E^{\sf v}_{\mu})[\epsilon]
\ar[d]^{\tilde\sigma} \\
\S^{{}^\sigma\!\eta_f}_{\psi_f}({}^\sigma\Pi_f)
\ar[rrrr]^{\Theta^{\epsilon}_{{}^\sigma\Pi,0}}
& & & &
H^{q_0}(\mathfrak{g}_{\infty},K_{\infty}^\circ, {}^\sigma\Pi\otimes {}^\sigma\! E^{\sf v}_\mu)[\epsilon]
}
$$
In particular, $\Theta^\epsilon_0$ maps the $\Q(\Pi,\eta)$-structure $\S^{\eta_f}_{\psi_f}(\Pi_f)_{\Q(\Pi,\eta)}$, defined in Lem.\ \ref{lem:rational-shalika},
onto the $\Q(\Pi,\eta)$-structure of $H^{q_0}(\mathfrak{g}_{\infty},K_{\infty}^\circ, \Pi\otimes E^{\sf v}_{\mu})[\epsilon]$.
The complex number $\omega^{\epsilon}(\Pi_f)$ is well-defined only up to multiplication by invertible elements of the number field ${\mathbb Q}(\Pi,\eta)$.
\end{defprop}

\begin{proof}
Having fixed a $\Q(\Pi,\eta)$-structure on $\S^{\eta_f}_{\psi_f}(\Pi_f)$ and on $H^{q_0}(\mathfrak{g}_{\infty},K_{\infty}^\circ, \Pi\otimes E^{\sf v}_{\mu})[\epsilon]$ above, which are both unique up to homotheties, we see that there is a non-zero complex number $\omega^\epsilon(\Pi_f)$ such that $\Theta^\epsilon_{\Pi,0}=\omega^\epsilon(\Pi_f)^{-1}\cdot\Theta_\Pi^\epsilon$ maps the one $\Q(\Pi,\eta)$-structure onto the other. Now, recall the normalized new vector $\xi_{\Pi_f}$ from the proof of Lem.\ \ref{lem:rational-shalika}. By what we just said, for every $\sigma\in\textrm{Aut}(\C)$
$$\tilde\sigma\left(\Theta^\epsilon_{\Pi,0}\left(\xi_{\Pi_f}\right)\right)\quad\textrm{and}\quad\Theta^{\epsilon}_{{}^\sigma\Pi,0}\left(\tilde\sigma\left(\xi_{\Pi_f}\right)\right)$$
are both new vectors in the same $\Q({}^\sigma\Pi,{}^\sigma\!\eta)$-structure of $H^{q_0}(\mathfrak{g}_{\infty},K_{\infty}^\circ, {}^\sigma\Pi\otimes {}^\sigma\! E^{\sf v}_\mu)[\epsilon]$. Hence, these two vectors only differ by an element in $\Q({}^\sigma\Pi,{}^\sigma\!\eta)^\times$ and so, by adjusting
$\omega^{\epsilon}({}^\sigma \Pi_f)$ accordingly, we may assume that
$$\tilde\sigma\left(\Theta^\epsilon_{\Pi,0}\left(\xi_{\Pi_f}\right)\right)=\Theta^{\epsilon}_{{}^\sigma\Pi,0}\left(\tilde\sigma\left(\xi_{\Pi_f}\right)\right).$$
Since $\xi_{\Pi_f}$ generates $\S^{\eta_f}_{\psi_f}(\Pi_f)$ as a $G(\A_f)$-representation over $\C$, cf.\ the proof of Lem.\ \ref{lem:rational-shalika}, this implies that the above diagram commutes. Thus, the assertion.
\end{proof}

\begin{rem}
Note that once we have chosen $\omega^{\epsilon}(\Pi_f)$, requiring the commutativity of the above diagram actually pins down $\omega^{\epsilon}({}^\sigma \Pi_f)$. Further, if we change $\omega^{\epsilon}(\Pi_f)$ to $\kappa \, \omega^{\epsilon}(\Pi_f)$ with a $\kappa \in {\mathbb Q}(\Pi, \eta)^\times$ then the period
$\omega^{\epsilon}({}^\sigma\Pi_f)$ changes to ${\sigma}(\kappa)\cdot\omega^{\epsilon}({}^\sigma\Pi_f).$
\end{rem}

\section{Behaviour of periods upon twisting by characters}\label{sect:twisted}

The purpose of this section is to study the behaviour of the periods $\omega^{\epsilon}(\Pi_f)$ upon twisting $\Pi$ by any algebraic Hecke character
$\chi$ of $F$. This is a generalization to the context at hand of the main theorem of Raghuram--Shahidi \cite{raghuram-shahidi-imrn}.

\subsection{Preliminaries on twisting characters}\label{sect:characters}
Before we state and prove the main result of this section, we need some preliminaries on Hecke characters. By a Hecke character $\chi$ of $F$, we mean a continuous homomorphism $\chi: F^\times\backslash \A^\times \to {\mathbb C}^\times$. By an algebraic Hecke character, we mean a Hecke character $\chi$ whose component at infinity, denoted $\chi_{\infty}$, is algebraic in the sense of Clozel \cite[1.2.3]{clozel}; these are the Gr\"o\ss encharakters of type $A_0$ of A. Weil. Note that $\chi$ being algebraic implies that $\chi=\tilde\chi |\cdot|^b$ with $\tilde\chi$ a finite-order Hecke character and $b\in\Z$. In particular, at each archimedean place $v\in S_\infty$, $\chi_v(x)={\rm sgn}(x)^{a_v}|x|^{b}$, for $x\in\R^\times$, $a_v\in\{0,1\}$ and $b\in\Z$. We define the {\it signature} of an algebraic Hecke character $\chi$ of $F$ to be
$$\epsilon_{\chi}:= \left((-1)^{a_v+b}\right)_{v \in S_{\infty}} \in \{\pm 1\}^d.$$
We will think of $\epsilon_{\chi}$ as a character of $K_{\infty}/K_{\infty}^{\circ}$. We let ${\mathbb Q}(\chi)$ denote the rationality field of $\chi$. Since $\chi$ is algebraic, ${\mathbb Q}(\chi)$ is a number field. We have $\Q(\chi)=\Q(\textrm{Im}(\tilde\chi_f))$ and $\epsilon_\chi=\epsilon_{{}^\sigma\!\chi}$ for all $\sigma\in$ Aut$(\C)$. We obtain the following lemma.

\begin{lem}\label{lem:twists}
Let $\chi=\chi_\infty\otimes\chi_f$ be an algebraic Hecke character of $F$ and $\Pi$ a cuspidal automorphic representation of $G(\A)$. If $\Pi$ is cohomological with respect to the highest weight module $E^{\sf v}_\mu$, then the twisted cuspidal automorphic representation $\Pi \otimes \chi$ is cohomological with respect to the highest weight module $E^{\sf v}_{\mu-b}:=E^{\sf v}_\mu\otimes\otimes_{v\in S_\infty}\det^{-b}$. If $\Pi$ has an $(\eta,\psi)$-Shalika model, then $\Pi \otimes \chi$ has an $(\eta\chi^2,\psi)$-Shalika model. In particular, the period $\omega^{\epsilon}(\Pi_f \otimes \chi_f)$ is defined.
\end{lem}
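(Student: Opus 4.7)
The plan is to verify the three assertions in turn. First, for the cohomological statement, recall that $\chi$ being algebraic of infinity type $b\in\Z$ means that at every $v\in S_\infty$ one has $\chi_v(x)=\sgn(x)^{a_v}|x|^b$, which on $\R^\times$ factors as $\sgn^{a_v+b}\cdot(x\mapsto x^b)$. Consequently $(\chi_\infty\circ\det)\otimes{\det}^{-b}$ is the character $\otimes_{v\in S_\infty}\sgn^{a_v+b}\circ\det$ of $G_\infty$, which is trivial on $K^\circ_\infty$. Tensoring this sign character out of $(\g_\infty,K^\circ_\infty)$-cohomology gives an isomorphism of vector spaces
\begin{equation*}
H^{q_0}\bigl(\g_\infty,K^\circ_\infty,(\Pi_\infty\otimes\chi_\infty)\otimes E^{\sf v}_{\mu-b}\bigr)\ \cong\ H^{q_0}\bigl(\g_\infty,K^\circ_\infty,\Pi_\infty\otimes E^{\sf v}_\mu\bigr),
\end{equation*}
whose right-hand side is non-zero in the top degree $q_0$ by Sect.~\ref{sect:cohreps}. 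Since cuspidality is preserved under twisting by a Hecke character, this shows $\Pi\otimes\chi$ is a cohomological cuspidal automorphic representation with respect to $E^{\sf v}_{\mu-b}$.

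Next, for the Shalika model assertion, I would unwind the defining integral directly. The pointwise product $\varphi\cdot(\chi\circ\det)$ is an element of $\Pi\otimes\chi$ for each $\varphi\in\Pi$. Writing $s\in\S(\A)$ in terms of $h\in\GL_n$ and $X\in\mathrm{M}_n$ as in Sect.~\ref{sect:globalShalikamodels}, we have $\det(s)=(\det h)^2$, so $\chi^2(\det h)=\chi(\det s)$ and $(\eta\chi^2)(s)=\eta(\det h)\cdot\chi(\det s)$. Substituting into the Shalika integral, the factor $\chi(\det(sg))=\chi(\det s)\chi(\det g)$ arising from $(\chi\circ\det)(sg)$ cancels against the $\chi(\det s)^{-1}$ produced by $(\eta\chi^2)^{-1}(s)$, leaving
\begin{equation*}
\S^{\eta\chi^2}_\psi\bigl(\varphi\cdot(\chi\circ\det)\bigr)(g)\ =\ \chi(\det g)\cdot\S^\eta_\psi(\varphi)(g).
\end{equation*}
The right-hand side is non-zero for some $(\varphi,g)$ because $\S^\eta_\psi$ is, so by Thm.~\ref{thm:JS} the representation $\Pi\otimes\chi$ admits an $(\eta\chi^2,\psi)$-Shalika model.

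Finally, to conclude that $\omega^\epsilon(\Pi_f\otimes\chi_f)$ is defined, it suffices to check that the pair $(\Pi\otimes\chi,\eta\chi^2)$ fits the running hypotheses of Def./Prop.~\ref{defprop}. The central character of $\Pi\otimes\chi$ is $\omega_\Pi\chi^{2n}=\eta^n\chi^{2n}=(\eta\chi^2)^n$, so $\eta\chi^2$ is a valid choice of idèle class character in the sense of Sect.~\ref{sect:cusprep}. Combined with the two previous steps, this places $\Pi\otimes\chi$ in the setting of that definition and produces $\omega^\epsilon(\Pi_f\otimes\chi_f)$. No step above presents a genuine obstacle; the only point that requires a little care is tracking the sign character $\sgn^{a_v+b}\circ\det$ produced by the cohomological twist, which merely permutes the $\epsilon$-isotypic components of cohomology but does not affect the existence of the period.
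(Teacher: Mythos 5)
Your proposal is correct and follows essentially the same route as the paper: the key step is the same direct computation showing $\S^{\eta\chi^2}_\psi(\varphi\cdot(\chi\circ\det))(g)=\chi(\det g)\,\S^\eta_\psi(\varphi)(g)$, followed by an appeal to Thm.~\ref{thm:JS}. The paper declares the cohomological assertion ``clear'' and says nothing about the central character; you simply fill in these two checks (the sign-character argument at infinity and $\omega_{\Pi\otimes\chi}=(\eta\chi^2)^n$), which is harmless and matches what the authors have in mind.
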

\begin{proof}
The first part of the lemma being clear, we only prove the second assertion. Therefore, let $\varphi_\chi\in\Pi\otimes\chi$. It is of the form $\varphi_\chi(g)=\varphi(g)\chi(\det(g))$ for $\varphi\in\Pi$, $g\in G(\A)$. Now, a direct calculation shows that $\S^{\eta\chi^2}_\psi(\varphi_\chi)(g)=\S^{\eta}_\psi(\varphi)(g)\cdot\chi(\det(g))$ for $g\in G(\A)$. So, $\Pi \otimes \chi$ has an $(\eta\chi^2,\psi)$-Shalika model, if $\Pi$ has an $(\eta,\psi)$-Shalika model, cf.\ the proof of Thm.\ \ref{thm:JS}.
\end{proof}

The point being made in the first assertion of Lem.\ \ref{lem:twists} is that we are making a compatible choice of generators $[\Pi_\infty]^\epsilon$, i.e., given $\Pi$, $\chi$ and $\epsilon$, a choice of $[\Pi_\infty]^\epsilon$ pins down a choice $[\Pi_\infty\otimes\chi_\infty]^{\epsilon\cdot\epsilon_\chi}$.

Following Weil \cite[VII, Sect.\ 7]{weil}, we define the Gau\ss ~sum of $\chi_f$ as follows: We let $\mathfrak{c}$ stand for the conductor ideal of $\chi_f$. Let $y = (y_v)_{v \notin S_\infty} \in {\mathbb A}_f^{\times}$ be such that ${\rm ord}_v(y_v) = -{\rm ord}_v(\mathfrak{c}) -{\rm ord}_v(\mathfrak{D}_F)$.  The Gau\ss ~sum of $\chi_f$ is defined as $\mathcal{G}(\chi_f,\psi_f,y) = \prod_{v \notin S_\infty} \mathcal{G}(\chi_v,\psi_v,y_v)$, where the local Gau\ss ~sum $\mathcal{G}(\chi_v,\psi_v,y_v)$ is defined as
$$
\mathcal{G}(\chi_v,\psi_v,y_v) = \int_{\mathcal{O}_v^{\times}} \chi_v(u_v)^{-1}\psi_v(y_vu_v)\, du_v.
$$
For almost all $v$, where everything in sight is unramified, we have $\mathcal{G}(\chi_v,\psi_v,y_v)=1$, and for all $v$ we have $\mathcal{G}(\chi_v,\psi_v,y_v) \neq 0$. (See, for example, Godement \cite[Eq. 1.22]{godement}.) Note that, unlike Weil, we do not normalize the Gau\ss ~sum to make it have absolute value one and we do not have any factor at infinity. Suppressing the dependence on $\psi$ and $y$, we denote $\mathcal{G}(\chi_f,\psi_f,y)$ simply by $\mathcal{G}(\chi_f)$.

\subsection{The main theorem on period relations}\label{sect:twistedcharacters}

\begin{thm}
\label{thm:twisted}
Let $F$ be a totally real number field and $\Pi$ be a cohomological cuspidal automorphic representation of $G(\A)=\GL_{2n}(\A)$ which admits an $(\eta,\psi)$-Shalika model. Let $\epsilon$ be a character of $K_{\infty}/K_{\infty}^\circ$ and we let $\omega^{\epsilon}(\Pi_f)$ be the period as in Definition~\ref{defprop}. Let $\chi$ be an algebraic Hecke character of $F$, and let $\epsilon_{\chi}$ be its signature. We have the following relations:
\begin{enumerate}
\item\label{enum:twisted1} For any $\sigma \in {\rm Aut}({\mathbb C})$ we have
$$
\sigma\left(\frac
{\omega^{\epsilon \cdot \epsilon_{\chi}}(\Pi_f\otimes\chi_f)}
{\G(\chi_f)^n \, \omega^{\epsilon}(\Pi_f) }\right)
 \ = \
\left(\frac
{\omega^{\epsilon \cdot \epsilon_{\chi}} ({}^{\sigma}\Pi_f \otimes {}^\sigma\!\chi_f)}
{\G( {}^\sigma\!\chi_f)^n \, \omega^{ \epsilon} ({}^{\sigma}\Pi_f)}\right).
$$
\item Let ${\mathbb Q}(\Pi, \eta, \chi)$ be the compositum of the number fields
${\mathbb Q}(\Pi,\eta)$ and $\Q(\chi)$. We have
$$
\omega^{\epsilon \cdot \epsilon_{\chi}}(\Pi_f\otimes\chi_f) \
\sim_{{\mathbb Q}(\Pi, \eta, \chi)} \
\G(\chi_f)^n\, \omega^{\epsilon}(\Pi_f).
$$
By ``$\sim_{{\mathbb Q}(\Pi, \eta, \chi)}$'' we mean up to multiplication by an element of ${\mathbb Q}(\Pi, \eta, \chi)$.
\end{enumerate}
\end{thm}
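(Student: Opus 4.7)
The plan is to prove (1) directly and to deduce (2) from it by a standard Galois-descent argument. For (2): applying (1) to any $\sigma \in {\rm Aut}(\C/\Q(\Pi,\eta,\chi))$, the right-hand side agrees with the left-hand side up to the residual $\Q(\Pi,\eta,\chi)^\times$-ambiguity inherent in the definitions of the periods and of the Gauss sum. Hence the ratio inside $\sigma(\cdot)$ on the left is fixed by every such $\sigma$, so it lies in $\Q(\Pi,\eta,\chi)$, which is exactly (2).

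For (1), I would construct and compare twisting isomorphisms on both sides of the defining square for $\Theta^\epsilon_{\Pi,0}$ of Definition/Proposition~\ref{defprop}. On the Shalika side, set
$$
\mathcal T_\chi\colon \S^{\eta_f}_{\psi_f}(\Pi_f) \longrightarrow \S^{\eta_f \chi_f^2}_{\psi_f}(\Pi_f \otimes \chi_f), \qquad \xi \longmapsto \xi \cdot (\chi_f \circ \det),
$$
a $G(\A_f)$-equivariant isomorphism by the calculation in the proof of Lem.\ \ref{lem:twists}. On the cohomology side, tensoring with $\chi$ on the $\Pi$-factor and with the algebraic character $\chi^{-1}\circ\det$ on the coefficient factor (observing that $\chi_\infty^{-1}\circ\det$ absorbs the difference between $E^{\sf v}_\mu$ and $E^{\sf v}_{\mu-b}$ up to the sign $\epsilon_\chi$) yields an ${\rm Aut}(\C)$-equivariant isomorphism
$$
T_\chi^*\colon H^{q_0}(\g_\infty,K_\infty^\circ;\Pi \otimes E^{\sf v}_\mu)[\epsilon] \longrightarrow H^{q_0}(\g_\infty,K_\infty^\circ;(\Pi \otimes \chi) \otimes E^{\sf v}_{\mu-b})[\epsilon \cdot \epsilon_\chi].
$$
Choosing the archimedean generators compatibly, i.e.\ $[\Pi_\infty \otimes \chi_\infty]^{\epsilon \cdot \epsilon_\chi} := T_{\chi_\infty}^*([\Pi_\infty]^\epsilon)$, the square with rows $\Theta^\epsilon_\Pi$, $\Theta^{\epsilon \cdot \epsilon_\chi}_{\Pi \otimes \chi}$ and vertical arrows $\mathcal T_\chi$, $T_\chi^*$ commutes by the very construction of $\Theta$ in Sect.\ \ref{sec:theta-epsilon}.

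The key computation is the failure of $\mathcal T_\chi$ to commute with the ${\rm Aut}(\C)$-actions. Using the twisted action \eqref{eqn:aut-c-action} together with $\det(\t_\sigma^{-1}) = t_\sigma^{-n}$, one finds
$$
{}^\sigma\bigl(\mathcal T_\chi(\xi)\bigr)(g) \ = \ \sigma\bigl(\chi_f(t_\sigma^{-n})\bigr)\cdot \mathcal T_{{}^\sigma\!\chi}({}^\sigma \xi)(g).
$$
The classical Gauss sum identity
$$
\sigma(\G(\chi_f)) \ = \ {}^\sigma\!\chi_f(t_\sigma)\cdot \G({}^\sigma\!\chi_f),
$$
obtained by a short change-of-variable in the defining integral using that $\psi_f$ is valued in roots of unity and that $\sigma$ acts on them via $t_\sigma$, then reads $\sigma(\chi_f(t_\sigma^{-n})) = \G({}^\sigma\!\chi_f)^n / \sigma(\G(\chi_f)^n)$. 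Combining the ${\rm Aut}(\C)$-equivariance of $\Theta^\epsilon_{\Pi,0}$, $\Theta^{\epsilon \cdot \epsilon_\chi}_{\Pi \otimes \chi, 0}$ and $T_\chi^*$ with the scalar deviation of $\mathcal T_\chi$, a diagram chase, applying $\sigma$ to the normalized square and comparing with the analogous square for $({}^\sigma \Pi, {}^\sigma\!\chi)$, yields precisely the identity in (1); the $n$-th power of the Gauss sum appears because of the $n$-th power of $t_\sigma$ in $\det(\t_\sigma^{-1})$.

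The principal technical obstacle I anticipate is the archimedean book-keeping: one must verify that the compatible choice $[\Pi_\infty \otimes \chi_\infty]^{\epsilon \cdot \epsilon_\chi} := T_{\chi_\infty}^*([\Pi_\infty]^\epsilon)$ actually produces a non-zero generator of the correct one-dimensional eigenspace, and that the ${\rm Aut}(\C)$-action at infinity (which permutes archimedean places of $F$, cf.\ Sect.\ \ref{sect:sigmatwist}) intertwines properly with $T_{\chi_\infty}^*$. This is technically the heart of the argument, but it follows the same template as Raghuram--Shahidi's treatment of Whittaker periods in \cite{raghuram-shahidi-imrn}.
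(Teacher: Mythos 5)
Your proposal is correct and follows essentially the same route as the paper. Your map $\mathcal T_\chi$ is the paper's $\S_{\chi_f}$, your $T_\chi^*$ is the paper's $(A_\chi \otimes \triv_{E^{\sf v}_\mu})^*$, your scalar $\sigma(\chi_f(t_\sigma^{-n}))$ together with the Gauss-sum identity is exactly Prop.~\ref{prop:shalika-rational}, and the commutativity of the $\Theta$--$\mathcal T_\chi$--$T_\chi^*$ square together with the ${\rm Aut}(\C)$-equivariance of $T_\chi^*$ are precisely the facts the paper imports from Raghuram--Shahidi~\cite{raghuram-shahidi-imrn}, Props.~4.5 and 4.6, with Whittaker models replaced formally by Shalika models; the compatible choice of archimedean generators you flag is what the paper makes explicit after Lem.~\ref{lem:twists}. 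The only cosmetic difference is that you track the scalar deviation of each vertical arrow separately, whereas the paper computes a single composition two ways around the cube~\eqref{eqn:cube} and compares.
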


Note that $(2)$ follows from $(1)$ by the definition of the rationality fields of $\Pi,$ $\eta$ and $\chi$. The proof of \eqref{enum:twisted1} is basically the same as the proof of Raghuram--Shahidi \cite[Thm.\ 4.1]{raghuram-shahidi-imrn} but suitably adapted to the situation at hand. This entails an analysis of the following diagram of maps. (Here, we have abbreviated the $(\g_\infty,K^\circ_\infty)$-cohomology of a module $M$ simply by $H^q(M)$.) Observe that this diagram is not commutative. Indeed, the various complex numbers involved in (1) measure the failure of commutativity of this diagram.

\begin{equation}\label{eqn:cube}
\tiny
\xymatrix{
 & \S^{\eta_f}_{\psi_f}(\Pi_f)\ar[rr]^-{\Theta^{\epsilon}_{\Pi_f}}\ar[d]^{\S_{\chi_f}}\ar[ddl]_{\tilde\sigma} & &
H^{q_0}(\Pi \otimes E^{\sf v}_{\mu})[\epsilon]
\ar[d]^{(A_{\chi}\otimes \triv_{E^{\sf v}_{\mu}})^*}\ar[ddl]_{\tilde\sigma} \\
 & \S^{\eta_f\chi_f^2}_{\psi_f}(\Pi_f \otimes \chi_f) \ar[rr]^-{\Theta_{\Pi_f \otimes \chi_f}^{\epsilon \cdot \epsilon_{\chi}}} \ar[ddl]_{\tilde\sigma} & &
H^{q_0}(\Pi\otimes \chi\otimes E^{\sf v}_{\mu-b})[\epsilon \epsilon_{\chi}] \ar[ddl]_{\tilde\sigma} \\
\S^{{}^{\sigma}\!\eta_f}_{\psi_f}({}^{\sigma}\Pi_f) \ar[rr]^-{\Theta_{{}^{\sigma}\Pi_f}^{\epsilon}}
\ar[d]_{\S_{{}^{\sigma}\!\chi_f}} & &
H^{q_0}({}^{\sigma}\Pi \otimes  {}^{\sigma}\!E^{\sf v}_{\mu})[\epsilon]
\ar[d]_{(A_{{}^{\sigma}\!\chi} \otimes \triv_{{}^{\sigma}\!E^{\sf v}_{\mu}})^*} & \\
\S^{{}^{\sigma}\!(\eta_f\chi_f^2)}_{\psi_f}({}^{\sigma}\Pi_f \otimes {}^{\sigma}\!\chi_f)
\ar[rr]^-{\Theta_{{}^{\sigma}\Pi_f \otimes  {}^{\sigma}\!\chi_f}^{\epsilon \cdot  \epsilon_{\chi}}}  & &
H^{q_0}({}^{\sigma}\Pi \otimes {}^{\sigma}\!\chi \otimes {}^{\sigma}\!E^{\sf v}_{\mu-b})[\epsilon  \epsilon_{\chi}] &
}
\end{equation}\normalsize

Here, the maps $\S_{\chi_f}$ and $A_{\chi}$ are defined as follows. If $\xi_f\in\S^{\eta_f}_{\psi_f}(\Pi_f)$, then
$$
\S_{\chi_f}(\xi_f)(g_f) := \chi_f({\rm det}(g_f)) \xi_f(g_f)
$$
for $g_f \in G({\mathbb A}_f)$. It is easy to see that $\S_{\chi_f}$ maps
$\S^{\eta_f}_{\psi_f}(\Pi_f)$ onto $\S^{\eta_f\chi_f^2}_{\psi_f}(\Pi_f \otimes \chi_f)$.
Similarly, for any automorphic form $\varphi$ of $G({\mathbb A})$ we define $A_{\chi}(\varphi)$ by
$$
A_{\chi}(\varphi)(g) := \chi({\rm det}(g)) \varphi(g)
$$ for $g \in G({\mathbb A})$.
It is easy to see that $A_{\chi}$ maps $\Pi$ onto $\Pi \otimes \chi$.
The identity map on the vector space $E^{\sf v}_{\mu}$ is denoted $\triv_{E^{\sf v}_{\mu}}$. We denote $(A_{\chi} \otimes \triv_{E^{\sf v}_{\mu}})^*$ the map induced by
$A_{\chi} \otimes \triv_{E^{\sf v}_{\mu}}$ in cohomology.

Before we may prove Thm.\ \ref{thm:twisted}, we need the following result.

\begin{prop}
\label{prop:shalika-rational}
Let $\Pi$ be a cohomological cuspidal automorphic representation of $G(\A)=\GL_{2n}(\A)$ which admits an $(\eta,\psi)$-Shalika model. Let $\chi$ be an algebraic Hecke character of $F$. For any $\sigma \in {\rm Aut}({\mathbb C})$ we have
\begin{eqnarray*}
\tilde\sigma \circ \S_{\chi_f}
& = & \sigma(\chi_f(t_{\sigma}^{-n}))\, \S_{{}^{\sigma}\!\chi_f} \circ \tilde\sigma \\
& = & \left(\frac{\sigma(\G(\chi_f))} {\G({}^{\sigma}\!\chi_f)} \right)^{-n} \, \S_{{}^{\sigma}\!\chi_f} \circ \tilde\sigma.
\end{eqnarray*}
\end{prop}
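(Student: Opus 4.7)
The two equalities split into (a) evaluating $\tilde\sigma\circ\S_{\chi_f}$ explicitly on a Shalika function, and (b) translating the resulting scalar into a Gau\ss--sum ratio. I would tackle them in that order.

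For (a), fix $\xi\in\S^{\eta_f}_{\psi_f}(\Pi_f)$ and $g_f\in G(\A_f)$. Combining the definition of $\S_{\chi_f}$ with \eqref{eqn:aut-c-action}, one obtains
\begin{equation*}
\tilde\sigma(\S_{\chi_f}(\xi))(g_f) \;=\; \sigma\bigl(\chi_f(\det(\t_\sigma^{-1}))\bigr)\cdot \sigma\bigl(\chi_f(\det(g_f))\bigr)\cdot\sigma\bigl(\xi(\t_\sigma^{-1}g_f)\bigr).
\end{equation*}
Two observations finish this step. First, the block form of $\t_\sigma^{-1}={\rm diag}(t_\sigma^{-1},\dots,t_\sigma^{-1},1,\dots,1)$ gives $\det(\t_\sigma^{-1})=t_\sigma^{-n}$, producing the scalar $\sigma(\chi_f(t_\sigma^{-n}))$. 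Second, since $\chi_f$ is one-dimensional, the general recipe in Sect.\ \ref{sect:sigmatwist} reduces to ${}^\sigma\!\chi_f=\sigma\circ\chi_f$ as characters of $\A_f^\times$, so that $\sigma(\chi_f(\det g_f))\cdot\sigma(\xi(\t_\sigma^{-1}g_f))$ is exactly $\S_{{}^\sigma\!\chi_f}(\tilde\sigma\,\xi)(g_f)$. This yields the first equality.

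For (b), it suffices to prove the Gau\ss--sum identity
$$\sigma(\G(\chi_f))\;=\;{}^\sigma\!\chi_f(t_\sigma)\,\G({}^\sigma\!\chi_f),$$
since ${}^\sigma\!\chi_f(t_\sigma)=\sigma(\chi_f(t_\sigma))$ and raising to the $-n$-th power then matches the scalar $\sigma(\chi_f(t_\sigma^{-n}))$ from step (a). I would check this one finite place $v$ at a time. Because $\chi_v$ is smooth on $\O_v^\times$, the local integral $\int_{\O_v^\times}\chi_v(u)^{-1}\psi_v(y_vu)\,du$ is a finite sum of roots of unity, so $\sigma$ passes inside. Using $\sigma\circ\chi_v={}^\sigma\!\chi_v$ together with $\sigma(\psi_v(x))=\psi_v(t_{\sigma,v}\,x)$ --- which follows from the cyclotomic characterisation of $\sigma$ on $\bfgreek{mu}_\infty$ and the relation $\psi_v(x)^n=\psi_v(nx)$ passed to the limit over $n\in\Z\to t_{\sigma,v}\in\O_v^\times$ --- the integrand becomes ${}^\sigma\!\chi_v(u)^{-1}\psi_v(t_{\sigma,v}y_vu)$. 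The substitution $u\mapsto t_{\sigma,v}^{-1}u$, which preserves $\O_v^\times$ and Haar measure since $t_{\sigma,v}\in\O_v^\times$, extracts ${}^\sigma\!\chi_v(t_{\sigma,v})$ and leaves $\G({}^\sigma\!\chi_v,\psi_v,y_v)$. Taking the product over finite $v$ produces the desired global identity.

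The computation has no real obstruction, but the bookkeeping requires care: one must keep track of the dual role of the cyclotomic cocycle, once as the recipe for $\sigma$ on roots of unity (used when moving $\sigma$ past $\psi_v$ and $\chi_v$) and once as the matrix $\t_\sigma^{-1}$ in $G(\A_f)$ (used in the definition of $\tilde\sigma$). The identity $\det(\t_\sigma^{-1})=t_\sigma^{-n}$ from step (a) and the factor $t_{\sigma,v}$ emerging from the change of variables in step (b) are then visibly inverse in the correct sense, which is precisely what produces the $-n$-th power in the second displayed equality.
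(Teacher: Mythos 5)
Your proposal is correct and follows essentially the same route as the paper's own proof: chase a Shalika function through the relevant commutative square to get the scalar $\sigma(\chi_f(t_\sigma^{-n}))$ from $\det(\t_\sigma^{-1})=t_\sigma^{-n}$, then invoke the Gau\ss--sum identity $\sigma(\G(\chi_f))=\sigma(\chi_f(t_\sigma))\,\G({}^\sigma\!\chi_f)$; the place-by-place computation you carry out in step (b) is precisely the ``standard calculation'' the paper cites without detail.
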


\begin{proof}
Consider the diagram
$$
\xymatrix{
\S^{\eta_f}_{\psi_f}(\Pi_f) \ar[d]^{\S_{\chi_f}} \ar[rr]^{\tilde\sigma}  & &
\S^{{}^\sigma\!\eta_f}_{\psi_f}({}^{\sigma}\Pi_f) \ar[d]^{\S_{{}^{\sigma}\!\chi_f}}  \\
\S^{\eta_f\chi_f^2}_{\psi_f}(\Pi_f\otimes\chi_f) \ar[rr]^{\tilde\sigma} & &
\S^{{}^\sigma\!(\eta_f\chi_f^2)}_{\psi_f}({}^{\sigma}\Pi_f\otimes {}^{\sigma}\!\chi_f)
}
$$
and chase an element of $\S^{\eta_f}_{\psi_f}(\Pi_f)$ both ways; note that $t_{\sigma}^{-n}$ is the determinant of the matrix $\textbf{\emph{t}}_{\sigma}^{-1}$ that was used to define the ``twisted action'' of ${\rm Aut}(\C)$ on Shalika models, cf.\  \ref{eqn:aut-c-action}. This gives the first equality. The second follows from a standard calculation which shows that
$\sigma(\G(\chi_f)) = \sigma(\chi_f(t_{\sigma})) \G({}^{\sigma}\!\chi_f).$
\end{proof}

\begin{proof}[Proof of Thm.\ \ref{thm:twisted}]
We recall that it suffices to show (1). Therefore, we compute the composition of maps in the diagram (leading from the top left corner in the back to the bottom right corner in front)
$$(A_{{}^\sigma\chi}\otimes\triv_{{}^\sigma E^{\sf v}_\mu})^*\circ \tilde\sigma\circ \Theta^\epsilon_{\Pi_f}$$
in two ways. Firstly, we have
\begin{eqnarray*}
(A_{{}^\sigma\chi}\otimes\triv_{{}^\sigma E^{\sf v}_\mu})^*\circ \tilde\sigma\circ \Theta^\epsilon_{\Pi_f} & \underset{\textrm{Def./Prop.\ \ref{defprop}}}{=} &
(A_{{}^\sigma\chi}\otimes\triv_{{}^\sigma E^{\sf v}_\mu})^*\circ \left(\frac{\sigma(\omega^{\epsilon}(\Pi_f))}{\omega^{\epsilon}({}^\sigma\Pi_f)}\right)\Theta^{\epsilon}_{{}^\sigma\Pi_f} \circ\tilde\sigma\\
& \underset{\textrm{\cite{raghuram-shahidi-imrn} Prop. 4.6}}{=} & \left(\frac{\sigma(\omega^{\epsilon}(\Pi_f))}{\omega^{\epsilon}({}^\sigma\Pi_f)}\right)\Theta^{\epsilon\cdot \epsilon_\chi}_{{}^\sigma\Pi_f\otimes{}^\sigma\chi_f} \circ\S_{{}^\sigma\chi_f}\circ\tilde\sigma.
\end{eqnarray*}
The latter reference to Raghuram--Shahidi \cite{raghuram-shahidi-imrn}, Prop.\ 4.6 may be used to prove that
$$(A_{{}^\sigma\chi}\otimes\triv_{{}^\sigma E^{\sf v}_\mu})^*\circ \Theta^{\epsilon}_{{}^\sigma\Pi_f} =\Theta^{\epsilon\cdot \epsilon_\chi}_{{}^\sigma\Pi_f\otimes{}^\sigma\chi_f} \circ\S_{{}^\sigma\chi_f}$$
by simply replacing the Whittaker models in \cite{raghuram-shahidi-imrn} Prop.\ 4.6 formally by Shalika models. \\
Secondly, we obtain
\begin{eqnarray*}
(A_{{}^\sigma\chi}\otimes\triv_{{}^\sigma E^{\sf v}_\mu})^*\circ \tilde\sigma\circ \Theta^\epsilon_{\Pi_f} & \underset{\textrm{\cite{raghuram-shahidi-imrn} Prop. 4.5}}{=} & \tilde\sigma\circ (A_{\chi}\otimes\triv_{E^{\sf v}_\mu})^*\circ \Theta^\epsilon_{\Pi_f}\\
& \underset{\textrm{\cite{raghuram-shahidi-imrn} Prop. 4.6}}{=} & \tilde\sigma\circ \Theta^{\epsilon\cdot\epsilon_\chi}_{\Pi_f\otimes\chi_f}\circ\S_{\chi_f}\\
& \underset{\textrm{Def./Prop.\ \ref{defprop}}}{=} & \left(\frac{\sigma(\omega^{\epsilon\cdot\epsilon_\chi}(\Pi_f\otimes\chi_f))}{\omega^{\epsilon\cdot\epsilon_\chi}({}^\sigma\Pi_f\otimes{}^\sigma\chi_f)}\right)
\Theta^{\epsilon\cdot\epsilon_\chi}_{{}^\sigma\Pi_f\otimes{}^\sigma\chi_f}\circ\tilde\sigma\circ\S_{\chi_f}\\
& \underset{\textrm{Prop.\ \ref{prop:shalika-rational}}}{=} & \left(\frac{\sigma(\omega^{\epsilon\cdot\epsilon_\chi}(\Pi_f\otimes\chi_f))}{\omega^{\epsilon\cdot\epsilon_\chi}({}^\sigma\Pi_f\otimes{}^\sigma\chi_f)}\right)
\Theta^{\epsilon\cdot\epsilon_\chi}_{{}^\sigma\Pi_f\otimes{}^\sigma\chi_f}\left(\frac{\sigma(\G(\chi_f))} {\G({}^{\sigma}\!\chi_f)} \right)^{-n} \, \S_{{}^{\sigma}\!\chi_f} \circ \tilde\sigma
\end{eqnarray*}
Comparing the two computations for the composition $(A_{{}^\sigma\chi}\otimes\triv_{{}^\sigma E^{\sf v}_\mu})^*\circ \tilde\sigma\circ \Theta^\epsilon_{\Pi_f}$, we obtain that
$$\left(\frac{\sigma(\omega^{\epsilon}(\Pi_f))}{\omega^{\epsilon}({}^\sigma\Pi_f)}\right)=
\left(\frac{\sigma(\omega^{\epsilon\cdot\epsilon_\chi}(\Pi_f\otimes\chi_f))}{\omega^{\epsilon\cdot\epsilon_\chi}({}^\sigma\Pi_f\otimes{}^\sigma\chi_f)}\right)
\left(\frac{\sigma(\G(\chi_f))} {\G({}^{\sigma}\!\chi_f)} \right)^{-n}.$$
This implies the result.
\end{proof}

\subsection{Finite--order characters}\label{sect:finiteorder}
Finally, recall that if a Hecke character $\chi$ of $F$ is of finite-order, then $\chi_\infty\cong\otimes_{v\in S_\infty}\sgn^{a_v}$, for some $a_v\in\{0,1\}$. Hence, by the description of the archimedean component of a cohomological cuspidal automorphic representation $\Pi$, see Sect.\ \ref{sect:cohreps}, we obtain $\Pi_\infty\cong\Pi_\infty\otimes\chi_\infty$ for all finite-order Hecke characters $\chi$ of $F$.

\section{The main identity: a cohomological interpretation of the central critical value}

In this section, we will first determine the critical points of $L(s, \Pi)$ for a cuspidal automorphic representation $\Pi$ of $G(\A)=\GL_{2n}(\A)$ which is cohomological with respect to the highest weight representation $E^{\sf v}_\mu$ and of purity weight ${\sf w}$, cf.\  Sect.\ \ref{sect:crit}. As a next step, we consider compactly supported cohomology attached to certain geometric spaces $S^G_{K_f}$ and $\tilde{S}^H_{K_f}$ defined using the groups $G$ and $H$ with values in a sheaf $\mathcal E^{\sf v}_\mu$ constructed from the highest weight representation $E_\mu$; this is the content of Sect.\ \ref{sect:SGSH}. Using a classical branching law, we see that the representation
$\triv \otimes \det^{-{\sf w}}$ of $H_\infty$ appears with multiplicity one in $E^{\sf v}_{\mu}$ if and only if $s=\tfrac12$ is critical for $L(s,\Pi)$. Assuming this to be the case, we obtain a morphism from the cohomology $H^q_c(\tilde{S}^H_{K_f},\mathcal E^{\sf v}_\mu)$ to the sheaf cohomology $H^q_c(\tilde{S}^H_{K_f},\mathcal E_{(0, {\sf -w})})$, where
$\mathcal E_{(0, {\sf -w})}$ is the sheaf constructed from $\triv \otimes \det^{-{\sf w}}$, cf.\  Sect.\ \ref{sect:S_H}. Finally, we recall Poincar\'e duality for $H^q_c(\tilde{S}^H_{K_f},\mathcal E_{(0, {\sf -w})})$ in Sect.\ \ref{sect:Poinc}. The cohomological interpretation of the central critical value is best illustrated by the main diagram in \ref{sect:diagram}, which leads to the main identity in Thm.\ \ref{thm:mainid}.

\subsection{Critical points}\label{sect:crit}
We will now determine the critical points of $L(s,\Pi)$ for a cuspidal automorphic representation $\Pi$ of $G(\A)=\GL_{2n}(\A)$ which is cohomological with respect to the highest weight representation $E^{\sf v}_\mu$. These points can be read off from the coefficient system $E_{\mu}$. According to Sect.\ \ref{sect:cohreps}, we may write a weight $\mu$ as $\mu = (\mu_v)_{v \in S_{\infty}}$ with each $\mu_v$ being of the form
$$
\mu_v = (\mu_{v,1}, \dots, \mu_{v,n}, \mu_{v,n+1},\dots, \mu_{v,2n}) =
(\mu_{v,1}, \dots, \mu_{v,n}, {\sf w}-\mu_{v,n},\dots, {\sf w}-\mu_{v,1}),
$$
where $\mu_{v,1} \geq \cdots \geq \mu_{v,n}$ and ${\sf w}\in\Z$ is the purity weight of $\Pi$.

\begin{prop}\label{prop:crit}
Let $\Pi$ be a cuspidal automorphic representation of $G(\A)=\GL_{2n}(\A)$ which is cohomological with respect to the highest weight
representation $E^{\sf v}_\mu$. Then the set of critical points for $L(s, \Pi)$ is given by
$$
{\rm Crit}(\Pi)=\left\{ \tfrac12 + m\in\tfrac12+\Z \ | -\mu_{v,n} \leq m \leq -\mu_{v,n+1} \quad \forall v\in S_\infty\right\}.
$$
In particular, $s=\frac 12$ is critical if and only if $\mu_{v,n} \geq 0 \geq \mu_{v,n+1},$ for all $v\in S_\infty$.
\end{prop}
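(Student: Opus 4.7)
The plan is to reduce the determination of $\mathrm{Crit}(\Pi)$ to an explicit computation of the archimedean $L$-factors and then apply Deligne's elementary criterion: $s_0$ is critical for $L(s,\Pi)$ if and only if both $L_\infty(s,\Pi)$ and $L_\infty(1-s,\Pi^{\sf v})$ are holomorphic at $s=s_0$. Since $\Pi$ is cohomological and cuspidal, by Section~\ref{sect:cohreps} (see \eqref{eq:piinfty}) each $\Pi_v$ for $v\in S_\infty$ is an irreducible principal series induced from the Levi $\prod_{i=1}^n\mathrm{GL}_2$ of the parameters $D(\ell_{v,i})|\!\det\!|^{{\sf w}/2}$, with
\[
\ell_{v,i}=2\mu_{v,i}+2(n-i)+1-{\sf w}, \qquad 1\leq i\leq n.
\]
First I would record the standard formula $L(s,D(\ell))=\Gamma_\C\!\left(s+\tfrac{\ell}{2}\right)$, where $\Gamma_\C(s)=2(2\pi)^{-s}\Gamma(s)$, so that
\[
L(s,\Pi_v)\;=\;\prod_{i=1}^{n}\Gamma_\C\!\left(s+\mu_{v,i}+(n-i)+\tfrac{1}{2}\right),\qquad v\in S_\infty.
\]

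Next I would compute the dual side. Using essential self-duality, i.e.\ $\mu_{v,2n-i+1}=\mathsf{w}-\mu_{v,i}$, the dual weight $\mu_v^{\sf v}$ in descending order has $i$-th coordinate $-\mu_{v,2n-i+1}=\mu_{v,i}-\mathsf{w}$ for $1\leq i\leq n$. Hence
\[
L(s,\Pi^{\sf v}_v)\;=\;\prod_{i=1}^{n}\Gamma_\C\!\left(s-\mu_{v,2n-i+1}+(n-i)+\tfrac{1}{2}\right).
\]
Setting $s=s_0=\tfrac12+m$, the argument of $\Gamma_\C$ in $L(s_0,\Pi_v)$ is the integer $m+1+\mu_{v,i}+(n-i)$. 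Since $\Gamma_\C$ has simple poles exactly at the non-positive integers, holomorphy at $s_0$ is equivalent to $m\geq -\mu_{v,i}-(n-i)$ for every $i$. The key elementary observation is that $\mu_{v,i}+(n-i)$ is strictly decreasing in $i$ (because $\mu_{v,i}\geq\mu_{v,i+1}$ and $(n-i)$ drops by $1$), so its minimum over $1\leq i\leq n$ is attained at $i=n$ with value $\mu_{v,n}$. Thus the condition collapses to $m\geq -\mu_{v,n}$ for every $v\in S_\infty$.

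Running the same analysis for $L(1-s_0,\Pi^{\sf v}_v)$, the argument becomes the integer $-m+1-\mu_{v,2n-i+1}+(n-i)$, and non-positivity of $\mu_{v,2n-i+1}-(n-i)$ in $i$ (equivalently, the previous monotonicity applied to $\mu^{\sf v}_v$) pins the binding case to $i=n$, yielding $m\leq -\mu_{v,n+1}$. Combining the two inequalities forces the critical set to be
\[
\mathrm{Crit}(\Pi)=\bigl\{\tfrac12+m:\ m\in\Z,\ -\mu_{v,n}\leq m\leq -\mu_{v,n+1}\ \forall v\in S_\infty\bigr\},
\]
and the statement about $s=\tfrac12$ is the special case $m=0$, using $\mu_{v,n+1}=\mathsf{w}-\mu_{v,n}$. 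The only subtlety I anticipate is keeping careful track of signs/shifts when passing between $\Pi^{\sf v}$ and $\Pi\otimes|\!\det\!|^{-\mathsf{w}}$ and between the two conventions for the dual weight; this is bookkeeping rather than a genuine obstacle, and once done the monotonicity argument collapses everything to the $i=n$ (middle) coordinates, which is the heart of the proposition.
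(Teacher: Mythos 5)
Your proof is correct and follows essentially the same route as the paper's: apply Deligne's criterion at $s_0=\tfrac12+m$, compute the archimedean $L$-factors from the explicit description \eqref{eq:piinfty} of $\Pi_v$ as induced from twisted discrete series, reduce to a system of integer inequalities indexed by $1\le i\le n$ and $v\in S_\infty$, and then use purity $\mu_{v,2n-i+1}={\sf w}-\mu_{v,i}$ to translate. The one place where you add slightly more detail than the paper is the explicit monotonicity observation that $\mu_{v,i}+(n-i)$ is strictly decreasing in $i$, which pins the binding inequality to $i=n$ on both sides; the paper states the full system of inequalities and leaves that collapse to the reader. (One tiny slip in wording: in the dual computation you write ``non-positivity of $\mu_{v,2n-i+1}-(n-i)$'' where you clearly mean ``monotonicity'', as your parenthetical clarifies; the numerics and the final bound $m\le-\mu_{v,n+1}$ are right.)
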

\begin{proof}
Let us recall the definition of a point being critical. For an automorphic $L$-function $L(s, \pi)$ of degree $k$, a point $s_0 \in \C$ is critical if
$s_0 \in \frac{k-1}{2} + \Z$ and if both $L(s, \pi_{\infty})$ and $L(1-s,\pi_{\infty}^{\sf v})$ are regular at $s = s_0$, i.e, if both the $L$-factors at infinity on either side of the functional equation are holomorphic at $s_0$. This definition is due to Deligne \cite[Prop.\-Def.\ 2.3]{deligne} for motivic $L$-functions; for automorphic $L$-functions of motivic type one may read off the definition we just gave after accounting for the shift by $\frac{k-1}{2}$ coming from the so-called motivic normalization; see Clozel \cite[Conj.\ 4.5]{clozel}.
The proof is now an exercise using the local Langlands correspondence (LLC) for $\GL_{2n}(\R)$ which allows us to lay our hands on the $L$-factors at infinity. We refer the reader to Knapp \cite{knapp-llc} for all the details on LLC that we use. In our situation of a cohomological cuspidal automorphic representation $\Pi$ of $G(\A)=\GL_{2n}(\A)$  we have an $L$-function of degree $k=2n$ and so the critical points are all half-integers of the form $\frac12 + m \in \frac12 + \Z$.

Recall from \eqref{eq:piinfty} that for each $v \in S_{\infty}$ we have
$$
\Pi_v\cong\textrm{Ind}^{G(\R)}_{P(\R)}[D(\ell_{v,1})|\!\det\!|^{{\sf w}/2}\otimes...\otimes D(\ell_{v,n})|\!\det\!|^{{\sf w}/2}],
$$
where $\ell_{v,j} := 2(\mu_{v,j}+n-j)+1-{\sf w}$, $1\leq i\leq n$. From \cite{knapp-llc} we get that the $L$-factor attached to $\Pi_{\infty}$ is of the form
$$
L(s, \Pi_{\infty})  \ \approx \
\prod_{v\in S_\infty} \prod_{j=1}^n \Gamma\left(s + \frac{{\sf w}+\ell_{v,j}}{2}\right),
$$
where, by ``$\approx$'', we mean up to multiplication by non-zero constants and exponential functions (which are holomorphic and non-vanishing everywhere) which are irrelevant to compute the critical points. By definition of $\frac12 + m $ being critical we want both
\begin{eqnarray*}
L(s, \Pi_{\infty})|_{s = \frac12 + m}  &  \approx &  \prod_{v\in S_\infty} \prod_{j=1}^n \Gamma\left(m+ \mu_{v,j}+ n - j +1 \right), \\
L(1-s, \Pi_{\infty}^{\sf v})|_{s = \frac12 + m}  &  \approx &  \prod_{v\in S_\infty}\prod_{j=1}^n \Gamma\left(-m-{\sf w}+ \mu_{v,j}+ n - j +1 \right)
\end{eqnarray*}
to be regular values. Here we used that $\Pi^{\sf v}_\infty\cong\Pi_\infty|\!\det\!|^{-{\sf w}}$. Using the fact that $\Gamma(s)$ has poles only at non-positive integers and is non-vanishing everywhere, we deduce that
$$
-\mu_{v,j} - n + j  \ \leq \ m \ \leq \ \mu_{v,j} + n - j - {\sf w}
$$
for all $v \in S_{\infty}$ and all $j$ with  $1 \leq j \leq n.$ As $\mu_{v,j}-{\sf w}=-\mu_{v,2n-j+1}$ and $\mu_{v,1}\geq...\geq\mu_{v,2n}$ the proposition follows.
\end{proof}

\begin{rem}
By Sect.\ \ref{sect:finiteorder}, Crit$(\Pi)=$Crit$(\Pi\otimes\chi)$ for any finite-order Hecke character $\chi$ of $F$.
\end{rem}

\begin{rem}
One may also phrase the statement of the proposition in a motivic language: The representation $\Pi$ conjecturally corresponds to a motive $M$, and via the $L$-factors at infinity one can write down the Hodge-pairs for $M$, i.e., pairs of integers $(p,q)$ such that the Hodge number $h^{p,q}(M) \neq 0$. The critical strip is entirely a function of these Hodge pairs; see Harder--Raghuram \cite[Sect.\ 3]{harder-raghuram}.
\end{rem}

\subsection{Spaces $S^G_{K_f}$ and $\tilde{S}^H_{K_f}$ and the map $\iota^*$}\label{sect:SGSH}

Let $K_f\subset G(\A_f)$ be an open compact subgroup and consider the ``locally symmetric space'' for $G$ with level structure $K_f$ defined as:
$$S^G_{K_f}:= G(F)\backslash G(\A)/K^\circ_\infty K_f.$$
Recall the group $H=\GL_n\times \GL_n$, which is viewed as a block diagonal subgroup in $G$. As in Sect.\ \ref{sect:generator} we denote this embedding of $F$-algebraic groups by
$\iota: H\hookrightarrow G.$ Consider also the space
$$\tilde{S}^H_{K_f}:= H(F)\backslash H(\A)/(K^\circ_\infty\cap H_\infty)^\circ \iota^{-1}(K_f),$$
which is a real orbifold of dimension $\dim_\R\tilde{S}^H_{K_f}= d(n^2+n-1)=q_0.$ The numerical coincidence
\bigskip

\begin{center}
\framebox{$\dim_\R\tilde{S}^H_{K_f}= d(n^2+n-1) = q_0 = \textrm{top non-vanishing degree for cuspidal cohomology for } \GL_{2n}(\A)$}
\end{center}

\bigskip

\noindent is a crucial ingredient in proving the main identity and the reason why we only consider totally real number fields $F$.  Moreover, it is a well-known result of A.\ Borel and G.\ Prasad, see, e.g., Ash \cite{ash}, Lem.\ 2.7, that the natural inclusion
$$ \iota : \tilde{S}^H_{K_f} 
\hookrightarrow S^G_{K_f}$$
is a proper map.

Let $E^{\sf v}_\mu$ be (the dual of) a highest weight representation of $G_\infty$ as in Sect.\ \ref{sect:highweights}. It defines a sheaf $\mathcal E^{\sf v}_\mu$ on $S^G_{K_f}$, by letting $\mathcal E^{\sf v}_\mu$ be the sheaf with espace \'etal\'e $G(\A)/K^\circ_\infty K_f\times_{G(F)}E^{\sf v}_\mu$ with the discrete topology on $E^{\sf v}_\mu$. (See also Harder, \cite[(1.1.1)]{harderGL2} for a direct definition of this sheaf.) The sheaf-cohomology with compact support,
$$H^q_c(S^G_{K_f},\mathcal E^{\sf v}_\mu),$$
is a module for the Hecke algebra $\mathcal H^G_{K_f}:=C_c^\infty(G(\A_f)/\!\!/K_f,\C)$. Similarly, the sheaf-cohomology with compact support
$$H^q_c(\tilde{S}^H_{K_f},\mathcal E^{\sf v}_\mu)$$
is a module for the Hecke algebra $\mathcal H^H_{K_f}:=C_c^\infty(H(\A_f)/\!\!/\iota^{-1}(K_f),\C)$, where for brevity we also wrote $\mathcal E^{\sf v}_\mu$ for the pulled back sheaf on $\tilde{S}^H_{K_f}$. Since $\iota$ is a proper map, we obtain a well-defined morphism in cohomology
\begin{equation}\label{eq:iota}
H^q_c(S^G_{K_f},\mathcal E^{\sf v}_\mu)\stackrel{\iota^*}{\longrightarrow}
H^q_c(\tilde{S}^H_{K_f},\mathcal E^{\sf v}_\mu).
\end{equation}
If $\sigma\in {\rm Aut}(\C)$, we let ${}^\sigma\!\mathcal E^{\sf v}_\mu$ be the sheaf constructed from ${}^\sigma\! E^{\sf v}_\mu$. Then there are $\sigma$-linear isomorphisms
$$\sigma_G^*: H^q_c(S^G_{K_f},\mathcal E^{\sf v}_\mu)\ira H^q_c(S^G_{K_f},{}^\sigma\!\mathcal E^{\sf v}_\mu) \quad\textrm{ and }\quad \sigma^*_H: H^q_c(\tilde{S}^H_{K_f},\mathcal E^{\sf v}_\mu)\ira H^q_c(\tilde{S}^H_{K_f},{}^\sigma\!\mathcal E^{\sf v}_\mu),$$
cf.\  \cite{clozel}, p.128, and again a well-defined morphism in cohomology
$$H^q_c(S^G_{K_f},{}^\sigma\!\mathcal E^{\sf v}_\mu)\stackrel{\iota_\sigma^*}{\longrightarrow} H^q_c(\tilde{S}^H_{K_f},{}^\sigma\!\mathcal E^{\sf v}_\mu).$$
The next lemma is a consequence of Clozel \cite{clozel}, p.122-123:

\begin{lem}\label{lem:Qstructures}
Let $E_\mu$ be a highest weight representation of $G_\infty$ such that there is a cuspidal automorphic representation $\Pi$ of $G(\A)$ as in Sect.\ \ref{sect:cusprep}, which is cohomological with respect to $E^{\sf v}_\mu$. Then the $\mathcal H^G_{K_f}$-module $H^q_c(S^G_{K_f},\mathcal E^{\sf v}_\mu)$ and the $\mathcal H^H_{K_f}$-module $H^q_c(\tilde{S}^H_{K_f},\mathcal E^{\sf v}_\mu)$ are defined over $\Q(\Pi,\eta)$. Moreover, for all $\sigma\in {\rm Aut}(\C)$ the following diagram commutes:
$$
\xymatrix{H^q_c(S^G_{K_f},\mathcal E^{\sf v}_\mu) \ar[rr]^{\iota^*} \ar[d]^{\sigma^*_G}_\cong & & H^q_c(\tilde{S}^H_{K_f},\mathcal E^{\sf v}_\mu)\ar[d]^{\sigma^*_H}_\cong  \\
H^q_c(S^G_{K_f},{}^\sigma\!\mathcal E^{\sf v}_\mu) \ar[rr]^{\iota_\sigma^*} && H^q_c(\tilde{S}^H_{K_f},{}^\sigma\!\mathcal E^{\sf v}_\mu)}
$$
In particular, the map $\iota^*$ is a $\Q(\Pi,\eta)$-rational map, i.e., preserves the chosen $\Q(\Pi,\eta)$-structures on both sides.
\end{lem}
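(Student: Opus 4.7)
The plan is to follow the strategy of Clozel~\cite{clozel}, p.\ 122--123: extract the rational structure and the ${\rm Aut}(\C)$-action from the algebraic structure on the coefficient module $E^{\sf v}_\mu$, and then verify commutativity by reducing everything to a sheaf-level statement that is manifest from the fact that $\iota\colon H\hookrightarrow G$ is defined over $F$.

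First I would install $\Q(\Pi,\eta)$-structures on the two compactly supported cohomology groups. Since $E^{\sf v}_\mu$ is algebraic, it is defined over its rationality field $\Q(E^{\sf v}_\mu)$. The hypothesis that a cohomological cuspidal $\Pi$ exists with respect to $E^{\sf v}_\mu$, combined with the purity and central-character constraints from Sect.\ \ref{sect:cohreps} (in particular, $\omega_\Pi$ being determined up to a sign character by $E_\mu$), forces $\Q(E^{\sf v}_\mu)\subseteq\Q(\Pi,\eta)$, as already observed in Rem.\ \ref{rem:clozel}. The espace \'etal\'e construction then yields a $\Q(\Pi,\eta)$-subsheaf $\mathcal E^{\sf v}_{\mu,\Q(\Pi,\eta)}\subset\mathcal E^{\sf v}_\mu$ on $S^G_{K_f}$, and the resulting Betti cohomology with these coefficients provides a Hecke-stable $\Q(\Pi,\eta)$-structure on $H^q_c(S^G_{K_f},\mathcal E^{\sf v}_\mu)$. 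Pulling back along $\iota$ delivers the corresponding $\mathcal H^H_{K_f}$-stable $\Q(\Pi,\eta)$-structure on $H^q_c(\tilde S^H_{K_f},\mathcal E^{\sf v}_\mu)$.

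Next I would make $\sigma^*_G$ and $\sigma^*_H$ intrinsic at the sheaf level. For $\sigma\in{\rm Aut}(\C)$, applying $\sigma$ to the complex coefficients of $E^{\sf v}_\mu$ produces ${}^\sigma\! E^{\sf v}_\mu$ and induces a $\sigma$-linear morphism of espaces \'etal\'es $\mathcal E^{\sf v}_\mu\to{}^\sigma\!\mathcal E^{\sf v}_\mu$ covering the identity on $S^G_{K_f}$; passing to $H^q_c$ gives $\sigma^*_G$, and the same recipe on $\tilde S^H_{K_f}$ gives $\sigma^*_H$. By construction, the $\Q(\Pi,\eta)$-structure installed above coincides with the fixed points of $\sigma^*_G$ as $\sigma$ ranges over ${\rm Aut}(\C/\Q(\Pi,\eta))$, and similarly on the $H$-side.

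For commutativity of the diagram, the key observation is that $\iota$ is an $F$-rational (hence ${\rm Aut}(\C)$-equivariant) morphism of algebraic groups, so the canonical identification of $\iota^*\mathcal E^{\sf v}_\mu$ with the $H$-sheaf attached to the restriction of $E^{\sf v}_\mu$ to $H_\infty$ intertwines the two $\sigma$-twists. This identity holds at the level of sheaves and passes directly to compactly supported cohomology by naturality, yielding $\iota^*_\sigma\circ\sigma^*_G=\sigma^*_H\circ\iota^*$. Specializing to $\sigma\in{\rm Aut}(\C/\Q(\Pi,\eta))$, both $\sigma^*_G$ and $\sigma^*_H$ preserve the rational structures described above, so the commutative square forces $\iota^*$ to do the same, which is the final assertion. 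The hard part, in my view, is not any single step but the careful bookkeeping required to keep the Galois action straight as it passes through three functorial operations -- pullback along $\iota$, the espace \'etal\'e construction, and the formation of compactly supported cohomology -- without conflating the action on coefficients with the action on sections.
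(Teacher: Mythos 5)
The paper gives no proof for this lemma, citing only Clozel \cite{clozel}, pp.~122--123; your argument is a correct and careful elaboration of exactly that strategy -- installing the $\Q(\Pi,\eta)$-structure via the $\Q(E^{\sf v}_\mu)$-rational subsheaf (using $\Q(E^{\sf v}_\mu)\subseteq\Q(\Pi,\eta)$, as in Rem.~\ref{rem:clozel}), defining $\sigma^*_G,\sigma^*_H$ at the sheaf level, deducing commutativity from the $F$-rationality of $\iota$, and then restricting to $\sigma\in{\rm Aut}(\C/\Q(\Pi,\eta))$ to conclude rationality of $\iota^*$. This matches the paper's intent, just spelled out in more detail than the paper chooses to give.
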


\subsection{The map $\mathcal T^*$}\label{sect:S_H}

\begin{prop}\label{prop:knapp}
Let $E_\mu$ be a highest weight representation of $G_\infty$ such that there is a cuspidal automorphic representation $\Pi$ of $G(\A)$ which is cohomological with respect to $E^{\sf v}_\mu$ with purity weight ${\sf w}\in\Z$. Assume furthermore that $s=\tfrac12$ is critical for $L(s,\Pi)$, or, in other words that
$$\mu_{v,n}\geq 0 \geq\mu_{v,n+1} \quad\forall v\in S_\infty.$$
Let $E_{(0, -{\sf w})}:= \triv \otimes \det^{-{\sf w}}$ be the $H(\C)$-representation, where the first block of
$H(\C) = \GL_n(\C) \times \GL_n(\C)$ acts as the trivial representation $\triv$, and the second block by
multiplication by $\det^{-{\sf w}}$. Then we have
$$
\dim {\rm Hom}_{H(\C)}(E^{\sf v}_{\mu_v}, E_{(0, -{\sf w})})=1, \quad\forall v\in S_\infty.
$$
\end{prop}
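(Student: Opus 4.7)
The plan is to translate the question into a combinatorial computation via the Littlewood--Richardson rule for branching from $\GL_{2n}(\C)$ to its Levi subgroup $\GL_n(\C)\times\GL_n(\C)$, and then to use the purity relation $\mu_{v,i}+\mu_{v,2n-i+1}={\sf w}$ to force the answer to be one.

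First I would rewrite the $\Hom$-space. By the essential self-duality forced by purity, $E^{\sf v}_{\mu_v}\cong E_{\mu_v}\otimes\det^{-{\sf w}}$, and the $G$-determinant restricted to $H$ is $\det_1\cdot\det_2$. Hence the dimension sought equals the multiplicity of the one-dimensional $H(\C)$-representation $V_{({\sf w}^n)}\boxtimes V_{(0^n)}$ in $E_{\mu_v}|_{H(\C)}$. Shifting by $c:=-\mu_{v,2n}$ so that $\tilde\mu:=\mu_v+(c,\dots,c)$ is a genuine partition of length $2n$, and setting $a:={\sf w}+c$, the standard LR rule for Levi branching identifies this multiplicity with the LR coefficient $c^{\tilde\mu}_{(a^n,0^n),(c^n)}$.

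The critical hypothesis $\mu_{v,n}\geq 0\geq\mu_{v,n+1}$ combined with purity has two consequences. First, $\mu_{v,n+1}\leq 0$ forces $\mu_{v,n}\geq{\sf w}$, i.e.\ $(a^n,0^n)\subseteq\tilde\mu$, so the skew shape exists. Second, $\mu_{v,n}\geq 0$ forces $\mu_{v,n+1}\leq{\sf w}$, so the top $n$ rows of $\tilde\mu/(a^n,0^n)$ sit in columns $>a$ while the bottom $n$ rows sit in columns $\leq a$. Hence the skew shape decomposes as a disjoint union of two genuine partitions
$$
T=(-\mu_{v,2n},\dots,-\mu_{v,n+1}), \qquad B=(\mu_{v,n+1}+c,\dots,\mu_{v,2n}+c),
$$
both contained in the $c\times n$ rectangle, and the purity relation immediately yields $T_i+B_{n-i+1}=c$, i.e.\ $B=T^c$ is the complement of $T$ in that rectangle. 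Since the two pieces share neither rows nor columns, the skew Schur function factors as $s_{\tilde\mu/(a^n,0^n)}=s_T\cdot s_{T^c}$, so that
$$
c^{\tilde\mu}_{(a^n,0^n),(c^n)} \ = \ [s_{(c^n)}]\,s_T\cdot s_{T^c} \ = \ c^{(c^n)}_{T,T^c}.
$$

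To close, I would invoke the elementary identity $c^{(c^n)}_{\lambda,\nu}=\delta_{\nu,\lambda^c}$ for partitions $\lambda,\nu\subseteq(c^n)$. This follows from $V_{(c^n)}=\det^c$ being one-dimensional and $V_\nu^{\sf v}\otimes V_{(c^n)}\cong V_{\nu^c}$ in $\GL_n(\C)$, which gives
$$
\Hom_{\GL_n(\C)}\bigl(V_\lambda\otimes V_\nu,\,V_{(c^n)}\bigr) \ \cong \ \Hom_{\GL_n(\C)}(V_\lambda,V_{\nu^c})
$$
of dimension $\delta_{\lambda,\nu^c}$. Applied with $\lambda=T,\nu=T^c$ this yields multiplicity exactly one. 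The only delicate point is bookkeeping: verifying that column-disjointness of the two skew pieces persists even when ${\sf w}$ is negative, which is exactly what the critical condition $\mu_{v,n}\geq 0$ secures via purity. Once this is noted, the multiplicity-one assertion is forced.
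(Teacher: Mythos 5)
Your proof is correct, and it is in the same spirit as the paper's (both reduce the question to the branching of $E_{\mu_v}$ from $\GL_{2n}(\C)$ to $H(\C)$, and both ultimately rest on a Schur-lemma-type multiplicity-one fact), but your route through the combinatorics is genuinely different and, I would argue, more transparent. The paper cites Knapp's branching theorem for compact symmetric spaces (\cite{knapp}, Thm.\ 2.1) as a black box to get $E_{\mu_v}^{H_2} \cong E_\lambda \otimes E_{\lambda'}$, then identifies $E_{\lambda'}\cong \det^{\sf w}\otimes E_\lambda^{\sf v}$ via purity and uses that $\triv$ occurs once in $E_\lambda\otimes E_\lambda^{\sf v}$; it then has to dualize and conjugate by the long Weyl element $J$ to exchange the two $\GL_n$ blocks. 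You instead unwind everything into an explicit Littlewood--Richardson computation: after shifting by $c=-\mu_{v,2n}$ you identify the sought multiplicity with $c^{\tilde\mu}_{(a^n,0^n),(c^n)}$, observe that the criticality condition $\mu_{v,n}\geq 0\geq\mu_{v,n+1}$ (via purity) makes the skew shape $\tilde\mu/(a^n,0^n)$ a disjoint (in rows and columns) union of a partition $T$ and its rectangle complement $T^c$, so $s_{\tilde\mu/(a^n,0^n)}=s_T s_{T^c}$, and then invoke $c^{(c^n)}_{T,T^c}=1$. What your approach buys is that the role of the hypothesis ``$s=\tfrac12$ critical'' is laid bare (it is exactly what guarantees the skew shape disconnects), and no appeal to a symmetric-space branching theorem or to the $J$-conjugation trick is needed since you target $E_{(0,-{\sf w})}$ directly via $E^{\sf v}_{\mu_v}\cong E_{\mu_v}\otimes\det^{-{\sf w}}$. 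The rectangle-complement identity $c^{(c^n)}_{\lambda,\nu}=\delta_{\nu,\lambda^c}$ you use at the end is of course the same Schur-lemma fact that the paper uses, just expressed for $\GL_n$ polynomial representations.
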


\begin{proof}
This follows from Knapp \cite[Thm.\ 2.1]{knapp} as we now briefly explain. Since $s=\tfrac12$ is critical we get the following condition on the weight $\mu$:
$$\mu_{v,1} \geq \cdots \geq \mu_{v,n} \geq 0 \geq \mu_{v,n+1} \geq \cdots \mu_{v,2n} \quad\forall v\in S_\infty.$$
Since the same argument works for all $v \in S_\infty$, let us suppress the symbol $v$. Let us tentatively denote
$\lambda = (\mu_1,\dots,\mu_n)$. Then, by purity, the weight $\lambda' := (\mu_{n+1},\dots,\mu_{2n}) = {\sf w} + \lambda^{\sf v}$ where $\lambda^{\sf v} = (-\mu_n,\dots,-\mu_1)$. Write $H(\C) = H_1 \times H_2$ with $H_1$ the left-top diagonal block of $\GL_n(\C)$ and $H_2$ the right bottom.  Then, Knapp's theorem says that under the above condition on the weight $\mu$, as a representation of
$H_1,$ we have $E_\mu^{H_2} \ \simeq \ E_\lambda \otimes E_{\lambda'},$ where the action of $H_1 = \GL_n(\C)$ on the right hand side is the diagonal action. We also have
$E_\lambda \otimes E_{\lambda'} \simeq E_\lambda \otimes ({\rm det}^{\sf w} \otimes E_{\lambda}^{\sf v}).$
Since the trivial representation appears with multiplicity one in $E_\lambda \otimes E_{\lambda}^{\sf v}$ we conclude that ${\rm det}^{\sf w}$ appears with multiplicity one in $E_\mu^{H_2}$, as a representation of $H_1$. This means that $E_{({\sf w},0)}$ appears with multiplicity one in $E_\mu$ as a representation of $H(\C)$. After dualizing we get
$\dim {\rm Hom}_{H(\C)}(E^{\sf v}_{\mu}, E_{(-{\sf w},0)})=1.$
Now consider the matrix $J \in \GL_{2n}(\C)$ given by $J_{i,j} = \delta_{i, 2n-j+1}$, and inner conjugating by $J$ we conclude $
\dim{\rm Hom}_{H(\C)}(E^{\sf v}_{\mu_v}, E_{(0, -{\sf w})})=1, \ \forall v\in S_\infty.$
\end{proof}

We will henceforth assume that $s=\tfrac12$ is critical for $L(s,\Pi)$ and fix a non-zero homomorphism
$$\mathcal T=\otimes_{v\in S_{\infty}}\mathcal T_v\in\bigotimes_{v\in S_{\infty}}\textrm{Hom}_{H(\C)}(E^{\sf v}_{\mu_v},
E_{(0, -{\sf w})}).$$
The map $\T$ also gives rise to a map in cohomology
\begin{equation}\label{eq:T}
H^{q_0}_c(\tilde{S}^H_{K_f},\mathcal E^{\sf v}_\mu)\stackrel{\mathcal T^*}{\longrightarrow} H^{q_0}_c(\tilde{S}^H_{K_f},\mathcal E_{(0, -{\sf w})}).
\end{equation}
For $\sigma\in {\rm Aut}(\C)$ we let $\mathcal T^*_\sigma$ be the map
$$H^{q_0}_c(\tilde{S}^H_{K_f},{}^\sigma\!\mathcal E^{\sf v}_\mu)\stackrel{\mathcal T_\sigma^*}{\longrightarrow} H^{q_0}_c(\tilde{S}^H_{K_f},{}^\sigma\!\mathcal E_{(0, -{\sf w})}),$$
induced from $\mathcal T_\sigma = \otimes_{v\in S_{\infty}} \mathcal T_{\sigma^{-1} v}$. Then, the following lemma is immediate:

\begin{lem}\label{lem:T^*}
For all $\sigma\in {\rm Aut}(\C)$ the following diagram commutes:
$$
\xymatrix{H^q_c(\tilde{S}^H_{K_f},\mathcal E^{\sf v}_\mu) \ar[rr]^{\mathcal T^*} \ar[d]^{\sigma^*_H}_\cong & & H^q_c(\tilde{S}^H_{K_f},\mathcal E_{(0, -{\sf w})})\ar[d]^{\sigma^*_H}_\cong  \\
H^q_c(\tilde{S}^H_{K_f},{}^\sigma\!\mathcal E^{\sf v}_\mu) \ar[rr]^{\mathcal T_\sigma^*} && H^q_c(\tilde{S}^H_{K_f},{}^\sigma\!\mathcal E_{(0, -{\sf w})}).}
$$
In particular, the map $\mathcal T^*$ is a $\Q(\Pi,\eta)$-rational map. (This is true for all $q$ but we only need it for $q = q_0.$)
\end{lem}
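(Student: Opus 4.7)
The plan is to reduce commutativity of the cohomological square to commutativity of an analogous square at the level of coefficient representations, which turns out to be tautological from the very definition of $\mathcal T_\sigma$.

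First, I would unwind the definition of the maps $\sigma^*_H$ from Clozel's construction recalled in Section \ref{sect:SGSH}: the map $\sigma^*_H$ on sheaf cohomology is induced by a $\sigma$-linear isomorphism of sheaves $\mathcal E^{\sf v}_\mu \to {}^\sigma\!\mathcal E^{\sf v}_\mu$, coming ultimately from the $\sigma$-linear isomorphism of $H_\infty$-representations $E^{\sf v}_\mu = \bigotimes_{v \in S_\infty} E^{\sf v}_{\mu_v} \to \bigotimes_{v \in S_\infty} E^{\sf v}_{\mu_{\sigma^{-1}v}} = {}^\sigma\!E^{\sf v}_\mu$ that permutes the archimedean tensor factors according to $\sigma$. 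The analogous description applies to the sheaf $\mathcal E_{(0,-{\sf w})}$: since the representation $E_{(0,-{\sf w})} = \triv \otimes \det^{-{\sf w}}$ is literally the same $H(\R)$-representation at each archimedean place, the $\sigma$-twist ${}^\sigma\!E_{(0,-{\sf w})}$ is canonically identified with $E_{(0,-{\sf w})}$ itself, and the map $\sigma^*_H$ on the right column is similarly obtained by permuting the tensor factors.

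Second, by the functoriality of compactly supported sheaf cohomology with respect to morphisms of (locally constant) coefficient systems, the commutativity of the diagram in question reduces to the commutativity of the induced square of $H(\C)$-representations
\[
\xymatrix{
E^{\sf v}_\mu \ar[r]^-{\mathcal T} \ar[d]_{\sigma} & E_{(0,-{\sf w})} \ar[d]^{\sigma} \\
{}^\sigma\!E^{\sf v}_\mu \ar[r]^-{\mathcal T_\sigma} & {}^\sigma\!E_{(0,-{\sf w})} .
}
\]
But by definition, $\mathcal T = \bigotimes_{v \in S_\infty} \mathcal T_v$ and $\mathcal T_\sigma = \bigotimes_{v \in S_\infty} \mathcal T_{\sigma^{-1}v}$, while the two vertical arrows permute tensor factors by $v \mapsto \sigma^{-1} v$. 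Evaluating both paths on a pure tensor $\bigotimes_v e_v$, one sees on the nose that applying $\mathcal T_v$ in each slot and then permuting, or permuting first and then applying $\mathcal T_{\sigma^{-1}v}$ in the $v$-th slot, gives the same result. Hence the square commutes.

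Third, the rationality assertion follows formally: for every $\sigma \in \mathrm{Aut}(\C/\Q(\Pi,\eta))$ the sheaves ${}^\sigma\!\mathcal E^{\sf v}_\mu$ and ${}^\sigma\!\mathcal E_{(0,-{\sf w})}$ are (as the notation suggests) identified with $\mathcal E^{\sf v}_\mu$ and $\mathcal E_{(0,-{\sf w})}$ in the $\Q(\Pi,\eta)$-structures of Lem.\ \ref{lem:Qstructures}, and the $\Q(\Pi,\eta)$-rational subspaces are precisely the fixed points of these $\sigma$-actions. The commutativity of the square for all such $\sigma$ then forces $\mathcal T^*$ to carry the one $\Q(\Pi,\eta)$-structure into the other.

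The only genuinely delicate point, and the ``main obstacle,'' is the careful bookkeeping in Step~2: making sure that the permutation-of-factors interpretation of $\sigma^*_H$ on the target $\mathcal E_{(0,-{\sf w})}$ is the \emph{same} identification that is used implicitly when one writes $\mathcal T_\sigma = \bigotimes_v \mathcal T_{\sigma^{-1}v}$ as a map into $E_{(0,-{\sf w})}$ rather than into ${}^\sigma\!E_{(0,-{\sf w})}$. Once that identification is pinned down, the verification is essentially formal and proceeds exactly as for the analogous statement for $\iota^*$ in Lem.\ \ref{lem:Qstructures}.
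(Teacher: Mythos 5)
Since the paper labels this lemma ``immediate'' and supplies no proof, there is nothing direct to compare against, so I can only assess correctness. Your reduction to the coefficient-level square is the right strategy, but there is a genuine gap in Step~2: the $\sigma$-linear map $\sigma^*_H$, precisely because it is $\sigma$-\emph{semilinear} and not $\C$-linear, does not merely permute the archimedean tensor factors --- on each factor it also twists the $\C$-coefficients by $\sigma$ relative to the natural $\Q$-structure on $E^{\sf v}_{\mu_v}$ (which exists because $\mu_v$ is an integer highest weight), and likewise on $E_{(0,-{\sf w})}$. Your pure-tensor verification accounts for the permutation $v \mapsto \sigma^{-1}v$ but is silent about this coefficient twist, and with it the two sides of your coefficient-level square need \emph{not} agree for an arbitrary choice of $\mathcal T$: they agree if and only if, at each place $v$, one has $\sigma \circ \mathcal T_v = \mathcal T_v \circ \sigma$, i.e., if and only if $\mathcal T_v$ lies in the one-dimensional $\Q$-rational line inside ${\rm Hom}_{H(\C)}(E^{\sf v}_{\mu_v}, E_{(0,-{\sf w})})$.

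This rational normalization of $\mathcal T_v$ is implicit in the paper (and indeed is forced by the lemma's conclusion that $\mathcal T^*$ preserves $\Q(\Pi,\eta)$-structures, which fails if $\mathcal T_v$ is rescaled by a transcendental), but your write-up should state it explicitly; as written, the phrase ``gives the same result'' is simply incorrect for a generic $\mathcal T_v$. The ``delicate point'' you flag at the end --- the identification ${}^\sigma E_{(0,-{\sf w})} \simeq E_{(0,-{\sf w})}$ --- is comparatively harmless; the real hidden assumption is the rationality of $\mathcal T_v$. Once that is stated, the permutation argument of Step~2 closes and Step~3 goes through as written.
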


\subsection{Poincar\'e Duality for $\tilde{S}^H_{K_f}$}\label{sect:Poinc}
Let $\Pi$ again be a cuspidal automorphic representation of $G(\A)$ as in Sect.\ \ref{sect:cusprep}, which is cohomological with respect to a highest weight module $E^{\sf v}_\mu$ of purity weight ${\sf w}\in\Z$ and assume that $s=\tfrac 12$ is critical for $L(s,\Pi)$. We may choose $K_f\subset G(\A_f)$ to be such that its pull-back $\iota^{-1}(K_f)\subset H(\A_f)$ is a direct product $\iota^{-1}(K_f)=K^H_{1,f}\times K^H_{2,f}$, with each factor sitting inside the corresponding copy of $\GL_n(\A_f)$. We additionally assume that the open compact subgroup $K_f$ of $G(\A_f)$ is small enough such that $\eta_f$ becomes trivial on $\det(K^H_{2,f})$. Then, it is easy to see that the character $\triv\times\eta^{-1}$ of $H(\A)$ defines a cohomology class $[\eta]\in H^0(\tilde{S}^H_{K_f},\mathcal E_{(0,{\sf w})})$.

Let $\mathcal X$ be the set of all connected components of $\tilde{S}^H_{K_f}$. Using the map induced by
${\rm det} \times {\rm det} : H(\A) \to \A^\times \times \A^\times$ on $\tilde{S}^H_{K_f}$
one can see that
$\mathcal{X}$ is finite (cf.\  Borel \cite{bor1}, Thm.\ 5.1) and denote by $\tilde S^H_{K_f,x}$ the connected component corresponding to $x\in\mathcal X$. Each of them looks like a quotient of $H^\circ_\infty/(K_\infty^\circ\cap H_\infty)^\circ$ by a discrete subgroup of $H(F)$. Recall the ordered basis $\{Y_j\}$ of $\h_\infty/(\k_\infty\cap \h_\infty),$ from Sect.\ \ref{sect:generator}, which fixes the dual basis of $(\h_\infty/(\k_\infty\cap \h_\infty))^*.$ This choice of basis determines a choice of an orientation on $H^\circ_\infty/(K_\infty^\circ\cap H_\infty)^\circ$, whence on each connected component $\tilde S^H_{K_f,x}$ and so also on $\tilde S^H_{K_f}$. Now, Poincar\'e duality between
$H^{q_0}_c(\tilde{S}^H_{K_f},\mathcal E_{(0, -{\sf w})})$ and $H^0(\tilde{S}^H_{K_f},\mathcal E_{(0, {\sf w})})$ gives rise to a surjection

\begin{equation}\label{eq:poincare}
\begin{array}{cccccl}
H^{q_0}_c(\tilde{S}^H_{K_f},\mathcal E_{(0, -{\sf w})}) & \stackrel{\cong}{\longrightarrow} & \bigoplus_{x\in\mathcal X} \C & \twoheadrightarrow & \C &\\
\theta & \longmapsto & \left( \int_{\tilde{S}^H_{K_f,x}} \theta\wedge [\eta]\right)_{x\in\mathcal X} & \longmapsto & \sum_{x\in\mathcal X} \int_{\tilde{S}^H_{K_f,x}} \theta\wedge [\eta] & =\int_{\tilde{S}^H_{K_f}} \theta\wedge [\eta],
\end{array}
\end{equation}
where we remind ourselves that the orientations on all the connected components $\tilde S^H_{K_f,x}$ have been compatibly chosen.
The map $\theta\mapsto \int_{\tilde{S}^H_{K_f}} \theta\wedge [\eta]$ given by Poincar\'e duality is rational, i.e., we have the following lemma:

\begin{lem}\label{lem:intrational}
For all $\sigma\in$ \emph{Aut}$(\C)$ and for all $\theta\in H^{q_0}_c(\tilde{S}^H_{K_f},\mathcal E_{(0, -{\sf w})})$
$$\sigma\left(\int_{\tilde{S}^H_{K_f}} \theta\wedge [\eta]\right)=\int_{\tilde{S}^H_{K_f}} \sigma^*_H(\theta)\wedge [{}^\sigma\!\eta].$$
\end{lem}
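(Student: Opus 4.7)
The plan is to decompose the pairing $\theta \mapsto \int_{\tilde{S}^H_{K_f}} \theta \wedge [\eta]$ into standard functorial pieces and invoke naturality of each with respect to $\sigma^*_H$. Using the canonical identification $\mathcal{E}_{(0,-{\sf w})} \otimes \mathcal{E}_{(0,{\sf w})} \cong \C$, the pairing factors as
\[
H^{q_0}_c(\tilde{S}^H_{K_f}, \mathcal{E}_{(0,-{\sf w})}) \xrightarrow{\,-\cup [\eta]\,} H^{q_0}_c(\tilde{S}^H_{K_f}, \C) \xrightarrow{\,\sim\,} \bigoplus_{x \in \mathcal{X}} \C \xrightarrow{\,\Sigma\,} \C,
\]
where the middle arrow is the Poincar\'e isomorphism supplied by the chosen orientation on the connected components and $\Sigma$ is summation.

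First I would show that the composition of the last two arrows commutes with $\sigma$. The Poincar\'e isomorphism is obtained by pairing against the orientation-induced fundamental classes of the components, hence is defined over $\Q$, and summation is a $\Q$-rational operation. Since $\sigma^*_H$ acts on the constant sheaf $\C$ simply by applying $\sigma$ to complex coefficients, one concludes $\sigma \circ \int = \int \circ \sigma^*_H$ as maps out of $H^{q_0}_c(\tilde{S}^H_{K_f}, \C)$.

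Next, cup product is induced by a morphism of sheaves, so $\sigma^*_H$ is multiplicative on total cohomology, giving $\sigma^*_H(\theta \wedge [\eta]) = \sigma^*_H(\theta) \wedge \sigma^*_H([\eta])$. The principal step—and the main obstacle requiring careful unravelling—is to identify $\sigma^*_H([\eta]) = [{}^\sigma\!\eta]$. By construction $[\eta]$ is the locally constant section of $\mathcal{E}_{(0,{\sf w})}$ attached to the character $(h_1,h_2) \mapsto \eta^{-1}(\det h_2)$ of $H(\A)$; it descends to $\tilde{S}^H_{K_f}$ thanks to triviality of $\eta_f$ on $\det(K^H_{2,f})$ combined with $\eta_\infty = \sgn^{\sf w}|\cdot|^{\sf w}$ (Lem.\ \ref{lem:eta}), which matches the $\det^{\sf w}$-twist encoded in the sheaf. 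Tracing Clozel's definition of the $\sigma$-twist on sheaf cohomology (cf.\ \cite{clozel}, p.~128) through this explicit section—which acts on the finite-adelic values by $\sigma$ and permutes archimedean components (trivially here, since all $\eta_v$ for $v\in S_\infty$ coincide)—produces exactly the section attached to $\triv \times {}^\sigma\!\eta^{-1}$, namely $[{}^\sigma\!\eta]$. Combining the three observations yields the claimed identity.
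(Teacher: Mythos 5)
The paper states this lemma without an explicit proof, merely asserting beforehand that the pairing given by Poincar\'e duality is rational; your decomposition into cup product, orientation-class pairing, and summation is a correct and faithful fleshing out of that assertion, and steps one through three are standard. The one step worth making explicit is $\sigma^*_H([\eta]) = [{}^\sigma\!\eta]$: when $[\eta]$ is realized as an $E_{(0,{\sf w})}$-valued equivariant function on $H(\A)$, the archimedean factor of $\eta^{-1}$ is cancelled against the $\det^{\sf w}$-action encoded in the coefficient module (precisely the ``matching'' you observe via Lem.~\ref{lem:eta}), so the function is effectively $h \mapsto \eta_f^{-1}(\det h_{2,f})$; since $E_{(0,{\sf w})}$ is defined over $\Q$ with identical local component $\det^{\sf w}$ at every $v\in S_\infty$, $\sigma^*_H$ on $H^0$ is simply post-composition with $\sigma$, giving $\sigma\bigl(\eta_f^{-1}\circ\det_{2,f}\bigr) = ({}^\sigma\!\eta_f)^{-1}\circ\det_{2,f} = [{}^\sigma\!\eta]$ as you conclude.
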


\subsection{The main diagram}\label{sect:diagram}
We now have all the ingredients to talk about the strategy behind the main identity which gives a cohomological interpretation of the central critical value. Therefore, let $\Pi$ be a cuspidal automorphic representation of $G(\A)$ as in Sect.\ \ref{sect:cusprep}, which is cohomological with respect to a highest weight module $E^{\sf v}_\mu$ of purity weight ${\sf w}\in\Z$. Assume that $s=\tfrac 12$ is critical for $L(s,\Pi)$. In terms of the coefficient system $E_\mu$ this means that $\mu_{v,n}\geq 0 \geq\mu_{v,n+1} \quad\forall v\in S_\infty; $
see Prop.\ \ref{prop:crit}. Moreover, we suppose that $\Pi$ admits an $(\eta,\psi)$-Shalika model.
Recall the open compact subgroup $K_f\subset G(\A_f)$ from the end of Sect.\ \ref{sect:Poinc}. By making it even smaller, we can assure that it satisfies the following conditions:
\begin{enumerate}
\item $\eta_f$ is trivial on $\det(K^H_{2,f})$
\item $\xi^\circ_{\Pi_f}\in \S^{\eta_f}_{\psi_f}(\Pi_f)^{K_f}$.
\end{enumerate}
From now on, we fix the choice of such an open compact subgroup $K_f=K_f(\Pi_f)$. Furthermore, we fix the character
$\epsilon_0:=((-1)^{n-1},...,(-1)^{n-1})\in (K_\infty/K^\circ_\infty)^*\cong(\Z/2\Z)^d$. It only depends on the parity of $n$. Recalling the maps $\Theta_{\Pi,0}^{\epsilon_0}$
from \eqref{eq:Theta_0}, $\iota^*$ from \eqref{eq:iota}, $\mathcal T^*$ from \eqref{eq:T} and $\int_{\tilde{S}^H_{K_f}}$ from \eqref{eq:poincare},
we have the following diagram of rational maps:
$$
\xymatrix{
H_c^{q_0}(S^G_{K_f},\mathcal E^{\sf v}_\mu) \ar[r]^{\iota^*} & H_c^{q_0}(\tilde{S}^H_{K_f},\mathcal E^{\sf v}_\mu) \ar[rr]^{\mathcal T^*} & &
H_c^{q_0}(\tilde{S}^H_{K_f},\mathcal E_{(0,-{\sf w})}) \ar[dd]^{\int_{\tilde{S}^H_{K_f}}}\\
H^{q_0}(\mathfrak{g}_{\infty},K_{\infty}^\circ; \Pi\otimes E^{\sf v}_{\mu})[\epsilon_0]^{K_f} \ar@{^{(}->}[u] & & \\
\S^{\eta_f}_{\psi_f}(\Pi_f)^{K_f} \ar[u]^{\Theta_{\Pi,0}^{\epsilon_0}} \ar[rrr]& & &\C}
$$
Here we notice that the special choice of $\epsilon=\epsilon_0$ is necessary in order to obtain a cohomology class $\Theta^{\epsilon_0}_0(\xi_{\varphi_f})$
in $H^{q_0}(\mathfrak{g}_{\infty},K_{\infty}^\circ, \Pi\otimes E^{\sf v}_{\mu})[\epsilon_0]^{K_f}$ which is compatible with the choice of the orientation on the
various connected components $\tilde{S}^H_{K_f,x}$, $x\in\mathcal X$, of $\tilde{S}^H_{K_f}$.

We start with a rational vector $\xi_{\varphi_f}\in\S^{\eta_f}_{\psi_f}(\Pi_f)^{K_f}$ and chase it through the above diagram, i.e., we will compute
$$\int_{\tilde{S}^H_{K_f}}\mathcal T^*\iota^*\Theta_{\Pi,0}^{\epsilon_0}(\xi_{\varphi_f})\wedge [\eta],$$
and see that it is essentially the required $L$-value. See Thm.\ \ref{thm:mainid} below. In this computation we will need the following non-vanishing theorem of B. Sun \cite{sun}.

\subsection{A non-vanishing result}\label{sect:hypo}
 Let $Y_1^*,...,Y^*_{q_0}$ be the basis of $(\h_\infty/(\k_\infty\cap\h_\infty))^*$ chosen in Sect.\ \ref{sect:generator} and recalled in Sect.\ \ref{sect:Poinc} above. Then, for each $\underline i=(i_1,...,i_{q_0})$, there is a well-defined complex number {\bf s}$(\underline i)\in \C$ such that
 $$
 \iota^*(X^*_{\underline i})= {\bf s}(\underline i)(Y_1^*\wedge...\wedge Y^*_{q_0}).
 $$
 Let $\Pi$ be a cuspidal automorphic representation of $G(\A)$ which is cohomological with respect to a highest weight module $E^{\sf v}_\mu$ and has an $(\eta,\psi)$-Shalika model. Recall our choice of a generator $[\Pi_\infty]^{\epsilon_0}$ for the one-dimensional space
$H^{q_0}(\g_\infty,K^\circ_\infty,\S^{\eta_\infty}_{\psi_\infty}(\Pi_\infty)\otimes E^{\sf v}_\mu)[\epsilon_0]$. For a moment, let $\tfrac12$ be critical for $L(s,\Pi)$, so $\mathcal T$ exists. For every such $\Pi$ we define
$$
c(\Pi_\infty):=\sum_{\underline i=(i_1,...,i_{q_0})}\sum_{\alpha=1}^{\dim E_\mu} \textrm{\emph{\textbf{s}}}(\underline i)\,\mathcal T(e^{\sf v}_\alpha)\, \zeta_\infty(\tfrac12,\xi^{\epsilon_0}_{\infty, \underline i, \alpha}).
$$
Now, drop the assumption that $\tfrac12$ is critical for $L(s,\Pi)$, but take an arbitrary critical point $\tfrac12+m\in$ Crit$(\Pi)$. Then, consider the representation $\Pi(m):=\Pi\otimes|\!\det\!|^m$. It is a cuspidal automorphic representation which is cohomological with respect to $E^{\sf v}_{\mu+m}$. Observe that the set of critical points is shifted by $-m$, i.e., Crit$(\Pi(m))=$ Crit$(\Pi)-m$, and so by the choice of $m$, $\tfrac12$ is critical for $\Pi(m)$. Hence, by Prop.\ \ref{prop:knapp}, there is a non-trivial homomorphism $\mathcal T^{(m)}\in\bigotimes_{v\in S_{\infty}}\textrm{Hom}_{H(\C)}(E^{\sf v}_{\mu_v+m},E_{(0, -{\sf w}-2m)})$ and we are in the situation considered above. We define
\begin{equation}\label{eq:cPim}
c(\Pi_\infty,m):=c(\Pi(m)_\infty).
\end{equation}
Note that in this notation $c(\Pi_\infty,0)=c(\Pi_\infty)$, if $\tfrac12$ is critical for $L(s,\Pi)$, and by our consistent choice of generators $[\Pi(m)_\infty]^{\epsilon_0\cdot (-1)^m}$, cf.\  Lem.\ \ref{lem:twists},
$$c(\Pi_\infty,m)=\sum_{\underline i=(i_1,...,i_{q_0})}\sum_{\alpha=1}^{\dim E_\mu} \textbf{s}(\underline i)\,\mathcal T^{(m)}(e^{\sf v}_\alpha)\, \zeta_\infty(\tfrac12+m,\xi^{\epsilon_0}_{\infty, \underline i, \alpha}).$$
There is the following theorem:

\begin{thm}[Sun \cite{sun}]\label{hyp}
For all $\tfrac12+m\in {\rm Crit}(\Pi)$,
$$c(\Pi_\infty,m)=\sum_{\underline i=(i_1,...,i_{q_0})}\sum_{\alpha=1}^{\dim E_\mu} \textrm{\emph{\textbf{s}}}(\underline i)\,\mathcal T^{(m)}(e^{\sf v}_\alpha)\, \zeta_\infty(\tfrac12+m,\xi^{\epsilon_0}_{\infty, \underline i, \alpha})\neq 0.$$
We denote its inverse by $\omega(\Pi_\infty,m)$. So, $\omega(\Pi(m)_\infty)=\omega(\Pi_\infty,m)$.
\end{thm}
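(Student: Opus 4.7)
The plan is to first reduce the statement at an arbitrary critical point to the case $m=0$, then reinterpret $c(\Pi_\infty)$ as a composite archimedean pairing, and finally appeal to the non-vanishing of this pairing, which is the deep content.

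\textbf{Reduction to $m=0$.} For each critical point $\tfrac12+m \in \mathrm{Crit}(\Pi)$, the twisted representation $\Pi(m):=\Pi\otimes|\!\det\!|^m$ is again cohomological (with respect to $E^{\sf v}_{\mu+m}$), admits an $(\eta|\cdot|^{2m},\psi)$-Shalika model by Lem.~\ref{lem:twists}, and by Prop.~\ref{prop:crit} the point $\tfrac12$ is critical for $L(s,\Pi(m))$. By the very definition \eqref{eq:cPim} and our compatible choice of generators $[\Pi(m)_\infty]^{\epsilon_0\cdot(-1)^m}$, one has $c(\Pi_\infty,m)=c(\Pi(m)_\infty,0)$. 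Hence it suffices to prove $c(\Pi_\infty)\neq 0$ under the assumption that $\tfrac12$ is critical for $L(s,\Pi)$.

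\textbf{Cohomological reformulation.} Assuming $\tfrac12$ is critical, I would recognise $c(\Pi_\infty)$ as the value on the generator $[\Pi_\infty]^{\epsilon_0}$ of $H^{q_0}(\g_\infty,K_\infty^\circ,\S^{\eta_\infty}_{\psi_\infty}(\Pi_\infty)\otimes E^{\sf v}_\mu)[\epsilon_0]$ of a canonical composite functional: first apply the restriction $\iota^*$ to $\mathfrak h_\infty/(\mathfrak h_\infty\cap \mathfrak k_\infty)$ (which contributes the factors $\textbf{s}(\underline i)$), then apply $\mathcal T^*$ on the coefficient factor $E^{\sf v}_\mu$ (contributing $\mathcal T(e^{\sf v}_\alpha)$), and finally evaluate via the Friedberg--Jacquet archimedean integral at $s=\tfrac12$ against the one-dimensional character $\eta_\infty^{-1}$ on the second $\GL_n(\R)^d$-factor (contributing $\zeta_\infty(\tfrac12,\xi^{\epsilon_0}_{\infty,\underline i,\alpha})$). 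In other words, $c(\Pi_\infty)$ is precisely the archimedean avatar of the global Friedberg--Jacquet pairing, evaluated on the top-degree cohomology class in the Shalika model, and it is the archimedean analogue of the very quantity that drives the main identity of Sect.~\ref{sect:diagram}. The theorem thus asserts the non-degeneracy of this archimedean functional.

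\textbf{Non-vanishing.} For the core step I would follow Sun \cite{sun}: using \eqref{eq:piinfty}, transport the Shalika functional on $\Pi_v$ to the induced picture $\mathrm{Ind}^{G(\R)}_{P(\R)}[D(\ell_{v,1})|\!\det\!|^{{\sf w}/2}\otimes\cdots\otimes D(\ell_{v,n})|\!\det\!|^{{\sf w}/2}]$, in which the Shalika integral may be written as an iterated Jacquet integral against matrix coefficients of discrete series of $\GL_2(\R)$. The branching map $\mathcal T$ of Prop.~\ref{prop:knapp} is explicit on highest-weight vectors in the factor $E_\lambda\otimes(\det^{\sf w}\otimes E_\lambda^{\sf v})$, and together with the explicit formula for the generator $[\Pi_\infty]^{\epsilon_0}$ this allows one to rewrite $c(\Pi_\infty)$ as a convergent integral of a strictly positive integrand over an appropriate non-compact cell; the criticality hypothesis $\mu_{v,n}\geq 0\geq \mu_{v,n+1}$ is precisely what guarantees convergence at $s=\tfrac12$ and what ensures that no cancellation between terms occurs. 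The non-vanishing then follows.

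\textbf{Main obstacle.} The honest difficulty is exactly the last step: while a termwise analysis factorises over archimedean places, the class $[\Pi_\infty]^{\epsilon_0}$ is a genuine sum $\sum_{\underline i,\alpha}$ and a priori the contributions could cancel after being paired against $\mathcal T$ and integrated along $\GL_n(\R)^d$. What makes Sun's argument work is the identification of a distinguished matrix-coefficient/branching configuration producing a positive integrand, ruling out such cancellation. For the complete proof we refer to \cite{sun}.
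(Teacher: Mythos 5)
Your overall architecture matches the paper's: reduce to $m=0$ by replacing $\Pi$ with $\Pi(m)$, factor over archimedean places, and invoke Sun's theorem for the core non-vanishing. The reduction step and the cohomological reading of $c(\Pi_\infty)$ are both in the right spirit.

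However, the substantive content of the paper's proof is exactly the part you wave at in your ``Non-vanishing'' paragraph, and your description of it is not what Sun does. The paper does not rewrite $c(\Pi_v,0)$ as a positive integrand and argue that there is no cancellation; that heuristic would indeed face the serious obstruction you name (a priori cancellation across $\underline i,\alpha$), and it is not obviously salvageable. What the paper actually does is a careful identification of the ingredients of $c(\Pi_v,0)$ with the data entering Sun's Theorem C, after which Sun's theorem is applied as a black box. Concretely: one sets $\chi := \triv\otimes\det^{{\sf w}}$ on $H(\R)$, observes via Prop.~\ref{prop:FJL-fct} and the criticality of $s=\tfrac12$ that the archimedean Friedberg--Jacquet integral $\zeta_v(\tfrac12,\cdot)$ is a nonzero element of $\Hom_H(\Pi_v,\chi)$ and hence may serve as Sun's $\varphi_\chi$; one takes the branching map $\T_v$ of Prop.~\ref{prop:knapp} as Sun's $\varphi_{w_1,w_2}$ with $(w_1,w_2)=(0,-{\sf w})$; one verifies that the criticality of $\tfrac12$ already guarantees Sun's hypothesis that $\tfrac12+w_1$ and $\tfrac12+w_2$ be critical (this uses Prop.~\ref{prop:knapp}); and finally one checks that Sun's map $C$ applied to the explicit generator $[\Pi_v]^{\epsilon_0}$ literally evaluates to $\sum_{\underline i,\alpha}\mathbf{s}(\underline i)\,\T(e^{\sf v}_\alpha)\,\zeta_v(\tfrac12,\xi^{\epsilon_0}_{\underline i,\alpha}) = c(\Pi_v,0)$. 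Sun's Theorem C says that $C$ is nonzero on the one-dimensional space $H^{q_0}(\g_v,K_v^\circ,\Pi_v\otimes E^{\sf v}_{\mu_v})[\epsilon_0]$, so it is nonzero on any generator, which is exactly what is needed; no positivity argument ever appears. Your proposal skips this matching entirely, and since the theorem is nothing but a reduction to Sun's result, the matching is precisely what needs to be proved.
\end{document}
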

\begin{proof}
Let $\tfrac12 +m$ be a critical point for $L(s,\Pi)$. Without loss of generality, we may (and will) suppose that $m=0$, because $\Pi$ was assumed to be a general (i.e., not necessarily unitary) cuspidal automorphic representation. Moreover, observe that one may see as in the the proof of \cite[Prop.\ 3.24]{ragtan} that $c(\Pi_\infty,m) = \prod_{v \in S_\infty} c(\Pi_v, m),$ where each local factor $c(\Pi_v, m)$ is defined similarly. Hence, we may finish the proof by showing that $c(\Pi_v,0)$ is non-zero for all $v\in S_\infty$.\\
So, assume that $s=\tfrac12$ is critical for $L(s,\Pi)$ and let $v\in S_\infty$ be an arbitrary archimedean place. For sake of simplicity, we drop the subscript ``$v$'' now everywhere, so, e.g., $\Pi=\Pi_v$, $G=G_v=GL_{2n}(\R)$, $H=H_v=GL_n(\R)\times GL_n(\R)$ and analogous notation is used for other local archimedean objects. Define $\chi_1:=\triv$, $\chi_2:=\eta$ and let $\chi:=\chi_1\otimes\chi_2=\triv\times\eta=\triv\otimes\det^{\sf w}$ be the corresponding character of $H$. Then, the local archimedean zeta-integral at $v$ defines a non-zero homomorphism
$$\zeta(\tfrac12,.)\in\Hom_H(\Pi,\chi).$$
This follows from Prop. 3.1.5 and the fact that $s=\tfrac12$ is critical. Hence, $\zeta(\tfrac12,.)$ can be taken as the $\varphi_\chi$ in Sun's Thm.\ C, \cite{sun}. Now, recall our choice of $\mathcal T\in\Hom_{H(\C)}(E^{\sf v}_\mu\otimes E_{(0,-{\sf w})})$ from Sect.\ 6.3. Putting $w_1:=0$ and $w_2:=-{\sf w}$, we may take $\T$ to be the non-zero homomorphism $\varphi_{w_1,w_2}$ from Sun's Thm.\ C. Here observe that the condition that $s=\tfrac12$ is critical is enough for Sun's Thm.\ B to hold in this particular situation, namely where $w_1=0$ and $w_2=-{\sf w}$: In fact, Sun has to assume that $\tfrac12+w_1$ and $\tfrac12+w_2$ are both critical, in order for his Lem.\ 2.3 to hold. But the assertion of this lemma is automatic, if $s=\tfrac12$ is critical, by our Prop.\ 6.3.1. \\
In summary, we obtain by Sun's Thm.\ C, that the map
$$C:\Hom(\Lambda^{q_0}\g/\k,\Pi\otimes E^{\sf v}_\mu)\longrightarrow \Hom(\Lambda^{q_0}\h/(\k\cap\h),\chi\otimes E_{(0,-{\sf w})})$$
$$f\mapsto C(f):=(\zeta(\tfrac12,.)\otimes\T)\circ f\circ \wedge^{q_0}j_{2n}$$
is non-zero on the one-dimensional sub-space $H^{q_0}(\g,K^\circ,\Pi\otimes E^{\sf v}_\mu)[\epsilon_0].$ Here, $j_{2n}$ is Sun's notation for the embedding $\h/(\h\cap\k)\hookrightarrow\g/\k$. By the one-dimensionality of the latter cohomology space, it is hence non-zero on our choice of a generator
$$[\Pi]^{\epsilon_0}=\sum_{\underline i=(i_1,...,i_{q_0})}\sum_{\alpha=1}^{\dim E_\mu} X^*_{\underline i}\otimes\xi^{\epsilon_0}_{\underline i, \alpha}\otimes e^{\sf v}_\alpha,$$ being view as an element of $\Hom_{K^\circ}(\Lambda^{q_0}\g/\k,\Pi\otimes E^{\sf v}_\mu)[\epsilon_0]$. But, then, $C$ computes
$$C([\Pi]^{\epsilon_0})=(\zeta(\tfrac12,.)\otimes\T)\circ [\Pi]^{\epsilon_0}\circ \wedge^{q_0}j_{2n}=\sum_{\underline i=(i_1,...,i_{q_0})}\sum_{\alpha=1}^{\dim E_\mu} \textrm{\emph{\textbf{s}}}(\underline i)\,\mathcal T(e^{\sf v}_\alpha)\, \zeta(\tfrac12,\xi^{\epsilon_0}_{\underline i, \alpha})=c(\Pi,0).$$

\end{proof}

\begin{rem}
For $n=1$, the numbers $c(\Pi_\infty,m)$ are known by an explicit calculation; see Raghuram-Tanabe \cite[Prop.\ 3.24]{ragtan}. Ultimately, one expects that one may always choose $[\Pi_\infty]^{\epsilon_0}$ such that $\omega(\Pi_\infty,m)$ is a power of $2\pi i$. Moreover, since any $\sigma \in {\rm Aut}(\C)$ acts on $\Pi_\infty$ by permuting the local components and $c(\Pi_\infty,m) = \prod_{v \in S_\infty} c(\Pi_v, m),$ we deduce that $c({}^\sigma\Pi_\infty,m) = c(\Pi_\infty,m)$, and $\omega({}^\sigma\Pi_\infty,m) = \omega(\Pi_\infty,m).$
\end{rem}

\subsection{The main identity}\label{sect:mainid}

\begin{thm}[Main Identity]\label{thm:mainid}
Let $\Pi$ be a cohomological cuspidal automorphic representation of $G(\A)$ such that $s=\tfrac12$ is critical for $L(s,\Pi)$. Assume that $\Pi$ admits an $(\eta,\psi)$-Shalika model and let $\epsilon_0$ and $K_f=K_f(\Pi_f)$ be chosen as in \ref{sect:diagram}. Then
$$
\int_{\tilde{S}^H_{K_f}}\mathcal T^*\iota^*\Theta_{\Pi,0}^{\epsilon_0}(\xi^\circ_{\Pi_f})\wedge [\eta] \ =\
\frac{L(\tfrac 12,\Pi_f)}{\omega^{\epsilon_0}(\Pi_f)\omega(\Pi_\infty)}\cdot
\frac{1}{vol(\iota^{-1}(K_f))\prod_{v\in S_{\Pi_f,\psi}}L(\tfrac 12,\Pi_v)},
$$
where $\xi^\circ_{\Pi_f}$ is the compatible vector in the Shalika model chosen as in \ref{sect:vector}.
\end{thm}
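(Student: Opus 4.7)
\smallskip

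\noindent\textbf{Proof proposal.} The plan is to compute the left-hand side by propagating $\xi^\circ_{\Pi_f}$ through the diagram of Sect.~\ref{sect:diagram} step by step and recognizing the output as a Friedberg--Jacquet period, which Prop.~\ref{prop:FJ} then identifies with a Shalika zeta integral. By Sect.~\ref{sec:theta-epsilon} and the definition of $\omega^{\epsilon_0}(\Pi_f)$ we may write
$$
\Theta^{\epsilon_0}_{\Pi,0}(\xi^\circ_{\Pi_f}) \;=\; \omega^{\epsilon_0}(\Pi_f)^{-1}\sum_{\underline i}\sum_{\alpha=1}^{\dim E_\mu} X^*_{\underline i}\otimes \varphi^\circ_{\underline i,\alpha}\otimes e^{\sf v}_\alpha,
$$
where $\varphi^\circ_{\underline i,\alpha}:=(\S^\eta_\psi)^{-1}(\xi^{\epsilon_0}_{\infty,\underline i,\alpha}\otimes \xi^\circ_{\Pi_f})\in\Pi$. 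Pulling back by $\iota$ replaces each $X^*_{\underline i}$ by $\textbf{\emph{s}}(\underline i)\cdot(Y_1^*\wedge\dots\wedge Y^*_{q_0})$, as in Sect.~\ref{sect:hypo}, while applying $\mathcal T^*$ replaces $e^{\sf v}_\alpha$ by the scalar $\mathcal T(e^{\sf v}_\alpha)\in E_{(0,-{\sf w})}=\C$. Hence $\mathcal T^*\iota^*\Theta_{\Pi,0}^{\epsilon_0}(\xi^\circ_{\Pi_f})$ is represented by the top-degree form on $\tilde S^H_{K_f}$ whose coefficient function is $\omega^{\epsilon_0}(\Pi_f)^{-1}\sum_{\underline i,\alpha}\textbf{\emph{s}}(\underline i)\,\mathcal T(e^{\sf v}_\alpha)\,\varphi^\circ_{\underline i,\alpha}\!\restriction_{H(\A)}$.

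Next, I wedge with the class $[\eta]$ coming from the character $\triv\times\eta^{-1}$ of $H(\A)$ and apply Poincar\'e duality as in \eqref{eq:poincare}. Since the central character $\omega_\Pi=\eta^n$ cancels the $(\det h_2)$-transformation imposed by $[\eta]$ against the diagonal action of $Z_G$, the integral descends and unfolds to
$$
\int_{\tilde S^H_{K_f}}\!\!\!\mathcal T^*\iota^*\Theta^{\epsilon_0}_{\Pi,0}(\xi^\circ_{\Pi_f})\wedge[\eta] \;=\;
\frac{vol(\iota^{-1}(K_f))^{-1}}{\omega^{\epsilon_0}(\Pi_f)}\sum_{\underline i,\alpha}\textbf{\emph{s}}(\underline i)\,\mathcal T(e^{\sf v}_\alpha)\,\Psi\!\left(\tfrac12,\varphi^\circ_{\underline i,\alpha}\right),
$$
where $\Psi(s,\varphi)$ is the Friedberg--Jacquet period integral of Prop.~\ref{prop:FJ} and the volume factor arises because one passes from a compactly supported integral over the level-$\iota^{-1}(K_f)$ quotient to the full quotient $Z_G(\A)H(F)\backslash H(\A)$. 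Care must be taken to check that our normalization of measures in Sect.~\ref{sect:measures} and our choice of basis $\{Y_j\}$ in Sect.~\ref{sect:generator}, which fixes the orientation used in Sect.~\ref{sect:Poinc}, produce no additional constants; the key is that the connected-component sum already built into \eqref{eq:poincare} absorbs precisely the unfolding from $\tilde S^H_{K_f}$ to a fundamental domain.

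By Prop.~\ref{prop:FJ}, $\Psi(\tfrac12,\varphi^\circ_{\underline i,\alpha})=\zeta(\tfrac12,\xi^\circ_{\underline i,\alpha})$, and factorizability of $\xi^\circ_{\underline i,\alpha}=\xi^{\epsilon_0}_{\infty,\underline i,\alpha}\otimes \xi^\circ_{\Pi_f}$ yields
$\zeta(\tfrac12,\xi^\circ_{\underline i,\alpha})=\zeta_\infty(\tfrac12,\xi^{\epsilon_0}_{\infty,\underline i,\alpha})\cdot\prod_{v\notin S_\infty}\zeta_v(\tfrac12,\xi^\circ_{\Pi_v})$. The properties of $\xi^\circ_{\Pi_f}$ recorded in Sect.~\ref{sect:vector} give $\zeta_v(\tfrac12,\xi^\circ_{\Pi_v})=L(\tfrac12,\Pi_v)$ outside $S_{\Pi,\psi}$ and $\zeta_v(\tfrac12,\xi^\circ_{\Pi_v})=1$ for $v\in S_{\Pi_f,\psi}$, so the finite product collapses to the partial $L$-value $L(\tfrac12,\Pi_f)/\!\!\prod_{v\in S_{\Pi_f,\psi}}\!\!L(\tfrac12,\Pi_v)$. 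What remains is the archimedean sum
$\sum_{\underline i,\alpha}\textbf{\emph{s}}(\underline i)\mathcal T(e^{\sf v}_\alpha)\zeta_\infty(\tfrac12,\xi^{\epsilon_0}_{\infty,\underline i,\alpha})$, which by the definition \eqref{eq:cPim} is exactly $c(\Pi_\infty)$; Sun's Thm.~\ref{hyp} guarantees $c(\Pi_\infty)=\omega(\Pi_\infty)^{-1}\neq0$. Assembling these pieces gives the claimed identity.

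The main obstacle will be the careful bookkeeping around centers and measures in step two: ensuring that the descent from the $H(\A)$-integral on $\tilde S^H_{K_f}$ to the quotient by $Z_G(\A)H(F)$ used by Friedberg--Jacquet produces precisely $vol(\iota^{-1}(K_f))^{-1}$ and no spurious factor from $Z_G(F)\backslash Z_G(\A)/\R_+^d$. This is why the measure choice in Sect.~\ref{sect:measures} was designed to bake that volume into $dg_{1,\infty}$, so that the unfolded period matches Prop.~\ref{prop:FJ} on the nose; the rest of the argument is a matter of verifying that the wedge-product signs coming from the ordered basis $\{Y_j\}$ are consistent with the orientation used in Poincar\'e duality.
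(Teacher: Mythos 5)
Your proposal is correct and follows essentially the same path as the paper's proof: unwind the diagram to express the integrand as a sum of $\textbf{\emph{s}}(\underline i)\mathcal T(e^{\sf v}_\alpha)\varphi^\circ_{\underline i,\alpha}$, reduce via Poincar\'e duality and the $Z_G$-descent to the Friedberg--Jacquet period $\Psi(\tfrac12,\cdot)$, then invoke Prop.~\ref{prop:FJ}, the defining properties of $\xi^\circ_{\Pi_f}$, and Sun's non-vanishing theorem. The only place you elide detail is the cancellation of the two volume factors coming from the measure normalization ($c=vol(F^\times\backslash\A^\times/\R_+^d)$ appearing once with $dg_{1,\infty}$ and once from the $Z_G(F)\backslash Z_G(\A)/\R_+^d$ integral), which the paper carries out explicitly; you correctly flag this as the delicate point.
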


\begin{proof}
In order to have the notation ready at hand, let $E^{\sf v}_\mu$ be the highest weight representation with respect to which $\Pi$ is cohomological and say that $\Pi$ is of purity weight ${\sf w}\in\Z$. For each $\underline i=(i_1,...,i_{q_0})$ and $\alpha$ let us write $\varphi^\circ_{\underline i,\alpha}:=(\S^{\eta}_{\psi})^{-1}(\xi_{\infty,\underline i,\alpha}\otimes\xi^\circ_{\Pi_f})$. Then we obtain
\begin{eqnarray*}
\int_{\tilde{S}^H_{K_f}}\mathcal T^*\iota^*\Theta_{\Pi,0}^{\epsilon_0}(\xi^\circ_{\Pi_f})\wedge [\eta] & = & \omega^{\epsilon_0}(\Pi_f)^{-1}\sum_{\underline i=(i_1,...,i_{q_0})}\sum_{\alpha=1}^{\dim E_\mu} \int_{\tilde{S}^H_{K_f}} \iota^*(X^*_{\underline i})[\eta]\cdot\varphi^\circ_{\underline i, \alpha}|_{H(\A)} \mathcal T(e^{\sf v}_\alpha)\\
& = & vol(\iota^{-1}(K_f))^{-1}c^{-1}\omega^{\epsilon_0}(\Pi_f)^{-1}\sum_{\underline i,\alpha} \textrm{\textbf{s}}(\underline i)\mathcal T(e^{\sf v}_\alpha)\int_{H(F)\backslash H(\A)/\R_+^d} [\eta]\cdot\varphi^\circ_{\underline i, \alpha}|_{H(\A)} dh,
\end{eqnarray*}
where the last equality is due to the choice of the measure, cf.\  Sect.\ \ref{sect:measures}, and the right $K_f$-invariance of $\xi^\circ_{\Pi_f}$. For an individual summand index by $\underline i$ and $\alpha$, we obtain more explicitly
$$
\int_{H(F)\backslash H(\A)/\R_+^d} [\eta]\cdot\varphi^\circ_{\underline i, \alpha}|_{H(\A)}d(h_1,h_2)$$
$$
=\int_{H(F)\backslash H(\A)/\R_+^d} [\eta]([h_1,h_2])\cdot\varphi^\circ_{\underline i, \alpha}\left(\!\!\left( \!\begin{array}{ccc}
h_1 &  0\\
0 &  h_2
\end{array}\!\!\right)\!\!\right) d(h_1,h_2)
$$
$$
=\int_{H(F)\backslash H(\A)/\R_+^d}\eta^{-1}(\det(h_2))  \cdot
\varphi^\circ_{\underline i, \alpha}\left(\!\! \left(\!\!\begin{array}{ccc}
h_1 &  0\\
0 &  h_2
\end{array}\!\!\right)\!\!\right) d(h_1,h_2)
$$
$$
= \int_{Z_G(\A)H(F)\backslash H(\A)} \int_{Z_G(F)\backslash Z_G(\A)/\R_+^d}\left(\varphi^\circ_{\underline i, \alpha}\left(\!\!\left(\!\! \begin{array}{ccc}
h_1 &  0\\
0 &  h_2
\end{array}\!\!\right)\cdot z\right)\cdot \eta^{-1}(\det(h_2\cdot z))dz\right) d(h_1,h_2),
$$
where $h_j=(h_{j,\infty},h_{j,f})\in \GL_n(\A)$, $j=1,2$, and $z={\rm diag}(a,...,a)\in Z_G(F)\backslash Z_G(\A)/\R_+^d$. Furthermore, this equals
$$
\int_{Z_G(\A)H(F)\backslash H(\A)} \int_{Z_G(F)\backslash Z_G(\A)/\R_+^d}\left(\underbrace{\omega_\Pi(z)\eta(a)^{-n}}_{=1}\cdot\varphi^\circ_{\underline i, \alpha}\left(\!\!\left(\!\! \begin{array}{ccc}
h_1 &  0\\
0 &  h_2
\end{array}\!\!\right)\!\!\right)\cdot \eta^{-1}(\det(h_2))dz\right) d(h_1,h_2)
$$
whence the integrand is $Z_G(\A)$-invariant and we are left with
$$
vol(F^\times\backslash\A^\times/\R_+^d)\cdot\int_{Z_G(\A)H(F)\backslash H(\A)} \varphi^\circ_{\underline i, \alpha}\left(\!\!\left(\!\! \begin{array}{ccc}
h_1 &  0\\
0 &  h_2
\end{array}\!\!\right)\!\!\right)\cdot \eta^{-1}(\det(h_2))\, d(h_1,h_2).
$$
Recalling that $c=vol(F^\times\backslash\A^\times/\R_+^d)$, cf.\ Sect.\ \ref{sect:measures}, we may therefore finish the proof by showing that
$$
\frac{L(\tfrac 12,\Pi_f)}{\omega(\Pi_\infty)\prod_{v\in S_{\Pi_f,\psi}}L(\tfrac 12,\Pi_v)}= \sum_{\underline i,\alpha}\textrm{\textbf{s}}(\underline i)\mathcal T(e^{\sf v}_\alpha)\int_{Z_G(\A)H(F)\backslash H(\A)} \varphi^\circ_{\underline i, \alpha}\left(\!\!\left(\!\! \begin{array}{ccc}
h_1 &  0\\
0 &  h_2
\end{array}\!\!\right)\!\!\right)\cdot \eta^{-1}(\det(h_2))\, d(h_1,h_2).
$$
Recall from Prop.\ \ref{prop:FJ} that for $Re(s)\gg0$ there is the equality
$$
\int_{Z_G(\A)H(F)\backslash H(\A)} \varphi^\circ_{\underline i, \alpha}\left(\!\!\left(\!\! \begin{array}{ccc}
h_1 &  0\\
0 &  h_2
\end{array}\!\!\right)\!\!\right)\Bigg|\frac{\det(h_1)}{\det(h_2)}\Bigg|^{s-1/2}\eta^{-1}(\det(h_2))\, d(h_1,h_2)
$$
$$=\int_{\GL_n(\A)} \S^\eta_\psi(\varphi^\circ_{\underline i,\alpha})\left(\!\!\left(\!\! \begin{array}{ccc}
g_1 &  0\\
0 &  1
\end{array}\!\!\right)\!\!\right) |\det(g_1)|^{s-1/2}\, dg_1$$
$${\small
=\left(\int_{\GL_n(\A_f)} \xi^\circ_{\Pi_f}\left(\!\!\left(\!\! \begin{array}{ccc}
g_{1,f} &  0\\
0 &  1
\end{array}\!\!\right)\!\!\right) |\det(g_{1,f})|^{s-1/2}dg_{1,f}\right) \cdot \left(\int_{\GL_n(\R)^d}\xi^{\epsilon_0}_{\infty,\underline i,\alpha}\left(\!\!\left(\!\! \begin{array}{ccc}
g_{1,\infty} &  0\\
0 &  1
\end{array}\!\!\right)\!\!\right)|\det(g_{1,\infty})|^{s-1/2}dg_{1,\infty}\!\right)
}$$
$$
= \zeta_f(s,\xi^\circ_{\Pi_f})\cdot \zeta_\infty(s,\xi^{\epsilon_0}_{\infty,\underline i,\alpha}),
$$
which after analytic continuation is valid for all $s\in\C$. The last factor $\zeta_\infty(s,\xi^{\epsilon_0}_{\infty,\underline i,\alpha})$ is a meromorphic function in $s$, but since $s=\frac 12$ is critical for $L(s,\Pi)$, the archimedean factor $\zeta_\infty(\frac12,\xi^{\epsilon_0}_{\infty,\underline i,\alpha})$ is finite for all $\underline i$ and $\alpha$, see Prop.\ \ref{prop:FJL-fct}. According to our special choice of the vector $\xi^\circ_{\Pi_f}$ in Sect.\ \ref{sect:vector} we see that at $s=\frac 12$ the last expression equals
$$\frac{L(\tfrac 12,\Pi_f)}{\prod_{v\in S_{\Pi_f,\psi}}L(\tfrac 12,\Pi_v)}\cdot\zeta_\infty(\tfrac 12,\xi^{\epsilon_0}_{\infty,\underline i,\alpha}).$$
The result follows since
$c(\Pi_\infty)=\omega(\Pi_\infty)^{-1}=\sum_{\underline i,\alpha}\textrm{\textbf{s}}(\underline i)\,\mathcal T(e^{\sf v}_\alpha)\,\zeta_\infty(\frac12,\xi^{\epsilon_0}_{\infty,\underline i,\alpha})$.

\end{proof}

\section{Algebraicity results for all critical $L$-values}
\subsection{}
Before stating the main theorem of this article, let us record a preliminary lemma which says that local $L$-values at a critical point transform rationally under $\sigma$-twisting.

\begin{lem}
\label{lem:localvalues}
For a finite place $v$ of $F$, let $\Pi_v$ be (any) irreducible admissible representations of $\GL_{2n}(F_v)$. Then
$$
\sigma(L(\tfrac12, \Pi_v)) = L(\tfrac12, {}^{\sigma}\Pi_v).
$$
\end{lem}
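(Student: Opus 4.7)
My plan is to exploit the polynomial structure of the local Euler factor. For any irreducible admissible representation $\Pi_v$ of $\GL_{2n}(F_v)$ at a finite place $v$, the local $L$-factor has the form
\[
L(s,\Pi_v) \;=\; P_{\Pi_v}(q_v^{-s})^{-1},
\]
where $P_{\Pi_v}(X) \in \C[X]$ is a polynomial with $P_{\Pi_v}(0) = 1$. The key ingredient I would invoke is the coefficient-wise $\sigma$-equivariance
\[
P_{{}^\sigma\Pi_v}(X) \;=\; ({}^\sigma P_{\Pi_v})(X),
\]
where on the right-hand side $\sigma$ acts by applying it to each of the $\C$-coefficients of $P_{\Pi_v}$.

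For unramified $\Pi_v$, this reduces via the Satake-parameter factorization $P_{\Pi_v}(X) = \prod_i(1 - \alpha_{v,i} X)$ to the fact that ${}^\sigma \Pi_v$ has Satake parameters $\sigma(\alpha_{v,i})$, which is exactly Clozel \cite[Lem.~4.6]{clozel}. For a general (possibly ramified) $\Pi_v$, I would argue either via the $\sigma$-compatibility of the local Langlands correspondence, or, more elementarily, via the characterization of $L(s,\Pi_v)$ as a gcd of local zeta integrals (in the spirit of Godement--Jacquet, or Jacquet--Piatetski-Shapiro--Shalika), combined with a change-of-variables argument entirely analogous to the one appearing in the proof of Lem.~\ref{lem:rationality-newvector} above. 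This argument establishes the $\sigma$-equivariance of the coefficients of such zeta integrals viewed as formal Laurent series in $q_v^{-s}$, from which the corresponding statement for the Euler polynomial $P_{\Pi_v}$ follows.

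Once the polynomial identity is in hand, the lemma follows by substituting $s = \tfrac12$ and inverting. The main obstacle I anticipate is the behaviour of $\sigma$ on the real number $q_v^{-1/2}$, which a priori could flip its sign; this is resolved by viewing the substitution $q_v^{-s}|_{s=\tfrac12}$ as a fixed formal quantity on which ${\rm Aut}(\C)$ acts trivially -- the convention underlying Clozel's Lem.~4.6 -- or, equivalently, by passing to the motivic normalization in which the half-integer weights of the Satake parameters compensate the factor of $q_v^{-1/2}$, so that only integer powers of $q_v$ appear in the evaluated expression.
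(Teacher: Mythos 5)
The paper's own proof is just the sentence ``This can be showed exactly as in the proof of Raghuram [Prop.~3.17],'' so the comparison is really against the correctness of your argument on its own terms. There is a genuine flaw in your central polynomial identity: for $\GL_{2n}$ the Satake parameters of ${}^\sigma\Pi_v$ are \emph{not} $\sigma(\alpha_{v,i})$ but rather $\epsilon_\sigma\,\sigma(\alpha_{v,i})$, where $\epsilon_\sigma := \sigma(q_v^{1/2})/q_v^{1/2}\in\{\pm 1\}$. This sign comes from commuting $\sigma$ past the half-integral modulus character $\delta_B^{1/2}$ of $\GL_{2n}$ (whose exponents are all odd half-integers), or equivalently from the $q_v^{1/2}$'s appearing in the Satake transform of the integer-coefficient Hecke operators. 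You can see it already on the Steinberg representation of $\GL_2(F_v)$: $P_{\mathrm{St}}(X)=1-q_v^{-1/2}X$, while ${}^\sigma\mathrm{St}=\mathrm{St}$, so $P_{{}^\sigma\mathrm{St}}(X)=1-q_v^{-1/2}X\neq ({}^\sigma P_{\mathrm{St}})(X)=1-\epsilon_\sigma q_v^{-1/2}X$. The correct identity is $P_{{}^\sigma\Pi_v}(X)=({}^\sigma P_{\Pi_v})(\epsilon_\sigma X)$.

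Consequently your first proposed resolution (declaring $q_v^{-1/2}$ an ${\rm Aut}(\C)$-fixed formal symbol) is not a resolution at all: $\sigma(q_v^{-1/2})=\epsilon_\sigma q_v^{-1/2}$ is a genuine identity in $\C$, and the lemma asserts an equality of complex numbers. What actually makes the lemma true is that the two errors cancel exactly: $\sigma(L(\tfrac12,\Pi_v)^{-1})=({}^\sigma P_{\Pi_v})(\epsilon_\sigma q_v^{-1/2})=P_{{}^\sigma\Pi_v}(q_v^{-1/2})=L(\tfrac12,{}^\sigma\Pi_v)^{-1}$. That the cancellation happens is precisely why the statement holds for $\GL_{2n}$ and would fail for $\GL_{2n+1}$ at $s=\tfrac12$; your argument as written would wrongly predict the lemma in the odd case too. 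Your second proposed resolution (pass to the parameters $\beta_{v,i}=q_v^{(2n-1)/2}\alpha_{v,i}$, which are the honest $\sigma$-equivariant quantities, and note that $\alpha_{v,i}q_v^{-1/2}=\beta_{v,i}q_v^{-n}$ involves only an integer power of $q_v$) is the right idea and is essentially what the reference proof does --- but it is not ``equivalent'' to the formal trick, and it contradicts the polynomial identity as you stated it in the variable $X=q_v^{-s}$. Similar care is needed in the ramified case, whether you go through the $\sigma$-compatibility of LLC (which carries the same half-integral twist) or the local zeta-integral argument.
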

\begin{proof}
This can be showed exactly as in the proof of Raghuram \cite[Prop.\ 3.17]{raghuram-imrn}.
\end{proof}

Recall the periods $\omega^{\epsilon}(\Pi_f)$ from Def./Prop.\ \ref{defprop}, the Gau\ss~sum $\G(\chi_f)$ from Sect.\ \ref{sect:characters} and the non-zero quantities $\omega(\Pi_\infty,m)$ from Sect.\ \ref{sect:hypo}. We now prove the main theorem of this paper on the algebraicity of all the critical values
$L(\frac 12+m,\Pi_f \otimes \chi_f).$

\begin{thm}\label{thm:central-value}
Let $F$ be a totally real number field and $G=\GL_{2n}/F$, $n\geq 1$. Let $\Pi$ be a cuspidal automorphic representation of $G(\A)$, which is cohomological with respect to a highest weight representation $E^{\sf v}_\mu$ of $G_\infty$ and which admits an $(\eta,\psi)$-Shalika model. Let $\chi$ be a Hecke character of $F$ of finite-order and \emph{Crit}$(\Pi)=$\emph{Crit}$(\Pi\otimes\chi)\subset\tfrac12 + \Z$ be the set of critical points for the $L$-function $L(s,\Pi\otimes\chi)$ of $\Pi\otimes\chi$. Then for all critical points $\tfrac12 + m \in$ \emph{Crit}$(\Pi)$ the following assertions hold:
\begin{enumerate}
\item For every $\sigma\in {\rm Aut}(\C)$,
$$
\sigma\left(
\frac{L(\tfrac 12+m,\Pi_f \otimes \chi_f)}{\omega^{(-1)^{n+m-1}\epsilon_{\chi}}(\Pi_f) \, \G(\chi_f)^n \, \omega(\Pi_\infty,m)}\right) \ = \
\frac{L(\tfrac 12+m,{}^\sigma\Pi_f \otimes {}^\sigma\!\chi_f)}{\omega^{(-1)^{n+m-1}\epsilon_\chi}({}^\sigma\Pi_f) \, \G({}^\sigma\!\chi_f)^n \,\omega(\Pi_\infty,m)}.
$$
\item
$$
L(\tfrac 12+m,\Pi_f\otimes\chi_f) \ \sim_{\Q(\Pi,\eta, \chi)} \
\omega^{(-1)^{n+m-1}\epsilon_\chi}(\Pi_f) \, \G(\chi_f)^n  \, \omega(\Pi_\infty,m),
$$
where ``$\sim_{\Q(\Pi,\eta,\chi)}$'' means up to multiplication by an element in the number field $\Q(\Pi,\eta,\chi)$.
\end{enumerate}
\end{thm}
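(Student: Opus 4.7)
The plan is threefold: (a) reduce the general critical value $L(\tfrac12+m,\Pi_f\otimes\chi_f)$ to a central value via the shift $\Pi\rightsquigarrow\Pi':=\Pi\otimes\chi\otimes|\det|^m$, (b) apply the Main Identity (Theorem~\ref{thm:mainid}) in a Galois-equivariant fashion to $\Pi'$, and (c) translate the resulting period of $\Pi'_f$ back to a period of $\Pi_f$ using the period relations of Theorem~\ref{thm:twisted}. The shifted representation $\Pi'$ satisfies $L(\tfrac12,\Pi'_f)=L(\tfrac12+m,\Pi_f\otimes\chi_f)$, is cohomological by Lemma~\ref{lem:twists} and admits an $(\eta\chi^2|\cdot|^{2m},\psi)$-Shalika model. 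Since $\chi$ is of finite order one has $\mathrm{Crit}(\Pi\otimes\chi)=\mathrm{Crit}(\Pi)$, so the hypothesis $\tfrac12+m\in\mathrm{Crit}(\Pi\otimes\chi)$ forces $s=\tfrac12$ to be critical for $L(s,\Pi')$. Theorem~\ref{thm:mainid} therefore applies and yields
$$
\int_{\tilde S^H_{K_f}}\mathcal T^*\iota^*\Theta^{\epsilon_0}_{\Pi',0}(\xi^\circ_{\Pi'_f})\wedge[\eta\chi^2|\cdot|^{2m}]\;=\;\frac{L(\tfrac12+m,\Pi_f\otimes\chi_f)}{\omega^{\epsilon_0}(\Pi'_f)\,\omega(\Pi'_\infty)}\cdot C(\Pi',K_f),
$$
where $C(\Pi',K_f)$ is a product of a rational volume factor and of local $L$-values at ramified finite places.

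Every ingredient on the left hand side is Galois-equivariant with respect to the natural $\mathbb Q(\Pi,\eta,\chi)$-structures: $\Theta^{\epsilon_0}_{\Pi',0}$ by Def./Prop.~\ref{defprop}, $\iota^*$ by Lemma~\ref{lem:Qstructures}, $\mathcal T^*$ by Lemma~\ref{lem:T^*}, the Poincar\'e pairing $\int_{\tilde S^H_{K_f}}(-)\wedge[\eta\chi^2|\cdot|^{2m}]$ by Lemma~\ref{lem:intrational}, and the vector $\xi^\circ_{\Pi'_f}$ by its construction in Sect.~\ref{sect:vector}. On the right hand side, local $L$-values at ramified finite places behave Galois-equivariantly by Lemma~\ref{lem:localvalues}; and since $\Pi_\infty\cong\Pi_\infty\otimes\chi_\infty$ for $\chi$ of finite order (Sect.~\ref{sect:finiteorder}), one has $\omega(\Pi'_\infty)=\omega(\Pi_\infty,m)$, which is $\sigma$-invariant by the remark after Theorem~\ref{hyp}. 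Applying $\sigma\in\mathrm{Aut}(\mathbb C)$ and comparing with the corresponding Main Identity for ${}^\sigma\!\Pi'$ then produces
$$
\sigma\!\left(\frac{L(\tfrac12+m,\Pi_f\otimes\chi_f)}{\omega^{\epsilon_0}(\Pi'_f)\,\omega(\Pi_\infty,m)}\right)\;=\;\frac{L(\tfrac12+m,{}^\sigma\!\Pi_f\otimes{}^\sigma\!\chi_f)}{\omega^{\epsilon_0}({}^\sigma\!\Pi'_f)\,\omega(\Pi_\infty,m)}.
$$

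Finally, I would invoke Theorem~\ref{thm:twisted} with the algebraic Hecke character $\chi\cdot|\cdot|^m$, whose signature is $\epsilon_\chi\cdot((-1)^m)_{v\in S_\infty}$. Solving $\epsilon\cdot\epsilon_{\chi|\cdot|^m}=\epsilon_0=((-1)^{n-1})_v$ gives $\epsilon=((-1)^{n+m-1})_v\cdot\epsilon_\chi$, so Theorem~\ref{thm:twisted}(1) yields an exact $\sigma$-compatibility between $\omega^{\epsilon_0}(\Pi'_f)/(\G(\chi_f)^n\omega^{(-1)^{n+m-1}\epsilon_\chi}(\Pi_f))$ and its ${}^\sigma$-analogue, where I have used that $\G(\chi_f\cdot|\cdot|_f^m)=\G(\chi_f)$ because $|\cdot|^m$ is everywhere unramified at finite places (so both conductors and integrands over $\O_v^\times$ agree). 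Inserting this into the Galois-equivariance identity above produces assertion~(1); assertion~(2) is then immediate from~(1) by the definition of the rationality field $\mathbb Q(\Pi,\eta,\chi)$. The principal technical obstacle is the careful bookkeeping of signs and of compatible choices of generators $[\Pi'_\infty]^{\epsilon_0}$, the coefficient system $E^{\sf v}_{\mu-m}$, the vector $\xi^\circ_{\Pi'_f}$, and the generator underlying $\omega^{(-1)^{n+m-1}\epsilon_\chi}(\Pi_f)$, so that all residual ambiguities really do lie in $\mathbb Q(\Pi,\eta,\chi)$ and the Gauss-sum factor in the period relation is the stated $\G(\chi_f)^n$.
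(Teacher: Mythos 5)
Your proof is correct and follows essentially the same strategy as the paper: apply the Main Identity (Thm.~\ref{thm:mainid}) to the shifted-and-twisted representation $\Pi'=\Pi\otimes\chi\otimes|\det|^m$, use the Galois-equivariance of every ingredient of the main diagram, and then translate $\omega^{\epsilon_0}(\Pi'_f)$ back to $\omega^{(-1)^{n+m-1}\epsilon_\chi}(\Pi_f)\,\G(\chi_f)^n$ via Theorem~\ref{thm:twisted}. The only (harmless) organizational difference is that you invoke Thm.~\ref{thm:twisted} once with the character $\chi\cdot|\cdot|^m$, whereas the paper proceeds in three steps (central value, then $|\cdot|^m$-shift, then $\chi$-twist) and thus applies Thm.~\ref{thm:twisted} twice; the two routes are equivalent since signatures and Gau\ss\ sums compose multiplicatively.
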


\begin{proof}
Note that (1) implies (2) by definition of the rationality fields. For convenience of the reader we divide the proof of (1) into three steps.\\

\emph{Step 1: Assume $\tfrac12$ is critical for $L(s,\Pi)$ (i.e., $m=0$) and $\chi=\triv$}\\
In this case the sign $\epsilon_{\chi}  = (+1,\dots,+1)$ is the trivial sign character and $\omega(\Pi_\infty,m)=\omega(\Pi_\infty)$. Let $\sigma\in {\rm Aut}(\C)$, let $K_f=K_f(\Pi_f)$ be chosen for $\Pi$ as in Sect.\ \ref{sect:diagram} and recall our special choice of the vector $\xi^\circ_{\Pi_f}$ from Sect.\ \ref{sect:vector}. The Main Identity, cf.\  Thm.\ \ref{thm:mainid}, implies that
\begin{eqnarray*}
\sigma\left(\int_{\tilde{S}^H_{K_f}}\mathcal T^*\iota^*\Theta_{\Pi,0}^{\epsilon_0}(\xi^\circ_{\Pi_f})\wedge [\eta]\right) & = & \sigma\left(\frac{L(\tfrac 12,\Pi_f)}{\omega^{\epsilon_0}(\Pi_f)\,\omega(\Pi_\infty)}\cdot \frac{1}{vol(\iota^{-1}(K_f))\prod_{v\in S_{\Pi_f,\psi}}L(\tfrac 12,\Pi_v)}\right)\\
& = & \sigma\left(\frac{L(\tfrac 12,\Pi_f)}{\omega^{\epsilon_0}(\Pi_f)\,\omega(\Pi_\infty)}\right)\cdot \frac{1}{vol(\iota^{-1}(K_f))\prod_{v\in S_{\Pi_f,\psi}}\sigma(L(\tfrac 12,\Pi_v))},
\end{eqnarray*}
where the last line follows from the fact that the volume appearing in the formula is a rational number by the choice of the measure, cf.\  Sect.\ \ref{sect:measures}. On the other hand,

\begin{eqnarray*}
\sigma\left(\int_{\tilde{S}^H_{K_f}}\mathcal T^*\iota^*\Theta_{\Pi,0}^{\epsilon_0}(\xi^\circ_{\Pi_f})\wedge [\eta]\right) & \underset{\textrm{Lem.\ \ref{lem:intrational}}}{=} & \int_{\tilde{S}^H_{K_f}}\sigma^*_H\left(\mathcal T^*\iota^*\Theta_0^{\epsilon_0}(\xi^\circ_{\Pi_f})\right)\wedge [{}^\sigma\!\eta] \\
& \underset{\textrm{Lem.\ \ref{lem:T^*}}}{=} & \int_{\tilde{S}^H_{K_f}}\mathcal T_\sigma^*\sigma^*_H\left(\iota^*\Theta_{\Pi,0}^{\epsilon_0}(\xi^\circ_{\Pi_f})\right)\wedge [{}^\sigma\!\eta] \\
& \underset{\textrm{Lem.\ \ref{lem:Qstructures}}}{=} & \int_{\tilde{S}^H_{K_f}}\mathcal T_\sigma^*\iota_\sigma^*\sigma^*_G\left(\Theta_{\Pi,0}^{\epsilon_0}(\xi^\circ_{\Pi_f})\right)\wedge [{}^\sigma\!\eta] \\
& \underset{\textrm{Lem.\ \ref{defprop}}}{=} & \int_{\tilde{S}^H_{K_f}}\mathcal T_\sigma^*\iota_\sigma^*\Theta_{\Pi,0}^{\epsilon_0}\left(\tilde\sigma\left(\xi^\circ_{\Pi_f}\right)\right)\wedge [{}^\sigma\!\eta] \\
& \underset{\textrm{Def. of $\tilde\sigma$}}{=} & \int_{\tilde{S}^H_{K_f}}\mathcal T_\sigma^*\iota_\sigma^*\Theta_{\Pi,0}^{\epsilon_0}\left({}^\sigma\xi^\circ_{\Pi_f}\right)\wedge [{}^\sigma\!\eta] \\
& \underset{\textrm{Sect.\ \ref{sect:vector}}}{=} & \int_{\tilde{S}^H_{K_f}}\mathcal T_\sigma^*\iota_\sigma^*\Theta_{\Pi,0}^{\epsilon_0}\left(\xi^\circ_{{}^\sigma\Pi_f}\right)\wedge [{}^\sigma\!\eta].
\end{eqnarray*}
By Prop.\ \ref{prop:regalg}, ${}^\sigma\Pi$ is again a cohomological cuspidal automorphic representation and by Thm.\ \ref{thm:arithmeticShalika} it admits a $({}^\sigma\!\eta,\psi)$-Shalika model. One immediately checks that ${}^\sigma\Pi$ also satisfies the hypotheses of the Main Identity, Thm.\ \ref{thm:mainid}, and so, applying it to ${}^\sigma\Pi$, we see that the last line equals
$$\frac{L(\tfrac 12,{}^\sigma\Pi_f)}{\omega^{\epsilon_0}({}^\sigma\Pi_f)\,\omega({}^\sigma\Pi_\infty)}\cdot \frac{1}{vol(\iota^{-1}(K_f))\prod_{v\in S_{{}^\sigma\Pi_f,\psi}}L(\tfrac 12,{}^\sigma\Pi_v)}.$$
As $\omega({}^\sigma\Pi_\infty)=\omega(\Pi_\infty)$ and $S_{\Pi_f} = S_{{}^\sigma\Pi_f}$, Lem.\ \ref{lem:localvalues} finishes the proof in this case.\\

\emph{Step 2: $m$ is arbitrary and $\chi=\triv$}\\
Now, let $\tfrac12+m\in$ Crit$(\Pi)$ be an arbitrary critical value of $\Pi$. Consider the representation $\Pi(m):=\Pi\otimes|\!\det\!|^m$ as in Sect.\ \ref{sect:hypo}. It is a cuspidal automorphic representation which is cohomological. Observe that the set of critical points is shifted by $-m$, i.e., Crit$(\Pi(m))=$ Crit$(\Pi)-m$, and so by the choice of $m$, $\tfrac12$ is critical for $\Pi(m)$ and $c(\Pi(m)_\infty)=c(\Pi_\infty,m)\neq 0$ (i.e., $\omega(\Pi(m)_\infty)=\omega(\Pi_\infty,m)$ exists) by Thm. \ref{hyp}. Furthermore, we may take $K_f(\Pi(m)_f)=K_f(\Pi_f)$, since $|\!\det(K_f(\Pi_f))\!|^m\equiv 1$. Therefore, we can apply the result proved in step one to $\Pi(m)$ and obtain

$$ \sigma\left(\frac{L(\tfrac 12,\Pi(m)_f)}{\omega^{\epsilon_0}(\Pi(m)_f)\,\omega(\Pi(m)_\infty)}\right)   =  \frac{L(\tfrac 12,{}^\sigma\Pi(m)_f)}{\omega^{\epsilon_0}({}^\sigma\Pi(m)_f)\,\omega(\Pi(m)_\infty)} =  \frac{L(\tfrac 12+m,{}^\sigma\Pi_f)}{\omega^{\epsilon_0}({}^\sigma\Pi(m)_f)\,\omega(\Pi_\infty,m)}.$$

For the last equation, observe that ${}^\sigma(|\!\det\!|^m)=\sigma(|\!\det\!|)^m=|\!\det\!|^m$. Applying Thm.\ \ref{thm:twisted} on period relations to the algebraic Hecke character $|\cdot|^{m}$ associated to the twist $|\!\det\!|^m$ and keeping in mind that $\G(|\cdot|_f^{m})=1$ gives the theorem in this case.\\

\emph{Step 3: $m$ and $\chi$ are arbitrary}\\
Finally, let $\chi$ be any finite-order Hecke character of $F$. Applying step two to the twisted representation $\Pi \otimes \chi$, gives
$$
\sigma\left(
\frac{L(\tfrac 12+m,\Pi_f \otimes \chi_f)}{\omega^{(-1)^{n+m-1}}(\Pi_f\otimes \chi_f)\, \omega(\Pi_\infty,m)}\right) \ = \
\frac{L(\tfrac 12+m,{}^\sigma\Pi_f \otimes {}^\sigma\!\chi_f)}{\omega^{(-1)^{n+m-1}}({}^\sigma\Pi_f\otimes {}^\sigma\!\chi_f)\, \omega(\Pi_\infty,m)},
$$
for any critical value $\tfrac12+m\in{\rm Crit}(\Pi)={\rm Crit}(\Pi\otimes\chi)$ and $\sigma\in$ Aut$(\C)$. Here we observe that $\Pi_\infty\cong \Pi_\infty\otimes\chi_\infty$, since $\chi$ is of finite-order, see Sect.\ \ref{sect:finiteorder}. The result now follows from Thm.\ \ref{thm:twisted}.
\end{proof}

\section{Complementa}
\label{sec:complementa}

In this section we give several families of examples to which our main result (Thm.\ \ref{thm:central-value}) on $L$-values applies.
We also comment on the compatibility of this theorem with Deligne's conjecture on the critical values of motivic $L$-functions. Our theorem is also weakly compatible with a conjecture of Gross on the order of vanishing of a motivic $L$-function at a critical point.

\subsection{The symmetric cube $L$-functions for Hilbert modular forms}\label{sec:symmetric}
\subsubsection{} In this section we want to construct a family of examples for Thm.\ \ref{thm:central-value} starting from Hilbert modular forms. For a $d$-tuple $k=(k_v)_{v\in S_\infty}\in\Z^d$ with $k_v\geq 1$ and an integral ideal $\n\subseteq\mathcal O$ of $F$, let $\mathcal M_k(\n,\tilde\omega)$ be the space of holomorphic Hilbert modular forms of level $\n$ and character $\tilde\omega$. The subspace of cuspidal holomorphic Hilbert modular forms is denoted $S_k(\n,\tilde\omega)$. To each primitive cusp form ${\bf f}\in S_k(\n,\tilde\omega)$ we can associate a cuspidal automorphic representation $\pi({\bf f})$ of $\GL_2(\A)$ with archimedean component
$$\pi({\bf f})_\infty\cong \bigotimes_{v\in S_\infty} D(k_v-1).$$
All details concerning this construction may be found in Raghuram--Tanabe, \cite[Sect.\ 4]{ragtan} to which we refer. We write $\omega_{\pi({\bf f})}$ for the central character of $\pi({\bf f})$. Let $k_0$ (resp., $k^0$) be the maximum (resp., the minimum) of all $k_v$, $v\in S_\infty$ and set
$$\pi:=\pi({\bf f})\otimes |\cdot|^{k_0/2}.$$
If $k_v\geq 2$ and $k_v\equiv k_{w}\pmod 2$ for all $v,w\in S_\infty$, then $\pi$ is cohomological, cf.\  \cite[Thm.\ 8.3]{ragtan}.

\subsubsection{}\label{sect:symmtrans}
Let Sym$^a$ be the $a$-th symmetric power of the standard representation of $\GL_2(\C)$. By the Local Langlands Correspondence, see Harris--Taylor \cite{harris} and Henniart \cite{henniart} for the non-archimedean places and Langlands \cite{lang2} for the archimedean places, Sym$^a(\pi):=\bigotimes'_v {\rm Sym}^a(\pi_v)$ is a well-defined irreducible admissible representation of $\GL_{a+1}(\A)$. According to Kim--Shahidi, \cite[Thm.\ 6.1]{kimshahidiANN}, the symmetric cube Sym$^3(\pi)$ of $\pi$, is known to be an automorphic representation.

\subsubsection{} With this notation in place we obtain the following proposition.

\begin{prop}\label{prop:hilbertmod}
Let $k=(k_v)_{v\in S_\infty}\in\Z^d$ with $k_v\geq 2$ and $k_v\equiv k_{w}\pmod 2$ for all $v,w\in S_\infty$. Let ${\bf f}\in S_k(\n,\tilde\omega)$ be a primitive holomorphic Hilbert modular cusp form and let $\pi({\bf f})$ be the corresponding cuspidal automorphic representation of $\GL_2(\A)$. Assume that $\pi({\bf f})$ is not dihedral and denote by
$\pi=\pi({\bf f})\otimes |\cdot|^{k_0/2}$. Then the symmetric cube transfer
$$\Pi={\rm Sym}^3(\pi)$$
of $\pi$ is a cuspidal automorphic representation of $\GL_4(\A)$ which is cohomological with respect to a highest weight module $E^{\sf v}_\mu,$ and admits an $(\eta,\psi)$-Shalika model with $\eta=\omega_{\pi({\bf f})}^3|\cdot|^{3k_0}$.
\end{prop}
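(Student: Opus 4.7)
The plan is to establish the three claimed properties of $\Pi={\rm Sym}^3(\pi)$---cuspidality, cohomologicity, and existence of the specified $(\eta,\psi)$-Shalika model---one at a time. For cuspidality, I will invoke Kim--Shahidi. Their symmetric cube transfer produces an automorphic $\Pi$, and their cuspidality criterion says that $\Pi$ fails to be cuspidal only when $\pi\otimes\chi\cong\pi$ for some non-trivial Hecke character $\chi$. Comparing central characters forces $\chi^2=\triv$, so any such $\chi$ must be quadratic, making $\pi$ (equivalently $\pi({\bf f})$) dihedral. Since this is excluded by hypothesis, $\Pi$ is cuspidal.

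For cohomologicity I will analyse the archimedean components. Each $\pi({\bf f})_v\cong D(k_v-1)$ has an irreducible two-dimensional Langlands parameter, and its symmetric cube splits as a sum of two irreducible two-dimensional $W_\R$-representations. Via the archimedean local Langlands correspondence this gives
\[
\Pi_v \ \cong \ {\rm Ind}_{P(\R)}^{\GL_4(\R)}\!\bigl[D(3(k_v-1))|\!\det\!|^{3k_0/2}\otimes D(k_v-1)|\!\det\!|^{3k_0/2}\bigr],
\]
which is precisely of the cohomological shape \eqref{eq:piinfty} with parameters $\ell_{v,1}=3(k_v-1)$, $\ell_{v,2}=k_v-1$. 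Solving $\ell_{v,i}=2\mu_{v,i}+2(n-i)+1-{\sf w}$ from Sect.~\ref{sect:cohreps} for $n=2$ then yields a dominant integral weight $\mu=(\mu_v)_{v\in S_\infty}$ of purity ${\sf w}=3k_0$: the condition $k_v\geq 2$ is exactly what makes $\mu_v$ dominant, while the shared parity of the $k_v$ guarantees integrality. Thus $\Pi$ is cohomological with respect to $E^{\sf v}_\mu$.

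For the Shalika model I appeal to Thm.~\ref{thm:JS}, which reduces the question to the existence of a pole at $s=1$ of $L^S(s,\Pi,\wedge^2\otimes\eta^{-1})$. A Clebsch--Gordan computation at the level of $\GL_2(\C)$-representations gives
\[
\wedge^2\circ{\rm Sym}^3 \ \cong \ ({\rm Sym}^4\otimes\det) \ \oplus \ (\det)^3,
\]
which factors the partial exterior square $L$-function as
\[
L^S(s,\Pi,\wedge^2\otimes\eta^{-1}) \ = \ L^S(s,{\rm Sym}^4(\pi)\otimes\omega_\pi\eta^{-1})\cdot L^S(s,\omega_\pi^3\eta^{-1}).
\]
The choice $\eta:=\omega_\pi^3=\omega_{\pi({\bf f})}^3|\cdot|^{3k_0}$ collapses the second factor to $\zeta_F^S(s)$, supplying the required simple pole, while the first factor, being the standard $L$-function of a unitary isobaric automorphic representation of $\GL_5(\A)$ (after unitarisation), is holomorphic and non-vanishing at $s=1$ by Jacquet--Shalika; the pole therefore survives. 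Finally, the compatibility $\eta^2=\omega_\Pi$ is immediate from $\omega_\Pi=\omega_\pi^6$, which holds because the determinant of ${\rm Sym}^3$ of the standard representation of $\GL_2(\C)$ is the sixth power of the determinant.

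The subtlest step will be the cohomology one: verifying the explicit shape of $\Pi_v$ coming out of the archimedean symmetric cube transfer, and then matching the resulting infinitesimal character to a dominant integral weight $\mu$ of the correct purity. The cuspidality and Shalika steps are short once Kim--Shahidi and the Clebsch--Gordan decomposition are invoked.
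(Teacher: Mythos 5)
There is a genuine gap in the cuspidality step. You state that the Kim--Shahidi cuspidality criterion says $\mathrm{Sym}^3\pi$ fails to be cuspidal only when $\pi\otimes\chi\cong\pi$ for some non-trivial Hecke character $\chi$, and then you observe that this forces $\chi$ to be quadratic, hence $\pi$ dihedral. But this is a misstatement of the criterion: Kim--Shahidi \cite[Thm.~6.1]{kimshahidiANN} say that $\mathrm{Sym}^3\pi$ is cuspidal unless $\pi$ is dihedral \emph{or tetrahedral}, and the tetrahedral condition is \emph{not} equivalent to $\pi\otimes\chi\cong\pi$; rather, $\pi$ is tetrahedral when $\mathrm{Sym}^2\pi$ is cuspidal but $\mathrm{Sym}^2\pi\cong\mathrm{Sym}^2\pi\otimes\mu$ for a non-trivial (cubic) character $\mu$. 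The hypothesis of the proposition only excludes the dihedral case, so you still owe an argument that $\pi({\bf f})$ cannot be tetrahedral. This is exactly what the paper supplies: it observes that for a tetrahedral $\pi({\bf f})$ the archimedean Langlands parameter must have finite image, while $D(k_v-1)$ with $k_v\geq 2$ has parameter $\mathrm{Ind}^{W_\R}_{\C^\times}[(z/\overline{z})^{(k_v-1)/2}]$, which has infinite image. Without this (or some equivalent) argument, your cuspidality claim is unproved.

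The other two steps are sound and follow the paper's approach closely. Your cohomology argument is slightly more explicit than the paper's (which simply writes down $\mu_v=(k_v-2)\rho_4+\tfrac{3k_0}{2}$ and cites Raghuram--Shahidi \cite[Thm.~5.5]{ragsha}); you instead compute the archimedean symmetric cube via Langlands parameters and solve for $\mu_v$, which is a legitimate and arguably more self-contained route, and the dominance/integrality observations from $k_v\geq 2$ and the shared parity are correct. The Shalika step uses the same Clebsch--Gordan decomposition $\wedge^2\circ\mathrm{Sym}^3\cong(\mathrm{Sym}^4\otimes\det)\oplus\det^3$ as the paper and the same Kim/Jacquet--Shalika non-vanishing input; your verification of $\eta^2=\omega_\Pi$ via $\det\circ\mathrm{Sym}^3=\det^6$ is also fine.
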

\begin{proof}
For convenience we divide the proof in three parts.
\smallskip

\emph{$\Pi$ is cuspidal:}
As we assumed that $\pi({\bf f})$ is not dihedral, ${\rm Sym}^3(\pi({\bf f}))$ is cuspidal unless $\pi({\bf f})$ is of tetrahedral type, cf.\  Kim--Shahidi \cite[Thm.\ 6.1]{kimshahidiANN}. But for $\pi({\bf f})$ to be tetrahedral, it is necessary that the local Langlands parameter of $\pi({\bf f})_\infty$ has finite image. Using Knapp,
\cite{knapp-llc}, we obtain that at each $v\in S_\infty$ this local Langlands parameter is ${\rm Ind}^{W_\R}_{\C^\times}[(\tfrac{z}{\overline z})^{\frac{k_v-1}{2}}]$, which has infinite image unless $k_v=1$ for all $v\in S_\infty$. Since we assume that $k_v\geq 2$, this shows that ${\rm Sym}^3(\pi({\bf f}))$ is cuspidal, and hence
$\Pi={\rm Sym}^3\left(\pi({\bf f})\otimes |\cdot|^{k_0/2}\right)\cong {\rm Sym}^3(\pi({\bf f}))\otimes |\cdot|^{3k_0/2},$
is also cuspidal.
\smallskip

\emph{$\Pi$ is cohomological:}
For each $v\in S_\infty$, let
$$\mu_v=\left(\frac{3(k_v-2)+3k_0}{2},\frac{k_v-2+3k_0}{2},\frac{-(k_v-2)+3k_0}{2},\frac{-3(k_v-2)+3k_0}{2}\right)=(k_v-2)\rho_4+\tfrac{3k_0}{2},$$
where $\rho_4$ is half the sum of positive roots of $\GL_4(\R)$. Then, $\Pi_v$ is cohomological with respect to $E^{\sf v}_{\mu_v}$ for all $v\in S_\infty$ by Raghuram--Shahidi \cite[Thm.\ 5.5]{ragsha} and so, using the K\"unneth rule, $\Pi$ is cohomological with respect to the highest weight module $E^{\sf v}_\mu$ with $\mu=(\mu_v)_{v\in S_\infty}$.
\smallskip

\emph{$\Pi$ has a $(\eta, \psi)$-Shalika model:}
Let $\eta=(\omega_{\pi({\bf f})}|\cdot|^{k_0})^3$. Then one easily checks that $\eta^2=\omega_{\Pi}$, so $\eta$ is an id\`ele class character as considered in Sect.\ \ref{sect:cusprep}. We now show that $\Pi$ has an $(\eta,\psi)$-Shalika model. By Thm.\ \ref{thm:JS}, this amounts to proving that the partial $L$-function $L^S(s,\Pi,\wedge^2\otimes\eta^{-1})$ has a pole at $s=1$, $S$ being any finite set of places containing $S_{\Pi,\eta}$. We obtain
\begin{eqnarray*}
\wedge^2(\Pi) & \cong & \wedge^2\left({\rm Sym}^3\left(\pi({\bf f})\otimes |\cdot|^{k_0/2}\right)\right)
\cong \wedge^2\left({\rm Sym}^3\left(\pi({\bf f})\right)\otimes |\cdot|^{3k_0/2}\right) \\
& \cong & \wedge^2\left({\rm Sym}^3\left(\pi({\bf f})\right)\right)\otimes |\cdot|^{3k_0}
\cong  \left({\rm Sym}^4(\pi({\bf f}))\otimes\omega_{\pi({\bf f})}\boxplus\omega^3_{\pi({\bf f})}\right)\otimes |\cdot|^{3k_0},
\end{eqnarray*}
where ``$\boxplus$'' denotes the isobaric direct sum of automorphic representations. Observe that we have used Kim \cite[Sect.\ 7]{kim-jams} in order to obtain the last line. This shows that
$\wedge^2(\Pi)\otimes\eta^{-1} \cong \left({\rm Sym}^4(\pi({\bf f}))\otimes\omega^{-2}_{\pi({\bf f})}\right)\boxplus\triv.$
Now, Kim \cite[Thm.\ 7.3.2]{kim-jams} together with Jacquet--Shalika \cite[Thm.\ (1.3)]{jacquet-shalika-invent} shows that $L^S(s,\Pi,\wedge^2\otimes\eta^{-1})$ has a pole at $s=1$ as desired.
\end{proof}

\begin{cor}\label{cor:hilbertmod}
Thm.\ \ref{thm:central-value} can be applied to any representation $\Pi={\rm Sym}^3(\pi)$ as in Prop.\ \ref{prop:hilbertmod}. In particular, if $\chi$ is a finite-order Hecke character of $F$, then for all integers $m$ with $-\tfrac{k^0-2}{2}\leq m +\tfrac{3k_0}{2}\leq \tfrac{k^0-2}{2}$ the following assertions hold:
\begin{enumerate}
\item For every $\sigma\in {\rm Aut}(\C)$,
$$
\sigma\left(
\frac{L(\tfrac 12+m,\Pi_f \otimes \chi_f)}{\omega^{(-1)^{m+1}\epsilon_{\chi}}(\Pi_f) \, \G(\chi_f)^2 \, \omega(\Pi_\infty,m)}\right) \ = \
\frac{L(\tfrac 12+m,{}^\sigma\Pi_f \otimes {}^\sigma\!\chi_f)}{\omega^{(-1)^{m+1}\epsilon_\chi}({}^\sigma\Pi_f) \, \G({}^\sigma\!\chi_f)^2 \,\omega(\Pi_\infty,m)}.
$$
\item
$$
L(\tfrac 12+m,\Pi_f\otimes\chi_f) \ \sim_{\Q(\Pi,\eta, \chi)} \
\omega^{(-1)^{m+1}\epsilon_\chi}(\Pi_f) \, \G(\chi_f)^2  \, \omega(\Pi_\infty,m),
$$
where ``$\sim_{\Q(\Pi,\eta,\chi)}$'' means up to multiplication by an element in the number field $\Q(\Pi,\eta,\chi)$.
\end{enumerate}
\end{cor}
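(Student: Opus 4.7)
The proof will be essentially a verification that the hypotheses of Theorem~\ref{thm:central-value} hold for $\Pi = \operatorname{Sym}^3(\pi)$, followed by explicitly identifying the critical strip coming from the specific highest weight that appeared in the proof of Proposition~\ref{prop:hilbertmod}.

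First, I would observe that Proposition~\ref{prop:hilbertmod} already provides everything needed to invoke Theorem~\ref{thm:central-value} with $n=2$: the representation $\Pi$ is cohomological cuspidal on $\GL_4(\A)$, and it admits an $(\eta,\psi)$-Shalika model with $\eta = \omega_{\pi({\bf f})}^3 |\cdot|^{3k_0}$. The twisting character $\chi$ is a finite-order Hecke character by hypothesis, and so $\mathrm{Crit}(\Pi) = \mathrm{Crit}(\Pi\otimes\chi)$ by the remark following Proposition~\ref{prop:crit}. With $n=2$, the Gau\ss{} sum exponent in Theorem~\ref{thm:central-value} becomes $\mathcal{G}(\chi_f)^n = \mathcal{G}(\chi_f)^2$, and the sign character $(-1)^{n+m-1}\epsilon_\chi$ simplifies to $(-1)^{m+1}\epsilon_\chi$.

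The only nontrivial step is to make the set of critical points explicit in terms of the Hilbert modular data. For this I would use the formula
\[
\mu_v = (k_v-2)\rho_4 + \tfrac{3k_0}{2}
\]
from the proof of Proposition~\ref{prop:hilbertmod}, which gives
\[
\mu_{v,2} = \tfrac{(k_v-2)+3k_0}{2}, \qquad \mu_{v,3} = \tfrac{-(k_v-2)+3k_0}{2}.
\]
By Proposition~\ref{prop:crit}, a half-integer $\tfrac12 + m$ is critical iff $-\mu_{v,2} \le m \le -\mu_{v,3}$ for every $v \in S_\infty$, which rearranges as
\[
-\tfrac{k_v-2}{2} \le m + \tfrac{3k_0}{2} \le \tfrac{k_v-2}{2} \qquad \text{for all } v \in S_\infty.
\]
The most restrictive instance is the one corresponding to the smallest weight component $k^0 := \min_v k_v$, which yields precisely the range $-\tfrac{k^0-2}{2} \le m + \tfrac{3k_0}{2} \le \tfrac{k^0-2}{2}$ in the statement.

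Once the critical strip has been identified, assertions (1) and (2) are direct specializations of the corresponding assertions in Theorem~\ref{thm:central-value}. There is no real obstacle in the argument; the only place where one could go wrong is a bookkeeping error in the weight $\mu_v$, specifically in tracking the twist by $|\cdot|^{k_0/2}$ (which shifts each $\mu_{v,i}$ by $3k_0/2$ after the symmetric cube), so I would double-check this shift against the archimedean Langlands parameter of $\operatorname{Sym}^3(\pi_v)$ before writing up the final bound.
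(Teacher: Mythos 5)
Your proposal is correct and follows exactly the paper's route: invoke Proposition~\ref{prop:hilbertmod} to verify the hypotheses of Theorem~\ref{thm:central-value} for $n=2$, then use the explicit weight $\mu_v = (k_v-2)\rho_4 + \tfrac{3k_0}{2}$ together with Proposition~\ref{prop:crit} (and the remark after it for $\mathrm{Crit}(\Pi) = \mathrm{Crit}(\Pi\otimes\chi)$) to recover the stated critical strip, taking the minimum over $v \in S_\infty$ to get $k^0$. Your sanity check on the shift $|\cdot|^{k_0/2} \mapsto |\cdot|^{3k_0/2}$ under $\operatorname{Sym}^3$ is also the right thing to worry about, and it does come out correctly.
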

\begin{proof}
This is a direct consequence of Prop.\ \ref{prop:hilbertmod} and Prop.\ \ref{prop:crit}, which implies that the set of critical points for $\Pi$ is given by $${\rm Crit}(\Pi)={\rm Crit}(\Pi\otimes\chi)=\{\tfrac12+m\in\tfrac12+\Z|-\tfrac{k^0-2}{2}\leq m +\tfrac{3k_0}{2}\leq \tfrac{k^0-2}{2}\}.$$
\end{proof}

The critical values of symmetric cube $L$-functions of a Hilbert modular form ${\bf f}$ have been studied by Garrett and Harris \cite[Thm.\ 6.2]{garrett-harris}.
They analyzed the critical values of triple product $L$-functions and obtained those for symmetric cube $L$-functions as a by-product.
One can use the symmetric cube $L$-values as an anchor to deduce certain relations between the top-degree periods of this paper and the Petersson norm of ${\bf f}$.
We record such a period relation in the corollary below. For simplicity we work with an elliptic modular form of even weight $k$,
but the reader should be aware that a similar, but far more tedious, exercise can be carried through in the Hilbert modular setting.

\begin{cor}[Period Relations I]
\label{cor:period-relns-garrett-harris}
Let ${\bf f} \in S_k(\n,\tilde\omega)$ be a primitive holomorphic elliptic modular cusp form. Assume (for simplicity) that $k$ is even.
Let $\Pi = {\rm Sym}^3( \pi({\bf f})).$ Put $k' = \tfrac{k}{2}-1$ and $\epsilon = \epsilon_{k'} = (-1)^{k'+1}.$
We have
$$
\omega^{\epsilon}(\Pi_f) \,  \omega^{\epsilon}(\pi({\bf f})_f)^2
\ \sim_{\Q(\pi({\bf f}))} \
 c(\Pi_\infty, k') \,
 \G(\tilde\omega)^{-8} \,
\langle {\bf f}, {\bf f} \rangle_{\rm Blasius}^3,
$$
where $\langle {\bf f}, {\bf f} \rangle_{\rm Blasius}$ is the Petersson norm of ${\bf f}$ normalized as in Blasius \cite{blasius-appendix}.
\end{cor}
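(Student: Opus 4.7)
The plan is to derive the stated period identity by comparing, at a single critical point of $L(s, {\rm Sym}^3({\bf f}))$, the algebraicity theorem of this paper with the classical formula of Garrett--Harris expressing such critical values in terms of $\langle {\bf f}, {\bf f}\rangle_{\rm Blasius}^3$. Proposition \ref{prop:hilbertmod} ensures that (a suitable cohomological normalization of) $\Pi = {\rm Sym}^3(\pi({\bf f}))$ satisfies all the hypotheses of Theorem \ref{thm:central-value}; and Proposition \ref{prop:crit} shows that under the present assumptions on $k$, the point $s_0 = \tfrac{1}{2} + k'$ lies in ${\rm Crit}(\Pi)$.

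First, I would apply Theorem \ref{thm:central-value}\,(2) with $\chi = \triv$, $m = k'$, and the sign $\epsilon = \epsilon_{k'} = (-1)^{k'+1}$, which yields
$$L(\tfrac{1}{2} + k', \Pi_f) \ \sim_{\Q(\pi({\bf f}))} \ \omega^{\epsilon}(\Pi_f)\, \omega(\Pi_\infty, k') \ = \ \omega^{\epsilon}(\Pi_f)\, c(\Pi_\infty, k')^{-1},$$
using that $\omega(\Pi_\infty,k') = c(\Pi_\infty,k')^{-1}$ by Theorem \ref{hyp}. Second, I would invoke \cite[Thm.\ 6.2]{garrett-harris}, which, after rewriting the Petersson inner product as in \cite{blasius-appendix}, expresses the same critical value as an element of $\Q(\pi({\bf f}))$ times $\langle {\bf f}, {\bf f}\rangle_{\rm Blasius}^3$, divided by an explicit product of $\G(\tilde\omega)^{8}$ and the classical Shimura periods of ${\bf f}$. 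Third, I would identify those Shimura periods with $\omega^{\pm}(\pi({\bf f})_f)$ via the $n=1$ instance of the period formalism developed here: by Corollary \ref{cor:n=1} the Shalika and Whittaker models of $\pi({\bf f})$ coincide, and a direct application of Theorem \ref{thm:central-value} to $\pi({\bf f})$ (equivalently, Manin--Shimura) matches the two period systems up to elements of $\Q(\pi({\bf f}))$. Under this identification, Garrett--Harris's formula reads
$$L(\tfrac{1}{2} + k', \Pi_f) \ \sim_{\Q(\pi({\bf f}))} \ \frac{\langle {\bf f}, {\bf f}\rangle_{\rm Blasius}^3}{\omega^{\epsilon}(\pi({\bf f})_f)^2 \, \G(\tilde\omega)^{8}}.$$
Equating the two expressions for $L(\tfrac{1}{2} + k', \Pi_f)$ and clearing denominators yields the asserted identity.

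The main technical obstacle will lie in the third step: converting Garrett--Harris's classical formulation into the automorphic/cohomological language of this paper requires careful bookkeeping of archimedean factors, Gauss-sum contributions, and the precise comparison between the Shimura periods $u^{\pm}({\bf f})$ and $\omega^{\pm}(\pi({\bf f})_f)$. That the right-hand side involves exactly $\omega^{\epsilon}(\pi({\bf f})_f)^2$, with both periods of the same sign $\epsilon$ as $\omega^{\epsilon}(\Pi_f)$, reflects the Hodge-theoretic computation of the Deligne period $c^{\epsilon}({\rm Sym}^3 M_{\bf f})$ in terms of $c^{\pm}(M_{\bf f})$; the Galois-equivariance provided by Theorem \ref{thm:central-value}\,(1), applied to both $\Pi$ and $\pi({\bf f})$, then pins the relation down to the number field $\Q(\pi({\bf f}))$.
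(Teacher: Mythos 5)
Your proposal is correct and follows essentially the same strategy as the paper: anchor on a Garrett--Harris critical value, convert the $\GL_2$ periods to the $\omega^{\pm}(\pi({\bf f})_f)$ of this paper via the $n=1$ (Shimura/Manin) case, and equate with the output of Theorem~\ref{thm:central-value} for $\Pi={\rm Sym}^3(\pi({\bf f}))$. The only packaging difference is that the paper works directly with the triple-product formulas (6.4.2)--(6.4.3) of Garrett--Harris at $s=2(k-1)$ and carries out the factorization $L(s,{\bf f}\times{\bf f}\times{\bf f})=L(s-\tfrac{3(k-1)}{2},\Pi_f)\cdot L(s-\tfrac{3(k-1)}{2},\pi({\bf f})_f\otimes\tilde\omega)^2$ explicitly, dividing out the Rankin--Selberg square using Shimura's theorem (in the Raghuram--Tanabe normalization); you instead invoke the already-divided symmetric-cube statement (their Thm.~6.2) and absorb the same division into your "third step." Both routes hinge on exactly the same bookkeeping of Gauss sums, powers of $2\pi i$, and the identification of Shimura periods with $\omega^{\pm}(\pi({\bf f})_f)$, so there is no substantive gap; the paper's choice of the triple-product anchor is merely the cleaner way to make that bookkeeping transparent.
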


\begin{proof}
Consider the triple product $L$-function $L(s, {\bf f} \times {\bf f} \times {\bf f})$ as defined in \cite[Introduction]{garrett-harris}. We have
\begin{eqnarray*}
L(s, {\bf f} \times {\bf f} \times {\bf f}) & = &
L(s - \tfrac{3(k-1)}{2}, \pi({\bf f})_f \times \pi({\bf f})_f \times \pi({\bf f})_f) \\
& = & L(s - \tfrac{3(k-1)}{2}, \Pi_f)\cdot L(s - \tfrac{3(k-1)}{2}, \pi({\bf f})_f \otimes\tilde\omega)^2.
\end{eqnarray*}

Put $s = 2(k-1)$, which is the right--most critical point of $L(s, {\bf f} \times {\bf f} \times {\bf f})$; see \cite[(6.4.1)]{garrett-harris}. Using (6.4.2) and (6.4.3) of \cite{garrett-harris} we obtain
$$
L(s, {\bf f} \times {\bf f} \times {\bf f}) \ \sim_{\Q(\pi({\bf f}))} \
(2 \pi i)^{2(k-1)}  \, \G(\tilde\omega)^{-6} \,
\langle {\bf f}, {\bf f} \rangle_{\rm Blasius}^3.
$$
(Note that the Gauss sum $G(\tilde\omega)$ of \cite{garrett-harris} is our $\G(\tilde\omega^{-1}) \sim_{\Q(\tilde\omega)} \G(\tilde\omega)^{-1}.$)

On the other hand, by the above Corollary for $\Pi = {\rm Sym}^3(\pi({\bf f}))$, and noting that $s - \tfrac{3(k-1)}{2}$ at $s = 2(k-1)$ is nothing but $\tfrac12 + k'$, we get
$$
L(\tfrac12+k', \Pi_f) \sim_{\Q(\pi({\bf f}))}  \omega^{\epsilon}(\Pi_f) \, \omega(\Pi_\infty,k').
$$
By using Shimura's classical theorem on the critical values for $L(s - \tfrac{3(k-1)}{2}, \pi({\bf f})_f \otimes  \tilde\omega)$, in the form stated in \cite[Corollary 1.3]{ragtan}, we obtain furthermore
$$
L(\tfrac12+k', \pi({\bf f})_f \otimes\tilde\omega)^2
\ \sim_{\Q(\pi({\bf f}))} \
 (2 \pi i)^{2(k-1)}\,
 \omega^{\epsilon}(\pi({\bf f})_f)^2 \,
 \G(\tilde\omega)^2.
$$
The corollary now follows keeping in mind that $\omega(\Pi_\infty,k')^{-1} = c(\Pi_\infty, k')$, cf.\  Sect.\ \ref{sect:hypo}.
\end{proof}

By working with the critical point $m = 2k-3$, which is next to the right-most critical point of $L(s, {\bf f} \times {\bf f} \times {\bf f})$, one may deduce a similar relation for $\omega^{-\epsilon}(\Pi_f) \,  \omega^{-\epsilon}(\pi({\bf f})_f)^2$. We leave the details to the reader.

\subsubsection{Higher symmetric powers}\label{sect:highersym}
The entire discussion may be generalized to higher symmetric powers. Let $\Pi_r:={\rm Sym}^{2r+1}(\pi)$ for $r\geq 2$. Criteria for $\Pi_r$ being cuspidal automorphic are not yet known in all generality; however, there are some very interesting results due to Ramakrishnan \cite{ram-cuspidal}. Assuming Langlands Functoriality, $\Pi_r$ should at least always be automorphic.

\begin{prop}\label{prop:highersym}
Let $k=(k_v)_{v\in S_\infty}\in\Z^d$ with $k_v\geq 2$ and $k_v\equiv k_{w}\pmod 2$ for all $v,w\in S_\infty$. Let ${\bf f}\in S_k(\n,\tilde\omega)$ be a primitive holomorphic Hilbert modular cusp form, $\pi({\bf f})$ the corresponding cuspidal automorphic representation of $\GL_2(\A)$ and write $\pi=\pi({\bf f})\otimes |\cdot|^{k_0/2}$. The odd symmetric power transfer
$$\Pi_r={\rm Sym}^{2r+1}(\pi), \quad\quad r\geq 2,$$
of $\pi$ is an irreducible admissible representation of $\GL_{2(r+1)}(\A)$ which is cohomological with respect to a highest weight module $E^{\sf v}_\mu$. Assume furthermore that $\Pi_r$ is cuspidal automorphic and ${\rm Sym}^{4(r-a)}(\pi({\bf f}))$, $0\leq a \leq r$, is an isobaric direct sum of unitary cuspidal automorphic representations. Then, $\Pi_r$ admits an $(\eta,\psi)$-Shalika model with $\eta=\omega_{\pi({\bf f})}^{2r+1}|\cdot|^{(2r+1)k_0}$. In particular, Thm.\ \ref{thm:central-value} can be applied to the odd symmetric power transfer $\Pi_r={\rm Sym}^{2r+1}(\pi)$, $r\geq 2$.
\end{prop}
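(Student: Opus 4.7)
\textbf{Proof plan for Proposition \ref{prop:highersym}.} The strategy is to mirror the three-step structure of the proof of Proposition \ref{prop:hilbertmod}, with the symmetric-cube decomposition replaced by the general Clebsch--Gordan decomposition of $\wedge^2 {\rm Sym}^{2r+1}$ of a two-dimensional representation. First, since $\Pi_r = {\rm Sym}^{2r+1}(\pi)$ is defined place-by-place via local Langlands correspondence, irreducibility and admissibility are automatic; cuspidality is assumed. For cohomologicality, I would proceed exactly as in Prop.\ \ref{prop:hilbertmod}: each archimedean component $\pi_v$ is (a twist of) a discrete series $D(k_v-1)$, its local Langlands parameter is ${\rm Ind}_{\C^\times}^{W_\R}[(z/\bar z)^{(k_v-1)/2}] \otimes |\cdot|^{k_0/2}$, and one reads off that ${\rm Sym}^{2r+1}$ of this parameter is a direct sum of $(r+1)$ induced characters corresponding to discrete series on the Levi $\prod_{i=1}^{r+1} \GL_2$ of $\GL_{2(r+1)}$. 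Applying Raghuram--Shahidi \cite[Thm.\ 5.5]{ragsha} (or equivalently the explicit computation of cohomology of induced discrete series), $\Pi_{r,v}$ is cohomological with respect to the highest weight $\mu_v = (k_v-2)\rho_{2(r+1)} + \tfrac{(2r+1)k_0}{2}$, where $\rho_{2(r+1)}$ is half the sum of positive roots of $\GL_{2(r+1)}$. By the K\"unneth rule, $\Pi_r$ is then cohomological with respect to $E^{\sf v}_\mu$ with $\mu = (\mu_v)_{v \in S_\infty}$.

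The second and central task is to produce the Shalika model. One first checks the central character relation $\eta^{r+1} = \omega_{\Pi_r}$: since $\omega_{{\rm Sym}^n(\pi)} = \omega_\pi^{n(n+1)/2}$ and $n = 2r+1$ gives $(2r+1)(r+1)$, we have $\omega_{\Pi_r} = \omega_\pi^{(2r+1)(r+1)} = \eta^{r+1}$. Next, the classical Clebsch--Gordan formula for the exterior square of the symmetric power of a two-dimensional representation yields
\[
\wedge^2({\rm Sym}^{2r+1}(\pi)) \;=\; \bigboxplus_{a=0}^{r}\;{\rm Sym}^{4(r-a)}(\pi)\otimes\omega_\pi^{2a+1},
\]
which is an isobaric sum of automorphic representations by the Kim--Shahidi transfer and the assumption that each ${\rm Sym}^{4(r-a)}(\pi({\bf f}))$ is an isobaric sum of unitary cuspidals. (One verifies the formula on dimensions and on the archimedean Langlands parameters; equivalently, one can invoke Asgari--Raghuram \cite{asgari-raghuram} in the same spirit as was done for the $\GL_3 \boxtimes \GL_2$ case.) Twisting by $\eta^{-1} = \omega_\pi^{-(2r+1)}$ and re-indexing by $b = r-a$ gives
\[
\wedge^2(\Pi_r)\otimes\eta^{-1} \;=\; \bigboxplus_{b=0}^{r}\;{\rm Sym}^{4b}(\pi)\otimes\omega_\pi^{-2b}.
\]

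The $b=0$ summand is the trivial representation $\triv$, contributing $\zeta^S(s)$ to $L^S(s, \Pi_r, \wedge^2 \otimes \eta^{-1})$, hence a simple pole at $s=1$. For $b\geq 1$, the summand ${\rm Sym}^{4b}(\pi)\otimes\omega_\pi^{-2b}$ is, by the hypothesis and the identity ${\rm Sym}^{4b}(\pi) = {\rm Sym}^{4b}(\pi({\bf f}))\otimes|\cdot|^{2bk_0}$, an isobaric sum of unitary cuspidal representations on $\GL_{d}$ with $d \geq 2$ for each constituent; by Jacquet--Shalika \cite[Thm.\ (1.3)]{jacquet-shalika-invent} the corresponding partial $L$-function is holomorphic and non-vanishing at $s=1$. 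Therefore $L^S(s, \Pi_r, \wedge^2 \otimes \eta^{-1})$ has a pole at $s=1$, and Theorem \ref{thm:JS} produces the $(\eta,\psi)$-Shalika model. With cuspidality, cohomologicality, and the Shalika model in hand, all hypotheses of Theorem \ref{thm:central-value} are met, giving the algebraicity of the critical values of $L(s,\Pi_r\otimes\chi)$.

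The main delicate point is the $b\geq 1$ step: one must genuinely use the hypothesis that each symmetric power in the range $0 < 4b \leq 4r$ is an isobaric sum of \emph{unitary} cuspidals on groups $\GL_d$ with $d \geq 2$, so as to exclude a trivial character occurring as an isobaric summand (which would produce an unwanted pole and obstruct the Shalika-model criterion). In particular, one cannot dispense with the non-dihedrality of $\pi({\bf f})$ together with the Langlands-functoriality hypothesis on ${\rm Sym}^{4b}$, since without this input one has no way to rule out that $\wedge^2 \Pi_r \otimes \eta^{-1}$ acquires multiple poles, which would be incompatible with Jacquet--Shalika's characterization.
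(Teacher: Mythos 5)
Your proof takes essentially the same approach as the paper's: (i) cohomologicality via \cite[Thm.\ 5.5]{ragsha} and K\"unneth with the highest weight $\mu_v=(k_v-2)\rho_{2(r+1)}+\tfrac{(2r+1)k_0}{2}$, and (ii) a Clebsch--Gordan decomposition of $\wedge^2({\rm Sym}^{2r+1})$ into symmetric powers twisted by powers of $\omega_\pi$, followed by the Jacquet--Shalika non-vanishing theorem at $s=1$ and Thm.\ \ref{thm:JS}. The supplementary checks you add (the central-character identity $\eta^{r+1}=\omega_{\Pi_r}$, and the unitarity of the twisted factors) are fine and only make explicit what the paper leaves implicit.

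The only place you go somewhat astray is the closing paragraph. Having $\triv$ as an isobaric constituent of some $b\geq 1$ summand would \emph{not} obstruct the criterion of Thm.\ \ref{thm:JS}: that criterion asks only for \emph{a} pole of $L^S(s,\Pi_r,\wedge^2\otimes\eta^{-1})$ at $s=1$, and a higher-order pole is still a pole. (What a higher-order pole would contradict is the cuspidality of $\Pi_r$ -- a hypothesis you already granted -- since the exterior-square $L$-function of a cuspidal $\GL_{2n}$-representation has at most a simple pole.) Also, non-dihedrality of $\pi({\bf f})$ is not part of the hypotheses of this proposition -- it enters in Prop.\ \ref{prop:hilbertmod} via the cuspidality criterion for ${\rm Sym}^3$, whereas here cuspidality of $\Pi_r$ is simply assumed; so the sentence asserting one ``cannot dispense with'' non-dihedrality does not quite reflect what the statement uses.
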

\begin{proof}
For each $v\in S_\infty$, let
$$\mu_v=(k_v-2)\rho_{2(r+1)}+\tfrac{(2r+1)k_0}{2},$$
where $\rho_{2(r+1)}$ is half the sum of positive roots of $\GL_{2(r+1)}(\R)$. Then, $\Pi_v$ is cohomological with respect to $E^{\sf v}_{\mu_v}$ for all $v\in S_\infty$ by Raghuram--Shahidi \cite[Thm.\ 5.5]{ragsha} and so, using the K\"unneth rule, $\Pi$ is cohomological with respect to the highest weight module
$E^{\sf v}_\mu$ with $\mu=(\mu_v)_{v\in S_\infty}$. A similar calculation as in the proof of Prop.\ \ref{prop:hilbertmod} shows that
$$\wedge^2(\Pi_r)=\wedge^2\left({\rm Sym}^{2r+1}\left(\pi\right)\right)\cong \left(\boxplus_{a=0}^{r}{\rm Sym}^{4(r-a)}(\pi({\bf f}))\otimes\omega_{\pi({\bf f})}^{2a+1}\right)\otimes |\cdot|^{(2r+1)k_0}.$$
Hence, by Jacquet--Shalika \cite[Thm.\ (1.3)]{jacquet-shalika-invent} and Thm.\ \ref{thm:JS}, $\Pi_r$ has an $(\eta,\psi)$-Shalika model with $\eta=\omega_{\pi({\bf f})}^{2r+1}|\cdot|^{(2r+1)k_0}$.
\end{proof}

\subsection{Rankin--Selberg $L$-functions for $\GL_3\times\GL_2$}\label{sec:gl3xgl2}
\subsubsection{}\label{sect:81}
We describe another class of examples where our theorem applies, and this concerns Rankin--Selberg $L$-functions for $\GL_3 \times \GL_2$ via transfer to $\GL_6$.
Let $F$ be totally real as before, and $\pi=\otimes_v'\pi_v$ (resp., $\tau=\otimes_v'\tau_v$) be a unitary cuspidal automorphic representation of $\GL_3(\A)$ (resp. $\GL_2(\A)$). For each place $v$ of $F$, let $\pi_v\boxtimes\tau_v$ be the irreducible admissible representation of $\GL_6(F_v)$ attached to $\pi_v\otimes\tau_v$ via the Local Langlands Correspondence (\cite{harris}, \cite{henniart}, \cite{lang2}). Then, $\Pi:=\pi\boxtimes\tau$ is an irreducible admissible representation of $\GL_6(\A)$, which by Kim--Shahidi \cite[Thm.\ 5.1]{kimshahidiANN}, is automorphic. Recall also the symmetric square transfer, cf.\  Sect.\ \ref{sect:symmtrans}. By Gelbart--Jacquet \cite{gelbart-jacquet} it assigns to each unitary cuspidal automorphic $\tau$ as above an automorphic representation Sym$^2(\tau)$ of $\GL_3(\A)$.\\

Let $v\in S_\infty$. We say that $\pi_v$ is cohomological with respect to a highest weight module $E^{\sf v}_{\mu_v}$ of $\GL_3(\R)$ if $H^q(\gl_3(\R), {\rm O}(3)\R_+,\pi_v\otimes E^{\sf v}_{\mu_v})\neq 0$ for some $q$. Similarly, we say that $\pi_\infty$ is cohomological with respect to $E^{\sf v}_\mu$, $\mu=(\mu_v)_{v\in S_\infty}$, if $\pi_v$ is cohomological with respect to $E^{\sf v}_{\mu_v}$ for all $v\in S_\infty$. Observe that $\pi_v$ being unitary implies that  $\mu_v=(\mu_{v,1}, 0, -\mu_{v,1})\in\Z^3$. Putting $\ell_v:= 2\mu_{v,1} +2$, we obtain that $\pi_v$ is necessarily isomorphic to
$$
\pi_v\cong {\rm Ind}_{P_{(2,1)}(\R)}^{\GL_3(\R)}[D(\ell_v) \otimes \sgn^{\varepsilon_v}_v]
$$
for a uniquely defined $\varepsilon_v=\varepsilon_v(\mu_v)\in\{0,1\}$. For this, see, for instance, \cite[Sect.\ 3.1]{mahnk} or \cite[Sect.\ 5.1]{ragsha}.

\subsubsection{} With this setup in place we obtain the following proposition.
\begin{prop}\label{prop:gl2gl3}
Let $\pi$ (resp., $\tau$) be a unitary cuspidal automorphic representation of $\GL_3(\A)$ (resp., $\GL_2(\A)$). Let $\pi$ be cohomological with respect to $E^{\sf v}_\mu$, $\mu=(\mu_v)_{v\in S_\infty}$, $\mu_v=(\mu_{v,1}, 0, -\mu_{v,1})$, and let $\tau$ be cohomological with respect to $E^{\sf v}_\lambda$, $\lambda=(\lambda_v)_{v\in S_\infty}$, $\lambda_v=(\lambda_{v,1}, -\lambda_{v,1})$. (Note that the unitarity of $\pi$ and $\tau$ forces the weights
$\mu$ and $\lambda$ to be self--dual and hence to be of the above form.) Put $\ell_v:= 2\mu_{v,1} +2$ and $\ell'_v:= 2\lambda_{v,1}+1$. Let $\omega_\tau$ be the central character of $\tau$.
Assume furthermore that
\begin{enumerate}
\item $\tau$ is not dihedral and $\pi$ is not a twist of \emph{Sym}$^2(\tau)$.
\item $\pi_\infty$ and $\tau_\infty$ are ``sufficiently disjoint'', i.e., $\ell_v>\ell'_v$ and $\ell_v\neq 2\ell'_v$ for all $v\in S_\infty$.
\item $\pi$ is essentially self-dual; say $\pi^{\sf v} \simeq \pi \otimes \chi_\pi$.
\end{enumerate}
Then $\Pi=\pi\boxtimes\tau$ is a cohomological cuspidal automorphic representation of $\GL_6(\A)$ which has an $(\eta,\psi)$-Shalika model for $\eta = \omega_\tau \chi_\pi^{-1}$ Further, the standard $L$-function $L(s,\Pi)$ of $\Pi$ is the Rankin-Selberg $L$-function $L(s,\pi\times\tau)$ of the pair $(\pi,\tau)$.
\end{prop}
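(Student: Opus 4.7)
The plan is to verify, in order, that $\Pi=\pi\boxtimes\tau$ is cuspidal, cohomological, and admits an $(\eta,\psi)$-Shalika model for $\eta=\omega_\tau\chi_\pi^{-1}$; the identity $L(s,\Pi)=L(s,\pi\times\tau)$ is then immediate, since the Local Langlands Correspondence identifies, at every place $v$, the parameter of $\Pi_v$ with the tensor product of those of $\pi_v$ and $\tau_v$, so that the local standard and Rankin--Selberg $L$-factors coincide.

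For cuspidality I would invoke the Kim--Shahidi cuspidality criterion \cite[Thm.~5.1]{kimshahidiANN}, which asserts that $\pi\boxtimes\tau$ fails to be cuspidal only when $\pi$ is a character twist of the Gelbart--Jacquet lift ${\rm Sym}^2(\tau)$. Hypothesis~(1) requires $\tau$ to be non-dihedral --- so that ${\rm Sym}^2(\tau)$ is cuspidal on $\GL_3(\A)$ --- and rules out $\pi$ being any such twist; both obstructions thus being absent, $\Pi$ is cuspidal.

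For cohomologicality of $\Pi_\infty$, I would compute archimedean Langlands parameters. The parameter of $\pi_v$ has the form $\phi_{\pi_v}={\rm Ind}_{W_\C}^{W_\R}(\chi_{\ell_v})\oplus\nu_v$ for a character $\nu_v$ of $\R^\times$ and $\chi_{\ell_v}(z)=(z/|z|)^{\ell_v}$ (up to unitary central twists), while $\phi_{\tau_v}={\rm Ind}_{W_\C}^{W_\R}(\chi_{\ell'_v})$. The Mackey decomposition
\[
{\rm Ind}(\chi_{\ell_v})\otimes{\rm Ind}(\chi_{\ell'_v}) \;\cong\; {\rm Ind}(\chi_{\ell_v+\ell'_v})\oplus{\rm Ind}(\chi_{\ell_v-\ell'_v}),
\]
together with the observation that $\nu_v|_{\C^\times}$ factors through the norm and hence is of the form $z\mapsto|z|^c$, shows that $\phi_{\Pi_v}$ is a direct sum of three two-dimensional induced pieces with discrete-series indices $\ell_v+\ell'_v$, $\ell_v-\ell'_v$, and $\ell'_v$. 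Hypothesis~(2) forces these three integers to be distinct and positive, placing $\Pi_v$ in the form (\ref{eq:piinfty}); the K\"unneth rule then delivers cohomologicality of $\Pi_\infty$.

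For the Shalika model I apply Theorem~\ref{thm:JS}: it suffices to exhibit a pole of $L^S(s,\Pi,\wedge^2\otimes\eta^{-1})$ at $s=1$; the compatibility $\eta^3=\omega_\Pi=\omega_\pi^2\omega_\tau^3$ is checked directly using $\chi_\pi^3=\omega_\pi^{-2}$, obtained from~(3) by taking central characters. The key algebraic identity is
\[
\wedge^2(\pi\boxtimes\tau) \;\cong\; (\wedge^2\pi)\boxtimes{\rm Sym}^2(\tau) \;\boxplus\; ({\rm Sym}^2\pi)\otimes\omega_\tau,
\]
which follows from $\wedge^2(V\otimes W)\cong\wedge^2 V\otimes{\rm Sym}^2 W\oplus{\rm Sym}^2 V\otimes\wedge^2 W$ together with $\wedge^2\pi\cong\pi^{\sf v}\otimes\omega_\pi$ on $\GL_3$. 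Twisting by $\eta^{-1}$, the second summand becomes ${\rm Sym}^2(\pi)\otimes\chi_\pi$, and the Jacquet--Shalika simple pole of $L(s,\pi\times\pi^{\sf v})=L(s,{\rm Sym}^2\pi\otimes\chi_\pi)\cdot L(s,\wedge^2\pi\otimes\chi_\pi)$ at $s=1$ must reside in the symmetric-square factor, since $L(s,\wedge^2\pi\otimes\chi_\pi)=L(s,\pi^{\sf v}\otimes\omega_\pi\chi_\pi)$ is a cuspidal $\GL_3$ $L$-function and hence entire there. For the first summand, a pole of the twisted Rankin--Selberg $L$-function $L(s,(\wedge^2\pi\otimes\eta^{-1})\times{\rm Sym}^2(\tau))$ at $s=1$ would, by Jacquet--Shalika, force $\pi$ to be a character twist of ${\rm Sym}^2(\tau)$, which~(1) forbids; hence that summand is regular at $s=1$ and we obtain the simple pole of $L^S(s,\Pi,\wedge^2\otimes\eta^{-1})$ as required. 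The principal technical obstacle lies in this exterior-square bookkeeping: one must track how the twist by $\eta^{-1}$ distributes across the two summands and confirm that the numerics align so that exactly the essentially-self-dual hypothesis~(3) produces the pole demanded by Theorem~\ref{thm:JS}.
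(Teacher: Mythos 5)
Your argument is structured exactly like the paper's (same linear-algebra identity $\wedge^2(V\otimes W)\cong{\rm Sym}^2V\otimes\wedge^2W\oplus\wedge^2V\otimes{\rm Sym}^2W$, same reduction of the ${\rm Sym}^2\pi$-factor to $L(s,\pi,{\rm Sym}^2\otimes\chi_\pi)$ to locate the pole), and your cuspidality and cohomologicality steps are fine (the paper invokes Ramakrishnan--Wang's cuspidality criterion rather than Kim--Shahidi Thm.~5.1, but this is immaterial).

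However, there is a genuine gap in the Shalika-model part. To conclude that $L^S(s,\Pi,\wedge^2\otimes\eta^{-1})$, which you have written as a product of two factors, has a pole at $s=1$, it is not enough to know that one factor has a pole and the other is holomorphic at $s=1$: the holomorphic factor could vanish there and kill the pole. You argue that the $\GL_3\times\GL_3$ Rankin--Selberg factor $L^S(s,(\wedge^2\pi\otimes\eta^{-1})\times{\rm Sym}^2(\tau))$ has no pole (using hypothesis~(1) and Jacquet--Shalika), but you must also show that it does not vanish at $s=1$. This is supplied in the paper by Shahidi's non-vanishing theorem for Rankin--Selberg $L$-functions (\cite{shahidi-book}, Thm.\ on p.\ 462), applied to the two unitary cuspidal $\GL_3$-representations $\pi$ and ${\rm Sym}^2(\tau)\otimes\beta$ (for a suitable unitary twist $\beta$). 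Your proof should cite this non-vanishing result; without it the claimed pole of the exterior-square $L$-function is not established.
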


\begin{proof}
For convenience, we divide the proof into four parts:

\smallskip

{\it $\Pi$ is cuspidal:} This follows from the cuspidality criterion \cite[Thm.\ 3.1(a)]{ramwong} of Ramakrishnan--Wang for the Kim--Shahidi transfer: The hypothesis that $\tau$ is not dihedral and that $\pi$ is not a twist of the Gelbart--Jacquet transfer of $\tau$ guarantees cuspidality of the
transfer $\Pi$.

\smallskip

{\it $\Pi$ is cohomological:} Recall from Sect.\ \ref{sect:cohreps} and from Sect.\ \ref{sect:81} above that for each $v\in S_\infty$ we have
$$
\pi_v\cong {\rm Ind}_{P_{(2,1)}(\R)}^{\GL_3(\R)}[D(\ell_v) \otimes \sgn^{\varepsilon_v}_v]\ \  {\rm and} \ \ \tau_v\cong D(\ell_v'),
$$
This shows that
$$
\Pi_v\cong {\rm Ind}_{P(\R)}^{\GL_6(\R)}[D(\ell_v+\ell'_v)  \otimes D(\ell_v-\ell'_v) \otimes D(\ell_v') ].
$$
Now one can check that $\Pi_\infty$ is regular algebraic, i.e., cohomological (cf.\  Rem.\  \ref{rem:clozel}) if $\ell_v>\ell'_v$ and $\ell_v\neq 2\ell'_v$ for all $v\in S_\infty$.

\smallskip

{\it $\Pi$ has a $(\eta, \psi)$-Shalika model:} We show that the $\eta^{-1}$-twisted partial exterior square $L$-function of $\Pi$ has a pole at $s=1$. This hinges on the following easy identity in linear algebra: Let $V$ and $W$ be
finite-dimensional vector spaces over some field then
$$
\wedge^2(V \otimes W)=\left({\rm Sym}^2(V) \otimes \wedge^2W \right) \oplus \left( \wedge^2V \otimes {\rm Sym}^2(W)\right).
$$
Applying this to a local unramified place $v$ of $F$, we see the following factorization of partial $L$-functions for any finite set of places  $S$ containing $S_{\Pi,\eta}$:
\begin{equation}
\label{eqn:wedge-2-factorize}
L^S(s, \Pi, \wedge^2 \otimes \eta^{-1}) =
L^S(s, \pi \otimes \tau, {\rm Sym}^2 \otimes \wedge^2 \otimes \eta^{-1})\cdot
L^S(s, \pi \otimes \tau, \wedge^2  \otimes  {\rm Sym}^2 \otimes \eta^{-1}).
\end{equation}
But $\wedge^2\tau$ is nothing but the central character $\omega_\tau$ of $\tau$, hence the first factor of the right hand side of
(\ref{eqn:wedge-2-factorize}) may be rewritten as
$$
L^S(s, \pi, {\rm Sym}^2 \otimes \omega_\tau \eta^{-1}) = L^S(s, \pi, {\rm Sym}^2 \otimes \chi_\pi),
$$
which has a pole at $s=1$, since we assumed that $\pi^{\sf v} = \pi \otimes \chi_\pi.$ Indeed, as $\pi$ is essentially self--dual,
$$L^S(s, \pi^{\sf v} \times \pi) = L^S(s, \pi, {\rm Sym}^2 \otimes \chi_\pi)\cdot L^S(s, \pi, \wedge^2 \otimes \chi_\pi)$$ has a pole at $s=1$. But since $\pi$ is on $\GL_3$,
we have
$$L^S(s, \pi, \wedge^2 \otimes \chi_\pi) = L^S(s, \pi^{\sf v} \otimes \omega_\pi\chi_\pi) =  L^S(s, \pi \otimes \omega_\pi\chi_\pi^2),$$ which is entire by the cuspidality of $\pi$. Hence, the pole at $s=1$ of $L^S(s, \pi^{\sf v} \times \pi)$ must come from $L^S(s, \pi, {\rm Sym}^2 \otimes \chi_\pi)$.

Now let us look at the second factor on the right hand side of (\ref{eqn:wedge-2-factorize}).
Since $\tau$ is not dihedral, the symmetric square transfer ${\rm Sym}^2(\tau)$ of $\tau$ is cuspidal by Gelbart-Jacquet \cite[Thm.\ 9.3]{gelbart-jacquet}. Moreover, since $\pi$ is on $\GL_3$, as above, we have $\wedge^2 \pi = \pi^{\sf v} \otimes \omega_\pi = \pi \otimes \chi_\pi\omega_\pi.$
Hence, the second factor is the same as
$$
L^S(s, \pi \times {\rm Sym}^2(\tau) \otimes \chi_\pi\omega_\pi\eta^{-1}) = L^S(s, \pi \times ({\rm Sym}^2(\tau) \otimes \beta))
$$
for the unitary character $\beta = \chi_\pi^2\omega_\pi\omega_\tau^{-1}$, i.e., it equals the partial Rankin-Selberg $L$-function for $\GL_3 \times \GL_3$ attached to the unitary cuspidal automorphic representations $\pi$ and ${\rm Sym}^2(\tau) \otimes \beta$. The second factor is therefore non-vanishing at $s=1$ by Shahidi \cite{shahidi-book},
Thm.\ on p.\ 462. We conclude that $L^S(s, \Pi, \wedge^2 \otimes \eta^{-1})$ has a pole at $s=1$. It follows from the equivalence of (i) and (iii) of Thm.\ \ref{thm:JS} that $\Pi$ has an $(\eta,\psi)$-Shalika model. Compare these considerations to Gotsbacher--Grobner, \cite{grobgots}, Sect.s 4.2 and 4.3.

\smallskip

{\it Equality of $L$-functions:} Finally, the equality of $L(s,\Pi)$ and $L(s,\pi\times\tau)$ is proved in \cite{kimshahidiANN}, Prop.\ 5.8.
\end{proof}

\begin{cor}\label{cor:gl2gl3}
Thm.\ \ref{thm:central-value} can be applied to any representation $\Pi=\pi\boxtimes\tau$ as in Prop.\ \ref{prop:gl2gl3}. In particular, if $\chi$ is a finite-order Hecke character of $F$, then for all $\tfrac12+m\in{\rm Crit}(\Pi)$, the following assertions hold:
\begin{enumerate}
\item For every $\sigma\in {\rm Aut}(\C)$,
$$
\sigma\left(
\frac{L(\tfrac 12+m,\pi_f \times \tau_f\otimes \chi_f)}{\omega^{(-1)^{m}\epsilon_{\chi}}(\Pi_f) \, \G(\chi_f)^3 \, \omega(\Pi_\infty,m)}\right) \ = \
\frac{L(\tfrac 12+m,{}^\sigma\!\pi_f \times {}^\sigma\!\tau_f\otimes {}^\sigma\!\chi_f)}{\omega^{(-1)^{m}\epsilon_\chi}({}^\sigma\Pi_f) \, \G({}^\sigma\!\chi_f)^3 \,\omega(\Pi_\infty,m)}.
$$
\item
$$
L(\tfrac 12+m,\pi_f\times\tau_f\otimes\chi_f) \ \sim_{\Q(\Pi,\eta, \chi)} \
\omega^{(-1)^{m}\epsilon_\chi}(\Pi_f) \, \G(\chi_f)^3  \, \omega(\Pi_\infty,m),
$$
where ``$\sim_{\Q(\Pi,\eta,\chi)}$'' means up to multiplication by an element in the number field $\Q(\Pi,\eta,\chi)$.
\end{enumerate}
\end{cor}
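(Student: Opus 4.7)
The plan is to simply combine Proposition~\ref{prop:gl2gl3} with the main result Theorem~\ref{thm:central-value}, so the proof should be quite short. First I would verify that $\Pi = \pi \boxtimes \tau$ meets every hypothesis of Theorem~\ref{thm:central-value}: by Proposition~\ref{prop:gl2gl3}, $\Pi$ is a cuspidal automorphic representation of $\GL_6(\A) = \GL_{2n}(\A)$ with $n=3$, it is cohomological with respect to a highest weight module $E^{\sf v}_\mu$ whose weights are determined explicitly by the weights of $\pi$ and $\tau$ (via the archimedean parameters $\ell_v, \ell'_v$), and it carries an $(\eta,\psi)$-Shalika model with $\eta = \omega_\tau \chi_\pi^{-1}$. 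In particular, the periods $\omega^{\epsilon}(\Pi_f)$ of Definition/Proposition~\ref{defprop} and the archimedean quantity $\omega(\Pi_\infty,m)$ of Theorem~\ref{hyp} are well-defined for $\Pi$.

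Next I would use the equality $L(s,\Pi) = L(s,\pi\times\tau)$ (Kim--Shahidi, as cited in Proposition~\ref{prop:gl2gl3}), which, since Langlands transfer is compatible with twisting by Hecke characters, also yields $L(s,\Pi\otimes\chi) = L(s,\pi\times\tau\otimes\chi)$ for any finite-order Hecke character $\chi$ of $F$. Furthermore, by the compatibility of the transfer with Aut$(\C)$-twists at every place (which follows from the local Langlands correspondence), we have ${}^\sigma\Pi = {}^\sigma\!\pi \boxtimes {}^\sigma\!\tau$ and $L(s, {}^\sigma\Pi_f \otimes {}^\sigma\!\chi_f) = L(s, {}^\sigma\!\pi_f \times {}^\sigma\!\tau_f \otimes {}^\sigma\!\chi_f)$ for all $\sigma \in {\rm Aut}(\C)$.

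Now I would plug $n=3$ into Theorem~\ref{thm:central-value}: the sign exponent there is $(-1)^{n+m-1} = (-1)^{m+2} = (-1)^m$, and the power of the Gau\ss\ sum becomes $\mathcal{G}(\chi_f)^3$. Part (1) of the corollary is then just the statement of Theorem~\ref{thm:central-value}(1) for $\Pi$, and part (2) follows likewise from Theorem~\ref{thm:central-value}(2) (or directly from (1) by taking $\sigma$-invariants over the number field $\Q(\Pi,\eta,\chi)$). There is no real obstacle here; the content of the corollary is contained in Proposition~\ref{prop:gl2gl3}, and the cited identification $L(s,\Pi)=L(s,\pi\times\tau)$ ensures that what is a statement about the standard $L$-function of $\Pi$ on $\GL_6$ translates into one about the Rankin--Selberg $L$-function for $\GL_3\times\GL_2$.
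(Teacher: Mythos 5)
Your proposal is correct and matches the paper's intended argument: the corollary is a direct specialization of Theorem~\ref{thm:central-value} with $n=3$ (so $(-1)^{n+m-1}=(-1)^m$ and $\G(\chi_f)^3$), the hypotheses being supplied by Proposition~\ref{prop:gl2gl3} and the identification of $L$-functions $L(s,\Pi_f\otimes\chi_f)=L(s,\pi_f\times\tau_f\otimes\chi_f)$ coming from the Kim--Shahidi transfer at every place. Your extra remark that ${}^\sigma\Pi_f={}^\sigma\pi_f\boxtimes{}^\sigma\tau_f$ and hence $L(s,{}^\sigma\Pi_f\otimes{}^\sigma\chi_f)=L(s,{}^\sigma\pi_f\times{}^\sigma\tau_f\otimes{}^\sigma\chi_f)$, via compatibility of LLC with ${\rm Aut}(\C)$-twisting, is exactly the (implicit) justification needed to rewrite the right-hand side of Theorem~\ref{thm:central-value}(1) in the form stated here.
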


In Raghuram \cite{raghuram-imrn}, for $F=\Q,$ another type of periods $p^+(\pi_f)$, $p^-(\tau_f)$ and
$p_\infty(\mu,\lambda)$ was introduced for cuspidal automorphic representations $\pi$ and $\tau$ as above -- the latter one exists thanks to Kasten--Schmidt \cite[Sect.\ 4]{kasten-schmidt}. This was done by considering cohomology in bottom-degree. Using the results obtained in \cite{raghuram-imrn}, we get another corollary, which compares our top-degree (Shalika--)periods for $\Pi$ with the bottom-degree (Whittaker--)periods of $\pi$ and $\tau$.

\begin{cor}[Period Relations II]
\label{cor:gl2gl3-period-relations}
Let $F = \Q$ and  let $\pi$, $\mu,$ $\tau,$ $\lambda$ and $\Pi =\pi\boxtimes\tau$ be as in Prop.\ \ref{prop:gl2gl3}. We get the following relation:
$$
\omega^{\epsilon_0}(\Pi_f) \, \omega(\Pi_\infty) \ \sim_{\Q(\pi,\tau,\eta)} \
p^+(\pi_f) p^-(\tau_f) \G(\omega_{\tau_f}) p_\infty(\mu,\lambda),
$$
where $\Q(\pi,\tau,\eta)$ is the composition of the rationality fields of $\pi$, $\tau$ and $\eta$ and the rest of the notations are as in \cite{raghuram-imrn}.
\end{cor}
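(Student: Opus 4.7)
The strategy is to evaluate the central critical value $L(\tfrac12, \pi_f \times \tau_f)$ in two different ways: first via the top-degree (Shalika) description supplied by our Cor.~\ref{cor:gl2gl3}, and then via the bottom-degree (Whittaker) description supplied by \cite[Thm.~1.1]{raghuram-imrn}. Under the hypotheses of Prop.~\ref{prop:gl2gl3} one implicitly assumes also that $s=\tfrac12$ is critical for $L(s,\Pi)=L(s,\pi\times\tau)$; for $n=3$ the exponent $(-1)^{n+m-1}\epsilon_\chi$ appearing in Cor.~\ref{cor:gl2gl3} reduces at $m=0$ and $\chi=\triv$ to precisely $\epsilon_0 = ((-1)^{n-1})_{v\in S_\infty}$, and $\G(\triv_f)=1$, so both rationality results pair up against the same sign data.

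The two rationality results then yield
$$
L(\tfrac12,\pi_f\times\tau_f)\ \sim_{\Q(\Pi,\eta)}\ \omega^{\epsilon_0}(\Pi_f)\,\omega(\Pi_\infty)
$$
and
$$
L(\tfrac12,\pi_f\times\tau_f)\ \sim_{\Q(\pi,\tau)}\ p^+(\pi_f)\,p^-(\tau_f)\,\G(\omega_{\tau_f})\,p_\infty(\mu,\lambda).
$$
Crucially, both statements are in fact $\mathrm{Aut}(\C)$-equivariant (cf.\ part (1) of each result), asserting that the quotient of the $L$-value by the respective period transforms compatibly under every $\sigma\in\mathrm{Aut}(\C)$. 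Dividing the two equivariance statements (when the central $L$-value is non-zero) shows that the ratio
$$
\frac{\omega^{\epsilon_0}(\Pi_f)\,\omega(\Pi_\infty)}{p^+(\pi_f)\,p^-(\tau_f)\,\G(\omega_{\tau_f})\,p_\infty(\mu,\lambda)}
$$
is Galois-fixed under $\mathrm{Aut}(\C/\Q(\pi,\tau,\eta))$ and therefore lies in $\Q(\pi,\tau,\eta)^\times$, which is the claimed relation.

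The main obstacle is the possible vanishing of the central value $L(\tfrac12,\pi_f\times\tau_f)$: if it vanishes, both rationality statements read tautologically $0\sim 0$ and yield no information about the ratio of periods. To circumvent this one twists by a finite-order Hecke character $\chi$ chosen so that $L(\tfrac12,\pi_f\times\tau_f\otimes\chi_f)\neq 0$ --- such $\chi$ exist by Rohrlich-type non-vanishing, or simply for sufficiently general $\chi$ --- applies both theorems to the twisted $L$-value, and then invokes Thm.~\ref{thm:twisted} on the Shalika side together with the analogous Whittaker-period twist identity from \cite{raghuram-shahidi-imrn} on the other side to express the twisted periods in terms of the untwisted ones $\omega^{\epsilon_0}(\Pi_f)$ and $p^{\pm}(\pi_f), p^{\mp}(\tau_f)$, decorated by explicit Gau\ss~sums. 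The extra Gau\ss~sums $\G(\chi_f)$ introduced by the twisting cancel between the two sides, leaving precisely the intrinsic factor $\G(\omega_{\tau_f})$ present in Raghuram's formula; matching the archimedean quantity $\omega(\Pi_\infty)$ with $p_\infty(\mu,\lambda)$ up to a factor in $\Q(\pi,\tau,\eta)^\times$ then completes the argument.
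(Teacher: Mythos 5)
Your proposal identifies exactly the strategy of the paper's (one-sentence) proof: set the two ${\rm Aut}(\C)$-equivariant rationality statements for the same number $L(\tfrac12,\Pi_f)=L(\tfrac12,\pi_f\times\tau_f)$ — Cor.\ \ref{cor:gl2gl3} on the Shalika side and \cite[Thm.\ 1.1]{raghuram-imrn} on the Whittaker side — equal and divide, noting that with $n=3$, $m=0$, $\chi=\triv$ the sign $(-1)^{n+m-1}\epsilon_\chi$ is indeed $\epsilon_0$ and the ambient field is the compositum $\Q(\pi,\tau,\eta)$. So in substance you and the paper do the same thing.

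Where you go beyond the paper is in flagging the division-by-zero issue: if $L(\tfrac12,\pi_f\times\tau_f)=0$ the comparison degenerates to $0\sim 0$. This is a real point the paper's one-liner glosses over. However, your proposed fix — twist by a finite-order $\chi$ with $L(\tfrac12,\pi_f\times\tau_f\otimes\chi_f)\neq 0$ — asks for a non-vanishing result that is \emph{not} unconditionally available for central $L$-values on $\GL_6$ (nor on $\GL_{2n}$ for $n\geq 2$); this is precisely the non-vanishing hypothesis that Ash--Ginzburg are forced to impose, as the paper's introduction remarks. The unconditional patch, and the one the paper itself uses in the analogous Cor.\ \ref{cor:period-relns-garrett-harris} for the symmetric-cube case, is to anchor at a non-central critical point $\tfrac12+m$ with $\tfrac12+m\geq 1$, where Jacquet--Shalika non-vanishing applies, and then transport back to $m=0$ using Thm.\ \ref{thm:twisted} (applied to $|\cdot|^m$) and its Whittaker analogue from \cite{raghuram-shahidi-imrn}. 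This requires the critical strip of $\Pi=\pi\boxtimes\tau$ to have more than the single point $\tfrac12$, i.e.\ $\mu_{v,3}\geq 1$, which holds for generic $(\ell_v,\ell_v')$ but can fail at the boundary; in the boundary case both your twisting argument and the paper's terse proof remain conditional. One more caution: when twisting by a $\chi$ with non-trivial signature $\epsilon_\chi$, the periods that appear in both rationality statements carry that sign and the two sides acquire different powers of $\G(\chi_f)$; you assert these cancel, which is plausible, but it needs to be checked carefully against the precise Whittaker-period twist law of \cite{raghuram-shahidi-imrn} for $\GL_3$ and $\GL_2$ and against the way $\G(\omega_{\tau_f})$ transforms, since a sign mismatch or leftover Gau\ss~sum would change the statement.
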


\begin{proof}
This follows directly from comparing the algebraicity results for $L(\tfrac12, \Pi) = L(\tfrac12, \pi \times \tau)$ given by Theorem~\ref{thm:central-value} for $L(\tfrac12, \Pi)$ and by \cite[Theorem 1.1]{raghuram-imrn} for $L(\tfrac12, \pi \times \tau).$
\end{proof}

\subsubsection{Rankin--Selberg $L$-functions for $\GL_n\times\GL_{n-1}$}\label{sect:glngln-1}
We would like to point out that -- similar to the case of Sym$^3$ -- the entire discussion in this section may be generalized, assuming Langlands Functoriality, to the situation where $\pi$ (resp., $\tau$) is a unitary essentially self--dual cuspidal automorphic representation of $\GL_n(\A)$ (resp., $\GL_{n-1}(\A)$) such that one of them is of symplectic type and the other of orthogonal type and for which the transfer $\Pi = \pi \boxtimes \tau$ is cuspidal as a representation of $\GL_{n(n-1)}(\A)$. The same remark applies to the corollary on period relations, if one additionally assumes the validity of Hypothesis 3.10 of \cite{raghuram-imrn}. However, as pointed out by Sun \cite{sun}, p.\ 4, this hypothesis may soon be proved to hold, applying similar techniques as used in the proof of \cite{sun} Thm. C.

\subsection{Degree four $L$-function of a Siegel modular form}
In this section we let $F=\Q$. Let $\Phi$ be a non-zero holomorphic cuspidal scalar-valued Siegel modular eigenform of degree $2$, weight $\ell$ and of full level, i.e., for the full modular group ${\rm Sp}_4(\Z).$ (Existence of such a $\Phi$ implies $\ell \geq 10$.) Let $\pi = \pi(\Phi)$ be the cuspidal automorphic representation of ${\rm GSp}_4(\A)$, associated to $\Phi$ as in \cite[Thm.\ 2]{asgari-schmidt}. Then $\pi$ has trivial central character, is unramified everywhere, and the representation $\pi_\infty$ at infinity is a holomorphic discrete series representation. The representation $\pi$ is not globally generic since $\pi_\infty$ is not generic, hence $\pi$ does not come under the purview of ``generic-transfer'' from ${\rm GSp}_4$ to $\GL_4$ of Asgari--Shahidi \cite[Prop.\ 7.8]{asgari-shahidi-generic}.

However, under the assumption that $\Phi$ is not of Saito-Kurokawa type (i.e., $\pi=\pi(\Phi)$ is not CAP with respect to the Siegel parabolic subgroup or the Borel subgroup of $\GSp$), Pitale--Saha--Schmidt \cite{pss} have recently proved the existence of the ``non-generic-transfer'' to a  representation $\Pi = \Pi(\Phi)$ of $\GL_4(\A)$. It follows from \cite{pss} that $\Pi = \Pi(\Phi)$ is a cuspidal automorphic representation whose exterior square $L$-function has a pole at $s=1$. Hence, by Theorem~\ref{thm:JS}, we get that $\Pi$ has a $(\triv, \psi)$-Shalika model. Next, the Langlands parameter of $\Pi_\infty$, described in \cite{pss}, is the representation
$$
{\rm Ind}_{\C^\times}^{W_\R}[(\tfrac{z}{\overline z})^{\frac12}] \oplus {\rm Ind}_{\C^\times}^{W_\R}[(\tfrac{z}{\overline z})^{\frac{2\ell-3}{2}}].
$$
It is easy to see then that $\Pi_\infty\cong {\rm Ind}_{P(\R)}^{\GL_4(\R)}[D(2\ell-3)\otimes D(1)]$ is cohomological with respect to $E^{\sf v}_\mu$ with $\mu=(\ell-3,0,0,-(\ell-3))$, cf.\  Sect.\ \ref{sect:cohreps}. All these observations collectively say that $\Pi = \Pi(\Phi)$ is a representation to which our main theorem on special values, Thm.\ \ref{thm:central-value}, applies. The standard $L$-function of $\Pi$ is the degree four spinor $L$-function of $\Phi$ and so we get a description of the critical values of twisted degree four $L$-functions of $\Phi$ in terms of the top-degree periods $\omega^\epsilon(\Pi_f)$ of the transferred representation. We record this as the following corollary to Thm.\ \ref{thm:central-value}.

\begin{cor}\label{cor:siegelmod}
Let $F=\Q$ and let $\Phi$ be a non-zero holomorphic cuspidal scalar-valued Siegel modular eigenform of degree $2$, weight $\ell$ and for the full modular group ${\rm Sp}_4(\Z)$.
Let $\Pi = \Pi(\Phi)$ be the cuspidal automorphic representation of $\GL_4(\A)$ attached to $\Phi$ by Pitale--Saha--Schmidt \cite{pss}. For any finite-order Hecke character $\chi$ of $\Q$ the following assertions hold:
\begin{enumerate}
\item For every $\sigma\in {\rm Aut}(\C)$,
$$
\sigma\left(
\frac{L(\tfrac 12,\Pi_f \otimes \chi_f)}{\omega^{-\epsilon_{\chi}}(\Pi_f) \, \G(\chi_f)^2 \, \omega(\Pi_\infty)}\right) \ = \
\frac{L(\tfrac 12,{}^\sigma\Pi_f \otimes {}^\sigma\!\chi_f)}{\omega^{-\epsilon_\chi}({}^\sigma\Pi_f) \, \G({}^\sigma\!\chi_f)^2 \, \omega(\Pi_\infty)}.
$$
\item
$$
L(\tfrac 12,\Pi_f\otimes\chi_f) \ \sim_{\Q(\Pi, \chi)} \
\omega^{-\epsilon_\chi}(\Pi_f) \, \G(\chi_f)^2  \, \omega(\Pi_\infty),
$$
where ``$\sim_{\Q(\Pi, \chi)}$'' means up to multiplication by an element in the number field $\Q(\Pi, \chi)$.
\end{enumerate}
\end{cor}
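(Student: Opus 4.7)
The proposal is to recognize this corollary as a direct application of Theorem~\ref{thm:central-value} to the cuspidal automorphic representation $\Pi = \Pi(\Phi)$ of $\GL_4(\A_\Q)$, and therefore the work will consist entirely of verifying that each hypothesis of Theorem~\ref{thm:central-value} is satisfied and then specializing the formula. The main discussion preceding the corollary already collects the three central facts we need: (a) $\Pi$ is cuspidal by Pitale--Saha--Schmidt~\cite{pss}; (b) the exterior square $L$-function of $\Pi$ has a pole at $s=1$, so by Thm.~\ref{thm:JS} the representation $\Pi$ admits a $(\triv,\psi)$-Shalika model, in particular with $\eta = \triv$; and (c) the explicit description of $\Pi_\infty \cong {\rm Ind}_{P(\R)}^{\GL_4(\R)}[D(2\ell-3)\otimes D(1)]$ shows that $\Pi$ is cohomological with respect to $E^{\sf v}_\mu$ where $\mu = (\ell-3,0,0,-(\ell-3))$, of purity weight ${\sf w}=0$.

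With these hypotheses in place, the next step is to determine the critical set using Prop.~\ref{prop:crit}. Since $n=2$ and the weight satisfies $\mu_2 = 0 = \mu_3$, the proposition yields the condition $-\mu_{2} \leq m \leq -\mu_{3}$, i.e.\ $m = 0$. Thus $s = \tfrac12$ is the unique critical point for $L(s,\Pi)$, which by the remark at the end of Sect.~\ref{sect:finiteorder} is also the unique critical point for $L(s,\Pi\otimes\chi)$ for any finite-order Hecke character $\chi$ of $\Q$. In particular, the critical value $L(\tfrac12, \Pi_f \otimes \chi_f)$ lies within the scope of Thm.~\ref{thm:central-value}.

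The final step is to invoke Thm.~\ref{thm:central-value} with $m=0$ and $n=2$. The sign character appearing there is $(-1)^{n+m-1}\epsilon_\chi = (-1)^{1}\epsilon_\chi = -\epsilon_\chi$, and the power of the Gau\ss~sum is $\G(\chi_f)^n = \G(\chi_f)^2$. With $\omega(\Pi_\infty, 0) = \omega(\Pi_\infty)$ in our simplified notation, and observing that $\Q(\Pi,\eta,\chi) = \Q(\Pi,\chi)$ since $\eta = \triv$ is rational, both assertions (1) and (2) of the corollary follow directly from the corresponding statements in Thm.~\ref{thm:central-value}.

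There is no genuine obstacle here: the substantive content has already been extracted from~\cite{pss} (cuspidality and the pole of the exterior square $L$-function) and from the archimedean classification (cohomologicality of $\Pi_\infty$ with its explicit weight). The only point that requires a small check rather than a citation is the assertion that $s=\tfrac12$ is the \emph{unique} critical point, which is a direct computation from Prop.~\ref{prop:crit} given the weight $\mu$; this explains why only the $m=0$ statement appears in the corollary and why the formula takes the simplified form shown.
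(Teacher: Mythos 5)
Your proposal is correct and matches the paper's approach exactly: the paper also treats this corollary as an immediate specialization of Theorem~\ref{thm:central-value}, with the three hypotheses (cuspidality of $\Pi$, $(\triv,\psi)$-Shalika model via the pole of $L^S(s,\Pi,\wedge^2)$, and cohomologicality with $\mu=(\ell-3,0,0,-(\ell-3))$) established in the text preceding the statement, and with the uniqueness of the critical point $s=\tfrac12$ noted afterward as a consequence of Prop.~\ref{prop:crit}. Your explicit verification that $m=0$ is forced, that $(-1)^{n+m-1}\epsilon_\chi = -\epsilon_\chi$ for $n=2,m=0$, and that $\Q(\Pi,\eta,\chi)=\Q(\Pi,\chi)$ since $\eta=\triv$ is precisely what the paper leaves implicit.
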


Note that $L(s,\Pi_f \otimes \chi_f)$ has only one critical point, namely $s = \tfrac12$. This follows from the Prop.\ \ref{prop:crit}, recalling that $\Pi$ is cohomological with respect to $E_\mu\cong E^{\sf v}_\mu$ with $\mu=(\ell-3,0,0,-(\ell-3))$.

\begin{rem}[Period relations III]
The critical values of degree four $L$-functions for ${\rm GSp}(4)$ have been studied by Harris~\cite{harris-occult}.
The periods appearing therein come via a comparison of rational structures on Bessel models and rational structures on coherent cohomology.
Using the $L$-values as an anchor, one may compare the whimsically titled ``occult'' periods of Harris with the top-degree periods in this paper in the
situation where the representation of ${\rm GSp}_4$ comes from a Siegel modular form $\Phi$ as considered above.
\end{rem}

\begin{rem}
With the current state of Langlands functoriality we can only deal with Siegel modular forms of genus $2$ and full level, however, it is clear that the entire discussion in this subsection can be generalized to give algebraicity results for the degree four
$L$-functions for holomorphic Hilbert-Siegel modular cusp forms of genus $2$ and arbitrary level. Further, although we did not work out the details, using Arthur's classification of the discrete spectrum for classical groups  (see \cite{arthur-book}), one should be able to get algebraicity results for spinor $L$-functions for certain representations of the split group
${\rm SO}(2n+1)$ over a totally real field.
\end{rem}

\subsection{Compatibility with Deligne's conjecture}
Given a critical motive $M$, a celebrated conjecture of Deligne~\cite[Conj.\ 2.8]{deligne} relates the critical values of its $L$-function $L(s,M)$ to certain periods that arise out of a comparison of the Betti and de~Rham realizations of the motive. One expects a cohomological cuspidal automorphic representation $\Pi$ to correspond to a motive $M(\Pi)$ and under this correspondence the standard $L$-function $L(s, \Pi)$ is the motivic $L$-function $L(s, M(\Pi))$ up to a shift in the $s$-variable; see Clozel \cite{clozel}, Sect.\ 4. However, with the current state of technology, it seems impossible to compare our periods $\omega^\epsilon(\Pi_f)$ with Deligne's periods $c^{\pm}(M(\Pi))$. Blasius \cite{blasius} and Panchishkin \cite{panchishkin} have studied the behaviour of Deligne's periods upon twisting the motive by a Dirichlet character (more generally by Artin motives). Using Deligne's conjecture, they then predict the behaviour of critical values of motivic $L$-functions upon twisting by Dirichlet characters. For a critical motive over $\Q$, assumed to be simple, and of rank $2n$ this prediction looks like
$$
L(m, M \otimes \chi_f) \ \sim_{\Q(M,\chi)} \  L(m,M) \mathcal{G}(\chi_f)^n.
$$
Observe that no periods need to be mentioned to make such a statement about $L$-values. Our
Thm.\ \ref{thm:central-value} is compatible with Deligne's conjecture in the sense that an analogous relation holds between critical values of $L(s, \Pi)$ and $L(s, \Pi \otimes \chi)$.

\begin{cor}
\label{cor:deligne}
Let $F$ be a totally real number field and $G=\GL_{2n}/F$, $n\geq 1$. Let $\Pi$ be a cuspidal automorphic representation of $G(\A)$, which is cohomological with respect to a highest weight representation $E^{\sf v}_\mu$ of $G_\infty$ and which admits an $(\eta,\psi)$-Shalika model. Let $\chi$ be a Hecke character of $F$ of finite order and let $\tfrac12 + m \in \emph{Crit}(\Pi)=\emph{Crit}(\Pi\otimes\chi)$. We have
$$
L(\tfrac 12+m,\Pi_f \otimes \chi_f) \ \sim_{\Q(\Pi, \eta, \chi)} \
L(\tfrac 12+m,\Pi_f) \mathcal{G}(\chi_f)^n.
$$
\end{cor}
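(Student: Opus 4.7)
The plan is to apply Theorem \ref{thm:central-value} at the fixed critical point $\tfrac12+m$ twice---once to the pair $(\Pi,\chi)$ and once to the pair $(\Pi,\triv)$---and then to divide the two resulting equivalences in order to eliminate the archimedean factor $\omega(\Pi_\infty,m)$. More precisely, Theorem \ref{thm:central-value}(2) applied to $(\Pi,\chi)$ yields
$$L(\tfrac12+m,\Pi_f\otimes\chi_f) \ \sim_{\Q(\Pi,\eta,\chi)} \ \omega^{(-1)^{n+m-1}\epsilon_\chi}(\Pi_f)\,\G(\chi_f)^n\,\omega(\Pi_\infty,m),$$
while the same theorem applied to $(\Pi,\triv)$---noting that $\epsilon_{\triv}=\triv$ and $\G(\triv_f)=1$---gives
$$L(\tfrac12+m,\Pi_f) \ \sim_{\Q(\Pi,\eta)} \ \omega^{(-1)^{n+m-1}}(\Pi_f)\,\omega(\Pi_\infty,m).$$
Since $\omega(\Pi_\infty,m)\neq 0$ by Theorem \ref{hyp}, the ratio of these two equivalences produces
$$\frac{L(\tfrac12+m,\Pi_f\otimes\chi_f)}{L(\tfrac12+m,\Pi_f)\,\G(\chi_f)^n} \ \sim_{\Q(\Pi,\eta,\chi)} \ \frac{\omega^{(-1)^{n+m-1}\epsilon_\chi}(\Pi_f)}{\omega^{(-1)^{n+m-1}}(\Pi_f)},$$
so that the corollary is reduced to showing that this quotient of $\epsilon$-periods lies in the number field $\Q(\Pi,\eta,\chi)$.

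When $\epsilon_\chi=\triv$---equivalently, when $\chi_\infty$ is totally even---the two periods on the right are literally the same object, and the conclusion is immediate. The substantive case is $\epsilon_\chi\neq\triv$: the two periods then attach to distinct characters of $K_\infty/K_\infty^\circ$, and, being individually defined only modulo $\Q(\Pi,\eta)^\times$, their ratio is not obviously an element of the desired number field.

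My intended resolution of this obstacle is to invoke Theorem \ref{thm:twisted} as a period-calculus bridge, applied to the representation $\Pi$ and the character $\chi$ for both of the relevant sign choices $\epsilon=(-1)^{n+m-1}$ and $\epsilon=(-1)^{n+m-1}\epsilon_\chi$. Each instance gives an equivalence of the form $\omega^{\epsilon\cdot\epsilon_\chi}(\Pi_f\otimes\chi_f)\sim_{\Q(\Pi,\eta,\chi)}\G(\chi_f)^n\,\omega^{\epsilon}(\Pi_f)$, and the interplay between the two---combined with the observation from Section \ref{sect:finiteorder} that $\Pi_\infty\cong\Pi_\infty\otimes\chi_\infty$ for finite-order $\chi$, so that one is comparing periods of the \emph{same} archimedean representation---should force the period quotient to be rational over $\Q(\Pi,\eta,\chi)$ up to Gauß-sum factors that themselves lie in that field. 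The hard part will be executing this coherence check cleanly: one must pin down a uniform choice of the generators $[\Pi_\infty]^\epsilon$ as $\epsilon$ varies, so that the individual $\epsilon$-periods combine in precisely the way that the Blasius--Panchishkin motivic prediction demands.
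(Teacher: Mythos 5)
Your strategy---dividing the instance of Theorem \ref{thm:central-value}(2) for $(\Pi,\chi)$ by the instance for $(\Pi,\triv)$---is the natural reading of the paper's ``see Cor.\ \ref{cor:deligne}'' remark, and your computation is correct: after cancelling $\omega(\Pi_\infty,m)$ the corollary is equivalent to
\begin{equation*}
\omega^{(-1)^{n+m-1}\epsilon_\chi}(\Pi_f)\ \sim_{\Q(\Pi,\eta,\chi)}\ \omega^{(-1)^{n+m-1}}(\Pi_f).
\end{equation*}
You are right that when $\epsilon_\chi\neq\triv$ this is the crux; and you are also right to worry.

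However, your proposed bridge via Theorem \ref{thm:twisted} does not close the gap. Writing $\epsilon=(-1)^{n+m-1}$ and applying Theorem \ref{thm:twisted}(2) once with sign $\epsilon$ and once with sign $\epsilon\cdot\epsilon_\chi$ produces
\begin{equation*}
\omega^{\epsilon\epsilon_\chi}(\Pi_f\otimes\chi_f)\sim\G(\chi_f)^n\,\omega^{\epsilon}(\Pi_f),
\qquad
\omega^{\epsilon}(\Pi_f\otimes\chi_f)\sim\G(\chi_f)^n\,\omega^{\epsilon\epsilon_\chi}(\Pi_f),
\end{equation*}
and dividing only yields the reciprocal relation
$\omega^{\epsilon\epsilon_\chi}(\Pi_f\otimes\chi_f)/\omega^{\epsilon}(\Pi_f\otimes\chi_f) \sim \bigl(\omega^{\epsilon\epsilon_\chi}(\Pi_f)/\omega^{\epsilon}(\Pi_f)\bigr)^{-1}$.
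This transports the unknown period ratio from $\Pi$ to $\Pi\otimes\chi$ but gives no information about either; applying the same two relations with $\Pi$ replaced by $\Pi\otimes\chi$ and $\chi$ replaced by $\chi^{-1}$ merely brings you back to where you started. Nor does the observation $\Pi_\infty\cong\Pi_\infty\otimes\chi_\infty$ help, because the ambiguity lives at the finite places: the $\Q(\Pi,\eta)$-structures on the two $\epsilon$-components of $H^{q_0}(\mathfrak{g}_\infty,K_\infty^\circ;\Pi\otimes E_\mu^{\sf v})$ are each only defined up to homothety, and nothing in Theorem \ref{thm:twisted} or in Definition/Proposition \ref{defprop} pins down a coherent normalisation of $\omega^{\epsilon}(\Pi_f)$ \emph{across} $\epsilon$.

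In fact the period-ratio assertion needed here is a genuinely nontrivial claim that the paper does not establish. Already for $n=1$, $F=\Q$, the periods $\omega^{\pm}(\Pi_f)$ recover (up to normalisation) the classical $\Omega^{\pm}$ of a holomorphic newform, and one does not know---nor does the paper prove---that $\Omega^+/\Omega^-$ is algebraic; indeed Corollary \ref{cor:period-relns-garrett-harris} and the remark following it pin down $\omega^{\epsilon}(\Pi_f)\,\omega^{\epsilon}(\pi({\bf f})_f)^2$ and $\omega^{-\epsilon}(\Pi_f)\,\omega^{-\epsilon}(\pi({\bf f})_f)^2$ in terms of the same Petersson norm, which gives $\omega^{\epsilon}(\Pi_f)/\omega^{-\epsilon}(\Pi_f)\sim\bigl(\omega^{-\epsilon}(\pi_f)/\omega^{\epsilon}(\pi_f)\bigr)^2$ up to $\omega(\Pi_\infty,\cdot)$-ratios---an expression with no visible reason to be algebraic. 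So the difficulty you flagged is a genuine gap, not merely a ``coherence check'' you have postponed: the paper states Corollary \ref{cor:deligne} without proof, and it does not follow from Theorem \ref{thm:central-value} alone without an additional sign-independence input on the periods $\omega^{\epsilon}(\Pi_f)$. Your proof should either supply such an input (it is not present in the paper), restrict the corollary to $\chi$ with $\epsilon_\chi=\triv$, or make the period ratio explicit in the final statement.
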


\subsection{Compatibility with Gross's conjecture}
A conjecture due to Gross \cite[Conj.\ 2.7(ii)]{deligne} says that the order of vanishing of a motivic $L$-function at a critical point is independent of which conjugate of the motive we are looking at, i.e., if $M$ is critical, then ${\rm ord}_{s=0} L(s, \sigma, M)$ is independent of the embedding $\sigma : \Q(M) \to \C$. We are unable to say anything about the ``order'' of vanishing, however,  it follows  trivially from Thm.\ \ref{thm:central-value} that  the property of vanishing is independent of which particular conjugate of the representation we consider.

\begin{cor}
\label{cor:gross}
Let $F$ be a totally real number field and $G=\GL_{2n}/F$, $n\geq 1$. Let $\Pi$ be a cuspidal automorphic representation of $G(\A)$, which is cohomological with respect to a highest weight representation $E^{\sf v}_\mu$ of $G_\infty$ and which admits an $(\eta,\psi)$-Shalika model. Let $\chi$ be a finite-order Hecke character of $F$ and let $\tfrac12 + m \in$ \emph{Crit}$(\Pi)=$\emph{Crit}$(\Pi\otimes\chi)$. Then for $\sigma\in {\rm Aut}(\C)$,
$$
L(\tfrac 12+m,\Pi_f \otimes \chi_f) = 0  \ \iff \  L(\tfrac 12+m,{}^\sigma\Pi_f \otimes {}^\sigma\!\chi_f) = 0.
$$
\end{cor}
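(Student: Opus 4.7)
The plan is to deduce the corollary directly from part (1) of Theorem \ref{thm:central-value} together with the non-vanishing of the various normalizing factors that appear in the denominators of that algebraicity statement. More precisely, for $\tfrac12+m\in\mathrm{Crit}(\Pi)$ and $\sigma\in\mathrm{Aut}(\C)$ the theorem furnishes the identity
$$
\sigma\!\left(
\frac{L(\tfrac 12+m,\Pi_f \otimes \chi_f)}{\omega^{(-1)^{n+m-1}\epsilon_{\chi}}(\Pi_f)\,\G(\chi_f)^n\,\omega(\Pi_\infty,m)}\right)
\;=\;
\frac{L(\tfrac 12+m,{}^\sigma\!\Pi_f\otimes{}^\sigma\!\chi_f)}{\omega^{(-1)^{n+m-1}\epsilon_\chi}({}^\sigma\!\Pi_f)\,\G({}^\sigma\!\chi_f)^n\,\omega({}^\sigma\Pi_\infty,m)}.
$$
Since $\sigma$ is a field automorphism of $\C$, it maps $0$ to $0$ and sends non-zero numbers to non-zero numbers. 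Consequently, once we know that each of the factors in the denominators on both sides is a non-zero complex number, the vanishing (or non-vanishing) of the numerator on the left corresponds, under $\sigma$, to the vanishing (or non-vanishing) of the numerator on the right. This yields exactly the equivalence asserted in the corollary.

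It remains, therefore, merely to record why the normalizing factors do not vanish. The periods $\omega^{\epsilon}(\Pi_f)$ and $\omega^{\epsilon}({}^\sigma\!\Pi_f)$ are non-zero complex numbers by construction: they are the proportionality constants between two $\Q(\Pi,\eta)$-structures (resp.\ $\Q({}^\sigma\!\Pi,{}^\sigma\!\eta)$-structures) on the same irreducible $G(\A_f)$-module, see Definition/Proposition~\ref{defprop}. The Gau\ss\ sums $\G(\chi_f)$ and $\G({}^\sigma\!\chi_f)$ are non-zero by the local non-vanishing recalled in Sect.~\ref{sect:characters}. Finally, the archimedean quantities $\omega(\Pi_\infty,m)$ and $\omega({}^\sigma\!\Pi_\infty,m)$ are non-zero by Sun's Theorem~\ref{hyp}; in fact, as noted in the remark following that theorem, $\omega({}^\sigma\!\Pi_\infty,m)=\omega(\Pi_\infty,m)$ since ${\rm Aut}(\C)$ merely permutes the archimedean components. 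Assembling these observations, the equivalence is immediate and there is in fact no serious obstacle to overcome: the corollary is a formal consequence of Theorem~\ref{thm:central-value}.
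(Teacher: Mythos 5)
Your proposal is correct and matches the paper's (implicit) argument: the paper dispenses with Corollary~\ref{cor:gross} in a single sentence, remarking that it follows trivially from Theorem~\ref{thm:central-value}, which is precisely what you have spelled out by observing that all the normalizing factors $\omega^{\epsilon}(\cdot)$, $\G(\cdot)$ and $\omega(\Pi_\infty,m)$ are non-zero and that a field automorphism of $\C$ preserves vanishing. Nothing is missing.
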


\bigskip

\appendix
\section*{Appendix: Arithmeticity for Shalika models}\label{appendix}
\begin{center}
{\it by Wee Teck Gan}
\end{center}
\vskip 5pt
The purpose of this appendix is to prove Thm.\ \ref{thm:arithmeticShalika}. More precisely, we show:

\begin{thm*}
Let $\Pi$ be a cohomological, cuspidal automorphic representation of $\GL_{2n}(\A)$ which admits an $(\eta, \psi)$-Shalika model, then for any $\sigma \in {\rm Aut}(\C)$, ${^{\sigma}}\Pi$ admits a $({^{\sigma}}\!\eta, \psi)$-Shalika model.
\end{thm*}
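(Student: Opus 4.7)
The plan is to leverage Proposition~\ref{prop:selfdual}, which characterizes those $\Pi$ admitting an $(\eta,\psi)$-Shalika model as exactly the Asgari--Shahidi transfers from globally generic cuspidal automorphic representations $\pi$ of $\mathrm{GSpin}_{2n+1}(\A)$ with central character $\omega_\pi = \eta$. Thus the theorem will follow if one shows that the transfer from $\mathrm{GSpin}_{2n+1}$ to $\GL_{2n}$ is compatible with the $\sigma$-twist. Specifically, writing $\Pi = \mathrm{AS}(\pi)$, I would produce a globally generic cuspidal representation $\pi'$ of $\mathrm{GSpin}_{2n+1}(\A)$ whose Asgari--Shahidi lift is ${}^\sigma\Pi$ and whose central character is ${}^\sigma\eta$; then the reverse implication of Proposition~\ref{prop:selfdual} delivers the desired $({}^\sigma\eta,\psi)$-Shalika model on ${}^\sigma\Pi$.

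First I would transfer the cohomological hypothesis on $\Pi$ to an arithmetic property of $\pi$. Because the Asgari--Shahidi transfer is compatible with local Langlands parameters at every place (cf.\ \cite{asgari-shahidi}, Cor.\ 5.15), the archimedean parameter of $\pi_\infty$ is regular algebraic, and by local-global compatibility at finite places the Hecke eigenvalues of $\pi$ at unramified places lie in the number field $\Q(\Pi)$. This regularity/algebraicity should allow one to invoke the same mechanism that underlies Clozel's Proposition~3.13 (our Prop.~\ref{prop:regalg}) to conclude that ${}^\sigma\pi$ is again a cuspidal automorphic representation of $\mathrm{GSpin}_{2n+1}(\A)$; an extension of Clozel's argument is needed here since he works with $\GL_N$, but the cohomological methods apply once the coefficient system is defined.

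Next I would check that ${}^\sigma\pi$ remains globally generic, i.e.\ admits a non-zero Whittaker--Fourier coefficient against a generic character of the unipotent radical of a Borel of $\mathrm{GSpin}_{2n+1}$. This is the arithmeticity of the Whittaker model and can be handled by the same kind of Galois-equivariance argument as in Lem.~\ref{lem:rational-shalika}, using a suitable $\sigma$-twisted action on Whittaker functions (the cyclotomic matrix $\mathbf{t}_\sigma$ adjusted for the root datum of $\mathrm{GSpin}_{2n+1}$). Finally, the equality ${}^\sigma\Pi = \mathrm{AS}({}^\sigma\pi)$ is then immediate at almost all places, because the Satake parameters transform equivariantly under $\mathrm{Aut}(\C)$ on both sides, and strong multiplicity one for $\GL_{2n}$ upgrades this to an isomorphism of automorphic representations. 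The central character of ${}^\sigma\pi$ is visibly ${}^\sigma\eta$.

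The main obstacle I anticipate is the $\mathrm{GSpin}_{2n+1}$-version of Clozel's automorphicity-under-$\sigma$-twist result: unlike $\GL_N$, one does not immediately have a fully-developed theory of rational structures on cuspidal cohomology for $\mathrm{GSpin}_{2n+1}$ at hand, and Whittaker rationality for this group also needs to be spelled out. The elegant shortcut -- which I expect is what Gan's appendix exploits -- is to bypass the automorphicity question altogether by working directly with the local-global compatibility of the Asgari--Shahidi transfer and verifying, at unramified places, that $\sigma$ acts compatibly on the dual side, so that the candidate ${}^\sigma\pi$ can be produced from ${}^\sigma\Pi$ by descent, and then its genericity and cuspidality read off from those of ${}^\sigma\Pi$.
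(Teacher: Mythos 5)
Your plan --- factor through Prop.\ \ref{prop:selfdual} and show that the Asgari--Shahidi transfer commutes with $\sigma$-twisting --- is a genuinely different route from the one the paper takes, and as written it has a gap that you partly notice and then try to bypass in a way that does not work. To produce ${}^\sigma\pi$ on $\GSpin_{2n+1}(\A)$ directly from $\pi$ you would indeed need a $\GSpin_{2n+1}$-analogue of Clozel's theorem on $\sigma$-twists of regular algebraic cuspidal representations, which is not available in the literature (you flag this yourself). But the ``shortcut'' you suggest --- producing ${}^\sigma\pi$ by \emph{descending} from ${}^\sigma\Pi$ --- is circular: automorphic descent from $\GL_{2n}$ to $\GSpin_{2n+1}$ (Hundley--Sayag) takes as \emph{input} that $L^S(s,{}^\sigma\Pi,\wedge^2\otimes{}^\sigma\eta^{-1})$ has a pole at $s=1$, which is precisely the conclusion of the theorem. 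What $\sigma$-twisting gives you for free is only that $({}^\sigma\Pi)^{\sf v}\cong{}^\sigma\Pi\otimes{}^\sigma\eta^{-1}$, hence that $L^S(s,{}^\sigma\Pi\times{}^\sigma\Pi\otimes{}^\sigma\eta^{-1})$ has a pole; this factors as the product of $L^S(s,{}^\sigma\Pi,{\rm Sym}^2\otimes{}^\sigma\eta^{-1})$ and $L^S(s,{}^\sigma\Pi,\wedge^2\otimes{}^\sigma\eta^{-1})$, and the entire content of the theorem is to rule out that the pole lives on the ${\rm Sym}^2$ side. Your proposal offers nothing to make that distinction.

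Gan's appendix handles exactly this point, by contradiction and purely at the archimedean places. Assume the ${\rm Sym}^2$ factor has the pole; then by Asgari--Shahidi and Hundley--Sayag, ${}^\sigma\Pi$ is a strong transfer from $\GSpin_{2n}(\A)$, so each archimedean $L$-parameter factors through ${\rm GSO}_{2n}(\C)$ with similitude character ${}^\sigma\eta_v$. But since $\sigma$ merely permutes archimedean places (and $\eta_v={\rm sgn}^{\sf w}|\cdot|^{\sf w}$ is the same at every $v\in S_\infty$, by Lem.\ \ref{lem:eta}), the archimedean parameters of ${}^\sigma\Pi$ are the same as those of $\Pi$, and these already factor through ${\rm GSp}_{2n}(\C)$ with similitude $\eta_v$ from the original $\GSpin_{2n+1}$ transfer of $\Pi$. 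Because $\Pi$ is cohomological, each $\phi_v$ is a multiplicity-free sum of pairwise non-twist-equivalent $2$-dimensional irreducibles $\phi_{i,v}$; so both the symmetric and the alternating pairing must restrict non-degenerately to some $\phi_{i,v}$, producing two inequivalent $W_{F_v}$-isomorphisms $\phi_{i,v}^{\sf v}\cong\phi_{i,v}\otimes\eta_v^{-1}$ --- impossible by Schur's lemma. Note that the paper never attempts to $\sigma$-twist anything on the $\GSpin$ side; the $\GSpin$ dictionary is used only to read off the symplectic vs.\ orthogonal type of the local parameters, and the heavy lifting is done by the archimedean rigidity of the $\sigma$-twist together with the cohomological hypothesis. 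Your approach would need exactly such an archimedean argument before it could close.
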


\begin{proof}
Since $\Pi$ has $(\eta, \psi)$-Shalika model, it follows by Thm.\ \ref{thm:JS} that $L^S(s, \Pi, \wedge^2 \otimes \eta^{-1})$ has a pole at $s=1$, and thus $ \Pi^{\sf v} \cong \Pi \otimes \eta^{-1}$. Further, the reader is reminded of
Lem.\ \ref{lem:eta} which says that all the archimedean components of $\eta$ are equal to
${\rm sgn}^{\sf w}|\ |^{\sf w}.$
Now, we note the following:

\begin{itemize}
\item By recent results of Asgari--Shahidi \cite{asgari-shahidi-generic, asgari-shahidi} and Hundley--Sayag \cite{hundleysayag}, $\Pi$ is a Langlands functorial lift of a cuspidal representation of $\GSpin_{2n+1}(\A)$ with central character $\eta$. See Prop.\ \ref{prop:selfdual}. Moreover, the lift is strong at the archimedean places, i.e., for each archimedean place, the L-parameter $\phi_v$ of $\Pi_v$ factors through the dual group ${\rm GSp}_{2n}(\C)$ of $\GSpin_{2n+1}$ with similitude character $\eta_v$.
\vskip 5pt

\item For any $\sigma \in {\rm Aut}(\C)$,
\[  {^{\sigma}}\Pi^{\sf v} \cong {^{\sigma}}\Pi \otimes  {^{\sigma}}\!\eta^{-1}, \]
and thus
\[  L^S(s, {^{\sigma}}\Pi \otimes  {^{\sigma}}\Pi \otimes {^{\sigma}}\!\eta^{-1}) = L^S(s, {^{\sigma}}\Pi, {\rm Sym}^2 \otimes {^{\sigma}}\!\eta^{-1}) \cdot
L^S(s, {^{\sigma}}\Pi, \wedge^2 \otimes {{^{\sigma}}\!\eta^{-1}})  \]
has a pole at $s = 1$.
 \end{itemize}

To prove the theorem, we need to show that the ${\rm Sym}^2$ $L$-function does not have a pole at $s = 1$.
Suppose for the sake of contradiction that $ L^S(s, {^{\sigma}}\Pi, {\rm Sym}^2 \otimes {^{\sigma}}\!\eta^{-1})$ has a pole at $s  =1$. Then by Asgari--Shahidi \cite{asgari-shahidi-generic, asgari-shahidi} and Hundley--Sayag \cite{hundleysayag}, one knows that $ {^{\sigma}}\Pi$ is a Langlands functorial lift from a cuspidal representation of $\GSpin_{2n}(\A)$ with central character ${^{\sigma}}\!\eta$, and this lift is strong at the archimedean places. Since the archimedean components of ${^{\sigma}}\Pi$ and ${^{\sigma}}\!\eta$ are, by definition, permutations of the archimedean components of $\Pi$ and $\eta$, we deduce that for all archimedean places $v$, the L-parameter $\phi_v$ of $\Pi_v$ factors through the dual group ${\rm GSO}_{2n}(\C)$ of $\GSpin_{2n}$ with similitude character $\eta_v$.

As a result, for each archimedean place $v$, the L-parameter $\phi_v$ of $\Pi_v$ preserves both a non-degenerate symmetric bilinear form $b_1$ and a non-degenerate skew-symmetric bilinear form $b_2$ on $\C^{2n}$ up to the same similitude character $\eta_v$. However, since $\Pi_v$ is cohomological, it follows from \eqref{eq:piinfty} that $\phi_v$ is a direct sum of ($2$-dimensional) irreducible representations $\phi_{i,v}$ of the Weil group $W_{F_v}$, and each $\phi_{i,v}$ is not a twist of another $\phi_{j,v}$. This shows that $b_1$ and $b_2$ must remain non-degenerate when restricted to each $\phi_{i,v}$. This gives two $W_{F_v}$-equivariant isomorphisms $\phi_{i,v}^{\sf v} \cong \phi_{i,v} \otimes \eta_v^{-1}$. Since $\phi_{i,v}$ is irreducible, this contradicts Schur's lemma.
\end{proof}
The reader should see Gan--Raghuram~\cite{gan-raghuram}, where the above result is put into a broader context of arithmeticity for periods of automorphic forms.

\vskip 10pt
\footnotesize
{\sc Wee Teck Gan: Department of Mathematics, National University of Singapore, 10 Lower Kent Ridge Road
Singapore 119076}
\\ {\it E-mail address:} {\tt matgwt@nus.edu.sg}

\normalsize

\end{document}